\newtheorem{maintheorem}{Theorem}
\newtheorem{theorem}{Theorem}[section]
\newtheorem{lemma}[theorem]{Lemma}
\newtheorem{proposition}[theorem]{Proposition}
\newtheorem{observation}[theorem]{Observation}
\newtheorem{definition}[theorem]{Definition}
\def\XXint#1#2#3{{\setbox0=\hbox{$#1{#2#3}{\int}$ }
\vcenter{\hbox{$#2#3$ }}\kern-.6\wd0}}
\newcommand{\E}{\mathbb{E}}
\renewcommand{\P}{\mathbb{P}}
\newcommand{\Z}{\mathbb{Z}}
\newcommand{\cf}{\mathcal{F}}
\begin{document}
\title{Evolving Voter Model on Dense Random Graphs}

\author{Riddhipratim Basu\thanks{Dept. of Statistics, University of California, Berkeley. Supported by UC Berkeley Graduate Fellowship. Email:riddhipratim@stat.berkeley.edu}
\and
Allan Sly\thanks{University of California, Berkeley and Australian National University. Supported by an Alfred Sloan Fellowship and NSF grants DMS-1208338, DMS-1352013. Email:sly@stat.berkeley.edu}
}
\date{}
\maketitle
\begin{abstract}
In this paper we examine a variant of the voter model on a dynamically changing network where agents have the option of changing their friends rather than changing their opinions. We analyse, in the context of dense random graphs, two models considered in Durrett et. al.~\cite{Durrett12}.  When an edge with two agents holding different opinion is updated,  with probability $\frac{\beta}{n}$, one agent performs a voter model step and changes its opinion to copy the other, and with probability $1-\frac{\beta}{n}$, the edge between them is broken and reconnected to a new agent chosen randomly from (i) the whole network (rewire-to-random model) or, (ii) the agents having the same opinion (rewire-to-same model). We rigorously establish in both the models, the time for this dynamics to terminate exhibits a phase transition in the model parameter $\beta$. For $\beta$ sufficiently small, with high probability the network rapidly splits into two disconnected communities with opposing opinions, whereas for $\beta$ large enough the dynamics runs for longer and the density of opinion changes significantly before the process stops. In the rewire-to-random model, we show that a positive fraction of both opinions survive with high probability.
\end{abstract}

\section{Introduction}
\label{s:intro}
In  recent years, a significant research effort in various fields, including biology, ecology, economics, sociolgy among others, has been concentrated on studying and modelling behaviour of large complex networks with many interacting agents. Different dynamics on large networks has been studied focussing on the structural impact of these dynamics on different models of networks. Some of the problems which received attention are consensus of opinion and polarisation, spread of epidemics, information cascades etc. (see \cite{N10, Dur06}). In many real world networks the evolution of the links in the network depend upon the states of the connecting agents and vice versa. The general class of network models that model this dependence are called adaptive or coevolutionary networks (see \cite{GB08, SP00}). As in the case of static networks, the problems of spread of information and epidemic, evolution of opinion and polarization into communities have been studied numerically and also using a variety of rigorous and partly non-rigorous methods (\cite{Volz07,Volz09,Gil06, KB08, Henry11}, see also \cite{Durrett12,Vas13} and references therein for more background). The problems we consider in this paper belong to this general class.

The voter model has classically been studied in the probability literature as an interacting particle system mainly on lattices \cite{HL1975, Lig85}. More recently voter models have been studied in the context of general networks as a model for spread of opinion \cite{PRL05, PRE05}. In the classical voter model on a fixed graph, each vertex has one of the two prevalent opinions, neighbours interact at some fixed rate, and one of the neighbours adopts the opinion of the other after the interaction. A simplified model of coevolution of network and opinion was introduced and studied using non-rigorous methods by Holme and Newman \cite{HN06} where they try to model the property that an agent is less likely to interact (remain connected with) another agent if there opinions do not match. Their model is similar to the classical voter model (but with number of opinions proportional to the size of the network) but with the added feature that, whenever there is an interaction between two vertices (agents) with different opinion, with probability $\alpha\in (0,1)$, one of the vertices breaks the link and connects to a different vertex of the same opinion, i.e., the network connections evolve with time as well. Using finite size scaling, Holme and Newman conjectured a phase transition in $\alpha$, where in the supercritical phase all the opinions will have small number of followers, but in the subcritical phase a giant community holding the same opinion will emerge.
This model and its further extensions were investigated in \cite{PRE08KH,PRL08}.

Durrett et al. \cite{Durrett12}, studied two variants of this model  using numerical simulations and formulated some conjectures about the behaviour of the model. They use certain non-rigorous and numerical methods to formulate some conjectures about the asymptotic behaviour of the models on sparse random graphs as network size becomes large. They take the initial network to be a sparse Erd\H{o}s-R\'enyi graph $G$ with average degree $>1$   and the two initial opinions distributed as product measure. In each step a uniformly chosen disagreeing edge is selected and a voter model step is performed with probability $1-\alpha$ and a rewiring step with probability $\alpha$.  They consider two variants of reconnecting edge (i) rewire-to-random where the edge is connected to a randomly chosen neighbour and (ii) rewire-to-same where the edge is connected to a random neighbour of the same opinion.
Based on numerical evidence and heuristics Durrett et. al. conjecture in~\cite{Durrett12} that

\begin{enumerate}
\item[(1)] Supercritical phase: In both variants, there exist $\alpha_c(u)\in (0,1)$ independent of initial density of $1$'s such that for $\alpha >\alpha_c$, the process reaches an absorbing state in time $O(n)$ and the final fraction $\rho$ of minority opinion is $\approx u$.

\item[(2)] Subcritical phase: In the {\bf rewire-to-random} model, for $\alpha<\alpha_c(u)$, the time to absorption is order $n^2$ on average and at the absorption time, the density of the minority opinion $\rho$ is bounded away from $0$ and independent of $u$.  In  contrast, for the {\bf rewire-to-same} model, $\rho\approx 0$, so one of the opinions takes over almost the whole network at the time of absorption.
\end{enumerate}

We establish analogous results in the dense case but without establishing a sharp transition. We also prove that both opinions survive  in {\bf rewire-to-random} model, however we cannot prove the contrasting result for the {\bf rewire-to-same} model as is conjectured in \cite{Durrett12}. Durrett et al. also formulates conjectures about finer behaviours of the evolving voter model along the path to absorption (see Conjecture 1 and Conjecture 2 in \cite{Durrett12}). Further extensions of these models with different social dynamics and multiple possible opinions were considered in \cite{PRE13, RG13PRE, MM13, Dur13PRE}.

\subsection{Main Results}

In this paper, we study the dense version of the model from~\cite{Durrett12} where the initial graph is $G(n,1/2)$. It is easy to see that to obtain in non-trivial transition,  we must renormalize the opinion update rate to $1-\alpha= \beta/n$ (this is due to the average degree being linear in $n$).
%In the sequel, whenever we say some event happens with high probability, it means that the event happens with probability tending to $1$, as the number of vertices $n\rightarrow \infty$.
Let $\tau$ denote the time to reach an absorbing state, i.e., $\tau$ is the first time when there are no disagreeing edges in the graph (for the rewire-to-same model absorbing states are slightly different, see \S~\ref{s:def} below). For $\frac{1}{2}>\varepsilon>0$, let $\tau_{*}(\varepsilon)$ be the first time that the fraction of the minority opinion reaches $\varepsilon$, i.e., $\tau_*(\varepsilon)=\min\{t:N_*(t)\leq \varepsilon n\}$, where $N_*(t)$ is the number of vertices holding the minority opinion at time $t$. Now we state our main theorems.

\begin{maintheorem}
\label{t:rewiretorandom}
Let $\frac{1}{2}>\varepsilon '>0$ be given. For both variants of the model, there exist $0<\beta_0<\beta_{*}(\varepsilon')<\infty$ such that each of the following hold.
\begin{enumerate}
\item[(i)] For all $\beta<\beta_0$ and any $\eta>0$, we have $\{\tau<10n^2, N_*(\tau)\geq \frac{1}{2}-\eta\}$ holds with high probability.

\item[(ii)] For all $\beta>\beta_*(\varepsilon')$, we have that $\tau_{*}(\varepsilon')\leq \tau$ with high probability and
\[
\lim_{c \downarrow 0} \liminf_n \P[\tau >cn^3] = 1.
\]
\end{enumerate}
\end{maintheorem}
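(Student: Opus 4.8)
My plan is to track two observables: $N_1(t)$, the number of opinion-$1$ vertices, and $D(t)$, the number of disagreeing edges. Rewiring preserves all opinions, and at a voter step one endpoint of a uniformly chosen disagreeing edge (which has exactly one vertex of each opinion) copies the other, so $N_1(t\wedge\tau)$ is a martingale whose quadratic variation increases by exactly $1$ at every voter step. The total number of edges is conserved by the entire dynamics; a rewiring step changes $D$ by $0$ or $-1$, while a voter step flipping a vertex $v$ changes $D$ by $d_{a}(v)-d_{1-a}(v)$, where $a$ is the former opinion of $v$ and $d_b(v)$ the number of neighbours of $v$ of opinion $b$, so $|\Delta D|\le n$ always. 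Finally, in any $T$ steps the number of voter steps is a sum of independent Bernoulli$(\beta/n)$'s, so Chernoff bounds control it in terms of $\beta T/n$.

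\emph{Part (i), $\beta<\beta_0$.} The plan is to show $\tau<10n^2$ w.h.p.\ and then read off the opinion bound. Up to time $10n^2$ only $O(\beta n)$ voter steps occur w.h.p., so the minority count stays above $n/4$; hence at a rewiring step the reconnected edge is monochromatic with probability at least $\tfrac14$ (automatically in rewire-to-same), and combining this negative rewiring contribution with the $O(\beta)$-bounded voter contribution gives $\E[\Delta D\mid\cf_t]\le-\tfrac18$ while $t\le 10n^2\wedge\tau$. Thus $D$ is a supermartingale with drift bounded away from $0$; its martingale part has increments $\le 2n$ and predictable quadratic variation $O(\beta n^3)$, so by Freedman's inequality it cannot fluctuate by $n^2$ over $10n^2$ steps except with probability $e^{-\Omega(n)}$. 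Since $D(0)\approx n^2/8$ w.h.p.\ and the drift would otherwise force $D$ below $0$ by time $10n^2$, absorption must occur earlier. By then $N_1$ has performed $O(\beta n)$ unit steps, so Doob's $L^2$ maximal inequality gives $\sup_{t\le\tau}|N_1(t)-N_1(0)|=o(n)$ w.h.p., and with $N_1(0)=n/2+O(\sqrt n)$ this yields $N_*(\tau)\ge(\tfrac12-\eta)n$ w.h.p.

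\emph{Part (ii), $\beta>\beta_*(\varepsilon')$.} The core claim is that the process cannot absorb while the minority is still macroscopic, i.e.\ $\tau\ge\tau_*(\varepsilon')$ w.h.p.; granting this, the $cn^3$ bound is soft. Indeed, by Doob's maximal inequality, within its first $K$ voter steps the martingale $N_1$ leaves $(\varepsilon'n,(1-\varepsilon')n)$ with probability $\le K/((\tfrac12-\varepsilon')^2n^2)$, which is $<\delta'$ for $K=\delta'(\tfrac12-\varepsilon')^2n^2$; but accumulating $K$ voter steps takes $\ge Kn/(2\beta)$ steps w.h.p.\ by Chernoff, so $\tau\ge\tau_*(\varepsilon')\ge cn^3$ with probability $\ge1-\delta'-o(1)$ for suitably small $c=c(\beta,\varepsilon',\delta')$, and letting $\delta'\downarrow 0$ gives the stated limit. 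For the core claim I would show $D$ has a \emph{positive} drift whenever it is small: when $D\le\delta n^2$, almost every vertex has almost all of its (linearly many) neighbours in its own class, so flipping a vertex creates $\Theta(n)$ new disagreeing edges in expectation --- concretely $\E[\Delta D\mid\text{voter}]\gtrsim E_{00}/n_0+E_{11}/n_1\gtrsim n$, where $E_{aa}$ is the number of monochromatic edges inside class $a$ and $E_{00}+E_{11}=m-D\gtrsim n^2$. Against the $O(1)$ per-step loss from rewiring this gives $\E[\Delta D\mid\cf_t]\ge\tfrac{\beta}{8}-1>0$ once $\beta$ is large, so $D$, starting near $n^2/8$, is repelled from the band $\{D\le\delta n^2\}$; since $|\Delta D|\le n$, a Freedman estimate with a union bound over time shows $D$ never reaches $0$ before $\tau_*(\varepsilon')$.

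\emph{The main obstacle.} The drift computation for $D$ in Part (ii) is valid only if the evolving graph remains sufficiently quasirandom: one needs degrees within each class to be concentrated and the few cross-class edges not to be concentrated on a small vertex set, since the vertex flipped at a voter step is chosen with probability proportional to its cross-degree, and an exceptional vertex joined almost entirely to the opposite class would make $\Delta D$ negative. Establishing and propagating such a structural invariant along the trajectory --- most naturally by tracking $(N_1,D,E_{00},E_{11})$ together with degree and codegree statistics and showing they stay close to the corresponding inhomogeneous random graph, with error terms that deteriorate as $\varepsilon'\to 0$, which is where the dependence of $\beta_*$ on $\varepsilon'$ enters --- will be the technical heart of the argument. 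Two routine complications along the way are the size-$n$ jumps of $D$ in every concentration estimate, absorbed using that only $O(\beta\cdot\text{time}/n)$ voter steps occur, and the modified notion of absorbing state in the rewire-to-same model, which must be folded into the supermartingale accounting.
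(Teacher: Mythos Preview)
Your treatment of Part~(i) is essentially the paper's argument: control the maximum degree and the minority density over a time window of order $n^2$, then show the disagreeing-edge count has strictly negative drift. The paper uses an exponential supermartingale for $e^{\lambda D(t)/n}$ rather than Freedman, but this is a cosmetic difference. Your derivation of the $cn^3$ lower bound in Part~(ii), granted the core claim $\tau\ge\tau_*(\varepsilon')$, is also correct and matches the paper's.

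For the core claim in Part~(ii), your approach departs substantially from the paper's and has a real gap. Your drift inequality $\E[\Delta D\mid\text{voter}]\gtrsim E_{00}/n_0+E_{11}/n_1$ holds only under a uniformity hypothesis --- that the cross-degree $d_{1-a(v)}(v)$ is roughly constant across vertices of each opinion --- which you correctly flag as the obstacle but do not establish. The difficulty is that the structural invariant you need (concentration of degrees within and across classes) is exactly what the dynamics might destroy, and your proposal to ``track degree and codegree statistics'' does not explain how to close the loop: showing those statistics stay controlled requires knowing how opinions are distributed over the evolving graph, which in turn depends on the graph structure. Tracking $(N_1,D,E_{00},E_{11})$ alone cannot detect whether all the cross-edges have piled onto a few vertices, and a Freedman bound on $D$ over the relevant $n^3$ timescale (quadratic variation $\sim\beta n^4$, jumps of size $n$) does not by itself prevent $D$ from touching $0$.

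The paper breaks this circularity with an idea absent from your proposal: a \emph{coupling of the evolving voter model with random walks} on the nearly-static graph. Over a short window of $Cn^2/\beta$ steps only $O(n^2/\beta)$ rewirings occur, so for large $\beta$ the voter-model duality is approximately valid; one couples the opinion of a vertex at the end of the window with the location of a random walk run for time $\Theta(1/\beta)$ per edge, and since the graph has good expansion (controlled via a Cheeger bound that is itself one of the tracked invariants), this walk mixes to near-uniform. The upshot is that after each short window the opinions are approximately i.i.d.\ with density $p=N_1(t)/n$, regardless of where the rewiring has pushed the edges. This opinion-mixing lemma is then fed into drift computations not for $D$ alone but for a whole family of cut statistics $K_{ST}$, together with edge-multiplicity, degree, and ``balancedness'' invariants, in a two-scale bootstrap (weak stopping times guaranteed over $\delta n^2$ steps, which in turn allow recovery of the strong stopping times). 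The dependence of $\beta_*$ on $\varepsilon'$ enters because both the mixing rate and the Cheeger constant degrade with the minority density.

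Your single-observable drift approach may be salvageable, but it would still need some mechanism guaranteeing that opinions remain well-spread over the evolving graph, and the random-walk duality is the natural one; without it I do not see how you rule out the scenario where the few remaining cross-edges concentrate on vertices whose flip decreases $D$.
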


The following theorem addresses the issue of fraction of minority opinion in the process at the absorption time for the rewire-to-random model.

\begin{maintheorem}
\label{t:rtoresplit}
Let $\beta>0$ be fixed. For the rewire-to-random model there exists $\varepsilon_{*}=\varepsilon_{*}(\beta)>0$ such that $\tau<\tau_*(\varepsilon_{*})$ with high probability.
\end{maintheorem}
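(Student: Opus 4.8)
The plan is to exploit that the minority count is a martingale whose fluctuations are driven purely by voter steps, and to show that once the minority is small the process is forced to absorb before the martingale can wander all the way down to $\varepsilon_* n$.

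First I would record the elementary observation that, on the event $N_*(t)\le n/3$ (which is all we need), $N_*(t)$ is a bounded martingale: it changes only at a voter step, which copies an opinion across a uniformly chosen oriented disagreeing edge, hence moves $N_*$ by $+1$ or $-1$ with equal probability; its predictable quadratic variation up to time $t$ is the number of voter steps performed so far, which, since each step is a voter step independently with probability $\beta/n$ until absorption, is tightly concentrated around $\tfrac{\beta}{n}t$. Consequently, by Doob's $L^2$ maximal inequality, if only $V$ voter steps occur over some time interval then with high probability $N_*$ changes by at most $O(\sqrt{V\log n})$ over that interval.

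The key structural input, and what I expect to be the main obstacle, is to prove that there is $\theta=\theta(\beta)>0$ such that, with high probability and uniformly over the entire (possibly $\Theta(n^3)$-long) run, whenever $N_*(t)\le \theta n$ the number of disagreeing edges $D(t)$ has one-step drift at most $-\tfrac14$. The rewiring part of a step already contributes $-\tfrac12+o(1)$: it deletes a disagreeing edge, and because the reconnecting endpoint is sent to a uniform vertex, the replacement edge is disagreeing with probability $\tfrac12+o(1)$. What must be controlled is the voter part, which occurs at rate $\beta/n$ and changes $D$ by $\deg_{\mathrm{same}}(y)-\deg_{\mathrm{opp}}(y)$ when it flips a vertex $y$; writing out the disagreeing-degree--size-biased expectation, one needs that the weighted average of $\deg_{\mathrm{same}}(y)-\deg_{\mathrm{opp}}(y)$ over endpoints of disagreeing edges is $o(n)$. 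This is exactly where \emph{rewire-to-random} enters: since rewired edges land on uniform vertices, the disagreeing edges stay ``spread out'' over the two opinion classes, so no small set of vertices simultaneously carries a constant fraction of them and has large own-opinion degree. Making this precise — presumably by tracking degree and codegree statistics of the disagreeing-edge graph and comparing them with those of a random graph on the current degree sequence — is the technical heart of the argument. (One cannot hope for such negative drift while $N_*$ is of order $n$: there the process genuinely runs for $\Theta(n^3)$ steps, so the well-mixedness, and hence the negative drift, can only be established once the minority is small, which is why $\theta$, and therefore $\varepsilon_*$, must depend on $\beta$.)

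Granting the structural claim, I would finish by a bootstrap. Put $\varepsilon_*=\theta(\beta)/8$. If $N_*$ never drops below $\tfrac12\theta n$ before $\tau$ there is nothing to prove; otherwise let $\sigma<\tau$ be the first time $N_*(\sigma)\le\tfrac12\theta n$. As long as $N_*\le\theta n$, the structural claim makes $t\mapsto D(t)+\tfrac14(t-\sigma)$ a supermartingale started from $D(\sigma)\le\tfrac12\theta n^2$, and the $O(\beta n)$ voter steps occurring in any window of length $Cn^2$ contribute total variance $o(n^4)$ to $D$; so an Azuma-type bound shows that with high probability $D$ hits $0$ — i.e.\ the process absorbs — within $Cn^2$ steps, where $C=C(\theta)$. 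Meanwhile only $O(\beta n)$ voter steps occur in that window, so by the Doob estimate $N_*$ moves by at most $O(\sqrt{\beta n\log n})=o(n)<\tfrac14\theta n$ there; this simultaneously keeps $N_*\le\tfrac12\theta n+\tfrac14\theta n<\theta n$ (so the drift hypothesis holds throughout the window, closing the bootstrap) and keeps $N_*\ge\tfrac12\theta n-\tfrac14\theta n>\varepsilon_* n$. Hence with high probability $N_*(t)>\varepsilon_* n$ for every $t\le\tau$, that is $\tau<\tau_*(\varepsilon_*)$, and the ``with high probability'' qualifiers are combined by a union bound over the at most $Cn^2$ steps of the window.
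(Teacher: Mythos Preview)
Your outline has the right endgame --- once the minority is small, $D(t)$ should drain to zero in $O(n^2)$ steps while $N_*$ barely moves --- but the step you yourself call the ``technical heart'' is a genuine gap, and the route you sketch for it does not obviously work. The problem is that you need the negative-drift claim to hold from the random time $\sigma$ onward, and you have no control whatsoever on the structure of $G(\sigma)$: the process may have run for $\Theta(n^3)$ steps by then, multi-edges are allowed, and nothing you have established rules out a few vertices of enormous degree or a very uneven distribution of disagreeing edges. Writing the voter contribution to the drift of $D$ out, it equals $\tfrac{\beta}{2nD}\bigl(\sum_u d_{\mathrm{opp}}(u)\,d_{\mathrm{same}}(u)-\sum_u d_{\mathrm{opp}}(u)^2\bigr)$, and the first sum is only small if the edges are already well spread; making $\theta$ small does not help since this quantity need not shrink with $\theta$. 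The same missing degree control also undermines your Azuma step: a single voter flip changes $D$ by up to $D_{\max}$, which you have not bounded at time $\sigma$.

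The paper does not attempt to prove your structural claim. Instead it proves a statement valid for an \emph{arbitrary} multigraph with minority fraction $p=p(\beta)$ and $\Theta(n^2)$ edges: absorption occurs within $10n^2$ steps with high probability. The device is combinatorial. Declare a vertex ``low'' if its degree at that moment is at most $10n$ (all but $n/25$ vertices are low). Encode the dynamics so that each time a low vertex flips from $0$ to $1$ it draws a fresh $\mbox{Geom}(\beta/n)$ counter for the number of rewirings it will serve as root before flipping again. If this counter is $\ge 25n$ (probability $\ge e^{-50\beta}$) the vertex can \emph{never} flip back, since its degree is $\le 10n$ initially and it gains at most about $14n$ incoming edges in $10n^2$ steps, so it cannot be root $25n$ times. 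Each such ``stubborn'' draw therefore locks a distinct vertex permanently into opinion $1$; since at most $\tfrac32 pn$ vertices hold opinion $1$ at any time, at most $\tfrac32 pn$ stubborn draws can be used, and taking $p$ small compared to $\beta e^{-50\beta}$ forces the total number of relabellings at low--low edges to be at most $\beta n/20$, hence at most $n^2/10$ low--low selections in all. A short accounting then caps the number of selections involving high-degree vertices, giving $\tau\le 10n^2$ regardless of the graph's structure at the starting time.
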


\subsection{Formal Model Definitions}
\label{s:def}
Now we describe formally the models we consider in this paper. Let $n$ be a fixed positive integer. Let $V$ be a fixed set with $|V|=n$. Let $V^{(2)}$ denote the set of all unordered pairs in $V$, we shall call elements of $V^{(2)}$ as \emph{bonds}. Also let $\mathcal{E}$ be a fixed set. We consider discrete time Markov chains $\{G(t)\}_{t\geq 0}$ taking values in
$$\left\{\{0,1\}^{V}, \left(V^{(2)}\right)^{\mathcal{E}}\right\},$$
i.e., for each $t$, $G(t)$ is a multi-graph on the vertex set $V$ with labelled edges coming from the set $\mathcal{E}$ (each edge in $\mathcal{E}$ is placed at one of the bonds); each vertex has one of the two opinions $0$ and $1$. The following notations will be used throughout this paper.

The opinion of a vertex $v$ at time $t$ shall be denoted by $v(t)$. The vector of opinions of vertices in $G(t)$ shall be denoted by $V(t)$. We shall denote by $N_0(t)$ and $N_1(t)$ the number of $0$s and $1$s in $V(t)$ respectively. Let $N_*(t)=\min\{N_0(t),N_1(t)\}$. For $v\in V$, let $C_v(t)$ denote the set of all vertices in $V$ which have the same opinion as $v$ in $G(t)$. By $\tilde{G}(t)=(V,E(t))$, we denote the underlying graph of $G(t)$. Often, when there is no scope of confusion we shall use $G(t)$ instead of $\tilde{G}(t)$ to denote the same. Notice that we are allowing multi-edges but not self loops, i.e., at a time $t$, a bond $(u,v)\in V^{(2)}$ may be connected by more than one edge, but there are no edges connecting $v$ to itself. For an edge connecting the bond $(u,v)$ in $G(t)$ we shall call it {\bf disagreeing} if $u(t)\neq v(t)$ and {\bf agreeing} otherwise.

{\bf Initial condition:} To simplify matters we only consider the following initial condition. We take $\tilde{G}(0)$ is distributed as $G(n,\frac{1}{2})$, i.e., each bond contains $0$ edge with probability $\frac{1}{2}$ and $1$ edge with probability $\frac{1}{2}$ independent of each other. Also, let $\{v(0)\}_{v\in V}$ be $i.i.d.$ $\mbox{Ber}(\frac{1}{2})$. Also denote the set of the labelled edges $\mathcal{E}=\{e_0,e_1,\ldots, e_N\}$.

{\bf Transition Probabilities:} We describe the one step evolution of the two variants of the Markov chains as follows.

Let $G(t)$ be the state of the chain at time $t$. Let $Z(t)=\mbox{Ber}(\frac{\beta}{n})$ be independent of $G(t)$. If $Z(t)=1$ we obtain $G(t+1)$ from $G(t)$ by taking a \emph{voter model step}, and if $Z(t)=0$ then we obtain $G(t+1)$ from $G(t)$ by taking a \emph{rewiring step}. We call $\beta>0$  the \emph{relabelling rate}, this is a parameter of the model.

Let $\mathcal{E}^{\times}(t)\subseteq \mathcal{E}$ denote the set of edges that are disagreeing in $G(t)$. Choose one edge $e$ from $\mathcal{E}^{\times}(t)$ uniformly at random. Let $(u,v)$ be the bond which this edge connects in $G(t)$. Choose one vertex randomly among $u$ and $v$, say $u$. The vertex $u$ as above will be called the {\bf root} of a rewiring move. 

{\bf The voter model step (relabelling step):} If $Z(t)=1$, then $u$ adopts the opinion of $v$, i.e., $G(t+1)$ is obtained from $G(t)$ by taking $\tilde{G}(t+1)=\tilde{G(t)}$, $v'(t+1)=v'(t)$ for all $v'\in V\setminus \{u\}$ and $u(t+1)=v(t+1)$.

{\bf The rewiring step:} If $Z(t)=0$, the two chains we consider evolve differently.

{\bf Rewire-to-random model:} In this model, we choose a vertex $v'$ uniformly at random from $V \setminus \{u\}$. We obtain $G(t+1)$ from $G(t)$ by taking $V(t+1)=V(t)$ and $E(t+1)$ is obtained from $E(t)$ by removing the edge $e$ from the bond $(u,v)$ and adding it to the bond $(u,v')$.

{\bf Rewire-to-same model:}
In this model, we choose a vertex $v'$ uniformly at random from $C_u(t)\setminus \{u\}$. We obtain $G(t+1)$ from $G(t)$ by taking $V(t+1)=V(t)$ and $E(t+1)$ is obtained from $E(t)$ by removing the edge $e$ from the bond $(u,v)$ and adding it to the bond $(u,v')$.

We make the following basic observation characterising the absorbing states.
\begin{observation}
\label{o:absorbing}
Notice that, on finite networks both the chains are absorbing. For the {\it rewire-to-random} model the only absorbing states are those which corresponds to the graph having no disagreeing edges, i.e., either one opinion has taken over all the vertices, or the graph is split into disconnected communities, where all the vertices in a community has the same opinion. For the {\it rewire-to-same} model the absorbing states are those that either have no disagreeing edges, or those in which one of the opinions are held by only one vertex.
\end{observation}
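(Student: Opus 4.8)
The statement is elementary and I would prove it by a short case analysis; write $\mathcal{E}^{\times}=\mathcal{E}^{\times}(t)$ for the set of disagreeing edges of the state under consideration. \textbf{Easy inclusion.} If $\mathcal{E}^{\times}=\emptyset$ then the update rule selects no edge and $G(t+1)=G(t)$ deterministically, so the state is absorbing for both models; and $\mathcal{E}^{\times}=\emptyset$ says exactly that every connected component of $\tilde{G}$ is monochromatic, which is precisely the combinatorial dichotomy in the statement (one opinion occupies every component, or components of both colours are present). This already handles, for the rewire-to-random chain, every configuration in which some parts are monochromatic regardless of the rest.

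\textbf{Converse for rewire-to-random.} Suppose $\mathcal{E}^{\times}\neq\emptyset$ and let $e$ be a disagreeing edge on a bond $(u,v)$, so $u(t)\neq v(t)$. With probability $\tfrac{\beta}{n}\cdot\tfrac{1}{|\mathcal{E}^{\times}|}>0$ the update picks $e$ and performs a voter step; whichever of $u,v$ is chosen as root, that root copies the opposite opinion, so $V(t+1)\neq V(t)$ and the state genuinely changes. Hence no state with a disagreeing edge is absorbing, which together with the easy inclusion gives the characterisation for rewire-to-random. To see that the chain is in fact absorbing (reaches an absorbing state a.s.), note that from any non-absorbed state one may, with positive probability, repeatedly pick a disagreeing edge and perform a voter step rooted at an endpoint holding opinion $1$ (one such endpoint always exists while $\mathcal{E}^{\times}\neq\emptyset$); each such step lowers $N_{1}$ by one, so after at most $N_{1}$ steps opinion $1$ has disappeared and $\mathcal{E}^{\times}=\emptyset$. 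Since the state space is finite and every state admits a positive-probability route into the absorbing set, absorption occurs a.s.

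\textbf{Rewire-to-same.} Here the absorbing set is ``slightly different'' precisely because the rewiring move can be undefined. On any state with $N_{*}\geq 2$ both updates are well-defined (if the root is $u$ then $C_{u}(t)\setminus\{u\}\neq\emptyset$, since $u$'s colour class has size at least two), and the converse argument above applies verbatim: a disagreeing edge yields a voter step that changes the state, so such a state is absorbing iff $\mathcal{E}^{\times}=\emptyset$. It remains to treat $N_{*}\leq 1$. If $N_{*}=0$ there are no disagreeing edges. If $N_{*}=1$, say $u$ is the unique holder of its opinion, then every disagreeing edge is incident to $u$, and with positive probability a rewiring move is rooted at $u$ and then calls for a vertex of $C_{u}(t)\setminus\{u\}=\emptyset$; the dynamics is not defined past such a state, and these $N_{*}=1$ states are declared terminal. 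This yields ``absorbing $\iff$ $\mathcal{E}^{\times}=\emptyset$ or $N_{*}=1$,'' and a.s. absorption follows as before, now by driving $N_{*}$ down to at most $1$ through voter steps (all well-defined en route).

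The only point requiring care — more bookkeeping than obstacle — is pinning down, for rewire-to-same, that $N_{*}=1$ with a non-isolated minority vertex is \emph{exactly} where the rewiring move loses meaning, so that treating it as terminal is forced rather than an arbitrary convention; everything else is immediate from the definition of the update.
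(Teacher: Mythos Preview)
The paper does not give a proof of this observation at all; it is stated as self-evident immediately after the model definitions. Your case analysis is correct and supplies the details the paper omits. In particular, your identification of the $N_{*}=1$ states in the rewire-to-same model as terminal-by-convention (because the rewiring move rooted at the lone minority vertex is undefined) is exactly the reason the paper singles out those states, and your route to almost-sure absorption via repeated voter steps driving $N_1$ (or $N_{*}$) down is the natural one. One small phrasing quibble: your sentence ``this already handles \ldots\ every configuration in which some parts are monochromatic regardless of the rest'' is a bit muddled---what you have actually shown is that $\mathcal{E}^{\times}=\emptyset$ is equivalent to \emph{all} components being monochromatic---but the logic surrounding it is fine.
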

%
%The next observation is also easy but useful.
%\begin{observation}
%\label{o:edgeconst}
Notice that the number of edges is conserved in each step of the chain, i.e., we have that $|E(t)|=|\mathcal{E}|$ for all $t$. Also observe that even though we have labelled edges, this fact does not affect the behaviour of the model at all. The edges are labelled, simply because it will be convenient while constructing a coupling of this chain with another process which we shall use.
%\end{observation}

One of the main questions we are interested in for both the models described above is the asymptotics of absorption time as a function of $\beta$ as $n\rightarrow \infty$, and whether it exhibits a phase transition in $\beta$ or not. Notice that if $\beta=0$, then we have only rewiring moves the absorption time is $\Theta(n^2)$, i.e., the graph splits immediately into two communities having different opinions. We investigate whether similar phenomenon occurs if $\beta>0$ is sufficiently small. In the other extreme, if the rewiring moves are much rarer compared to the relabelling moves (i.e., $\beta>>1$) one might expect the model to behave similarly as the voter model on a static graph, where the minority opinion density will become very small before reaching an absorbing state, and the absorption time will be at least $\Theta(n^3)$. This is established in Theorem \ref{t:rewiretorandom}. A related quantity of interest is the fraction of the minority opinion vertices when the process reaches an absorbing state. For $\beta$ sufficiently large does the minority opinion persist with a positive fraction? Theorem \ref{t:rtoresplit} provide the answers for the rewire-to-random model.

\subsection{Outline of the proof}
We prove parts $(i)$ and $(ii)$ of Theorem \ref{t:rewiretorandom} separately. The arguments are similar for the rewire-to-random model and the rewire-to-same model. We provide details only for the rewire-to-random model while pointing out the differences for the rewire-to-same model. To prove part $(i)$, we essentially show that before the density of either opinion changes, a rewiring move is likely to decrease the number of disagreeing edges. Using a martingale argument we show that, by time $\Theta(n^2)$ (by which time the opinion densities cannot change significantly), the number of disagreeing edges decay to 0.

Most of the work goes into proving part $(ii)$ of Theorem \ref{t:rewiretorandom}. We show that for $\beta=\beta(\varepsilon')$ sufficiently large, the graph $G(t)$ remains close enough to an Erd\H{o}s-R\'enyi graph, in a sense to be made precise, as long as the minority opinion density does not drop below $\varepsilon'$. To this end, we define a number of stopping times detecting when $G(t)$ deviates too much from an Erd\H{o}s-R\'enyi graph for the first time with respect to certain different properties, and roughly show that all those stopping times are with high probability at least as large as than $\tau_{*}(\varepsilon')$. The properties we need to consider are vertex degrees, the Cheeger constant and edge-multiplicities.

Corresponding to each of the properties we consider, we define two stopping times, one with a stronger threshold and the other with a weaker threshold. We show that provided none of the weaker thresholds have been reached, the opinions quickly mix to an approximate product measure which guarantees that the properties of interest are sufficiently mean reverting for our purposes.

For the proof of Theorem \ref{t:rtoresplit}, we show that for a fixed $\beta$, there exist sufficiently small but positive $\varepsilon_{*}$, such that once the minority opinion reaches $\varepsilon_{*}$, the typical vertices having minority opinions start losing disagreeing edges at a higher rate than it gains them, and eventually all the disagreeing edges are lost before the minority opinion density can change substantially.

{\bf Organisation of the paper:} The rest of the paper is organised as follows. In \S~\ref{s:smallbeta}, we prove part $(i)$ of Theorem \ref{t:rewiretorandom} for the rewire-to-random model. Most of the work in this paper goes towards the proof of part $(ii)$ of Theorem \ref{t:rewiretorandom} for the rewire-to-random model, which spans \S~\ref{s:stoptime}, \S~\ref{s:weakbound} and \S~\ref{s:strongbound}. In \S~\ref{s:stoptime} we define all the stopping times that we need to use. In \S~\ref{s:weakbound} we show that, if by time $t$ the graph does not reach any of the stronger stopping times, then the graph does not reach any of the weaker stopping times by time $t+t'$ with high probability, where $t'$ is of order $n^2$. That the the graph is also unlikely to reach any of the strong stopping times by time $t+t'$ as long as the minority opinion density does not become too small is shown in \S~\ref{s:strongbound}. Together these complete the proof of Theorem \ref{t:rewiretorandom}, part $(ii)$. Theorem \ref{t:rtoresplit} is proved in \S~\ref{s:esplit}. In \S~\ref{s:samemod}, we point out the significant adaptations to the argument that are necessary to prove Theorem \ref{t:rewiretorandom} for the rewire-to-same model. We finish with the discussion of some open problems in \S~\ref{s:conclusion}.

\section{Fast polarization for small $\beta$}
\label{s:smallbeta}
In this section we prove part $(i)$ of Theorem \ref{t:rewiretorandom} for the \emph{rewire-to-random model} with relabelling rate $\beta$. First we make the following definitions. Let $D_{\max}(t)$ denote the maximum degree of a vertex in $G(t)$. The degree of a vertex is defined as the number of edges incident to it, and not the number of bonds containing edges. Also, let $X_t=|\mathcal{E}^{\times}(t)|$. Consider the following stopping times.   Let $\tau_1=\min\{t: D_{\max}(t)\geq 8n\}$ and let $\tau_{2}=\tau_*(\frac{1}{3})$, i.e., $\tau_{2}=\min\{t: N_{*}(t)=\min\{N_0(t),N_1(t)\}\leq \frac{n}{3}\}$. Define $\tau_0=\tau \wedge \tau_1 \wedge \tau_2$. We have the following lemmas.

\begin{lemma}
\label{l:betasmallmg}
There exists $\beta_0>0$, such that for all $\beta<\beta_0$, we have $\tau_0\leq 6n^2$ with high probability.
\end{lemma}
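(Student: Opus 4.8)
The plan is to track the number of disagreeing edges $X_t=|\mathcal{E}^{\times}(t)|$ and to show that at every rewiring step it has a negative drift of constant order, against which the rare voter steps can do damage of at most $8n$ per step. A stopped martingale together with an Azuma bound then shows that the drift accumulated over the first $6n^2$ steps exceeds $X_0$ plus the martingale fluctuation plus the total voter-step damage, which is impossible while $X_t>0$ and $\tau_1\wedge\tau_2$ has not yet occurred; hence $\tau_0\le 6n^2$ with high probability.

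\emph{Drift at a rewiring step.} Fix $t<\tau_0$ and condition on $\cf_t$; note $\mathcal{E}^{\times}(t)\neq\emptyset$ because $\tau_0\le\tau$. A rewiring step picks a uniform disagreeing edge $e$ on a bond $(u,v)$, picks the root $u$ uniformly among $u,v$, and moves $e$ onto a bond $(u,v')$ with $v'$ uniform on $V\setminus\{u\}$. Since $e$ is disagreeing, one endpoint has opinion $0$ and the other opinion $1$, so the root $u$ has opinion $0$ or opinion $1$ each with probability $\tfrac12$, independently of $v'$. The step deletes the disagreeing edge $e$ and re-inserts it as a disagreeing edge precisely when $v'(t)\neq u(t)$, so $X_{t+1}-X_t=-\mathbf 1[v'(t)=u(t)]$ and, using $N_0(t)+N_1(t)=n$,
\[
\E\bigl[X_{t+1}-X_t\mid\cf_t,\text{ rewiring}\bigr]
=\tfrac12\tfrac{n-N_0(t)}{n-1}+\tfrac12\tfrac{n-N_1(t)}{n-1}-1
=-\frac{n-2}{2(n-1)}\le-\frac14
\]
for $n\ge3$. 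The crucial feature is that this bound is \emph{uniform in the configuration}: it is insensitive to how unbalanced or irregular $G(t)$ has become, which is exactly why no control of the graph beyond $\tau_1$ is needed. On a voter step the root merely copies a neighbour, affecting only the at most $D_{\max}(t)<8n$ edges incident to it (here $t<\tau_1$), so $|X_{t+1}-X_t|\le 8n$.

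\emph{Supermartingale, concentration, conclusion.} Set $c^{\mathrm r}_s:=\E[X_{s+1}-X_s\mid\cf_s,\text{ rewiring step at }s]\in[-\tfrac12,-\tfrac14]$ and $\eta_s:=(X_{s+1}-X_s-c^{\mathrm r}_s)\mathbf 1[\text{step }s\text{ is a rewiring step}]$; since the step type is independent of $\cf_s$ and $\tau_0$ is a stopping time, $W_t:=\sum_{s<t}\eta_s\mathbf 1[s<\tau_0]$ is a martingale with increments bounded by $2$. Summing $X_{s+1}-X_s$ over $s<t\wedge\tau_0$, separating rewiring from voter steps, and using $|X_{s+1}-X_s|\le 8n$ on voter steps, $R_{t\wedge\tau_0}=(t\wedge\tau_0)-V_{t\wedge\tau_0}\ge(t\wedge\tau_0)-V_T$ (with $T:=6n^2$ and $R_m,V_m$ the numbers of rewiring resp.\ voter steps before time $m$), and $X_{t\wedge\tau_0}\ge0$, one obtains for every $t\le T$
\[
\tfrac14\,(t\wedge\tau_0)\;\le\;X_0+W_t+\bigl(\tfrac14+8n\bigr)V_T .
\]
Taking $t=T$, on $\{\tau_0>T\}$ this forces $\tfrac32 n^2\le X_0+W_T+(\tfrac14+8n)V_T$. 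But $X_0$ is a sum of $\binom n2$ i.i.d.\ $\mathrm{Ber}(\tfrac14)$ variables, so $X_0=\tfrac{n^2}{8}(1+o(1))\le\tfrac{n^2}{7}$ with high probability; $V_T\sim\mathrm{Bin}(6n^2,\beta/n)$ has mean $6\beta n$, so $V_T\le 12\beta n$ and hence $(\tfrac14+8n)V_T\le 108\beta n^2$ with high probability; and $W_T$ is a martingale with at most $6n^2$ increments of size $\le 2$, so $\P[W_T\ge n^2/100]\le e^{-\Omega(n^2)}$ by Azuma--Hoeffding. Choosing $\beta_0$ small enough that $\tfrac17+\tfrac1{100}+108\beta_0<\tfrac32$ (say $\beta_0=10^{-2}$), the inequality $\tfrac32 n^2\le X_0+W_T+(\tfrac14+8n)V_T$ is violated on the intersection of these three high-probability events, so $\P[\tau_0>6n^2]\to 0$.

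The conceptual point that makes this work is the uniformity of the rewiring drift: the root of a rewiring move lies in either opinion class with probability exactly $\tfrac12$, so the reconnected edge is disagreeing with probability essentially $\tfrac12<1$ regardless of $G(t)$, giving $X_t$ a genuine constant negative drift until absorption. The main technical obstacle I anticipate is the last step: optional stopping alone only yields $\E[\tau_0]=O(n^2)$ and hence a bound on $\P[\tau_0>6n^2]$ that is bounded away from $1$, so one must really run the martingale concentration, and this forces one to isolate the rare ($\Theta(\beta n)$ many) voter steps — each able to change $X_t$ by as much as $8n$ — and to take $\beta_0$ small enough that their total effect, of order $\beta n^2$, stays comfortably below the $\tfrac14\cdot 6n^2=\tfrac32 n^2$ of accumulated drift.
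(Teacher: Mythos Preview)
Your proof is correct and takes a somewhat different route from the paper's. The paper works with the exponential process $e^{\lambda X_t/n}$ and shows it is a supermartingale on $\{t<\tau_0\}$ with a contraction factor $1-\lambda/(10n)$ per step, absorbing the voter-step damage into the exponential inequality and then applying Markov. You instead do an additive decomposition: isolate the rewiring drift, control the $O(\beta n)$ voter steps separately as a deterministic budget of size $O(\beta n^2)$, and close with Azuma--Hoeffding on the bounded-increment martingale $W_t$. Your key observation --- that the rewiring drift equals $-\tfrac{n-2}{2(n-1)}$ \emph{exactly}, regardless of the opinion split --- is sharper than the paper's bound $\P[X_{t+1}=X_t-1\mid\text{rewiring}]\ge\tfrac{N_*(t)-1}{n-1}$, which genuinely uses $t<\tau_2$; in your argument $\tau_2$ plays no role in the drift estimate (only in the definition of $\tau_0$). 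Both approaches yield exponentially small failure probability; yours is arguably more transparent about where the smallness of $\beta$ enters.

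One minor slip: the indicators making up $X_0$ are \emph{not} i.i.d.\ $\mathrm{Ber}(\tfrac14)$, since the opinion variables are shared across bonds. The conclusion $X_0\le n^2/7$ whp is still true --- condition on the opinion vector first, so that $X_0\mid V(0)\sim\mathrm{Bin}(N_0(0)N_1(0),\tfrac12)$ with $N_0(0)N_1(0)\le n^2/4$ --- or simply use the deterministic bound $X_0\le \binom{n}{2}<n^2/2$ and adjust $\beta_0$ accordingly.
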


%Perhaps since we used \alpha to describe Durret's relabeling rate, we can use another symbol
\begin{proof}
Let  $\cf_{t}$ denote the filtration generated by the process up to time $t$. Observe that whenever an edge is rewired, $X_t$ either remains the same or decreases by $1$. Conditional on $\cf_t$, the chance that $X_t$ is decreased by a rewiring move is at least $\frac{N_{*}(t)-1}{n-1}$. Also notice that a relabelling move, i.e., a voter model step can increase $X_t$ by at most $D_{\max}(t)$. Hence  we have for $\lambda>0$,
\begin{equation}
\label{e:supmart}
\E\left(e^{\frac{\lambda X_{t+1}}{n}} \mid \cf_t\right) \leq  e^{\frac{\lambda X_{t}}{n}}\left((1-\frac{\beta}{n})\left(1+\frac{N_{*}(t)-1}{n-1}(e^{-\frac{\lambda}{n}}-1)\right)+\frac{\beta}{n} e^{\frac{\lambda D_{\max}(t)}{n}}\right).
\end{equation}
Now for large $n$ on the event that $\{t<\tau_0\}$ we have $\frac{N_*(t)-1}{n-1}\geq \frac{1}{4}$. Taking $\lambda>0$ sufficiently small so that $e^{8\lambda}\leq 1+9\lambda$ and $e^{-\frac{\lambda}{n}}-1\leq -\frac{\lambda}{2n}$. Then  on $\{t< \tau_0\}$, we have for $\beta<\frac{1}{400}$,
\begin{eqnarray}
\label{e:supmart2}
\E\left(e^{\frac{\lambda X_{t+1}}{n}} \mid \cf_t\right) &\leq
&  e^{\frac{\lambda X_{t}}{n}}\left((1-\frac{\beta}{n})\left(1-\frac{1}{4}(e^{-\frac{\lambda}{n}}-1)\right)+\frac{\beta}{n} e^{8\lambda}\right)\nonumber\\
&\leq & e^{\frac{\lambda X_{t}}{n}}\left((1-\frac{\beta}{n})(1-\frac{\lambda}{8n}) +\frac{\beta}{n}(1+9\lambda)\right)\nonumber\\
&\leq & e^{\frac{\lambda X_{t}}{n}}(1-\frac{\lambda}{10n}).
\end{eqnarray}
It follows from above that
\begin{equation}
\label{e:markov}
\P[\tau_0 > t\mid \cf_0]\leq \E\left[e^{\frac{\lambda X_t}{n}}1_{\{\tau_0 >t\}}\mid \cf_{0}\right] \leq
e^{\frac{\lambda X_0}{n}}e^{-\frac{\lambda t}{10n}}\leq e^{\frac{\lambda n}{2}}e^{-\frac{\lambda t}{10 n}}
\end{equation}
since $X_0\leq \frac{n^2}{2}$. Hence we have
\begin{equation*}
\label{e:tau}
\P[\tau_0 > 6n^2]\leq e^{-\frac{\lambda n}{10}}.
\end{equation*}
\end{proof}

\begin{lemma}
\label{l:betasmallincoming}
We have $\tau_1>6n^2$ with high probability.
\end{lemma}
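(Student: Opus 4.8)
The plan is to control how fast a single vertex's degree can grow over the first $6n^2$ steps and then take a union bound over the $n$ vertices. First I would record the initial condition: since $\tilde{G}(0)\sim G(n,\tfrac12)$, the degree of every vertex is a $\mathrm{Bin}(n-1,\tfrac12)$ variable, so $D_{\max}(0)\le n-1$ deterministically (if one wants extra slack, $D_{\max}(0)<0.6n$ with high probability by a Chernoff bound and a union bound over the $n$ vertices). The key structural observation is that a voter-model step does not touch the graph at all, and a rewiring step moves exactly one edge, so it raises the degree of \emph{at most} one vertex — the new endpoint $v'$ — by exactly one (while lowering $\deg(v)$ by one), where in the rewire-to-random model $v'$ is chosen uniformly from $V\setminus\{u\}$. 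Consequently, for any fixed vertex $w$ and any time $t$, on $\cf_t$ the conditional probability that $\deg(w)$ increases at step $t+1$ is at most $\tfrac1{n-1}$.

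Given this domination the rest is a routine large-deviations estimate. The total increase of $\deg(w)$ over the first $6n^2$ steps is stochastically dominated by a $\mathrm{Bin}(6n^2,\tfrac1{n-1})$ random variable, whose mean is $\tfrac{6n^2}{n-1}=(6+o(1))n$. Hence, by a standard Chernoff bound, the probability that $\deg(w)$ gains at least $7n$ edges by time $6n^2$ is at most $e^{-cn}$ for an absolute constant $c>0$ and all $n$ large. A union bound over the $n$ vertices then shows that, with high probability, no vertex gains $7n$ or more edges during $[0,6n^2]$. Combining this with $D_{\max}(0)\le n-1$ and integrality gives $D_{\max}(t)\le (n-1)+(7n-1)<8n$ for every $t\le 6n^2$, that is, $\tau_1>6n^2$.

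There is no genuine obstacle here; the only points needing a little care are bookkeeping. First, one must note that $\deg_t(w)$ equals $\deg_0(w)$ plus the gains minus the losses incurred by $w$ up to time $t$, so it is bounded above by $\deg_0(w)$ plus the total gains up to time $6n^2$ — exactly the quantity the tail bound controls — and losses only help. Second, the rewired edge may be sent to the bond it already occupies (i.e. $v'=v$), but this only decreases the chance of a degree increase, so the $\tfrac1{n-1}$ bound remains valid. For the rewire-to-same model the same argument works verbatim, since there $v'$ is uniform on $C_u(t)\setminus\{u\}$ and hence each vertex is still chosen with conditional probability at most $\tfrac1{n-1}$ (in fact at most $\tfrac1{N_*(t)\wedge\,\cdot\,}$, which only improves the bound).
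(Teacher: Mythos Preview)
Your argument for the rewire-to-random model is correct and follows the same strategy as the paper: bound the number of edges that can be rewired \emph{into} any fixed vertex over $6n^2$ steps by a binomial with mean $O(n)$, apply Chernoff, and take a union bound over vertices. The paper carries this out by explicitly realizing the rewiring targets as an i.i.d.\ uniform sequence $\mathbb{W}=(W_i)$ on $[n]$ with rejection when $W_i$ equals the current root, first showing that at most $13n^2/2$ entries of $\mathbb{W}$ are inspected in $6n^2$ steps and then bounding $N(v)=\#\{i\le 13n^2/2: W_i=v\}\sim\mathrm{Bin}(13n^2/2,1/n)$. Your direct stochastic-domination version is a slightly cleaner implementation of the same idea; the paper's explicit $\mathbb{W}$-construction has the side benefit of being reused later (in \S\ref{s:esplit} and in the rewire-to-same discussion of \S\ref{s:samemod}).

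Your closing remark about the rewire-to-same model, however, is wrong. There $v'$ is uniform on $C_u(t)\setminus\{u\}$, so a vertex $w$ in that set is hit with probability $1/(|C_u(t)|-1)$, which is \emph{at least} $1/(n-1)$, not at most; when the root carries the minority opinion this can be as large as $1/(N_*(t)-1)$, so the bound does not ``only improve.'' The correct fix (sketched by the paper in \S\ref{s:samemod}) is to work on the event $\{t<\tau_2=\tau_*(1/3)\}$, on which $|C_u(t)|-1\ge n/3-1$ and the per-step gain probability is still $O(1/n)$; one then obtains $\tau_1>6n^2$ on $\{\tau_2\ge 6n^2\}$, which is all that is needed in the proof of Theorem~\ref{t:rewiretorandom}(i). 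Since the lemma as stated concerns only the rewire-to-random model, this does not affect the validity of your proof of the lemma itself.
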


\begin{proof}
It is easy to see that the \emph{rewire-to-random} dynamics can be implemented in the following way. Without loss of generality let $V=[n]$. And let $\mathbb{W}=\{W_{i}\}_{i\geq 1}$ be a sequence of i.i.d. random variables with each $W_i$ being uniformly distributed over $\{1,2,\ldots, n\}$. Let us define $L_0=0$, and we define $L_i$ recursively as follows. Let, $v_i$ be the \emph{root} of the $i$-th rewiring move. Then we define $L_{i}=\min\{j>L_{i-1}:W_{j}\neq v_i\}$. Then in the $i$-th rewiring move, we add an edge to the bond $(v_i,W_{L_i})$. The algorithm can be described as follows. For each rewiring move, start inspecting the list $\mathbb{W}$ from the first previously uninspected element upto the first time you find a vertex which is not equal to the root of the current rewiring. Rewire the edge to this vertex. Clearly, in this way the chosen vertex is uniform among all vertices other than $v_i$, and hence this is indeed an implementation of the \emph{rewire-to-random} dynamics.

Now observe that $L_{i+1}-L_{i}$ are i.i.d. $\mbox{Geom}(\frac{n-1}{n})$ variables. It follows by a large deviation estimate that $L_{6n^2}<\frac{13n^2}{2}$ with exponentially high probability. Now for $v\in V$, let
$$N(v)=\#\{i\leq \frac{13n^2}{2}: W_{i}=v\}.$$
Clearly, $N(v)$ is distributed as $\mbox{Bin}(\frac{13n^2}{2},\frac{1}{n})$, and a Chernoff bound implies
$$\P[N(v)\geq 7n] \leq e^{-n/78}.$$
Hence, noting that $D_{\max}(0)\leq n$, we have using a union bound over all the vertices

\begin{equation}
\label{e:tau1}
\P[\tau_1 \leq 6n^2]\leq ne^{-n/78}+\P[L_{6n^2}>\frac{13n^2}{2}].
\end{equation}
This completes the proof of the lemma.
\end{proof}

\begin{lemma}
\label{l:betasmalldensity}
There exists $\beta_0>0$, such that for all $\beta<\beta_0$, we have $\tau_2\geq 6n^2\wedge \tau$ with high probability.
\end{lemma}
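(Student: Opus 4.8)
The plan is to exploit that opinions change only at relabelling (voter-model) steps, and such steps are rare when $\beta$ is small, so neither opinion density can move much over the first $6n^2$ steps. Write $M_t=\sum_{s=0}^{t-1}Z(s)$ for the number of voter-model steps performed among the first $t$ steps. Since the $Z(s)$ are i.i.d. $\mbox{Ber}(\beta/n)$ and independent of the rest of the dynamics, $M_{6n^2}\sim\mbox{Bin}(6n^2,\beta/n)$ has mean $6\beta n$, and a Chernoff bound gives $\P[M_{6n^2}>12\beta n]\le e^{-cn}$ for an absolute constant $c>0$.

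Next, a rewiring step leaves every opinion unchanged, whereas a single voter-model step changes $N_0$ and $N_1$ by exactly $1$ (one vertex flips), so $N_*(t)=\min\{N_0(t),N_1(t)\}$ decreases by at most $1$ at such a step and does not change otherwise. Hence $N_*(t)\ge N_*(0)-M_t\ge N_*(0)-M_{6n^2}$ simultaneously for all $t\le 6n^2$; this monotonicity in $t$ is the only point needing a moment's care, and it is automatic. Finally, since $\{v(0)\}_{v\in V}$ are i.i.d. $\mbox{Ber}(1/2)$, we have $N_1(0)\sim\mbox{Bin}(n,1/2)$, so yet another Chernoff bound gives $N_*(0)\ge n/2-\sqrt{n\log n}$ with high probability.

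Combining the three estimates, on an event of probability $1-o(1)$,
\[
N_*(t)\ge \frac{n}{2}-\sqrt{n\log n}-12\beta n\qquad\text{for all } t\le 6n^2.
\]
Taking $\beta_0$ small enough (any $\beta_0<\tfrac{1}{36}$ works; e.g. $\beta_0=\tfrac{1}{100}$), the right-hand side exceeds $n/3$ for all large $n$ whenever $\beta<\beta_0$, so $\tau_2=\tau_*(\tfrac13)>6n^2\ge 6n^2\wedge\tau$ with high probability, which is precisely the assertion. I do not expect a genuine obstacle here: the lemma reduces to a concentration bound on the number of relabelling steps, together with the elementary observation that $N_*$ can drop by at most one per relabelling step.
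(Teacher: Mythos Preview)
Your argument is correct and follows essentially the same approach as the paper's proof: bound the number of relabelling steps in $[0,6n^2]$ by a Chernoff estimate on $\mbox{Bin}(6n^2,\beta/n)$, then observe that $N_*$ can drop by at most one per relabelling step, so for $\beta$ small enough the density cannot fall to $n/3$. The paper phrases the second step via a simple random walk $Z_{RL(t)}$ and then notes that $\P[\max_{i\le 12\beta n}|Z_i|\ge 3n/20]=0$ for small $\beta$, which is precisely your deterministic bound in disguise; your write-up is slightly more direct. One trivial arithmetic slip: the parenthetical ``any $\beta_0<\tfrac{1}{36}$ works'' should read $\beta_0<\tfrac{1}{72}$ (you need $12\beta<\tfrac{1}{6}$), though your concrete choice $\beta_0=\tfrac{1}{100}$ is fine.
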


\begin{proof} For $t\geq 1$, it is easy to see that $RL(t)$, the number of relabelling moves upto time $t$, is stochastically dominated by a $\mbox{Bin}(6n^2,\frac{\beta}{n})$ variable. On $\{t<\tau\}$, we have that $N_0(t)-N_0(0)$ is distributed as $Z_{RL(t)}$ where $\{Z_i\}_{i\geq 0}$ is a simple symmetric random walk on $\Z$ started from $0$. Using a union bound it follows that

$$\P[\tau _2 < 6n^2\wedge \tau]\leq \P[N_{*}(0)\leq \frac{2n}{5}]+ \P[RL(6n^2)>12\beta n]+\P[\max_{i\leq 12\beta n}|Z_i| \geq \frac{3n}{20}].$$
By choosing $\beta_0$ sufficiently small, the last term in the above inequality is $0$ for all $\beta<\beta_0$. Noticing that $\P[N_{*}(0)<\frac{2n}{5}]=2\P[\mbox{Bin}(n,\frac{1}{2})<\frac{2n}{5}]$ and using Hoeffding inequality to bound the first term and a Chernoff bound on the second term yields

\begin{equation}
\label{e:tau2}
\P[\tau _2 < 6n^2\wedge \tau]\leq 2e^{-2n/25} +  e^{-2\beta n}.
\end{equation}
This completes the proof of the lemma.
\end{proof}

Now we are ready to prove Theorem \ref{t:rewiretorandom}$(i)$.

\begin{proof}[Proof of Theorem \ref{t:rewiretorandom}$(i)$]
From Lemma \ref{l:betasmallmg}, Lemma \ref{l:betasmallincoming} and Lemma \ref{l:betasmalldensity} it follows that with high probability we have  $\{\tau_0\leq 6n^2, \tau_1\wedge \tau_2\geq 6n^2\wedge \tau\}$. It follows that, $\tau\leq 6n^2$ with high probability. The second part of the theorem follows from noting that using a random walk estimate as in Lemma \ref{l:betasmalldensity}, we see that for each $\eta>0$, the probability that the density of the minority opinion drops below $\frac{1}{2}-\eta$, within $6n^2$ steps tends to $0$ as $n\rightarrow\infty$. This completes the proof of the theorem.
\end{proof}

\section{High relabelling rate case: Stopping times}\label{s:stoptime}
\subsection{A time change: \emph{Rewire-to-random-*} dynamics}
For the proof of Theorem \ref{t:rewiretorandom}$(ii)$, we shall consider a time changed variant of \emph{rewire-to-random} dynamics, which we call \emph{rewire-to-random-*} model. This model is same as the \emph{rewire-to-random} model, except that now at time $(t+1)$, instead of choosing a disagreeing edge at random, we choose an edge at random from $G(t)$. If the edge is not disagreeing, then we do nothing. It is clear that \emph{rewite-to-random-*} model is a slowed down version of \emph{rewire-to-random} model. It is also clear that if we prove Theorem \ref{t:rewiretorandom}$(ii)$ for the \emph{rewire-to-random-*} dynamics, then it will imply the same theorem for the \emph{rewire-to-random} dynamics.

{\bf Assumption on the initial condition:}
For this section and the next two, we shall always assume that $G(0)$ satisfies the following conditions.

\begin{enumerate}
\item[(i)] $|E(0)|$, the number of edges in $G(0)$ is in $[\frac{n^2}{4}-n^{3/2}, \frac{n^2}{4}+n^{3/2}]$.
\item[(ii)] $\#\{v\in V: v(0)=0\} \in [\frac{n}{2}-n^{3/4}, \frac{n}{2}+n^{3/4}]$.
\end{enumerate}
Since both the events hold with probability $1-o(1)$, this assumption does not affect any of our results.

Now we move towards proving Theorem \ref{t:rewiretorandom}$(ii)$. Let us fix $\varepsilon'>\varepsilon>0$ for the rest of this paper. Let $\tau_{*}=\tau_{*}(\varepsilon)$, i.e.,
$$\tau_*=\min\{t:\min(N_{0}(t),N_{1}(t))< \varepsilon n\}.$$

\textbf{Parameters:}
Now we define the following stopping times. In the definition of these stopping times and the proofs that follow we use a number of parameters that need to satisfy the following relationships. For a fixed $
\varepsilon$ our parameters satisfy the following inequalities.
$\varepsilon_2<\varepsilon^2/1000$, $\varepsilon_3< \varepsilon_2^{2}/1000$. $\varepsilon_7<\varepsilon_3^{2}/1000$. Fixing these parameters we choose $C_2=2$, $\varepsilon_4< \frac{1}{4\log 10}$, $\delta < \frac{\varepsilon_3}{10000C_2}$. We choose $\varepsilon_{14}< \varepsilon^2/100$. Fixing all these $C_1$ is chosen sufficiently large depending on $\delta$ and $\varepsilon$, here the exact functional dependence is not of interest to us. After fixing all these parameters, $C$ is chosen sufficiently large depending on these. There are also many other parameters used in the proofs which are chosen either sufficiently small or large depending on other parameters, again where the functional dependence is not of importance to us. Finally $\beta$ is taken sufficiently large depending on all the parameters used. Also we shall always take $n$ sufficiently large depending on everything else.

$\bullet$~{\bf Stopping times for large Cuts}:

Let $S$ and $T$ be two disjoints subsets of $V$ with $S\cup T=V$. We denote by $N_{ST}(t)$  the number of edges in $G(t)$ with one endpoint in $S$ and another endpoint in $T$. Define $N_{SS}(t)$ similarly. Also let $N(t)=N$ denote the total number of edges in $G(t)$. Let
\[ %AS:  Should this be N_SS?  Also too many ``let's'' in a row here
K_{ST}(t)=(\dfrac{N_{SS}(t)-\frac{1}{4}|S|^2}{N(t)})^2+ (\dfrac{N_{TT}(t)-\frac{1}{4}|T|^2}{N(t)})^2.
\]
Let $$L(t)=\max_{S,T:\min(|S|,|T|)\geq \varepsilon_2 n}K_{ST}(t).$$
Also let $$L'(t)=\max_{S,T:\min(|S|,|T|)\geq \varepsilon_2 n} \mid \dfrac{N_{ST}(t)-\frac{1}{2}|S||T|}{N(t)} \mid \vee \mid \dfrac{N_{SS}(t)-\frac{1}{4}|S|^2}{N(t)} \mid.$$

Now the two stopping times are defined as follows:
\begin{itemize}
\item The stronger stopping time: $\tau_2=\min\{t: L(t)\geq \varepsilon_3^2\}$.
\item The weaker stopping time: $\tau'_2=\min\{t: L'(t)\geq 2\varepsilon_3\}$.
\end{itemize}

$\bullet$~{\bf Stopping times for individual edge multiplicities}:

For $u,v\in V$, let $M_{uv}(t)$ denote the number of edges in the bond $(u,v)$ in $G(t)$. Let $M(t)=\max_{u\neq v}M_{uv}(t)$.
Now the two stopping times are defined as follows:
\begin{itemize}
\item The stronger stopping time: $\tau_3=\min\{t: M(t)\geq \varepsilon_4\log n\}$.
\item The weaker stopping time: $\tau'_3=\min\{t: M(t)\geq 2\varepsilon_4\log n\}$.
\end{itemize}

$\bullet$~{\bf Stopping times for balanced vertices}:

Let us call a vertex $v$ $\epsilon$-balanced if for all $k$, $\#\{u\in V: M_{uv}=k\}\leq \epsilon 10^{-k}n$. We define the two stopping times as follows.
\begin{itemize}
\item The stronger stopping time: $\tau_4=\min\{t:\exists v\in V ~\text{not}~C_1\text{-balanced in}~G(t)\}$.
\item The weaker stopping time: $\tau'_4=\min\{t:\exists v\in V ~\text{not}~2C_1\text{-balanced in}~G(t)\}$.
\end{itemize}

$\bullet$~{\bf Stopping times for maximum and minimum degrees}:

Let $D_{\max}(t)$ and $D_{\min}(t)$ denote the maximum and minimum degree in $G(t)$ respectively. The stopping times are defined as follows.

\begin{itemize}
\item The stronger stopping time: $\tau_5=\min\{t: D_{\max}(G(t))> (1-\frac{\varepsilon}{2})n~\text{or}~D_{\min}(t)<\frac{\varepsilon n}{2} \}$.
\item The weaker stopping time: $\tau'_5=\min\{t: D_{\max}(t)>C_2n~\text{or}~D_{\min}(G(t))<\frac{\varepsilon n}{4}\}$.
\end{itemize}
%\textsf{The definition here was changed, that could possibly cause changing the constants in proofs at some places.}\\

It is easy to see that for each $i=2,3,4,5$, we have $\tau'_i\leq \tau_i$.\\

Finally we define $\tau_0=\tau_{*}\wedge \tau_2\wedge \tau_3\wedge \tau_4\wedge \tau_5$ and $\tau'_0=\tau_{*}\wedge \tau'_2\wedge \tau'_3\wedge \tau'_4\wedge \tau'_5$.\\

Part $(ii)$ of Theorem \ref{t:rewiretorandom} follows from the next theorem.

\begin{theorem}
\label{t:stoptime}
There exist $\beta_{*}=\beta_{*}(\varepsilon)$ such that for all $\beta>\beta_{*}$, we have for the rewire-to-random-* model $\tau_0\geq \tau_*-n^2$ w.h.p.
\end{theorem}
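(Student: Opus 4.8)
The plan is to run a bootstrap/continuity argument over windows of length of order $n^2$, chaining together the two sets of stopping times. The key structural input, to be established in \S\ref{s:weakbound} and \S\ref{s:strongbound}, is a one-window implication: conditionally on the event that none of the \emph{strong} stopping times $\tau_2,\tau_3,\tau_4,\tau_5$ (nor $\tau_*$) has occurred by some time $t$, during the next window of length $t'=\Theta(n^2)$ the opinion configuration mixes to something close to a product measure, and consequently (a) with high probability none of the \emph{weak} stopping times $\tau'_2,\tau'_3,\tau'_4,\tau'_5$ is reached in $[t,t+t']$, and (b) with high probability none of the \emph{strong} stopping times is reached in $[t,t+t']$ either, \emph{provided} $t+t'\le \tau_*$. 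The first statement is the content of \S\ref{s:weakbound} and gives the ``weak thresholds persist'' step; the second is \S\ref{s:strongbound} and uses the mean-reversion of degrees, cuts and multiplicities that the approximate product measure guarantees. Because for each $i$ we have $\tau'_i\le\tau_i$, once we know that on a window no weak threshold is crossed, the hypothesis of the next window (no strong threshold crossed) is automatically met, so the two statements interlock.

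Granting these one-window estimates, I would argue as follows. Partition $[0,\tau_*]$ into at most $O(n)$ windows of length $t'=\Theta(n^2)$ (there are at most $\tau_*\le n$ relabelling-scale steps$\,\times$ the slowdown, but in the rewire-to-random-* time scale $\tau_*=O(n^3)$, so $O(n)$ windows). Let $A_j$ be the event that at the start of the $j$-th window no strong stopping time has occurred. On $A_j\cap\{(j+1)t'\le\tau_*\}$, the \S\ref{s:strongbound} estimate gives that with probability $1-e^{-\Omega(n^{c})}$ no strong stopping time occurs during window $j$, hence $A_{j+1}$ holds; the initial condition assumption (i)--(ii) guarantees $A_0$. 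Taking a union bound over the $O(n)$ windows, since each window's failure probability is superpolynomially small (this is why the martingale/large-deviation estimates in \S\ref{s:weakbound}--\S\ref{s:strongbound} must be pushed to stretched-exponential strength, not merely $o(1)$), we get that with high probability no strong stopping time occurs before $\tau_*$, up to the last incomplete window; that is, $\tau_0\ge \tau_*-t'\ge\tau_*-n^2$. The choice of $\beta_*$ enters precisely here: $\beta$ must be large enough that within a window of length $\Theta(n^2)$ enough relabelling (voter) steps occur to force the opinions to mix before the geometry can drift, and this threshold depends only on $\varepsilon$ through the parameters $\varepsilon_2,\ldots,\varepsilon_{14},C_1,C_2,\delta$ fixed above.

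I expect the main obstacle to be the mixing/mean-reversion step underlying \S\ref{s:strongbound}: one must show that as long as the \emph{weak} geometric thresholds hold, the distribution of the opinion vector, relative to the (slowly evolving) graph, equilibrates on the $\Theta(n^2)$ time scale to a product-like measure sharply enough that the conditional drift of each monitored quantity — maximum and minimum degree, the cut functional $L(t)$, the maximum edge multiplicity $M(t)$, and the balanced-vertex count — is restoring toward its Erd\H{o}s--R\'enyi value whenever it approaches the strong threshold, with fluctuations controlled by concentration. Turning this into a supermartingale for each quantity (with the correct sign of the drift, uniformly over the window, on the complement of the weak stopping times) and verifying that the strong and weak thresholds are genuinely separated enough — the factors of $2$ and the inequalities among $\varepsilon_2,\varepsilon_3,\varepsilon_7,\delta$ — to absorb the errors is the technical heart. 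The bookkeeping that the slowdown from rewire-to-random to rewire-to-random-* only costs a constant factor in the number of effective steps (so that $\Theta(n^2)$ genuine moves still occur per window, using the degree bounds from $\tau'_5$) is a routine but necessary check.
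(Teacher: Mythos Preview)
Your bootstrap-over-windows strategy is exactly the paper's: chop time into intervals of length $\delta n^2$, invoke the one-window estimates of \S\ref{s:weakbound} (Theorem~\ref{t:weakbound}) and \S\ref{s:strongbound} (Theorems~\ref{t:bigcut}, \ref{t:indmultbound}, \ref{t:degsbound}, \ref{t:multedgesbound}) on each, and union bound. Two quantitative details in your proposal are off, though neither breaks the argument.

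First, you cannot partition $[0,\tau_*]$ and claim $O(n)$ windows via ``$\tau_*=O(n^3)$'': $\tau_*$ can be infinite (absorption may occur with minority fraction above $\varepsilon$), and even when finite, bounding it uses that the disagreeing-edge fraction stays bounded below, which is part of what you are proving. The paper instead shows $\P[\tau_0>n^4]=o(1)$ directly: on $\{t<\tau_0\}$ one has $t<\tau_2$, so a positive fraction of edges are disagreeing, the minority count does a live simple random walk at rate $\Theta(\beta/n)$ per step, and hence reaches $\varepsilon n$ well before $n^4$. This gives $O(n^2/\delta)$ windows, not $O(n)$.

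Second, your assertion that the per-window bounds ``must be pushed to stretched-exponential strength'' is incorrect and stronger than what the paper achieves or needs. The edge-multiplicity estimates (Theorems~\ref{t:indmultbound} and \ref{t:multedgesbound}) yield only $n^{-4}$; the cut and degree estimates give $n^{-14}$. With $O(n^2)$ windows and four stopping times, the union bound $\sum_{i,k}\P(A_{k,i})=O(n^2\cdot n^{-4})=o(1)$ is already enough. Polynomial decay suffices throughout.
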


We shall prove Theorem \ref{t:stoptime} over the next two sections. Before that we show how this implies part $(ii)$ of Theorem \ref{t:rewiretorandom} for \emph{rewire-to-random-*} model.

\begin{proof}[Proof of Theorem \ref{t:rewiretorandom},$(ii)$]
Notice that, it follows from a random walk estimate that $\tau_{*}\geq \tau_*(\varepsilon')+n^2$ with high probability. On $\{\tau_0\geq \tau_{*}(\varepsilon')\}$, we have that $\tau_{*}(\varepsilon')-1<\tau_0$. And hence, in particular, $\tau_{*}(\varepsilon')-1<\tau_2$. Let $S$ be the set of all vertices with the minority opinion at time $\tau_{*}-1$. Since $\varepsilon_2< \varepsilon<\varepsilon'$, we have that $N_{ST}(\tau_{*}-1)\geq \frac{1}{2}|S||T|-2\varepsilon_{3}N(t)>0$, since $2\varepsilon_{3}<\varepsilon_2(1-\varepsilon_{2})$. It then follows that $\tau\geq \tau_{*}(\varepsilon')$ for the \emph{rewire-to-random-*} dynamics. since the \emph{rewire-to-random-*} dynamics is merely a time changed version of the \emph{rewire-to-random} dynamics, the first statement in Theorem \ref{t:rewiretorandom},(ii) follows. The second statement is an obvious consequence after observing that on $\{t<\tau\}$ the number of vertices of one opinion does a simple random walk which takes one step with roughly $\frac{n}{\beta}$ steps of the rewire-to-random dynamics.   
\end{proof}

Before starting with the proof of Theorem \ref{t:stoptime} we prove the following lemma which establishes the connection between the evolving voter model dynamics and our stopping times that we shall exploit extensively.

\begin{lemma}
\label{l:mixing}
Consider the following continuous time random walk on $G(t)$. Let each directed edge ring at rate $\frac{\beta}{2n}$. Whenever an edge rings a walker at the starting point of the edge moves along the edge. Let $\lambda(G(t))$ denote the spectral gap of this Markov chain. Then there exists $\varepsilon_{14}>0$, such that we have, on $\{t<\tau'_2\wedge \tau'_3\wedge \tau'_4\wedge \tau'_5\}$, $\lambda(G(t))\geq \beta\varepsilon_{14}$.
\end{lemma}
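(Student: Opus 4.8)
The statement is a quantitative spectral gap bound for the lazy random walk on $G(t)$ under the assumption that none of the weak stopping times has been reached. I would prove it by establishing a lower bound on the Cheeger (conductance) constant of $G(t)$ and then invoking the discrete Cheeger inequality $\lambda \geq \Phi^2/2$ (up to the obvious rescaling by the edge ring rate $\beta/(2n)$). Concretely, for a set $A \subseteq V$ with $|A| \leq n/2$, the conductance is
\[
\Phi(A) = \frac{\text{(number of edges from $A$ to $A^c$)}}{\sum_{v \in A} \deg(v)},
\]
and I want to show $\Phi(A)$ is bounded below by an absolute constant (depending only on $\varepsilon$ and the parameters) on the event $\{t < \tau'_2 \wedge \tau'_3 \wedge \tau'_4 \wedge \tau'_5\}$.

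**Key steps.** First, split into two regimes according to $|A|$. For \emph{large} sets, i.e.\ $\varepsilon_2 n \leq |A| \leq n/2$: here I take $S = A$, $T = A^c$ in the cut stopping time $\tau'_2$. Since $t < \tau'_2$ we have $|N_{ST}(t) - \tfrac12 |S||T|| \leq 2\varepsilon_3 N(t)$, and with $N(t) = N \approx n^2/4$ and $|S||T| \geq \varepsilon_2 n \cdot \tfrac12 n$, the number of crossing edges is at least $\tfrac12 |S||T| - 2\varepsilon_3 N \geq c_\varepsilon n^2$ because $\varepsilon_3 \ll \varepsilon_2^2$. Meanwhile $t < \tau'_5$ gives $D_{\max}(t) \leq C_2 n$, so $\sum_{v\in A}\deg(v) \leq C_2 n |A| \leq C_2 n^2$. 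Hence $\Phi(A) \geq c_\varepsilon n^2 / (C_2 n^2)$, a constant. For \emph{small} sets, i.e.\ $|A| < \varepsilon_2 n$: here the cut bound does not apply directly, so instead I count edges out of $A$ using $\tau'_5$ from below and the balanced/multiplicity stopping times from above. Each vertex $v \in A$ has $\deg(v) \geq \varepsilon n / 4$ (since $t < \tau'_5$). Of these edges, the number that stay \emph{inside} $A$ is small: a vertex $v$ can have at most $2\varepsilon_4 \log n$ edges to any single other vertex ($t < \tau'_3$), and at most $2C_1 10^{-k} n$ vertices joined to it by multiplicity exactly $k$ ($t < \tau'_4$); so the total multiplicity of edges from $v$ landing in a set of size $< \varepsilon_2 n$ is at most $\sum_k k \cdot \min(2C_1 10^{-k} n, \varepsilon_2 n)$, which, because $\varepsilon_2$ is tiny relative to $C_1$'s scale and the geometric tail $\sum k 10^{-k}$ converges, is at most (say) $\tfrac{1}{100}\varepsilon n/4$ — i.e.\ a small fraction of $\deg(v)$. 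Summing over $v \in A$, at most a small fraction of the half-edges from $A$ land in $A$, so at least $(1 - o(1))\tfrac12 \sum_{v\in A}\deg(v)$ of them cross to $A^c$, giving $\Phi(A) \geq$ constant. Combining the two regimes yields $\Phi(G(t)) \geq c$, and then $\lambda(G(t)) \geq \tfrac{\beta}{2n} \cdot \tfrac{n}{\text{stationary normalization}} \cdot \tfrac{c^2}{2}$; tracking the rate normalization carefully (the chain has edges ringing at rate $\beta/(2n)$, with $\Theta(n)$ edges per vertex so total out-rate $\Theta(\beta)$ per vertex) produces $\lambda(G(t)) \geq \beta \varepsilon_{14}$ for a suitable small constant $\varepsilon_{14}$ depending on $\varepsilon$ and the fixed parameters.

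**Main obstacle.** The delicate point is the small-set regime: I need the internal edge count of a small set $A$ to be a genuinely small fraction of the volume of $A$, and this requires the balanced-vertex control ($\tau'_4$) to interact correctly with the size threshold $\varepsilon_2 n$ — the geometric decay $10^{-k}$ in the definition of balancedness is exactly what makes $\sum_k k\min(2C_1 10^{-k}n, \varepsilon_2 n)$ summable and small, and one must check the parameter hierarchy ($\varepsilon_2 \ll \varepsilon^2$, $C_1$ large, etc.) actually delivers ``small fraction of $\varepsilon n/4$.'' The second subtlety is bookkeeping the continuous-time rate normalization so that the final bound comes out proportional to $\beta$ rather than $\beta/n$ or $\beta n$: since every directed edge rings at rate $\beta/(2n)$ and a typical vertex has $\Theta(n)$ incident edges, the walk moves at $\Theta(\beta)$ rate, and the Cheeger inequality for this (non-lazy, possibly non-reversible-looking but actually reversible w.r.t.\ the degree measure since multi-edges are symmetric) chain gives the stated $\beta \varepsilon_{14}$ lower bound. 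I would also need to double-check reversibility of the described walk — with each directed edge ringing independently at equal rates, the uniform-on-half-edges (degree-proportional) measure is stationary and the chain is reversible, so the standard Cheeger inequality applies.
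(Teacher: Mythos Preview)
Your proposal is correct and follows essentially the same route as the paper: both arguments bound the Cheeger/conductance constant by splitting into the two regimes $|A|\geq \varepsilon_2 n$ (handled via $\tau'_2$) and $|A|<\varepsilon_2 n$ (handled via the minimum-degree bound from $\tau'_5$ together with the balanced-vertex condition from $\tau'_4$), and then apply Cheeger's inequality with the max-degree bound from $\tau'_5$ absorbing the normalization. The only cosmetic differences are that the paper normalizes its Cheeger constant by $2|S|n$ rather than by the volume $\sum_{v\in A}\deg(v)$ (equivalent up to constants since degrees lie in $[\varepsilon n/4, C_2 n]$), and that the paper records the small-set internal-edge bound as $N_{SS}(t)\leq 2|S|n\,\varepsilon_2\log(1/\varepsilon_2)$ rather than your sum $\sum_k k\min(2C_1 10^{-k}n,\varepsilon_2 n)$; both expressions arise from the same packing argument using the geometric decay in the definition of balancedness.
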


\begin{proof}
Let $h(G(t))$ denote the Cheeger constant of the corresponding random walk. We have
$$h(G(t)):=\min_{S,T S\cup T=V, S\cap T=\emptyset, |S|\leq n/2} \frac{N_{ST}(t)\beta}{2|S|n}.$$
Now on $\{t\leq \tau'_2\}$, if $|S|\geq \varepsilon_2 n$, $N_{ST}(t)\geq \frac{1}{2}|S||T|-2\varepsilon_3 N(t)\geq \frac{1}{2}|S||T|-\varepsilon_3 n^2$ and hence
$$\frac{N_{ST}(t)\beta}{2|S|n}\geq \beta (\frac{1}{8}-\frac{\varepsilon_3}{2\varepsilon_2})\geq 2\beta \sqrt{C_2} \sqrt{\varepsilon_{14}}$$
provided
$$\varepsilon_{14}\leq \dfrac{(\frac{1}{8}-\frac{\varepsilon_3}{2\varepsilon_2})^2}{4C_2}.$$
On $\{t\leq \tau'_4 \wedge \tau'_5\}$, if $|S|\leq \varepsilon_2 n$, then
$$N_{SS}(t)+N_{ST}(t)\geq \frac{\varepsilon|S|n}{4}$$
and
$$N_{SS}(t)\leq 2|S|n\varepsilon_2\log \frac{1}{\varepsilon_2}.$$

It follows that

$$\frac{N_{ST}(t)\beta}{2|S|n}\geq \beta (\frac{\varepsilon}{8}-2\varepsilon_2\log \frac{1}{\varepsilon_2})\geq 2\beta \sqrt{C_2} \sqrt{\varepsilon_{14}}$$
provided
$$\varepsilon_{14}\leq  \dfrac{(\frac{\varepsilon}{8}-2\varepsilon_2\log \frac{1}{\varepsilon_2})^2}{4C_2}.$$

Now it follows that on $\{t\leq \tau'_2\wedge \tau'_4\wedge \tau'_5\}$,

$$h(G(t))\geq 2\beta \sqrt{C_2} \sqrt{\varepsilon_{14}}.$$

Now, using Cheeger inequality (Theorem 13.14 of \cite{LPW09}, see \cite{FN02} for the variant used here) we get,

$$\lambda(G(t))\geq \frac{h(G(t))^2}{2\frac{\beta D_{max}(t)}{n}}\geq \frac{h(G(t))^2}{2\beta C_2}\geq \beta \varepsilon_{14}$$
which completes the proof of the lemma.
\end{proof}

%\marginpar{Need reference for Cheeger inequality and Cheeger constant, what is the standard reference here?}

%$$\tau_1=\min\{t: \lambda(G(t))<\beta\varepsilon_{14};$$
%
%$$\tau'_1=\min\{t: \lambda(G(t)) < \frac{\beta \varepsilon_{14}}{2}\}.$$

\section{Estimates for the weak stopping times}
\label{s:weakbound}
In this section we show that provided the process has not reached any of the stronger stopping times by time $t$, i.e., $t<\tau$, then within a small number ($\delta n^2$) steps, the process is unlikely to reach any of the weaker stopping times, i.e.,  $t+\delta n^2<\tau'_0$ with high probability. The general idea is that by time $\delta n^2$, there are not enough rewiring steps to change the graph substantially. We start with the following lemma which controls the fraction of minority opinion vertices in the time interval $\{t+1,t+2,\ldots, t+\delta n^2\}$.

%\textsf{Here we actually need the conditional probabilities. Do we really?}

%AS We should say what the point of this section is an have a sentence before each lemma.  Also we aren't really conditioning on \tau_0 since we only test the strong stopping time at multiples of \delta n^2.  Maybe the strong stopping times should reflect that or we replace F_t with some notation meaning that the strong stopping times conditions hold at time t.

\begin{lemma}%AS Should this be \tau_0?
\label{lbw0}
We have that $\P(\tau_{*}(\frac{4\varepsilon}{5})\leq t+\delta n^2\mid \cf_{t},t<\tau_0)\leq e^{-cn}$ where $c=c(\delta,\beta,\varepsilon)>0$.
\end{lemma}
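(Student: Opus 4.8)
The goal is to show that, starting from time $t$ with $t < \tau_0$ — so in particular the minority opinion density is at least $\varepsilon$ at time $t$ — the minority density cannot drop all the way to $\tfrac{4\varepsilon}{5}$ within the next $\delta n^2$ steps, except with exponentially small probability. The point is that in $\delta n^2$ steps of the rewire-to-random-* dynamics there are simply too few relabelling (voter) steps to move the density by a constant fraction $\tfrac{\varepsilon}{5}$, since each relabelling step changes $N_0$ by exactly $\pm 1$ and relabelling steps are rare (probability $\tfrac{\beta}{n}$ per step).

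The plan is as follows. First I would count relabelling steps: let $RL$ denote the number of relabelling moves in the window $\{t+1,\dots,t+\delta n^2\}$. Conditional on $\cf_t$, $RL$ is stochastically dominated by a $\mathrm{Bin}(\delta n^2, \tfrac{\beta}{n})$ random variable, which has mean $\delta\beta n$; a Chernoff bound gives $\P(RL > 2\delta\beta n \mid \cf_t) \le e^{-c_1 n}$ for some $c_1 = c_1(\delta,\beta) > 0$. Second, on $\{t < \tau_0\}$ we have $N_*(t) \ge \varepsilon n$, so $|N_0(t) - N_1(t)| \le (1-2\varepsilon)n$ is not directly what we need; rather we simply use that the minority count is $\ge \varepsilon n$. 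While $s < \tau$ (no absorption), the increments of $N_0$ at successive relabelling steps are those of a simple symmetric random walk (the root is equally likely to be a $0$ or a $1$ since a disagreeing edge is chosen and then one of its two endpoints uniformly), so $N_0$ performs a lazy simple random walk that takes a $\pm 1$ step exactly at each relabelling move. Hence, on $\{RL \le 2\delta\beta n\}$, for the minority density to fall from $\ge \varepsilon n$ to $\le \tfrac{4\varepsilon}{5} n$ the walk $\{Z_i\}$ must satisfy $\max_{i \le 2\delta\beta n} |Z_i| \ge \tfrac{\varepsilon}{5} n$. Choosing $\delta$ small enough (relative to $\varepsilon$ and $\beta$ — note the parameter hierarchy already forces $\delta$ small), the expected displacement $\sqrt{2\delta\beta n}$ is $o(n)$, and a maximal inequality (reflection / Azuma–Hoeffding for the stopped walk) bounds $\P(\max_{i\le 2\delta\beta n}|Z_i| \ge \tfrac{\varepsilon}{5} n \mid \cf_t)$ by $e^{-c_2 n}$ with $c_2 = c_2(\delta,\beta,\varepsilon) > 0$.

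Putting these together by a union bound yields
\[
\P\big(\tau_*(\tfrac{4\varepsilon}{5}) \le t + \delta n^2 \mid \cf_t, \, t < \tau_0\big) \le e^{-c_1 n} + e^{-c_2 n} \le e^{-cn}
\]
for suitable $c = c(\delta,\beta,\varepsilon) > 0$, as required. One minor point to be careful about is the conditioning on $\{t < \tau_0\}$ versus $\{t < \tau\}$: on $\{t<\tau_0\}$ the density bound $N_*(t) \ge \varepsilon n$ holds, and if absorption occurs within the window the count $N_0$ freezes, which only helps; so the random-walk domination argument applies to $N_0$ run up to time $(t+\delta n^2)\wedge \tau$ and the event in question is unaffected. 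I do not expect a serious obstacle here — this is a routine large-deviations / slow-clock argument — the only thing requiring a little care is making sure the increments of $N_0$ at relabelling steps really are symmetric mean-zero, which follows because the root of each move is a uniformly chosen endpoint of a uniformly chosen disagreeing edge, so it carries opinion $0$ or $1$ with equal probability conditionally.
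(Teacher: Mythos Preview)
Your proposal is correct and follows essentially the same approach as the paper: bound the number of relabelling steps in $[t+1,t+\delta n^2]$ by $2\delta\beta n$ via a Chernoff bound, observe that $N_0$ performs a simple symmetric random walk at relabelling steps (absorbed at the endpoints), and then use a random-walk maximal estimate to rule out a displacement of order $\tfrac{\varepsilon}{5}n$ in $2\delta\beta n$ steps. Your write-up is in fact more careful than the paper's sketch about the conditioning and the absorption issue; note also that you do not actually need $\delta$ small for the random-walk bound, since $\exp(-c\,\varepsilon^2 n/(\delta\beta))$ is exponentially small in $n$ for any fixed $\delta,\beta,\varepsilon>0$.
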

\begin{proof}
It follows from a Chernoff bound that the probability that there are more than $2\delta\beta n$ many relabelling steps in $[t+1,t+\delta n^2]$ is exponentially small in $n$. Note that the number of vertices of a certain opinion does a simple symmetric random walk in absorbed at $0$ or $n$ in the \emph{rewire-to-random} dynamics. The lemma now follows from a random walk estimate by observing that \emph{rewire-to-random-*} dynamics is slower than rewire-to-random dynamics.
\end{proof}

%\textsf{We don't really care about this constant $c$, since it does not feature in any of the constraints.}

The next lemma considers the weaker stopping times for the large cuts.  

\begin{lemma}
\label{l:lbw2}
On the event $\{t<\tau_0\}$, $t+\delta n^2<\tau'_2$.
\end{lemma}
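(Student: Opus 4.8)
The plan is to control how much the quantities $N_{SS}(t)$ and $N_{ST}(t)$ can change over $\delta n^2$ steps, uniformly over all cuts $(S,T)$ with $\min(|S|,|T|)\geq \varepsilon_2 n$, and to show that this change is too small to push $L'$ from below $\varepsilon_3^2$ (which it is on $\{t<\tau_2\}$, since $L(t)<\varepsilon_3^2$ forces each of $|N_{SS}(t)-\tfrac14|S|^2|$ and, via $K_{ST}$, the relevant quantities to be $\leq \varepsilon_3 N(t)$) past the weaker threshold $2\varepsilon_3 N(t)$. The key observation is that only rewiring steps change the graph at all (a relabelling step leaves $\tilde G(t)$ unchanged), and each rewiring step moves exactly one edge, so it changes $N_{SS}$, $N_{TT}$, and $N_{ST}$ each by at most $1$. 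Hence it suffices to bound the number of rewiring steps in the window $[t+1, t+\delta n^2]$.

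First I would invoke a Chernoff bound: conditionally on $\cf_t$, the number of rewiring steps in $[t+1,t+\delta n^2]$ is stochastically dominated by (in fact roughly equal to) a $\mathrm{Bin}(\delta n^2, 1)$-type count of ``active'' steps, but the relevant point is that the number of steps that actually move an edge is at most $\delta n^2$ deterministically, and in fact we can afford the crude bound that at most $\delta n^2$ edges are relocated. So for every cut $(S,T)$,
\[
|N_{SS}(t+\delta n^2) - N_{SS}(t)| \leq \delta n^2, \qquad |N_{ST}(t+\delta n^2) - N_{ST}(t)| \leq \delta n^2.
\]
Since $N(s)=N$ is conserved and $N\geq \tfrac{n^2}{4}-n^{3/2}$ by the assumption on $G(0)$, dividing by $N(s)$ this is a change of at most $\delta n^2/N \leq 5\delta$ for large $n$ in each normalized quantity. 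On $\{t<\tau_0\}\subseteq\{t<\tau_2\}$ we have $L(t)<\varepsilon_3^2$, hence for every admissible cut both $|N_{SS}(t)-\tfrac14|S|^2|/N(t)$ and $|N_{TT}(t)-\tfrac14|T|^2|/N(t)$ are $\leq \varepsilon_3$, and consequently (using $N_{ST}=N-N_{SS}-N_{TT}$ and $|S||T| = \tfrac12(n^2 - |S|^2 - |T|^2)$ together with $N \approx n^2/4$, absorbing the $O(n^{3/2})$ discrepancy into a further $o(1)$ term) also $|N_{ST}(t)-\tfrac12|S||T||/N(t) \leq \varepsilon_3 + o(1)$. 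Therefore at time $t+\delta n^2$ each of these normalized deviations is at most $\varepsilon_3 + 5\delta + o(1) < 2\varepsilon_3$, using $\delta < \varepsilon_3/(10000 C_2) < \varepsilon_3/10$. The same estimate gives $L'(s) < 2\varepsilon_3$ for every $s \in [t, t+\delta n^2]$, i.e. $\tau'_2 > t+\delta n^2$ on $\{t<\tau_0\}$, as claimed. (Note this statement is actually deterministic given $\{t<\tau_0\}$, since the bound on the number of edge relocations is deterministic; no high-probability qualifier is needed, consistent with the lemma as stated.)

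The only mildly delicate point — and the step I would be most careful about — is the arithmetic converting the bound on $L(t)$ into a bound on $|N_{ST}(t)-\tfrac12|S||T||/N(t)$, since this requires using that $N(t) = N$ is within $n^{3/2}$ of $n^2/4$ and that $\tfrac14|S|^2 + \tfrac14|T|^2 + \tfrac12|S||T| = \tfrac14(|S|+|T|)^2 = \tfrac14 n^2$, so the three centered quantities $N_{SS}-\tfrac14|S|^2$, $N_{TT}-\tfrac14|T|^2$, $N_{ST}-\tfrac12|S||T|$ sum to $N - \tfrac14 n^2$, which is $O(n^{3/2}) = o(N)$; hence a bound on two of them gives a bound on the third up to $o(1)$. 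Everything else is the routine ``not enough steps to do damage'' argument, and the uniformity over exponentially many cuts is free because the per-cut bound is deterministic rather than probabilistic.
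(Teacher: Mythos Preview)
Your approach is the same as the paper's: both use the deterministic observation that at most $\delta n^2$ edges can be relocated in $\delta n^2$ steps, so each of $N_{SS}$, $N_{TT}$, $N_{ST}$ changes by at most $\delta n^2$ over the window, and with $N \ge \tfrac{n^2}{4}-n^{3/2}$ this is too small to push the normalized deviations past the weak threshold $2\varepsilon_3$. Your write-up is in fact more detailed than the paper's two-line proof.

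There is one small arithmetic slip you should fix. From $K_{ST}(t) < \varepsilon_3^2$ you correctly get $|N_{SS}(t)-\tfrac14|S|^2|/N < \varepsilon_3$ and $|N_{TT}(t)-\tfrac14|T|^2|/N < \varepsilon_3$, but the sum identity you invoke then only gives
\[
\frac{|N_{ST}(t)-\tfrac12|S||T||}{N} \;\le\; \frac{|N_{SS}(t)-\tfrac14|S|^2|}{N} + \frac{|N_{TT}(t)-\tfrac14|T|^2|}{N} + o(1),
\]
which is at most $2\varepsilon_3 + o(1)$, not $\varepsilon_3 + o(1)$ as you wrote; that would already sit at the threshold and would not survive the additional $5\delta$. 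The easy fix is to use the sharper consequence $|a|+|b| \le \sqrt{2(a^2+b^2)} < \sqrt{2}\,\varepsilon_3$ coming directly from $K_{ST}(t) < \varepsilon_3^2$. Then the $N_{ST}$ deviation at any $t' \in [t,t+\delta n^2]$ is at most $\sqrt{2}\,\varepsilon_3 + 5\delta + o(1) < 2\varepsilon_3$ (using $\delta < \varepsilon_3/(10000C_2)$), and your argument goes through as written.
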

\begin{proof}
Clearly for all $S,T$ that make a partition of $V$ and any $t'$ with $t\leq t'\leq t+\delta n^2$,
$|N_{ST}(t')-N_{ST}(t)|\vee |N_{SS}(t')-N_{SS}(t')|\leq \delta n^2$. It follows from definitions that if $\delta <\frac{\varepsilon_3}{100}$ and $1000\varepsilon_7<\varepsilon$, then $t+\delta n^2\leq \tau'_2$.
%\textsf{need to include the estimate for $N_{SS}$}.
\end{proof}

The next lemma shows the weaker degree estimates continue to hold till time $t+\delta n^2$  with high probability if $t<\tau_0$.

\begin{lemma}
\label{l:lbw5}
We have $\P(t+\delta n^2\geq \tau'_5 \mid \cf_t, t<\tau_0)\leq e^{-cn}$ for some constant $c>0$.
\end{lemma}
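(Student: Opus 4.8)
The plan is to control the degree of a single vertex over the interval $[t+1,t+\delta n^2]$ and then union bound over the $n$ vertices. The structural fact to exploit is that in a rewiring move which removes an edge from the bond $(u,v)$ and places it at $(u,v')$, the root $u$ keeps its degree, the old endpoint $v$ loses one (unless $v'=v$, in which case nothing changes), and the new endpoint $v'$ gains one; voter (relabelling) steps and the ``do nothing'' steps of the \emph{rewire-to-random-*} dynamics leave all degrees unchanged. Since the number of edges is conserved, $N(t')\equiv N=N(0)\in[\tfrac{n^2}{4}-n^{3/2},\tfrac{n^2}{4}+n^{3/2}]$ throughout, so in particular $N\ge n^2/5$ for $n$ large; this removes any circularity coming from the denominator.

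First I would bound the maximum degree. For a fixed $w\in V$, $D_w$ increases at step $s+1$ only if $w$ is the newly chosen endpoint $v'$, and conditionally on $\cf_s$ this has probability at most $\tfrac1{n-1}$, irrespective of the rest of the history. Hence the number of ``gains'' of $w$ over the $\delta n^2$ steps is stochastically dominated by a $\mathrm{Bin}(\delta n^2,\tfrac1{n-1})$ variable (a standard coupling/induction on the number of steps), and a Chernoff bound gives that it is at most $2\delta n$ except with probability $e^{-c_1 n}$. Since $t<\tau_0\le\tau_5$ forces $D_{\max}(t)\le(1-\tfrac\varepsilon2)n$, a union bound over $w$ shows that $D_{\max}(t')\le(1-\tfrac\varepsilon2)n+2\delta n< C_2 n=2n$ for every $t'\in[t,t+\delta n^2]$, except with probability $ne^{-c_1 n}$.

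Next I would bound the minimum degree, where a little care is needed because the per-step chance that $D_w$ decreases depends on the current degree of $w$. The degree $D_w$ decreases at step $s+1$ only if the uniformly chosen edge is incident to $w$ (probability $D_w(s)/N$ in a rewiring step) and $w$ is not picked as the root (probability $\tfrac12$), so the conditional decrease probability is at most $D_w(s)/(2N)$. To break the dependence on $D_w(s)$, introduce the stopping time $\sigma=\inf\{s\ge t: D_{\max}(s)>(1-\tfrac\varepsilon2)n+2\delta n\}$; on $\{s<\sigma\}$ the decrease probability is at most $p/n$ for an absolute constant $p$ (using $D_w(s)\le n$ and $N\ge n^2/5$), so the number of ``losses'' of $w$ up to time $(t+\delta n^2)\wedge\sigma$ is dominated by $\mathrm{Bin}(\delta n^2,p/n)$ and is at most $2p\delta n$ except with probability $e^{-c_2 n}$. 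Combining this with the previous paragraph (which gives $\sigma>t+\delta n^2$ with probability $\ge 1-ne^{-c_1 n}$) and with $t<\tau_0\le\tau_5\Rightarrow D_{\min}(t)\ge\tfrac{\varepsilon n}2$, a union bound over $w$ yields $D_{\min}(t')\ge\tfrac{\varepsilon n}2-2p\delta n>\tfrac{\varepsilon n}4$ for all $t'\in[t,t+\delta n^2]$, the last inequality because $\delta<\varepsilon_3/(10000 C_2)$ makes $\delta$ far smaller than $\varepsilon/(8p)$. Putting the two degree bounds together, $\P(t+\delta n^2\ge\tau'_5\mid\cf_t,\,t<\tau_0)\le ne^{-c_1 n}+ne^{-c_2 n}\le e^{-cn}$.

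The only mildly delicate point is the interplay between the two degree bounds: the estimate controlling $D_{\min}$ is only valid while $D_{\max}$ has not yet grown out of control, which is why I would run the loss argument against the auxiliary stopping time $\sigma$ rather than directly. Everything else is a routine combination of the edge-conservation identity, elementary one-step conditional probability computations, and Chernoff bounds, with all the slack absorbed by the smallness of $\delta$.
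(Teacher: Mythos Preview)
Your proof is correct and follows essentially the same approach as the paper: dominate the number of degree increments at a vertex by $\mathrm{Bin}(\delta n^2,\tfrac{1}{n-1})$, apply Chernoff, and union bound; then control losses and combine with $t<\tau_5$.

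The one (minor) difference is in how the loss rate is bounded for the minimum-degree estimate. You couple the lower bound to the upper bound via the stopping time $\sigma$, using $D_w(s)\le D_{\max}(s)$ to cap the per-step loss probability by a constant times $1/n$. The paper instead exploits the self-bounding nature of the loss rate directly: for $D_v$ to fall from $\ge \varepsilon n/2$ to below $\varepsilon n/4$ there must be an excursion entirely below $\varepsilon n/2$ accumulating at least $\varepsilon n/4$ losses, and during such an excursion the loss probability is at most $\tfrac{D_v(s)}{2N}\le \tfrac{\varepsilon n/2}{2\cdot n^2/5}=\tfrac{5\varepsilon}{4n}$, so the number of relevant losses is dominated by $\mathrm{Bin}(\delta n^2,\tfrac{5\varepsilon}{4n})$. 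The paper's decoupled version needs only $\delta<1/6$, whereas your route needs $\delta$ small compared to $\varepsilon$; since the parameter hierarchy already forces $\delta\ll\varepsilon$, this is harmless. Your stopping-time device is a perfectly acceptable alternative, and your remark that the ``mildly delicate point'' is precisely this interplay is well taken.
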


\begin{proof}
Condition on $\cf_{t}$. For any fixed vertex $v$, the number of times in $[t+1,t+\delta n^2]$ an edge is rewired to $v$ is stochastically dominated by a $\mbox{Bin}(\delta n^2, \frac{1}{n-1})$ variable. By  Chernoff's inequality and a union bound it follows that the probability that any vertex gets more than $2\delta n$ edges is exponentially small in $n$. It follows that with exponentially high probability $\max_{t'\in [t+1,t+\delta n^2]}D_{\max}(t')<C_2n$ provided $C_2> (1-\frac{\varepsilon}{2})+2\delta$.

%For the lower bound, notice that from Lemma \ref{l:lbw2} and Lemma \ref{lbw0} it follows that with exponentially high probability there are at least $\frac{\varepsilon(1-\varepsilon/2)}{4}n^2-\varepsilon_3n^2$ many disagreeing edges at all the times $t'\in [t+1,t+\delta n^2]$.

For the lower bound notice that the probability that a vertex $v$ with degree at least $\varepsilon n/2$ at time $t$ becomes of degree less that $\varepsilon n/4$ in time $[t+1,t+\delta n^2]$ is at most
$$\P\left(\mbox{Bin}\biggl(\delta n^2, \frac{5\varepsilon}{4n}\biggr) \geq \varepsilon n/4\right).$$
%$$\P\left(\mbox{Bin}\biggl(\delta n^2, \frac{\varepsilon}{(\varepsilon(1-\varepsilon/2)-4\varepsilon_3)n}\biggr) \geq \varepsilon n/4\right).$$
The above probability is exponentially small in $n$ by a Chernoff bound provided $6\delta < 1$. Taking a union bound over all the vertices completes the proof of the lemma.

%
%
%  no of times a vertex loses an edge is at most $\frac{2C_2\delta n}{\varepsilon(1-\varepsilon)-2\varepsilon_3}$. Hence we have the lower bound in the lemma provided $$\frac{2C_2\delta}{\varepsilon(1-\varepsilon)-2\varepsilon_3}\leq 1/12.$$
\end{proof}

Now we prove a similar statement for individual edge-multiplicities.

\begin{lemma}
\label{l:lbw3}
We have $\P(t+\delta n^2 \geq  \tau'_3\mid \cf_{t}, t<\tau_0) \leq \frac{1}{n^{20}}$.
\end{lemma}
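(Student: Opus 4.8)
The plan is as follows. On $\{t<\tau_0\}$ we have $t<\tau_3$, so at time $t$ every bond carries fewer than $\varepsilon_4\log n$ edges; since a rewiring move changes the multiplicity of any given bond by at most one, the event $\{t+\delta n^2\ge\tau'_3\}$ can occur only if some bond $(a,b)$ receives at least $\varepsilon_4\log n$ fresh edges during the window $[t+1,t+\delta n^2]$. I would bound the probability of this for a fixed bond and then take a union bound over the at most $n^2$ bonds.

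To bound the per-step chance of adding an edge to $(a,b)$, recall we are running the \emph{rewire-to-random-*} dynamics, in which each step first picks an edge uniformly among all $N$ edges. Such a step can add an edge to $(a,b)$ only if it is a rewiring step whose root is $a$ or $b$ and whose uniformly chosen target is the other endpoint; the target contributes a factor $\frac{1}{n-1}$, and the root equalling, say, $a$ has conditional probability at most $\frac{D_a}{2N}$ (the picked edge is incident to $a$ with probability $D_a/N$, then $a$ is selected as the endpoint with probability $\frac{1}{2}$). This is exactly where the time change pays off: $N\ge n^2/4-n^{3/2}$ is fixed, and by Lemma \ref{l:lbw5}, outside an event of probability $e^{-cn}$ we have $\tau'_5>t+\delta n^2$, so $D_{\max}(s)\le C_2 n$ throughout the window. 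Hence on this good event the conditional probability that a given step of the window adds an edge to $(a,b)$ is at most $p\le C'/n^2$ for a constant $C'=C'(C_2)$. Conditioning step by step on $\cf_{s-1}$ and on not having yet reached $\tau'_5$, the number $A_{ab}$ of edges added to $(a,b)$ in the window (stopped at $\tau'_5$) is stochastically dominated by $\mbox{Bin}(\delta n^2,p)$, whose mean $\mu_0:=\delta n^2 p$ is a constant.

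To finish I would use the Poisson-type tail bound $\P(\mbox{Bin}(\delta n^2,p)\ge k)\le (e\mu_0/k)^k$ with $k=\varepsilon_4\log n$. Since the threshold grows like $\log n$ while $\mu_0$ is a fixed constant, the right side equals $\exp(-\varepsilon_4\log n\,(\log\log n-O(1)))$, which for $n$ large is smaller than any fixed power of $n$, in particular at most $n^{-23}$; a union bound over the $\le n^2$ bonds gives $\le n^{-21}$, and adding back the $e^{-cn}$ probability that $\tau'_5$ was reached early yields a bound below $n^{-20}$. The only step requiring real care is the stochastic domination: because the root distribution at a step depends on the evolving disagreeing-edge configuration, the additions are not independent, so one must compare them to a genuine binomial step by step using the uniform bound $p$. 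Everything else — the degree control from Lemma \ref{l:lbw5} and the binomial tail estimate — is routine.
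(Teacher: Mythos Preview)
Your proposal is correct and follows essentially the same route as the paper's proof: invoke Lemma~\ref{l:lbw5} to control degrees throughout the window, bound the per-step probability of adding an edge to a fixed bond by $O(C_2/n^2)$ via the root/target decomposition, stochastically dominate the number of additions by a binomial with constant mean, apply a Chernoff-type tail bound to beat $n^{-23}$, and union over bonds. Your explicit remark about establishing the binomial domination step by step (since the indicators are not independent) is a point the paper leaves implicit.
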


\begin{proof}
For every bond  $(u,v)$ in $V^{(2)}$, it follows from Lemma \ref{l:lbw5} that with exponentially high probability  $D_u(t')+D_v(t')\leq 2C_2 n$ for all $t'\in [t+1,t+\delta n^2]$ where $D_w(t')$ denotes the degree of the vertex $w$ in $G(t')$. Let $A_{uv}$ denote that event.
%Also let $\mathcal{C}$ denote the event $$\mathcal{C}=\{|\mathcal{E}^{\times}(t')|\geq \frac{\varepsilon(1-\varepsilon/2)}{4}n^2-\varepsilon_3n^2 \forall t'\in [t+1, t+\delta n^2]\}.$$ As argued in the proof of Lemma \ref{l:lbw5}, $\mathcal{C}$ holds with exponentially high probability.
Then, on $A_{uv}$, the number of edges added to the bond $(u,v)$ is stochastically dominated by
$$\mbox{Bin}(\delta n^2, \frac{10C_2}{n(n-1)})\preceq \mbox{Bin}(\delta n^2, \frac{12C_2}{n^2})$$
%$$\mbox{Bin}(\delta n^2, \frac{8C_2}{(\varepsilon(1-\varepsilon/2)-4\varepsilon_3)n(n-1)})\preceq \mbox{Bin}(\delta n^2, \frac{40C_2}{\varepsilon n^2}) $$
variables provided $\varepsilon>16\varepsilon_3$ and $n$ sufficiently large. By a Chernoff bound again, provided $n$ is sufficiently large we get that
$$\P\left( \mbox{Bin}(\delta n^2, \frac{12C_2}{n^2})\geq \varepsilon_4\log n \right)\leq \frac{1}{n^{23}}.$$
Taking a union bound over all bonds completes the proof of the lemma.
\end{proof}

Finally we work out the estimates for number of multi-edges incident to a given vertex.

\begin{lemma}
\label{l:lbw4}
We have $\P(t+\delta n^2\geq  \tau'_4\mid \cf_t, t< \tau_0)\leq \frac{1}{n^{18}}$.
\end{lemma}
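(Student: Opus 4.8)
\textbf{Proof plan for Lemma \ref{l:lbw4}.}
The plan is to mimic the proof of Lemma \ref{l:lbw3}, but now controlling, for a fixed vertex $v$ and a fixed level $k$, the number of vertices $u$ with $M_{uv} = k$, rather than the multiplicity of a single bond. Fix $v$ and condition on $\cf_t$ with $t < \tau_0$; in particular $v$ is $C_1$-balanced in $G(t)$, so $\#\{u : M_{uv}(t) = k\} \le C_1 10^{-k} n$ for every $k$. We want to show that with probability $1 - O(n^{-19})$ (so that a union bound over the $n$ choices of $v$ gives $O(n^{-18})$), $v$ remains $2C_1$-balanced throughout $[t+1, t+\delta n^2]$, i.e. $\#\{u : M_{uv}(t') = k\} \le 2C_1 10^{-k} n$ for all $k$ and all such $t'$.

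First I would dispose of the large-$k$ regime: on the event $\{t+\delta n^2 < \tau'_3\}$ (which holds with probability $1 - n^{-20}$ by Lemma \ref{l:lbw3}) no bond ever reaches multiplicity $2\varepsilon_4\log n$, so the balanced condition is vacuous for $k \ge 2\varepsilon_4 \log n$; hence only $O(\log n)$ values of $k$ need to be handled, and a union bound over them costs only a logarithmic factor. For a fixed $k$ in this range, I would bound the number of $u$ with $M_{uv}(t') = k$ by the initial count $C_1 10^{-k} n$ plus the number of bonds incident to $v$ whose multiplicity was strictly increased during $[t+1, t+\delta n^2]$. The total number of edges rewired with $v$ as the chosen endpoint $v'$ in this window is stochastically dominated by $\mathrm{Bin}(\delta n^2, \tfrac{1}{n-1})$ (a rewiring move attaches its edge to $v$ only if $v$ is the uniformly chosen target), so with exponentially high probability at most $2\delta n$ edges are added with endpoint $v$; each such addition increases at most one bond's multiplicity. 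Choosing $\delta$ small enough that $2\delta \le C_1 10^{-k}$ for all $k \ge 0$ — i.e. $2\delta \le C_1 / 10$, which is consistent with the earlier constraint $\delta < \varepsilon_3/(10000 C_2)$ once $C_1$ is taken large — we get $\#\{u : M_{uv}(t') = k\} \le C_1 10^{-k} n + 2\delta n \le 2 C_1 10^{-k} n$, which is exactly $2C_1$-balancedness at level $k$. (One also needs that edges \emph{leaving} a bond of multiplicity $k+1$ can push it down to $k$; but this only increases the level-$k$ count by bonds that were previously at higher multiplicity, whose total number incident to $v$ is again at most the total additions $2\delta n$ over all time, so the same bound absorbs this contribution.) Combining the exponentially small failure probability for the $\mathrm{Bin}(\delta n^2, \tfrac{1}{n-1})$ bound with the $n^{-20}$ from Lemma \ref{l:lbw3}, then union-bounding over $O(\log n)$ values of $k$ and over $n$ vertices $v$, yields $\P(t+\delta n^2 \ge \tau'_4 \mid \cf_t, t < \tau_0) \le n^{-18}$ for $n$ large.

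The main subtlety — and the step I would be most careful about — is the bookkeeping of which bonds can contribute to the level-$k$ count at time $t'$: a bond contributes if either it started at level $k$ and was never touched, or it was moved (from below or above) to level $k$ at some point and ended there. The clean way to handle this is to observe that every bond that is ever at a level different from its level at time $t$ must have had an edge added to it or removed from it with endpoint $v$, and the number of such "touched" bonds incident to $v$ is at most the total number of rewiring operations with endpoint $v$ (for additions) plus the number with the old root equal to $v$ (for removals) — both of which are $O(\delta n)$ with exponentially high probability by the same $\mathrm{Bin}(\delta n^2, \tfrac{1}{n})$-type domination used in Lemma \ref{l:lbw5}. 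So the level-$k$ count at any $t'$ is at most the initial count plus $O(\delta n)$, uniformly in $t'$ and $k$, and choosing $\delta$ small relative to $C_1$ closes the argument.
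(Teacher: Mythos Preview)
Your argument has a genuine gap in the intermediate-$k$ regime. The crucial inequality you claim, ``$2\delta \le C_1 10^{-k}$ for all $k\ge 0$, i.e.\ $2\delta \le C_1/10$'', is simply false: the right-hand side $C_1 10^{-k}$ shrinks with $k$, so no fixed $\delta>0$ can satisfy $2\delta \le C_1 10^{-k}$ once $k$ exceeds $\log_{10}(C_1/2\delta)$, which is a constant. Concretely, for $k$ near the top of the range $2\varepsilon_4\log n$ (recall $\varepsilon_4<\tfrac{1}{4\log 10}$, so $10^{-k}n \gtrsim \sqrt{n}$), the target $2C_1 10^{-k}n$ is of order $\sqrt{n}$, while your additive slack from ``touched'' bonds is $O(\delta n)$, which swamps it. The same objection kills the last paragraph: the number of bonds incident to $v$ that receive at least one new edge in $[t,t+\delta n^2]$ is genuinely of order $\delta n$, so you cannot hope to absorb it into $C_1 10^{-k}n$ uniformly in $k$.

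What is missing is a \emph{level-by-level} analysis: for a bond starting at level $\ell<k$ to ever reach level $k$, it must receive at least $k-\ell$ new edges, and the probability of that for a single bond is roughly $(O(\delta))^{k-\ell}$. Summing over the $C_1 10^{-\ell}n$ bonds at level $\ell$ and then over $\ell$ gives a bound of the form $\sum_{\ell<k} C_1 10^{-\ell}n \cdot c^{-(k-\ell)}$ with $c>10$, which telescopes to $O(C_1 10^{-k}n)$ as required. This is exactly what the paper does: it decomposes
\[
\max_{t'}\#\{u:M_{uv}(t')=k\}\le C_1 10^{-k}n+\sum_{\ell=0}^{k-1} N(\ell\to k)+\#\{u:M_{uv}(t)>k\},
\]
bounds the last term by $\tfrac19 C_1 10^{-k}n$ trivially, and for each $\ell$ uses a joint stochastic domination of the edge-gain counts $(T_i)$ by an urn model plus Chernoff to show $N(\ell\to k)\le (2.5)^{-(k-\ell)}C_1 10^{-k}n$ with probability $1-e^{-c\sqrt{n}}$. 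Your crude ``total touched bonds'' count throws away precisely the geometric decay in $k-\ell$ that makes the sum close.
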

%I've already mentioned the issue with joint stochastic domination.
\begin{proof}
Condition on $\cf_t$. As in the previous lemma we shall ignore without loss of generality the event with exponentially small probability that for some $t'\in [t+1,t+\delta n^2]$ there exists $u,v\in V$ such that $D_u(t')+D_v(t')>2C_2n$. Fix $v\in V$ and $0<k<2\varepsilon_4\log n$. Let  $N(\ell\rightarrow k)$ denote the number of bonds containing $v$ that gained at least $(k-\ell)$ edges during time $[t+1,t+\delta n^2]$. Clearly,
$$\max_{t'\in [t+1,t+\delta n^2]}\#\{u:M_{uv}(t')=k\}\leq C_1 10^{-k}n+\sum_{\ell=0}^{k-1} N(\ell \rightarrow k)+ \#\{u:M_{uv}(t)>k\}.$$

Clearly the third term in the above sum is bounded by $\frac{1}{9}C_1 10^{-k}n$. To bound the second term fix $\ell$ with $0\leq \ell \leq k-1$. Let $u_1,u_2,\ldots, u_D$ be the set of vertices in $V$ such that $M_{vu_i}(t)=\ell$. Without loss of generality we can assume $D=C_110^{-\ell}n$. Let $T_i$ denote the number of edges gained by the bond $(v,u_i)$ in $[t+1,t+\delta n^2]$. We construct a family of random variables $(Y_1,Y_2,\ldots ,Y_D)$ which jointly stochastically dominates $(T_1,T_2,\ldots, T_D)$ as follows. 
 
Consider $D$ urns all of which are empty to begin with. For $i=\{1,2,\ldots, \delta n^2\}$, at the ith step with probability $\frac{12C_2D}{n^2}$ we choose one of urns uniformly and put a ball in it. Let $(Y_1,Y_2,\ldots, Y_D)$ denote the vector of the number of balls in the urns after $\delta n^2$ steps. Notice that since at each step in the  voter model dynamics only one of the bonds $(v,u_i)$ can gain an edge, and since for each of the bonds, the chance to gain an edge is at most $\frac{12C_2}{n^2}$ as shown in the previous lemma, it follows that $(Y_1,Y_2,\ldots, Y_D)$ indeed jointly  stochastically dominates $(T_1,T_2,\ldots, T_D)$. Observe further that conditional on $\sum_{i=1}^{D}Y_i\leq 24\delta C_2 D$, the joint law of $(Y_1,Y_2,\ldots, Y_D)$ is stochastically dominated by the conditional joint law of $(Y'_1,Y'_2,\ldots, Y'_D)$ conditional on $\sum_{i=1}^{D}Y'_i\geq 24\delta C_2 D$ where $\{Y'_i\}$ is an i.i.d. sequence of $\mbox{Bin}(24\delta C_2 D, \frac{2}{D})$ variables. Let $Z_i$ (resp. $Z'_i$) denote the indicator of $Y_i\geq (k-\ell)$ (resp. $Y'_i\geq (k-\ell)$). Let $\mathcal{C}$ (resp. $\mathcal{C}'$) denote the event that $\sum_{i=1}^{D}Y_i\leq 24\delta C_2 D$ (resp. $\sum_{i=1}^{D}Y'_i\geq 24\delta C_2 D$). It follows therefore that 
\begin{eqnarray*}
\P[\sum_{i=1}^D Z_i\geq 25^{-(k-\ell)}D]& \leq & \P[\mathcal{C}^c]+\P[\sum_{i=1}^D Z_i\geq 25^{-(k-\ell)}D\mid \mathcal{C}\\
&\leq & e^{-4\delta C_2 D}+\P[\sum_{i=1}^D Z'_i\geq 25^{-(k-\ell)}D\mid \mathcal{C}']\\
&\leq & e^{-4\delta C_2 D}+ 2\times \P[\mbox{Bin}(D,q(k-\ell))\geq 25^{-(k-\ell)}D]
\end{eqnarray*} 
where $q(k-\ell)=\mbox{Bin}(24\delta C_2 D, \frac{2}{D})\geq (k-\ell))$. In the above equation we have used Chernoff bounds to deduce $\P[\mathcal{C}^c]\leq e^{-4\delta C_2 D}$ and $\P[\mathcal{C}_1]
\geq \frac{1}{2}$. Using another Chernoff bound we get
$$q(k-\ell)\leq e^{-\frac{1}{2}(k-\ell)\log (\frac{1}{48C_2\delta})(k-\ell)}\leq 30^{-(k-\ell)}$$
provided $1>48000C_2\delta$.
%
%Let $Z_i$ denote the indicator that $Y_i\geq (k-\ell)$. It then follows that $N(\ell \rightarrow k)\preceq \sum_{i=1}^{D}Z_i$. It follows that $N(\ell\rightarrow k)\preceq \mbox{Bin}(C_110^{-\ell}n, q(k-\ell))$ where $q(k-\ell)=\P(\mbox{Bin}(\delta n^2, \frac{12C_2}{n^2})\geq k-\ell)$. A Chernoff bound shows that for all $k,\ell$ with $0\leq \ell <k$,
%$$q(k-\ell)\leq e^{-\frac{1}{2}\log (\frac{1}{12C_2\delta})(k-\ell)}\leq 30^{-(k-\ell)} $$
%provided $1 > 20000C_2 \delta$.

Since $N(\ell\rightarrow k) \preceq \sum Z_i$ it  follows using Chernoff's inequality yet again that

$$\P(N(\ell\rightarrow k)\geq (2.5)^{-(k-\ell)}C_1 10^{-k}n)\leq e^{-\frac{C_1 10^{-k}n}{75}}\leq 2e^{-\frac{C_1\sqrt{n}}{75}}+e^{-4\delta C_1C_2\sqrt{n}/100}$$
since $k<2\varepsilon_4\log n$ provided $2\varepsilon_4 \log 10 <\frac{1}{2}$. Taking a union bound over all $\ell\in [0,k-1]$ gives that
$\P[\max_{t'\in [t+1,t+\delta n^2]}\#\{u:M_{uv}(t')=k\}>2C_110^{-k}n]\leq \frac{1}{n^{25}}$. A union bound over all $k\in [1,2\varepsilon_4\log n]$, Lemma \ref{l:lbw3} and another union bound over all vertices $v$ completes the proof of the lemma.
\end{proof}

All the lemmas in this section together imply the following Theorem.

\begin{theorem}
\label{t:weakbound}
For the rewire-to-random-* model, we have $\P(t+\delta n^2<\tau_{*},t+\delta n^2 \geq \tau'_0\mid \cf_t, t<\tau_0)\leq \frac{1}{n^{17}}$.
\end{theorem}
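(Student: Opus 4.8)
The statement to prove, Theorem~\ref{t:weakbound}, asserts that conditional on $\mathcal{F}_t$ and on the event $\{t<\tau_0\}$, the probability that the process reaches one of the weaker stopping times before time $t+\delta n^2$ while still having $t+\delta n^2 < \tau_*$ is at most $n^{-17}$.

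\bigskip

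\emph{Proof proposal.} The plan is simply to union-bound over the five weaker stopping times using the lemmas already established in this section. Recall $\tau'_0 = \tau_* \wedge \tau'_2 \wedge \tau'_3 \wedge \tau'_4 \wedge \tau'_5$, so on the event $\{t+\delta n^2 < \tau_*\}$ we have
\[
\{t+\delta n^2 \geq \tau'_0\} \subseteq \{t+\delta n^2 \geq \tau'_2\} \cup \{t+\delta n^2 \geq \tau'_3\} \cup \{t+\delta n^2 \geq \tau'_4\} \cup \{t+\delta n^2 \geq \tau'_5\}.
\]
Hence $\P(t+\delta n^2 < \tau_*, \ t+\delta n^2 \geq \tau'_0 \mid \mathcal{F}_t, t<\tau_0)$ is at most the sum of the four probabilities $\P(t+\delta n^2 \geq \tau'_i \mid \mathcal{F}_t, t<\tau_0)$ for $i=2,3,4,5$.

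\bigskip

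First I would invoke Lemma~\ref{l:lbw2}, which shows that on $\{t<\tau_0\}$ one has $t+\delta n^2 < \tau'_2$ deterministically (given the parameter inequalities $\delta < \varepsilon_3/100$ and $1000\varepsilon_7 < \varepsilon$ that are part of our standing parameter choices), so this term contributes $0$. Next, Lemma~\ref{l:lbw5} handles the degree stopping time with a bound $e^{-cn}$; Lemma~\ref{l:lbw3} handles individual edge-multiplicities with a bound $n^{-20}$; and Lemma~\ref{l:lbw4} handles the balanced-vertex condition with a bound $n^{-18}$. Adding these,
\[
\P(t+\delta n^2 < \tau_*, \ t+\delta n^2 \geq \tau'_0 \mid \mathcal{F}_t, t<\tau_0) \leq e^{-cn} + \frac{1}{n^{20}} + \frac{1}{n^{18}},
\]
which for $n$ sufficiently large (as we always assume) is bounded by $n^{-17}$, since the $n^{-18}$ term dominates and the exponential term is negligible. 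This completes the argument.

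\bigskip

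There is essentially no obstacle here: all the content is in Lemmas~\ref{lbw0}--\ref{l:lbw4}, and this theorem is just their bookkeeping conclusion. The only minor points to be careful about are (a) that the event $\{t+\delta n^2 < \tau_*\}$ is exactly what lets us drop the $\tau_*$ component from $\tau'_0$ in the union bound, so no separate estimate on $\tau_*$ (such as Lemma~\ref{lbw0}) is needed for \emph{this} statement, and (b) that the weakest of the four polynomial bounds, namely $n^{-18}$ from Lemma~\ref{l:lbw4}, is what forces the exponent $17$ in the conclusion after absorbing the other terms. One should also note that the conditioning is uniform in $t$, so the same bound holds for every $t$, which is what downstream applications (iterating over $O(n)$ windows of length $\delta n^2$ up to $\tau_*$) will require.
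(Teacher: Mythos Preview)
Your proposal is correct and follows exactly the approach the paper takes: the paper simply states that ``All the lemmas in this section together imply the following Theorem'' and gives no further proof, so your explicit union bound over $\tau'_2,\tau'_3,\tau'_4,\tau'_5$ using Lemmas~\ref{l:lbw2}--\ref{l:lbw4} is precisely the intended argument. Your observation that Lemma~\ref{lbw0} is not needed here (the event $\{t+\delta n^2<\tau_*\}$ already removes the $\tau_*$ component of $\tau'_0$) is also correct.
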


%\begin{lemma}
%\label{l:lbw1}
%On $t\leq \tau$, $t+\delta n^2\leq \tau'_1$ w.h.p..
%\end{lemma}

\section{Estimates for Strong Stopping Times}
\label{s:strongbound}

Our goal in this section is to prove that if by time $t$ the process does not reach any of the strong stopping times, then it is also unlikely that any of the strong stopping times will be hit by time $t+\delta n^2$ unless the minority opinion density drops below $\epsilon$. We shall prove this by separate analysis of each of the stopping times. In the heart of the analysis, in each case, is some estimates on how the opinions of the vertices get mixed in a short (compared to $\delta n^2$) time, which we prove by constructing a coupling of a random walk on the graph $G(t)$ with the evolving voter model dynamics.

\subsection{The Coupling Construction}
\label{s:coupling}
The dual relationship between the random walk and the voter model on a fixed graph $H$ is well known. The distribution of the voter model $X(t)$ started from $X(0)$ can be constructed by running coalescing random walks for time $t$ from each vertex and setting $X_u(t)$ to be the value of $X(0)$ at the location of the walker started from $u$ (c.f. \S 1.7, \cite{Dur06}). We prove that an analogue of that result holds in our setup. We want to say that if $t$ is small so that that graph has not changed sufficiently in the meantime, then the two distributions are not far apart. Now we formally describe the coupling.
\\

{\bf An equivalent implementation of the \emph{rewire-to-random-*} dynamics starting at time $t+1$:}
Let us condition on $\cf_{t}$. Let $N=N(t)$ be the number of edges in $G(t)$. Let the set of all labelled edges be $\{e_1,e_2,\ldots, e_N\}$. For the purpose of this subsection we shall assume that these edges are directed, i.e., the edges have two identifiable ends $e_{i}^{+}$ and $e_{i}^{-}$. When an edge is placed in a bond, we think of it as $e_i^{+}$ being placed at one vertex of the bond and $e_{i}^{-}$ being placed at the other. Suppose $e_{i}$ is placed in the bond $(u,v)$ and $e_i^{+}$ is placed at $u$ and $e_{i}^{-}$ placed at $v$. Then if a rewiring move rewires $e$ with root $u$ to the vertex $w$, then $e_i^{-}$ is placed at $w$. Consider the two independent sequences $\mathbb{RW}=\{RW_i\}_{i\geq 1}$ and $\mathbb{RL}=\{RL_i\}_{i\in \Z}$ where each $RL_i$ are chosen independently and uniformly from the set $E^*=\{e_1^+,e_1^-,e_2^+,e_2^-,\ldots , e_N^{+}, e_N^{-}\}$. The sequence $\mathbb{RW}$ is also an i.i.d. sequence where each, $RW_i=(e^*,v)$, where $e^*$ is picked uniformly from $E^*=\{e_1^+,e_1^-,e_2^+,e_2^-,\ldots , e_N^{+}, e_N^{-}\}$, and $v$ is a uniform random vertex $v$ picked from $V$ independently of $e^*$. Also let $Z_{i}$ be a sequence of i.i.d. $\mbox{Ber}\left({\frac{\beta}{n}}\right)$ variables.

We construct the equivalent formulation of the process as follows.  At time $t+i$, if $Z_i=0$, then choose the first uninspected element from $\mathbb{RW}$, let that element be $(e_j^{+},v)$. If the edge $e_j$ is not disagreeing in $G(t+i-1)$ then do nothing. Otherwise, we try to rewire the edge $e_j$ to $v$, with the root as the vertex having $e_j^{+}$. If this rewiring is not legal, (i.e., $e_j^{+}$ is already placed at $v$ in $G(t+i-1)$), then we choose the next element from $\mathbb{RW}$ and repeat the process. If $Z_i=1$, then we choose the first unused element from $\mathbb{RL}$, let that element be $e_j^{+}$. If the edge $e_j$ is not disagreeing in $G(t+i-1)$ then do nothing. Otherwise, we change the opinion of the vertex containing $e_j^{+}$. Notice that $\mathbb{RL}$ is a bi-infinite sequence but we start inspecting the elements starting from $RL_1$. It is clear from our construction that this indeed is an equivalent implementation of the \emph{rewire-to-random-*} dynamics. Let us run this dynamics starting with $G(t)$ upto $\frac{Cn^2}{\beta}$ steps. Let $\sigma$ and $\omega$ be the number of elements of $\mathbb{RL}$ and $\mathbb{RW}$ that gets inspected in the process. Notice that $\sigma$ is independent of the sequences $\mathbb{RL}$ and $\mathbb{RW}$.

{\bf Coupling with continuous time random walks:}
Now consider the following continuous time random walk on $G(t)$. Each directed edge rings at rate $\frac{\beta}{2n}$. When a directed edge rings a walker at the starting point of the edge moves along the edge. Consider the process where we start with one walker at each vertex of some arbitrary subset $W\subseteq V$, and each walker independently performs the random walk described above. We consider the following coupling between this process and the evolving voter model process described above. To start with each of the random walks are of type $A$. Now choose $T_i$ i.i.d. $exp(2N)$. At the $i$-th step, wait time $T_i$. If $Z_{Cn^2/\beta+1-i}=0$, then do nothing. If $Z_{Cn^2/\beta+1-i}=1$, then look at $RL_{\sigma+1-k(i)}$ where $k(i)=\#\{j\in [Cn^2/\beta+1-i, Cn^2/\beta]: Z_j=1\}$. If there is any walker of  type $A$ at the starting point of that edge, then that walker takes a step along that edge. If any walker of type $A$ takes a step to a vertex where there is already one or more walkers, then all the walkers become of type $B$. Type $B$ walkers do a random walk having the same waiting time distributions but using independent randomness. It is clear that the random walks are independent. Also since $\sigma$ is independent of $\mathbb{RL}$, the random walks also have correct marginals. So this is indeed a coupling as we claimed.

Let $T=\sum_{i=1}^{Cn^2/\beta}T_i$. Let us make the following definitions. For $v\in V$, let $$E^*(v)=\{e^*\in E^*: (e^*,v)=RW_i~\text{for some}~i<\omega\}.$$

\begin{definition}
\label{d:happy}
Consider the random walks described above. At time $s$, we call the walker starting at $v_0$ {\bf happy} if the following conditions are satisfied.
\begin{enumerate}
\item It is of type $A$ at time $s$.
\item None of the edges the walker has traversed have been rewired in the voter model process.
\item Let the path traversed by the walker be  $\{v_0,v_1,\ldots, v_k\}$, with the times of jump being $0=T_0^*<T_1^*<T_2^*<\ldots <T_k^*\leq s$. Then for each $\ell$, and each $e^*\in E^*(v_{\ell})$ there was no ring in $e^*$ in $[T_{\ell}^*, T_{\ell+1}^*]$.
\end{enumerate}
\end{definition}

The following lemma records the most basic useful fact about this coupling construction.

\begin{lemma}
\label{l:couplingbasic1}
Consider the coupling described above. At time $T$, if a walker starting from $v$ is {\bf happy}, then the opinion of the position of that walker at time $T$ is the same as the opinion of $v$ in the \emph{rewire-to-random-*} dynamics at time $t+\frac{Cn^2}{\beta}$.
\end{lemma}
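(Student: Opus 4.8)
The plan is to unpack the coupling construction and trace through what "happy" means, showing that each of the three conditions in Definition~\ref{d:happy} precisely rules out one way the walker's trajectory could fail to track the opinion-propagation in the voter dynamics. The statement is essentially a bookkeeping lemma verifying that the coupling was set up correctly, so the proof should be a careful but elementary induction along the walker's path.

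First I would set up the correspondence between time in the continuous-time random walk run (running for total time $T = \sum_{i=1}^{Cn^2/\beta} T_i$) and the discrete steps of the \emph{rewire-to-random-*} dynamics, using the fact that the $i$-th waiting period $T_i$ corresponds to discrete step $Cn^2/\beta + 1 - i$ and that rings of directed edges are being matched with the relabelling list $\mathbb{RL}$ via the index $k(i)$. The key observation to isolate is the duality heuristic: in the voter model, the opinion of $v_0$ at the final time equals the opinion (at the \emph{initial} time $t$) of whatever vertex the dual/coalescing walk from $v_0$ ends up at, \emph{provided} nothing interferes. So I would argue by induction on the number of jumps $k$ of the walker. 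If $k=0$ the walker has not moved and condition (2) combined with condition (3) (no relevant rings, no rewirings touching edges at $v_0$) guarantees that $v_0$'s opinion was never changed by a relabelling step in the window — because any relabelling step that could change $v_0$'s opinion acts through an edge incident to $v_0$, and such a step in the backward-coupled process would have moved the type-$A$ walker, contradicting $k=0$; one has to check this against condition (3) which controls precisely the $\mathbb{RW}$-rings at edges in $E^*(v_0)$, i.e.\ the edges that got rewired \emph{to} $v_0$.

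For the inductive step, I would peel off the first jump of the walker, at time $T_1^*$, along some edge $e^*$ from $v_0$ to $v_1$. Condition (2) says $e^*$ (and all later edges traversed) were never rewired, so $e^*$ stayed attached to $v_0$ and $v_1$ throughout; condition (1) (type $A$) means the walker's jump was genuinely coupled to a ring of $e^*$ matched with an $\mathbb{RL}$-entry, i.e.\ a relabelling step that set $v_0$'s opinion equal to $v_1$'s opinion \emph{at the moment of that step}; and condition (3) for $\ell = 0$ ensures that in the time interval $[T_0^*, T_1^*]$ no edge in $E^*(v_0)$ rang, so no relabelling step acting on an edge pointed at $v_0$ occurred before this jump — hence $v_0$'s opinion did not change between time $t$ and the matched step except via $v_1$. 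By induction, the walker from $v_1$, with its path $\{v_1,\dots,v_k\}$ and jump times shifted, is happy for the sub-walk, so the opinion at $v_k$ at the end equals the opinion of $v_1$ at time $t$... but I actually need it to equal $v_1$'s opinion at the \emph{matched step time}, which is why condition (3) must be read as controlling the whole path: between consecutive jumps $[T_\ell^*, T_{\ell+1}^*]$ no ring at $E^*(v_\ell)$ occurs, so $v_\ell$'s opinion is not disturbed in that window, and one propagates the equality forward step by step. Chaining these equalities from $v_k$ back to $v_0$ gives that $v_0$'s opinion at time $t + Cn^2/\beta$ equals the value at $v_k$'s location at time $T$.

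The main obstacle will be getting the \emph{direction of time} exactly right in the coupling and making the informal "nothing interferes" statements into precise claims about the enumerated conditions: the walkers run forward in their own clock but are coupled to the relabelling list read in \emph{reverse} order ($RL_{\sigma+1-k(i)}$), and the notion of a walker being "at the starting point of an edge" when that edge rings has to be reconciled with which endpoint of the directed edge $e_j^+$ the voter step acts on. I would be careful to state explicitly that a relabelling move on edge $e_j$ changes the opinion of the vertex carrying $e_j^+$ to that of the other endpoint, and that in the coupled random walk such a ring moves a type-$A$ walker sitting at that same vertex along $e_j$ to the other endpoint — so that a happy walker's successive positions are exactly the successive "opinion sources" for $v_0$. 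Everything else (that type-$B$ conversions, illegal rewirings, and the discarded $\mathbb{RW}$-entries do not affect happy walkers) follows directly from the definition, since happiness forbids precisely those events along the traversed path. No delicate estimates are needed here; the lemma is purely structural.
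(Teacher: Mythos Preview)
Your proposal is correct and is precisely the detailed unpacking of what the paper asserts; the paper's own proof consists of the single sentence ``This follows from the definition of the coupling,'' leaving the inductive verification you sketch entirely to the reader. Your careful tracking of the time-reversal, the role of each of the three happiness conditions, and the edge-end bookkeeping is exactly the argument one would write out to justify that sentence, so there is no substantive difference in approach.
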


\begin{proof}
This follows from the definition of the coupling.
\end{proof}

\begin{lemma}
\label{l:couplingbasic2}
For the coupling described as above, let $T=\sum_{i=1}^{\frac{Cn^2}{\beta}} T_i$. Then, for any $\kappa_3>0$, with exponentially high probability $\frac{2C-\kappa_3}{\beta} \leq T \leq \frac{2C+\kappa_3}{\beta}$.
\end{lemma}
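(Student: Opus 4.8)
The plan is to exploit the fact, noted earlier in the paper, that the number of edges is conserved by the dynamics, so that $N=N(t)=N(0)=|E(0)|$ is \emph{deterministic} given $\cf_t$, and then combine the standing assumption on $G(0)$ with a routine concentration bound for a sum of i.i.d.\ exponentials. Specifically, $T=\sum_{i=1}^{Cn^2/\beta}T_i$ is (after rounding $m:=Cn^2/\beta$ to an integer, which is harmless) a Gamma random variable with shape $m$ and rate $2N$, so $\E[T]=\frac{Cn^2}{2\beta N}$. By conservation of edges and the assumption $|E(0)|\in[\tfrac{n^2}{4}-n^{3/2},\tfrac{n^2}{4}+n^{3/2}]$, we have $N=\tfrac{n^2}{4}(1+O(n^{-1/2}))$, hence $\E[T]=\tfrac{2C}{\beta}(1+O(n^{-1/2}))$; in particular $\bigl|\E[T]-\tfrac{2C}{\beta}\bigr|\le \tfrac{\kappa_3}{2\beta}$ once $n$ is large enough (depending on $\kappa_3,C,\beta$).

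Next I would apply the standard Chernoff bound for the Gamma law: from $\E[e^{\lambda T}]=(1-\lambda/(2N))^{-m}$ for $0<\lambda<2N$ (and the analogous computation for $-T$), one gets that for any fixed $\epsilon\in(0,1)$, $\P\bigl[|T-\E[T]|\ge \epsilon\,\E[T]\bigr]\le 2e^{-c_\epsilon m}$ for some $c_\epsilon>0$. Choosing, say, $\epsilon=\tfrac{\kappa_3}{8C}$ so that $\epsilon\,\E[T]\le \tfrac{\kappa_3}{2\beta}$ for $n$ large, and recalling $m=Cn^2/\beta=\Theta(n^2)$, this yields $\P\bigl[|T-\E[T]|\ge \tfrac{\kappa_3}{2\beta}\bigr]\le e^{-cn^2}$ for a suitable $c=c(\kappa_3,C,\beta)>0$, which is exponentially small. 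Combining with the bound on $|\E[T]-\tfrac{2C}{\beta}|$ via the triangle inequality shows that on the complementary (exponentially likely) event, $\tfrac{2C-\kappa_3}{\beta}\le T\le \tfrac{2C+\kappa_3}{\beta}$, as claimed.

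Since each step is an elementary estimate, there is no genuine obstacle here; the only point worth stating carefully is that $N$ is deterministic given $\cf_t$, so all the randomness in $T$ resides in the i.i.d.\ exponentials and the concentration is immediate. If one wishes to avoid rounding, one can instead sandwich $T$ between Gamma variables with integer shapes $\lfloor m\rfloor$ and $\lceil m\rceil$, which perturbs the mean by only $O(1/n^2)$ and changes nothing.
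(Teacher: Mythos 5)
Your proof is correct and is essentially the same approach the paper invokes (the paper's proof is a single sentence citing "a large deviation estimate for sum of independent exponential variables"); you have simply filled in the routine details, correctly using conservation of edges and the initial-condition assumption to pin $N$ near $n^2/4$ and then applying the Gamma/Chernoff bound.
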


\begin{proof}
The result follows from a large deviation estimate for sum of independent exponential variables.
\end{proof}

\begin{lemma}
\label{l:couplingtv}
Consider the continuous time random walk on $G(t)$ described above starting from an arbitrary vertex $v$. Let $Y(t+t')$ denote the position of the walk at time $t'$. Also let $U$ be a uniformly chosen vertex from $V$.  Then for sufficiently large $C$, we have that on $\{t<\tau'_0\}$, for all $t'\geq \frac{C}{\beta}$,
$$||Y(t+t')-U||_{TV}\leq e^{-\frac{\sqrt{C}}{1000}}.$$
\end{lemma}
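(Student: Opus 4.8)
The plan is to bound the total variation distance between the position of the continuous-time random walk at time $t'$ and the uniform distribution via the spectral gap estimate from Lemma~\ref{l:mixing}, combined with the standard $L^2$-to-TV bound for reversible Markov chains. First I would observe that the continuous-time random walk described here is reversible with respect to the stationary distribution $\pi(\cdot)$ proportional to vertex degree, i.e.\ $\pi(w) = D_w(t)/(2N(t))$; this is immediate since each directed edge rings at the same rate $\frac{\beta}{2n}$. On the event $\{t<\tau'_0\}$ we have $\frac{\varepsilon n}{4} \le D_w(t) \le C_2 n$ for every vertex, so $\pi$ is comparable to uniform up to a constant factor: $\pi_{\min} \ge \frac{\varepsilon}{4C_2 n}$ and $\pi_{\max} \le \frac{4C_2}{\varepsilon n}$, in particular $\|\pi - U\|_{TV} \le C'$ for some constant $C' = C'(\varepsilon, C_2)$ --- wait, that is not small, so I actually need $\pi$ to be close to uniform, which requires the degrees to be close to $n/2$, not merely bounded above and below. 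The correct route is to first control $\|Y(t+t') - \pi\|_{TV}$ via mixing, and then separately bound $\|\pi - U\|_{TV}$ using the stronger degree information: on $\{t < \tau'_0\}$ we are also below $\tau'_4$ and the cut stopping time, which via the balanced-vertex condition and the cut estimate forces $\sum_w |D_w(t) - \tfrac{n}{2}|$ to be small; actually the cleanest sufficient input is that $N_{SS}$-type quantities are close to their Erd\H os--R\'enyi values, giving $D_w(t) = \tfrac{n}{2}(1+o(1))$ for all but a negligible fraction of vertices, whence $\|\pi - U\|_{TV} \le e^{-\sqrt{C}/1000}/2$ once $C$ is large (here I use that the relevant $\varepsilon_i$ parameters were chosen small). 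I would state this as a preliminary claim about $\pi$, citing the stopping-time definitions.

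Next, for the mixing part, I would invoke the standard bound for continuous-time reversible chains: writing $p^{t'}_v(\cdot)$ for the law of $Y(t+t')$ started at $v$,
\[
4\,\|p^{t'}_v - \pi\|_{TV}^2 \le \frac{1 - \pi(v)}{\pi(v)}\, e^{-2\lambda(G(t)) t'} \le \frac{1}{\pi_{\min}}\, e^{-2\lambda(G(t)) t'}.
\]
By Lemma~\ref{l:mixing}, on $\{t < \tau'_0\} \subseteq \{t < \tau'_2 \wedge \tau'_3 \wedge \tau'_4 \wedge \tau'_5\}$ we have $\lambda(G(t)) \ge \beta \varepsilon_{14}$, so for $t' \ge \frac{C}{\beta}$ the exponent satisfies $2\lambda(G(t)) t' \ge 2\varepsilon_{14} C$. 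Since $\pi_{\min} \ge \frac{\varepsilon}{4C_2 n} \ge n^{-2}$ for large $n$, we get
\[
\|p^{t'}_v - \pi\|_{TV} \le \frac{1}{2}\, n\, e^{-\varepsilon_{14} C} \le \frac{1}{2}\, e^{-\varepsilon_{14} C / 2}
\]
for $n$ large, and choosing $C$ large enough that $\varepsilon_{14} C/2 \ge \sqrt{C}/500$ (true for $C \ge C(\varepsilon_{14})$) and combining with the bound on $\|\pi - U\|_{TV}$ via the triangle inequality yields $\|Y(t+t') - U\|_{TV} \le e^{-\sqrt{C}/1000}$. Note the $n$ prefactor is harmless precisely because the spectral gap is $\Theta(\beta)$ rather than $\Theta(\beta/n)$ --- a constant number $C/\beta$ of continuous-time units already suffices for mixing, which is the whole point of the rate-$\frac{\beta}{2n}$ per directed edge scaling.

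The main obstacle I anticipate is not the mixing bound itself but justifying rigorously that the stationary measure $\pi$ is within $e^{-\sqrt{C}/1000}$ of uniform on the event $\{t < \tau'_0\}$. The weaker stopping times only give $D_w(t) \in [\frac{\varepsilon n}{4}, C_2 n]$, which is far too crude; one must extract near-regularity of the degree sequence from the large-cut stopping time $\tau'_2$ (applied to singletons unioned into sets, or via the observation that $\sum_{w \in S} D_w = 2N_{SS}(t) + N_{ST}(t)$, which $\tau'_2$ pins down to $\tfrac{1}{4}|S|^2 + \tfrac12|S||T| + O(\varepsilon_3 n^2) = \tfrac{n}{2}|S| + O(\varepsilon_3 n^2)$ for all large $S$), forcing all but $O(\varepsilon_3 n)$ vertices to have degree $\tfrac{n}{2}(1 + O(\sqrt{\varepsilon_3}))$, and then handling the exceptional vertices (which carry $\pi$-mass $O(\sqrt{\varepsilon_3})$) directly. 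Since $\varepsilon_3$ was chosen small (in fact $\varepsilon_3 < \varepsilon_2^2/1000$ and everything downstream can be taken as small as needed relative to $C$), one gets $\|\pi - U\|_{TV} = O(\varepsilon_3^{1/4}) \le \frac12 e^{-\sqrt{C}/1000}$ after possibly shrinking $\varepsilon_3$; this is consistent with the paper's convention that the small parameters are fixed first and $C$ last, so one would instead phrase it as: $C$ is large, then observe the TV bound needed from $\pi$ is a fixed small constant, which the already-chosen $\varepsilon_3$ satisfies. I would flag this dependency explicitly and otherwise treat the degree-regularity deduction as a short computation from the definition of $\tau'_2$.
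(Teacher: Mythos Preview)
Your proposal contains two errors, one a misconception and one a genuine gap.

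First, the stationary distribution of this continuous-time walk is exactly the uniform distribution $U$, not proportional to degree. Each directed edge rings at the same rate $\beta/(2n)$, so the generator has $Q_{uv}=M_{uv}(t)\beta/(2n)$; since $M_{uv}=M_{vu}$, detailed balance $\pi(u)Q_{uv}=\pi(v)Q_{vu}$ holds with $\pi$ uniform (equivalently: the jump chain has stationary $\propto$ degree, but the mean holding time at $u$ is $\propto 1/D_u$, and these cancel). This is also the stationary measure used in Lemma~\ref{l:mixing}, as you can check from the form of the Cheeger constant there. Your whole discussion of bounding $\|\pi-U\|_{TV}$ via the cut stopping time is therefore unnecessary.

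Second, and this is the real gap, your mixing bound carries a polynomial-in-$n$ prefactor that a constant $C$ cannot absorb. From $\pi_{\min}\gtrsim 1/n$ you get $\|p^{t'}_v-\pi\|_{TV}\lesssim \sqrt{n}\,e^{-\lambda t'}\le \sqrt{n}\,e^{-\varepsilon_{14}C}$. But $C$ is a fixed constant independent of $n$, so this tends to infinity as $n\to\infty$; your claimed inequality $n\,e^{-\varepsilon_{14}C}\le e^{-\varepsilon_{14}C/2}$ ``for $n$ large'' is simply false. The underlying problem is that $\|\delta_v-U\|_2\asymp\sqrt{n}$, and a bounded amount of spectral contraction cannot kill a factor growing with $n$. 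The paper's remedy is to wait for the first jump: after time $T_1$ the walker sits at $u$ with probability $M_{uv}/D_v$, and on $\{t<\tau'_4\}$ the balanced-vertex condition $\#\{u:M_{uv}=k\}\le 2C_1 10^{-k}n$ forces $\|Y(t+T_1)-U\|_2^2\le 50C_1$, a constant independent of $n$. One then applies $L^2$ contraction for the remaining time (at least $t'/2$) and bounds $\P(T_1>t'/2)$ separately. This is exactly why $\tau'_4$ is among the stopping times: it guarantees that one step already disperses the walk enough to bring the $L^2$ distance down to $O(1)$.
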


\begin{proof}
We know from Lemma \ref{l:mixing} that on $\{t<\tau'_0\}$, $\lambda:=\lambda(G(t))\geq \beta \varepsilon_{14}$.
Now let $T_1$ be the time of the first jump of the walker initially at $v$. Then we know by $L^2$ Contraction Lemma (Lemma 3.26, \cite{AF02})
$$||Y(t+T_1+t^*)-U||_2 \leq e^{-t^*\lambda}||Y(t+T_1)-U||_2.$$

Now we know that on $\{t<\tau'_0\}$,

\begin{eqnarray*}
||Y(t+T_1)-U||_2^{2} &\leq & \sum _{k=1}^{\infty} \frac{9k^2}{n^2}2C_110^{-k}n^2 \\ &\leq &  18C_1\sum_{k=1}^{\infty} k^210^{-k}\\
&\leq & 50C_1.
\end{eqnarray*}

Now observe that for $t'\geq \frac{C}{\beta}$,

\begin{eqnarray*}
||Y(t+t')-U||_1 &\leq & 2||Y(t+T+t'/2)-U||_1 +\P(T_1\geq t'/2)\\
&\leq & 2 e^{-\frac{t'\lambda}{2}}||Y(t+T+t'/2)-U||_2+ \P(T_1\geq t'/2)\\
&\leq & 10\sqrt{2C_1}e^{-t'\beta \varepsilon_{14}/2}+ e^{-\frac{C\varepsilon}{16}}\\
&\leq & e^{-C\varepsilon_{14}/20}+e^{-\frac{3C\varepsilon}{40}}.
\end{eqnarray*}
The lemma follows by taking $C$ sufficiently large.
\end{proof}

Now we need the following properties of the coupling described above.

\begin{lemma}
\label{l:nonintersection}
Let $\kappa>0$ be fixed. Let $v_1,v_2,\ldots, v_{\varepsilon_{13}n}$ be given vertices in $V$. From each of the vertices we run independent discrete time simple random walks in $G(t)$ upto $20C$ steps. Then, on $\{t<\tau'_0\}$, we have for $\varepsilon_{13}$ small enough, with exponentially high probability there exists at least $(1-\kappa)\varepsilon_{13}n$ vertices among these such that the paths of random walks started from these vertices do not intersect.
\end{lemma}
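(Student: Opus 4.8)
The plan is to show that a typical pair of independent random walks, each of length $20C$ (a constant number of steps), started from two of the given vertices, have an $o(1)$ chance of intersecting, and then to upgrade this pairwise statement to a statement about a $(1-\kappa)$-fraction of the $\varepsilon_{13}n$ walks via a first-moment/deletion argument together with a concentration bound. First I would fix the graph $G(t)$ on the event $\{t<\tau'_0\}$ and record the structural facts that hold there: by $\tau'_5$ the degrees satisfy $\frac{\varepsilon n}{4}\le D_w(t)\le C_2 n$ for every vertex, and by $\tau'_4$ every vertex is $2C_1$-balanced, so that from any vertex the one-step transition probability to any fixed vertex is $O(1/n)$ (indeed the number of neighbours joined by multiplicity $k$ is at most $2C_1 10^{-k} n$, so the chance a single step lands on a prescribed vertex $w$ is at most $M_{vw}(t)/D_v(t) \le \frac{2\varepsilon_4 \log n}{\varepsilon n/4} = O(\frac{\log n}{n})$, and summing the $k$-weighted contributions the chance of landing in any prescribed set of size $m$ is $O(m/n)$ uniformly).

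Next, using these bounds, I would estimate the intersection probability of two independent walks $(X_0,\dots,X_{20C})$ and $(Y_0,\dots,Y_{20C})$ started from $v_i$ and $v_j$. Conditioned on the full trajectory of the $X$-walk, which visits at most $20C+1$ vertices, the probability that the $Y$-walk ever hits one of those vertices is, by a union bound over the $\le 20C$ steps of $Y$ and the bound above, at most $(20C+1)\cdot(20C)\cdot O(1/n) = O(C^2/n)$. Hence $\P[\text{walks from }v_i,v_j\text{ intersect}\mid t<\tau'_0] \le K C^2/n$ for an absolute constant $K$. Now define, for the $\varepsilon_{13}n$ walks, the ``bad'' pairs $\{i,j\}$ whose walks intersect; the expected number of bad pairs is at most $\binom{\varepsilon_{13}n}{2}\cdot \frac{KC^2}{n} \le \frac{KC^2\varepsilon_{13}^2 n}{2}$. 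Choosing $\varepsilon_{13}$ small enough (depending on $\kappa$, $C$) that $\frac{KC^2\varepsilon_{13}}{2} \le \frac{\kappa^2}{8}$, say, makes this at most $\frac{\kappa^2}{8}\varepsilon_{13}n$. A vertex is declared bad if its walk intersects the walk of at least $\frac{\kappa}{2}\cdot\varepsilon_{13}n / (\varepsilon_{13} n) \cdot$ — more precisely, if it participates in at least one bad pair — and by deleting one endpoint from each bad pair we remove at most (number of bad pairs) vertices; but to get a clean $(1-\kappa)\varepsilon_{13}n$ survivors I would instead argue: Markov's inequality gives that with probability $\ge 1-\frac{\kappa}{4}$ the number of bad pairs is at most $\frac{\kappa}{2}\varepsilon_{13}n$, and deleting one vertex per bad pair leaves a set of at least $(1-\frac{\kappa}{2})\varepsilon_{13}n \ge (1-\kappa)\varepsilon_{13}n$ vertices whose walks are pairwise non-intersecting.

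To obtain the claimed \emph{exponentially} high probability rather than merely $1-o(1)$, I would replace the crude Markov step with a concentration argument: the number of bad pairs is a function of the $\varepsilon_{13}n$ independent walk-trajectories in which changing one trajectory changes the count by at most $\varepsilon_{13}n$, so by the bounded-differences (Azuma--McDiarmid) inequality the number of bad pairs is, with probability $1 - \exp(-c\varepsilon_{13}^2 n)$, within $O(\varepsilon_{13}^{3/2}n^{3/2}\cdot\text{(something)})$ of its mean — this is too weak because each coordinate has large influence. The cleaner route, which I would actually take, is to expose the walks \emph{sequentially}: let $I_m$ be the indicator that the $m$-th walk intersects one of the first $m-1$ walks; then $\P[I_m = 1 \mid \text{first }m-1\text{ walks}] \le (m-1)(20C+1)(20C) \cdot O(1/n) \le \frac{\kappa}{8}$ uniformly over $m \le \varepsilon_{13}n$ once $\varepsilon_{13}$ is small enough, so $\sum_m I_m$ is stochastically dominated by a $\mathrm{Bin}(\varepsilon_{13}n, \frac{\kappa}{8})$ variable, and a Chernoff bound gives $\P[\sum_m I_m \ge \frac{\kappa}{4}\varepsilon_{13}n \mid t<\tau'_0] \le e^{-c(\kappa)\varepsilon_{13} n}$. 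On the complementary event, discarding the at most $\frac{\kappa}{4}\varepsilon_{13}n$ walks that intersect an earlier one leaves $\ge(1-\kappa)\varepsilon_{13}n$ pairwise non-intersecting walks, which is exactly the assertion.

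The main obstacle is making the per-step hitting bound ``a single step lands in a prescribed set $S$ with probability $O(|S|/n)$, uniformly over vertices and over $\{t<\tau'_0\}$'' fully rigorous, since it must simultaneously use the degree lower bound from $\tau'_5$ and the balancedness (multiplicity profile) control from $\tau'_4$ — in particular one must handle the contribution of high-multiplicity bonds, which is where the geometric decay $10^{-k}$ in the definition of $C_1$-balanced is essential; everything after that is a routine sequential-exposure union bound plus a Chernoff estimate.
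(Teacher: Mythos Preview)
Your sequential-exposure strategy is essentially the paper's, but there is a real gap in the claimed bound $\P[I_m=1\mid\mathcal{F}_{m-1}]\le\kappa/8$. The $m$-th walk starts deterministically at $v_m$, so if the trajectories of the first $m-1$ walks happen to pass through $v_m$ then $I_m=1$ with conditional probability $1$, and the stochastic domination by a binomial variable is false. You cannot absorb this into your union bound over steps $1,\ldots,20C$, because the event $\{v_m\in C_m\}$ is $\mathcal{F}_{m-1}$-measurable and has nothing to do with the randomness of the $m$-th walk.

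The paper handles exactly this by splitting the bad indices into two sets: $D_1$ consists of those starting points $v_i$ that are visited by some other walk (controlled by bounding, via Azuma, the total number of visits any of the walks makes to the \emph{fixed} target set $V^*=\{v_1,\ldots,v_{\varepsilon_{13}n}\}$), and $D_2$ consists of those $i$ whose walk meets an earlier walk's trace at a point \emph{other than} $v_i$ (controlled by your sequential argument, which is now legitimate because step $0$ is excluded and the conditional bound really is uniformly small). Removing $D_1\cup D_2$ leaves pairwise non-intersecting walks. Your argument is precisely the $D_2$ half; you still need the $D_1$ half. A minor side remark: the balancedness condition actually gives a one-step hitting probability into a set of size $s$ of order $(s/n)\log(n/s)$ rather than $s/n$ (the worst case packs the target set into the high-multiplicity neighbours of the current vertex), so your ``$O(m/n)$'' carries an extra logarithmic factor; this is harmless since $\varepsilon_{13}\log(1/\varepsilon_{13})\to 0$, but it is worth stating correctly.
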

%AS:  Should A_ij be Z_ij?
\begin{proof}
Condition on $\{\cf_t, t<\tau'_0\}$. For $i=1,2,\ldots, \varepsilon_{13}n$, and $j=1,2,\ldots,4C$, let $Z_{ij}$ denote the indicator of the event that the random walk started from $i$ hits the set $V^*=\{v_1,v_2, \ldots, v_{\varepsilon_{13}n}\}$ in step $j$. Let $\cf_{i,j}$ denote the filtration generated by the random walk paths of the walks started from $v_1,v_2,\ldots, v_{i-1}$ and the first $(j-1)$ steps of the random walk started from $v_i$. Now notice that on $\{t<\tau'_0\}$, for any vertex $v$, the number of edges from $v$ to $V^*$ in $G(t)$ is at most
$$ \sum_{j: \sum_{k\geq j}{2C_110^{-k}\leq \varepsilon_{13}}} 2C_1j10^{-j}n\leq \sum_{j\geq \log(\frac{4C_1}{\varepsilon_{13}})} 2C_1j10^{-j}n\leq 25\varepsilon_{13}\log(\frac{4C_1}{\varepsilon_{13}})n \leq \frac{\varepsilon \kappa n}{400C}$$
for $\varepsilon_{13}$ sufficiently small. Since the degree of each vertex is at least $\frac{\varepsilon n}{4}$, it follows that
$$E[Z_{ij}\mid \cf_{i,j-1}]\leq \frac{\kappa}{100C}.$$
It follows from Azuma's inequality that

\begin{equation}
\label{e:azuma3}
\P\left(\sum_{i=1}^{\varepsilon_{13}n}\sum_{j=1}^{20C}\left[Z_{ij} -E[Z_{ij}\mid \cf_{i,j-1}]\right]\geq \frac{\kappa\varepsilon_{13}}{4}n\right) \leq e^{-\kappa^2\varepsilon_{13}n/640C}.
\end{equation}
Hence
\begin{equation}
\label{e:azuma4}
\P\left(\sum_{i=1}^{\varepsilon_{13}n}\sum_{j=1}^{20C} Z_{ij} \geq \frac{\kappa\varepsilon_{13}}{2}n\right) \leq e^{-\kappa^2\varepsilon_{13}n/640C}.
\end{equation}

Now let $B_i$ denote the event that the random walk starting from $v_i$ intersects a random walk started from $v_j$ for some $j<i$ at a point other than $v_i$. Let $Y_i=1_{B_i}$. Let $\mathcal{G}_i$ denote the filtration generated by the paths of random walks started from $v_1,v_2,\ldots ,v_i$. Let $C_i$ be the set of vertices visited by the first $(i-1)$ random walks except possibly $v_i$. Clearly $|C_i|\leq 25C\varepsilon_{13}n$. Arguing as before, the number of edges from any vertex $v$ to $C_i$ is at most $625C\varepsilon_{13}\log(\frac{4C_1}{25C\varepsilon_{13}})n\leq \frac{\kappa \varepsilon n}{400C}$ for $\varepsilon_{13}$ sufficiently small. By a union bound over the steps of the random walk started from $v_i$ it follows that $\E(Y_i\mid \mathcal{G}_{i-1})\leq \frac{\kappa}{4}$. Using Azuma's inequality as before we get

\begin{equation}
\label{e:azuma5}
\P\left(\sum_{i=1}^{\varepsilon_{13}n}\left[Y_i -E[Y_i\mid \mathcal{G}_{i-1}]\right]\geq \frac{\delta\varepsilon_{13}}{4}n\right) \leq e^{-\kappa^2\varepsilon_{13}n/32}.
\end{equation}
Hence
\begin{equation}
\label{e:azuma6}
\P\left(\sum_{i=1}^{\varepsilon_{13}n} Y_i \geq \frac{\kappa\varepsilon_{13}}{2}n\right) \leq e^{-\kappa^2\varepsilon_{13}n/32}.
\end{equation}

Now let $D_1$ be the set of vertices $v_i$ such that $v_i$ is hit by the random walk started from some $v_j$, $j\neq i$. Clearly
$$|D_1|\leq \sum_{i=1}^{\varepsilon_{13}n}\sum_{j=1}^{20C}Z_{ij}.$$
Also let $D_2$ be the set of vertices $v_i$ such that the random walk started from $v_i$ intersects a random walk started from $v_j$ for some $j<i$. Clearly
$$|D_2|\leq \sum_{i=1}^{\varepsilon_{13}n} Y_i.$$

It is also clear that for $i,j\in V^*\setminus (D_1\cup D_2)$, random walks started from $v_i$ and $v_j$ do not intersect. The lemma now follows from (\ref{e:azuma4}) and (\ref{e:azuma6}).
\end{proof}

\begin{lemma}
\label{l:notmanysteps}
Let $v_1,v_2,\ldots, v_{\varepsilon_{13}n}$ be given vertices in $V$. From each of the vertices we run independent continuous time random walks in $G(t)$ as described in the coupling upto time $\frac{10C}{\beta}$. Let $Z_1$ denote the number of walks that take more than $20C$ steps in this time. Then, $\P[Z_1\geq \kappa \varepsilon_{13}n\mid \cf_t, t<\tau'_0]$ is exponentially small in $n$ for $C$ sufficiently large .
\end{lemma}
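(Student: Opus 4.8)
The plan is to stochastically dominate the number of steps taken by each of the $\varepsilon_{13}n$ walks by a Poisson random variable of mean $O(C)$, note that these counts are independent across walks, and then finish with a Chernoff bound over the walks.

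First I would condition on $\{\cf_t,\, t<\tau'_0\}$ and use that $\tau'_0\le \tau'_5$, so that every vertex of $G(t)$ has degree at most $C_2 n=2n$. Since the walks here are run on the \emph{fixed} graph $G(t)$, a walker sitting at a vertex $v$ jumps at rate $\frac{D_v\beta}{2n}\le \frac{C_2\beta}{2}=\beta$, because each of its $D_v$ outgoing directed edges rings at rate $\frac{\beta}{2n}$. Consequently the number $S_i$ of steps taken by the $i$-th walk during an interval of length $\frac{10C}{\beta}$ is stochastically dominated by $\mbox{Poisson}\!\left(\beta\cdot\frac{10C}{\beta}\right)=\mbox{Poisson}(10C)$; moreover, as in the construction of the coupling the walks are independent, so the $S_i$ are independent.

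Next I would apply the standard Poisson Chernoff bound:
\[
\P[S_i>20C]\ \le\ \P[\mbox{Poisson}(10C)\ge 20C]\ \le\ e^{-10C(2\log 2-1)}\ =:\ p,
\]
and observe that $p\to 0$ as $C\to\infty$ while $\kappa$ and $\varepsilon_{13}$ stay fixed. Writing $Z_1=\sum_{i=1}^{\varepsilon_{13}n}1_{\{S_i>20C\}}$, we get $Z_1\preceq \mbox{Bin}(\varepsilon_{13}n,p)$. Choosing $C$ large enough that $p<\kappa/e$, a Chernoff bound for the binomial distribution gives
\[
\P[Z_1\ge \kappa\varepsilon_{13}n\mid \cf_t,\, t<\tau'_0]\ \le\ \P[\mbox{Bin}(\varepsilon_{13}n,p)\ge \kappa\varepsilon_{13}n]\ \le\ \left(\frac{ep}{\kappa}\right)^{\kappa\varepsilon_{13}n},
\]
which is exponentially small in $n$ since $\kappa,\varepsilon_{13}>0$ are fixed and $ep/\kappa<1$.

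I do not expect a genuine obstacle here: every estimate is a routine tail bound. The only points deserving care are that the walks are run on the fixed graph $G(t)$ (so the degree bound coming from $\tau'_5$ applies uniformly along each walk and yields the clean $\mbox{Poisson}(10C)$ domination), and that although a walk may switch between ``type $A$'' and ``type $B$'' in the coupling, both types execute the same continuous-time random walk on $G(t)$, so the step-count domination is unaffected.
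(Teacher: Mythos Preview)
Your proof is correct and follows essentially the same approach as the paper: the paper also conditions on $\{\cf_t,\,t<\tau'_0\}$, uses the degree bound from $\tau'_5$ to stochastically dominate each walk's step count by a $\mbox{Poi}(5CC_2)=\mbox{Poi}(10C)$ variable, and then invokes independence and a large deviation estimate. Your version simply spells out the Chernoff bounds explicitly, and your remark about type $A$/type $B$ walks not affecting the step-count domination is a correct (if unnecessary) clarification.
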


\begin{proof}
Condition on $\{\cf_t, t<\tau'_0\}$. Since the number of steps taken by each random walk is independent and are stochastically dominated by a $\mbox{Poi}(5CC_2)$ variable, it follows from a large deviation estimate and $C_2\leq 2$, thar $\P(Z_1>\kappa\varepsilon_{13}n)$ is exponentially small in $n$ by taking $C$ sufficiently large.
\end{proof}

\begin{lemma}
\label{l:notrewired}
Assume the hypothesis of Lemma \ref{l:notmanysteps}. Consider the coupling of the random walks with the evolving voter model as described above. Let $Z_2$ denote the number of walkers which traverse by time $\frac{10C}{\beta}$, some edge that is rewired during the voter model process. Then $\P(Z_2\geq 3\kappa \varepsilon_{13}n\mid \cf_t, t<\tau'_0)\leq e^{-\gamma n}$ for $C$ sufficiently large, $\varepsilon_{13}=\varepsilon_{13}(C)$ sufficiently small and $\beta=\beta(C)$ sufficiently large, and for some $\gamma>0$ that does not depend on $\beta$.
\end{lemma}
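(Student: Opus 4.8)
The plan is to bound $Z_2$ by counting, for each walker, the number of edges it traverses, and then bounding the probability that any such edge is among the (few) edges rewired during the coupled voter-model run. First, by Lemma \ref{l:notmanysteps}, outside an event of exponentially small probability at most $\kappa\varepsilon_{13}n$ of the walkers take more than $20C$ steps, so I may restrict attention to walkers taking at most $20C$ steps; any walker exceeding this is simply thrown into an ``exceptional'' count of size $\le \kappa\varepsilon_{13}n$. Next, I would control the total number of rewiring moves in the coupled dynamics up to time $\tfrac{10C}{\beta}$: this is the number of inspected elements of $\mathbb{RW}$, which (by a Chernoff bound, using that each of the $\tfrac{Cn^2}{\beta}$-ish discrete steps is a rewiring with probability $1-\tfrac{\beta}{n}$, and that $T\le \tfrac{2C+\kappa_3}{\beta}$ w.e.h.p. by Lemma \ref{l:couplingbasic2}) is at most $Rn^2$ for a constant $R$ depending only on $C$, with exponentially high probability. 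Crucially $R$ does not depend on $\beta$: the number of \emph{real-time} steps is $\Theta(n^2/\beta)$ but the rewiring \emph{rate per step} is $\approx 1$, so the count of rewirings is $\Theta(n^2)$.

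The heart of the argument is the following: condition on $\cf_t$, on $\{t<\tau'_0\}$, and on the entire sequence $\{Z_i\}$ and the set $\mathcal R$ of (at most $Rn^2$) edges that get rewired during the coupled run — this set is a function of the $\mathbb{RW}$ sequence and the $Z$'s, and is \emph{independent} of the random-walk steps, which are driven by the independent $\mathbb{RL}$/$\mathbb{RW}$-reading clocks in the coupling. Wait — more carefully, in the coupling the type-$A$ walkers move using the \emph{same} $\mathbb{RL}$ elements that drive relabellings, so there is a dependence; but the rewiring set $\mathcal R$ is read off $\mathbb{RW}$, which is independent of $\mathbb{RL}$, hence independent of the walkers' trajectories. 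So I can fix $\mathcal R$ with $|\mathcal R|\le Rn^2$ and ask: for a single walker taking $\le 20C$ steps in $G(t)$, what is the chance it ever traverses an edge of $\mathcal R$? At each step the walker is at a vertex of degree at least $\tfrac{\varepsilon n}{4}$ (since $t<\tau'_5$), and the number of $\mathcal R$-edges incident to that vertex is at most $|\mathcal R|$ globally but we need a \emph{per-vertex} bound; here I would instead average: the walker picks one of the (at least $\tfrac{\varepsilon n}{4}$, at most $C_2 n$) directed edges at its current vertex uniformly, so the chance the chosen edge lies in $\mathcal R$, \emph{summed over all $n$ vertices weighted by visit probability}, is controlled by $\tfrac{|\mathcal R|}{(\varepsilon n/4)\cdot(\text{something})}$. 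The clean way: let $p$ be the probability that a walker starting from a \emph{uniformly random} vertex traverses an $\mathcal R$-edge within $20C$ steps; by a union bound over the $\le 20C$ steps and the fact that at each step the current edge is one of $\ge \varepsilon n/4$ choices at a vertex, $p \le 20C\cdot \tfrac{4}{\varepsilon n}\cdot\big(\text{avg \# of }\mathcal R\text{-edges per vertex}\big) = 20C\cdot\tfrac{4}{\varepsilon n}\cdot\tfrac{2|\mathcal R|}{n}\le \tfrac{160 C R}{\varepsilon}$, which is \emph{not} small. This forces the key move: I must exploit that $\beta$ is large to make $|\mathcal R|$ small relative to the walk length. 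Reconsidering: up to real time $\tfrac{10C}{\beta}$ the number of rewirings is $\mathrm{Bin}$-dominated with mean $\approx \tfrac{10C}{\beta}\cdot 2N\cdot\tfrac{n-\beta}{n}$ — no. Let me recount in the coupling's own clock: in $\tfrac{Cn^2}{\beta}$ \emph{discrete} steps the number of relabel/rewire events is $\Theta(n^2/\beta)\cdot\Theta(1)$... the correct statement is that the number of edges traversed by all walkers is $\Theta(\varepsilon_{13}n)$ (each $\le 20C$ steps), while the number of rewired edges up to real time $\tfrac{10C}{\beta}$ is $\Theta(n^2/\beta)$, so each rewired edge has chance $O(1/n)$ of being the specific edge at a given walker-step, giving expected $Z_2 = O(\varepsilon_{13}n\cdot 20C\cdot \tfrac{n^2/\beta}{n^2}) = O(\tfrac{C\varepsilon_{13}n}{\beta})$, which is $\le \kappa\varepsilon_{13}n$ once $\beta>\beta(C,\kappa)$. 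That is the mechanism, and the $\beta$-dependence enters exactly here.

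So the steps in order are: (1) invoke Lemma \ref{l:notmanysteps} to discard walkers with $>20C$ steps; (2) show, via Chernoff, that the total number of rewiring events in the coupled run up to real time $\tfrac{10C}{\beta}$ is at most $\tfrac{K n^2}{\beta}$ w.e.h.p. for $K=K(C)$ independent of $\beta$, using Lemma \ref{l:couplingbasic2} to pin down the real-time horizon in terms of discrete steps; (3) condition on the set $\mathcal R$ of rewired edges (independent of the walk trajectories since trajectories are driven by $\mathbb{RW}$-reads / independent clocks while $\mathcal R$ is a function of the $Z$'s and $\mathbb{RW}$ — or if there is residual dependence through $\mathbb{RL}$, condition on $\mathbb{RL}$ too, as the \emph{positions} are still a nice Markov chain); (4) for each of the $\le \varepsilon_{13}n$ walkers, bound by a union bound over its $\le 20C$ steps the probability of traversing an $\mathcal R$-edge, using $D_{\min}(G(t))\ge \tfrac{\varepsilon n}{4}$ (as $t<\tau'_5$) so that each step lands on a uniformly chosen edge among $\ge \tfrac{\varepsilon n}{4}$ options, and using that $\sum_v(\#\mathcal R\text{-edges at }v) = 2|\mathcal R|\le \tfrac{2Kn^2}{\beta}$; this yields per-walker probability $\le \tfrac{C'K}{\varepsilon\beta}\le \kappa$ for $\beta$ large; (5) the indicators $\{$walker $i$ hits $\mathcal R\}$ are, conditionally on $\mathcal R$, independent (walks use independent randomness except where they coalesce, but coalescence only decreases the count) — actually they are not independent but are dominated by independent $\mathrm{Ber}(\kappa/3)$ variables by the same per-step bound applied sequentially with the filtration $\mathcal G_i$, exactly as in Lemma \ref{l:nonintersection} — so Azuma/Chernoff gives $\P(Z_2 \ge 3\kappa\varepsilon_{13}n)\le e^{-\gamma n}$ with $\gamma=\gamma(C,\varepsilon_{13},\kappa)>0$ not depending on $\beta$ once $\beta$ exceeds the threshold from step (4). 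The main obstacle is step (3)--(4): correctly identifying the independence structure in the coupling (the rewired-edge set versus the walk trajectories) and extracting the factor $1/\beta$ — this is where one must be careful that the walkers move on the \emph{frozen} graph $G(t)$ so each step is a genuine uniform choice among $\ge \varepsilon n/4$ edges, while the rewiring budget is only $O(n^2/\beta)$ because, although there are $\asymp n^2/\beta$ real-time units, each carries $O(1)$ rewirings, not $O(n)$.
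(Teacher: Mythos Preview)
Your overall mechanism is correct: the $1/\beta$ factor comes from there being only $O(n^2/\beta)$ rewired edges against $\Theta(\varepsilon_{13}n)$ short walk paths. But your execution has a real gap in step~(4), and the paper's argument proceeds with the conditioning \emph{reversed}.

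\textbf{The gap.} In step~(4) you condition on the set $\mathcal R$ of rewired edges and claim the per-walker probability of traversing an $\mathcal R$-edge is $\le \frac{C'K}{\varepsilon\beta}$, citing $\sum_v |\mathcal R_v| = 2|\mathcal R|\le \frac{2Kn^2}{\beta}$. This does not follow. A walker at vertex $v$ steps along an $\mathcal R$-edge with probability $\frac{|\mathcal R_v|}{D_v}$, and nothing you have written controls $|\mathcal R_v|$ for the \emph{particular} vertices the walker visits; the sum bound is an average-over-$v$ statement, not a worst-case one. If $\mathcal R$ happened to be concentrated near some $v_i$ this ratio could be $\Omega(1)$. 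The appeal to ``exactly as in Lemma~\ref{l:nonintersection}'' does not help: that lemma bounds edges into a small \emph{vertex} set via the balanced-vertex condition, whereas $\mathcal R$ is an \emph{edge} set with no vertex-spread guarantee in your argument. (Your approach is salvageable: since the edges read off $\mathbb{RW}$ are i.i.d.\ uniform half-edges, a Chernoff bound over the first $\frac{2Cn^2}{\beta}$ draws gives $\max_v |\mathcal R_v|=O(n/\beta)$ w.h.p., which then does yield the per-step bound. But this step is missing.) A secondary issue: the actual rewired set $\mathcal R$ depends on which edges are disagreeing along the way, hence on relabellings driven by $\mathbb{RL}$, so it is \emph{not} independent of the walk trajectories; you would need to replace $\mathcal R$ by the superset of all edges appearing in $RW_1,\dots,RW_{2Cn^2/\beta}$, which \emph{is} independent of $\mathbb{RL}$.

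\textbf{How the paper does it.} The paper conditions the other way: first on the random-walk paths $\mathcal H$, invoking Lemma~\ref{l:notmanysteps} and (crucially) Lemma~\ref{l:nonintersection} to obtain a subset $J$ of $\ge (1-2\kappa)\varepsilon_{13}n$ walkers whose edge-paths $D_j$ are pairwise disjoint with $|D_j|\le 20C$. Then, since $\mathbb{RW}$ is independent of $\mathcal H$, for each $j\in J$ one has directly
\[
\P\bigl(\exists\,k\le \tfrac{2Cn^2}{\beta}:\ \text{edge of }RW_k\in D_j\bigr)\ \le\ \tfrac{2Cn^2}{\beta}\cdot \tfrac{2|D_j|}{2N}\ \le\ \tfrac{200C^2}{\beta},
\]
and disjointness of the $D_j$'s makes the indicators amenable to a large-deviation bound, giving $\P(Z_2\ge 3\kappa\varepsilon_{13}n)\le e^{-\gamma n}$ once $\beta>2000C^2/\kappa$. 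The advantage of this direction is that the ``targets'' $D_j$ are small fixed sets and the ``probes'' $RW_k$ are i.i.d.\ uniform, so no per-vertex distribution of $\mathcal R$ is needed at all; the price is that one must first secure disjoint short paths via Lemma~\ref{l:nonintersection}, which your route would not require if the per-vertex bound were added.
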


\begin{proof}
Condition on $\{\cf_t, t<\tau'_0\}$. In the coupling construction, let $\omega$ be the number of entries in $\mathbb{RW}$ that were inspected. We consider the case $\omega\leq \frac{2Cn^2}{\beta}$ since the complement of this event has probability that is exponentially small in $n^2$ and can be ignored. Notice that $\mathbb{RW}$ is independent of $\mathbb{RL}$ and hence is also independent of the random walks. Let $\mathcal{H}$ denote the filtration generated by the random walk paths. Let $D_i$ denote the set of edges traversed by the random walk started at $v_i$. Let $\mathcal{D}$ denote the event that there is $J\subseteq [\varepsilon_{13}n]$ with $|J|\geq (1-2\kappa)\varepsilon_{13}n$ such that for all $j_1,j_2\in J$, $D_{j_1}\cap D_{j_2}=\emptyset$ and $|D_{j_1}|\leq 20C$. It follows from Lemma \ref{l:notmanysteps} and Lemma \ref{l:nonintersection} that $\P(\mathcal{D}^{c}\mid \cf_t, t<\tau'_0)$ is exponentially small in $n$ for $C$ sufficiently large and $\varepsilon_{13}=\varepsilon_{13}(C)$ sufficiently small. Now let us condition on $\mathcal{H}$ and $\mathcal{D}$. Since $\mathbb{RW}$ is independent of $\mathcal{H}$, for $j\in J$, we have
$$\P(\text{the edge in}~RW_{k}\in D_j~\text{for some}~k\leq \frac{2Cn^2}{\beta})\leq \frac{200C^2}{\beta}.$$

By a large deviation estimate, it follows that if $\beta$ is sufficiently large so that $\beta>\frac{2000C^2}{\kappa}$ then
$$\P(Z_2\geq 3\kappa\varepsilon_{13}n\mid \cf_t, t<\tau'_0, \mathcal{H},\mathcal{D})$$
is exponentially small in $n$. This finishes the proof of the lemma.
\end{proof}

\begin{lemma}
\label{l:happycondition}
Assume the hypothesis of Lemma \ref{l:notmanysteps}. Consider the coupling of the random walks with the evolving voter model as described above. Let $Z_3$ be the number of walks that take less than $20C$ steps up to time $\frac{10C}{\beta}$ but violates condition $3$ in Definition \ref{d:happy} at time $T$. Then $\P(Z_3>\kappa\varepsilon_{13}n)$ is exponentially small in $n$ for $\beta$ sufficiently large where the exponent does not depend on $\beta$.
\end{lemma}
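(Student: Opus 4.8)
I will prove Lemma~\ref{l:happycondition} by bounding the probability that a \emph{single} walker violates condition~$3$ by a quantity of order $C^3/\beta$, uniformly over the past, and then concluding by a bounded-differences argument. The point is that once $\beta$ is large this per-walker probability is far below the target density $\kappa$, so a deviation of $Z_3$ up to $\kappa\varepsilon_{13}n$ is a large deviation whose rate does not involve $\beta$. Throughout we condition on $\{\cf_t,\,t<\tau'_0\}$ and work on the two events $\{\omega\le 2Cn^2/\beta\}$, whose complement has probability exponentially small in $n^2$ (as in the proof of Lemma~\ref{l:notrewired}), and $\{T\le 10C/\beta\}$, which by Lemma~\ref{l:couplingbasic2} holds with exponentially high probability once $\kappa_3$ is chosen suitably. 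On these events a walker that takes fewer than $20C$ steps up to time $\tfrac{10C}{\beta}$ has, by time $T$, visited a set $\{v_0,\dots,v_k\}$ with $k<20C$, and every interval appearing in condition~$3$ lies inside $[0,T]\subseteq[0,\tfrac{10C}{\beta}]$; in particular, such a walker can violate condition~$3$ only if some directed edge in $\bigcup_{\ell\le k}E^*(v_\ell)$ rings during $[0,\tfrac{10C}{\beta}]$.

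First I would establish the per-walker bound. Fix a walker and condition additionally on its trajectory $\ch$; since in the coupled realization the trajectory is a function of $(T_i)$, $(Z_i)$, $(RL_i)$ and $\sigma$ — all of which are independent of $\mathbb{RW}$ — the sets $E^*(w)$ are independent of $\ch$. As the vertex coordinates of the $RW_i$ are i.i.d.\ uniform on $V$, on $\{\omega\le 2Cn^2/\beta\}$ each $|E^*(w)|$ is stochastically dominated by a $\mbox{Bin}(\lceil 2Cn^2/\beta\rceil,1/n)$ variable, of mean at most $2Cn/\beta$, so $\E_{\mathbb{RW}}\bigl|\bigcup_{\ell\le k}E^*(v_\ell)\bigr|\le 20C\cdot 2Cn/\beta=40C^2n/\beta$. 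Each directed edge rings according to a rate-$\tfrac{\beta}{2n}$ Poisson process, so the expected number of rings in a fixed set $S$ of directed edges during a time interval of length $\tfrac{10C}{\beta}$ is at most $\tfrac{5C}{n}|S|$; taking $S=\bigcup_{\ell\le k}E^*(v_\ell)$ and averaging over $\mathbb{RW}$ gives an expected number of rings at most $200C^3/\beta$, and hence, by Markov's inequality, the walker violates condition~$3$ with conditional probability at most $p:=200C^3/\beta$.

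To upgrade this to concentration, write $Z_3=\sum_{i=1}^{\varepsilon_{13}n}W_i$ with $W_i$ the indicator that walker $i$ takes fewer than $20C$ steps and violates condition~$3$, and reveal the trajectories one at a time, letting $\cg_{i-1}$ be the filtration generated by the first $i-1$ trajectories. Since $\mathbb{RW}$ is still independent of $\cg_{i-1}$ together with the $i$-th trajectory, the estimate of the previous paragraph gives $\E[W_i\mid\cg_{i-1}]\le p$ uniformly, exactly as in the proof of Lemma~\ref{l:nonintersection}. Now choose $\beta$ large enough that $p\le\kappa/4$; then $\sum_i\E[W_i\mid\cg_{i-1}]\le\tfrac{\kappa\varepsilon_{13}}{4}n$ deterministically, and Azuma's inequality applied to the martingale $\sum_i\bigl(W_i-\E[W_i\mid\cg_{i-1}]\bigr)$, whose increments are bounded by $1$, yields
\[
\P\bigl(Z_3>\kappa\varepsilon_{13}n\mid\cf_t,\,t<\tau'_0\bigr)\ \le\ \exp\!\Bigl(-\tfrac{(3\kappa\varepsilon_{13}n/4)^2}{2\varepsilon_{13}n}\Bigr)+e^{-\Omega(n^2)}\ \le\ e^{-\gamma n}
\]
for $n$ large, with $\gamma=\tfrac{9}{32}\kappa^2\varepsilon_{13}$, which does not depend on $\beta$; the additive $e^{-\Omega(n^2)}$ absorbs the complements of $\{\omega\le 2Cn^2/\beta\}$ and $\{T\le 10C/\beta\}$.

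The step I expect to be the real obstacle is the independence used in the per-walker bound: the clocks driving a walker's trajectory and the clocks whose rings falsify condition~$3$ are not literally separate, so one must observe that a ring of an edge that remains incident to $v_\ell$ throughout the walker's stay there cannot falsify condition~$3$ (it would have moved the type-$A$ walker and ended that stay), and that conditioning on the trajectory therefore only constrains rings of edges that are momentarily \emph{not} at $v_\ell$, leaving enough independent randomness — and the sequence $\mathbb{RW}$ entirely unconstrained — to run the Markov estimate. This is the same decoupling already exploited in Lemmas~\ref{l:notrewired} and~\ref{l:nonintersection}, so it should go through, but it is where the care is needed.
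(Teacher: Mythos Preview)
Your argument is correct and reaches the same conclusion, but it proceeds differently from the paper in one essential respect. The paper first fixes a function $q(\cdot)$ with $q(\beta)\ll\beta\ll q(\beta)\log q(\beta)$ and conditions on the entire sequence $\mathbb{RW}$ together with the event $\mathcal{G}=\{\omega<2Cn^2/\beta,\ |E^*(v)|\le 4Cn\,q(\beta)/\beta\ \forall v\}$; the super-constant factor $q(\beta)$ is calibrated precisely so that the Chernoff bound on $\max_v|E^*(v)|$ gives $\P(\mathcal{G}^c)\le e^{-\gamma n}$ with $\gamma$ independent of $\beta$. On $\mathcal{G}$ the per-walker violation probability is then bounded \emph{pointwise} by $200C^2q(\beta)/(\varepsilon\beta)$, and the paper concludes in one line by invoking independence of the walkers and a large-deviation bound. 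You instead leave $\mathbb{RW}$ random, use only the first-moment bound $\E|E^*(v)|\le 2Cn/\beta$ and Markov's inequality to obtain a per-walker probability $p=O(C^3/\beta)$, and then run Azuma along the filtration of revealed trajectories. The paper's route buys a deterministic bound on every $E^*(v)$ at the price of the $q(\beta)$ calibration; yours avoids that calibration but must carry the martingale argument. Both deliver an exponent in $n$ that does not depend on $\beta$.

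The dependence issue you flag in your final paragraph is real and, in fact, applies equally to the paper's one-line ``since the events are independent'': the violation indicator for walker $i$ is not a function of its trajectory alone but also of ring times of the shared clock process at directed edges in $E^*(v_\ell^{(i)})$, which may sit at vertices occupied by other walkers. The cleanest way to discharge it---and this is what the paper's conditioning on $\mathbb{RW}$ is really buying---is that once $\mathbb{RW}$ is fixed the sets $E^*(v)$ are deterministic, so the violation event for walker $i$ becomes a function of the Poisson ring process restricted to the space--time region $\bigcup_\ell E^*(v_\ell^{(i)})\times[T_\ell^{(i)*},T_{\ell+1}^{(i)*}]$; for distinct type-$A$ walkers these regions are disjoint up to events already counted by $Z_1$ and $Z_2$, so independence holds where it is needed. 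Your Azuma version can be repaired along the same lines, but it is tidier to condition on $\mathbb{RW}$ first, as the paper does, so that the only remaining randomness is the ring process driving independent walks.
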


\begin{proof}
Let us fix a function $q(\cdot)$ such that $q(\beta)<<\beta << q(\beta)\log q(\beta)$ as $\beta \rightarrow \infty$. Since the random walks are independent of $\mathbb{RW}$, we condition on $\mathbb{RW}$ and the following event $$\mathcal{G}=\{\omega < \frac{2Cn^2}{\beta}, |E^*(v)|\leq \frac{4Cnq(\beta)}{\beta}\forall v\in V\}.$$ By a Chernoff bound $\P(\mathcal{G})\geq 1-e^{-\gamma n}$ where $\gamma$ does not depend on $\beta$. Conditional on $\mathcal{G}$, for each $i$, the chance that the random walk started from $v_i$ violates condition $3$ in Definition \ref{d:happy} before making $20C$ many jumps is at most $\frac{200C^2 q(\beta)}{\varepsilon \beta}$. Since the events are independent for different $i$ conditional on $\mathcal{G}$, the lemma follows for $\beta$ sufficiently large.
\end{proof}

\begin{lemma}
\label{l:couplemany}
Let $v_1, v_2,\ldots , v_{\varepsilon_{13}n}$ be fixed vertices in $V$. Consider the coupling between the evolving voter model starting with $G(t)$ at time $t$, with independent continuous time random walks on $G(t)$ starting with one walker at each $v_i$ as described above.
Let us denote the position of the random walk started at $v_i$ at time $s$ by $Y_i(t+s)$. Let $\Pi$ denote the event that  there exists a time $\sigma_0\in \{C/\beta +\frac{1}{n^3}, C/\beta +\frac{2}{n^3}, \ldots, 3C/\beta\}$ and $J\subseteq [\varepsilon_{13}n]$ with $|J|\geq (1-7\kappa)\varepsilon_{13}n$, such that for each $i\in J$, opinion of $Y_i(t+\sigma_0)$ in $G(t)$ is same as $v_i(t+\frac{Cn^2}{\beta})$. Then for $C$ sufficiently large, $\varepsilon_{13}=\varepsilon_{13}(C)$ sufficiently small and $\beta=\beta(C)$ sufficiently large, we have that $\P(\Pi\mid \cf_t, t<\tau'_0)\geq 1-e^{-\gamma n/2}$ where $\gamma>0$ does not depend on $\beta$.
\end{lemma}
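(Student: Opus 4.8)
The plan is to read $\Pi$ off the coupling behind Lemma \ref{l:couplingbasic1}: if at the random time $T=\sum_{i=1}^{Cn^2/\beta}T_i$ at least $(1-6\kappa)\varepsilon_{13}n$ of the walkers are \textbf{happy} in the sense of Definition \ref{d:happy}, then for each such walker $i$ Lemma \ref{l:couplingbasic1} identifies the opinion of $Y_i(t+T)$ in $G(t)$ with $v_i(t+\frac{Cn^2}{\beta})$; I would then move from the random time $T$ to a deterministic grid point $\sigma_0$, losing at most $\kappa\varepsilon_{13}n$ further walkers, which gives $\Pi$ with $|J|\ge (1-7\kappa)\varepsilon_{13}n$.

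For the first step I would count, on $\{t<\tau'_0\}$, the walkers that fail to be happy at time $T$, using one of the previous lemmas for each of the three conditions of Definition \ref{d:happy}. By Lemma \ref{l:couplingbasic2} one may work on $\{(2C-\kappa_3)/\beta\le T\le 10C/\beta\}$. On the event $\mathcal{D}$ of Lemma \ref{l:notrewired} --- which, combining Lemmas \ref{l:notmanysteps} and \ref{l:nonintersection}, has probability $1-e^{-cn}$ with $c$ independent of $\beta$, once $C$ is large and $\varepsilon_{13}=\varepsilon_{13}(C)$ is small --- all but at most $2\kappa\varepsilon_{13}n$ walkers take at most $20C$ steps and have pairwise non-intersecting trajectories, hence never collide with one another and stay of type $A$ (condition (1)). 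Lemma \ref{l:notrewired} rules out condition (2) for all but $3\kappa\varepsilon_{13}n$ walkers and Lemma \ref{l:happycondition} rules out condition (3) for all but $\kappa\varepsilon_{13}n$ of the short-trajectory walkers. Adding these contributions, with probability $1-e^{-cn}$ at least $(1-6\kappa)\varepsilon_{13}n$ walkers are happy at time $T$, and Lemma \ref{l:couplingbasic1} applies to each of them.

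For the discretization, note that on $\{t<\tau'_0\}$, which is contained in $\{t<\tau'_5\}$, each walker jumps at rate at most $C_2\beta$, so the number of walker jumps occurring in a time window of length $\kappa/(2C_2\beta)$ is stochastically dominated by a $\mathrm{Poisson}(\kappa\varepsilon_{13}n/2)$ variable and hence is at most $\kappa\varepsilon_{13}n$ except on an event of probability $e^{-c\kappa\varepsilon_{13}n}$; the randomness of $T$ is absorbed by enclosing $[T-\kappa/(2C_2\beta),T]$ in a deterministic window via Lemma \ref{l:couplingbasic2} with $\kappa_3$ small. On $\{t<\tau'_0\}$ this window lies in $[C/\beta,3C/\beta]$ and has length $\gg n^{-3}$, so it contains a grid point $\sigma_0$; for that $\sigma_0$, at most $\kappa\varepsilon_{13}n$ walkers have $Y_i(t+\sigma_0)\ne Y_i(t+T)$. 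Intersecting with the happy walkers at $T$ leaves at least $(1-7\kappa)\varepsilon_{13}n$ indices $i$ with opinion of $Y_i(t+\sigma_0)$ in $G(t)$ equal to $v_i(t+\frac{Cn^2}{\beta})$, which is the event $\Pi$; and because every exponential rate above is of the form $c\varepsilon_{13}n$ or $c\kappa^2\varepsilon_{13}n/C$ with $c$ absolute, $\gamma$ comes out independent of $\beta$.

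I expect the main obstacle to be the type-$A$/type-$B$ bookkeeping. A walker with a short, isolated, non-rewired trajectory that also satisfies condition (3) can still be demoted to type $B$ by a collision with one of the $O(\kappa\varepsilon_{13}n)$ \emph{discarded} walkers, whose trajectories are not controlled by $\mathcal{D}$. I would handle this by adjoining to the discarded set every walker whose trajectory meets that of a discarded walker, and verifying that this enlargement removes only $O(\kappa)\varepsilon_{13}n$ walkers: the good trajectories being pairwise disjoint, each vertex lies on at most one of them; each discarded walker makes $O(C)$ steps with probability $1-e^{-cn}$; and the edge-count estimates in the proof of Lemma \ref{l:nonintersection} bound the number of edges from any vertex into a visited set by $O(\kappa\varepsilon n/C)$, so the two powers of $C$ cancel. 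This is precisely the place where the ordering ``$\kappa$ fixed, $C$ large, $\varepsilon_{13}=\varepsilon_{13}(C)$ small, $\beta=\beta(C)$ large'' is used.
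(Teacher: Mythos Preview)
Your proposal is correct and follows essentially the same route as the paper: count the walkers failing each clause of Definition~\ref{d:happy} using Lemmas \ref{l:notmanysteps}, \ref{l:nonintersection}, \ref{l:notrewired}, \ref{l:happycondition}, invoke Lemma \ref{l:couplingbasic1} at the random time $T$, and then pass from $T$ to a grid point $\sigma_0$. The one genuine difference is the discretization step. The paper defines the event $A$ that \emph{some} grid interval $[C/\beta+i/n^3,C/\beta+(i+1)/n^3]$ sees more than $\kappa\varepsilon_{13}n$ jumps and controls $\P(A)$ by a brute-force union over the $O(n^3/\beta)$ intervals (each contributing a super-exponentially small term). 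You instead localize $T$ to a deterministic window of width $O(\kappa_3/\beta)$ via Lemma \ref{l:couplingbasic2} and apply a single Poisson tail bound on the jumps inside that window. Both work and both give a $\beta$-independent exponent; yours avoids the large union bound.

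Your final paragraph on the type-$A$/type-$B$ bookkeeping pinpoints a subtlety that the paper's proof in fact glosses over (it simply asserts that the number of type-$B$ walkers is at most $\kappa\varepsilon_{13}n$ ``by Lemmas \ref{l:notmanysteps} and \ref{l:nonintersection}''). Your proposed fix is in the right spirit, but the specific claim that every discarded walker makes $O(C)$ steps with probability $1-e^{-cn}$ does not hold: a union of $\Theta(n)$ Poisson-tail events of individually constant probability is not exponentially small in $n$. The clean repair is to bound the \emph{total} number of steps taken by all $\varepsilon_{13}n$ walkers by a single $\mathrm{Poisson}(O(C\varepsilon_{13}n))$ variable, so that with probability $1-e^{-c\varepsilon_{13}n}$ the total visited set has size $O(C\varepsilon_{13}n)$; one can then rerun the edge-count argument from the proof of Lemma \ref{l:nonintersection} against this set, and the cancellation of $C$'s you describe goes through.
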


\begin{proof}
In this proof the value of the constant $\gamma$ may change from line to line but $\gamma$ is always a positive constant independent of $\beta$. Condition on $\{\cf_t, t<\tau'_0\}$. Let $Z_2$ denote the number of walkers at time $\frac{3C}{\beta}$ of type $B$. Then it follows from Lemma \ref{l:notmanysteps} and Lemma \ref{l:nonintersection} that $\P(Z_2>\kappa\varepsilon_{13}n)\leq e^{-\gamma n}$ for some $\gamma>0$ not depending on $\beta$. Let $Q$ denote the event that there exists $J\subseteq [\varepsilon_{13}n]$ with $|J|\geq (1-5\kappa)\varepsilon_{13}n$ such that
for all $j\in J$, the opinion of $Y_j(t+T)$ in $G(t)$ is same as $v_j(t+\frac{Cn^2}{\beta})$. It now follows from Lemma \ref{l:couplingbasic1}, Lemma \ref{l:notrewired} and Lemma \ref{l:happycondition} that for appropriate choices of $C,\varepsilon_{13}$ and $\beta$, $\P(Q^c, T<\frac{3C}{\beta}\mid \cf_t, t<\tau'_0)\leq e^{-\gamma n}$.

Now let $A$ denote the event there exist  $k\in \{1,2,\ldots ,\frac{2n^3C}{\beta}\}$ such that there are more than $\kappa\varepsilon_{13}n$ of the random walks take a step within time $[\frac{C}{\beta}+\frac{i}{n^3}, \frac{C}{\beta}+\frac{i+1}{n^3}]$. By a union bound it follows that $P(A\mid \cf_t, t<\tau'_0)\leq e^{-\gamma n}$. It follows now that,
$$\P(\Pi\mid \cf_t, t<\tau'_0)\geq 1-\P(T>\frac{3C}{\beta}\mid \cf_t, t<\tau'_0)-\P(A\mid \cf_t, t<\tau'_0)-\P(Q^c, T<\frac{3C}{\beta}\mid \cf_t, t<\tau'_0).$$
The proof of the lemma is completed using Lemma \ref{l:couplingbasic2}.
\end{proof}

\subsection{Bound for Large Cuts}
Let us fix $S,T\subseteq V$, such that $S\cap T=\emptyset$ and $S\cup T=V$ with $\varepsilon_2 n\leq |S|\leq |T|$.

\begin{proposition}
\label{p:proportionofbadedges}
For the \emph{rewire-to-random-*} dynamics, let us condition on $\{\cf_t, t<\tau'_0, N_1(t)=pn\}$. For $t'>t$, let $X_{ST}(t')$ denote the number of disagreeing edges at time $t'$. Then there exists a constant $C$ sufficiently large, and $\beta=\beta(C)$ sufficiently large such that, $$\P\left(X_{ST}(t+\frac{Cn^2}{\beta})\notin ((2p(1-p)-\varepsilon_7)N_{ST}(t+\frac{Cn^2}{\beta}), (2p(1-p)+\varepsilon_7)N_{ST}(t+\frac{Cn^2}{\beta})\mid \cf_{t}, t< \tau'_0\right)\leq e^{-\gamma n}$$
for some $\gamma>0$ that does not depend on $\beta$.
\end{proposition}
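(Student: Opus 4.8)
The plan is to exploit the coupling between the evolving voter model and the family of independent continuous-time random walks on the frozen graph $G(t)$ constructed in \S~\ref{s:coupling}, in the same spirit as the duality between the voter model and coalescing random walks on a static graph. The key point is that, on $\{t<\tau'_0\}$, the walk mixes in time $O(1/\beta)$ (Lemma~\ref{l:mixing} and Lemma~\ref{l:couplingtv}), so after time $\frac{Cn^2}{\beta}$ the opinions of most vertices behave as if they were re-sampled independently, with the correct density $p$. First I would consider the set of disagreeing edges of $G(t+\frac{Cn^2}{\beta})$ between $S$ and $T$: by Lemma~\ref{l:lbw3}/\ref{l:lbw4} (applied within the window) the edge multiplicities and degrees stay controlled, so the underlying graph $\tilde G(t+\frac{Cn^2}{\beta})$ is close to $\tilde G(t)$ up to $O(n^2/\beta)$ rewired edges, which is a $1/\beta$-fraction of all edges and contributes at most $\varepsilon_7 N_{ST}/3$ to any count once $\beta$ is large. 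So it suffices to estimate, for each endpoint pair $(u,v)$ of an edge present in both graphs, the probability that $u(t+\frac{Cn^2}{\beta})\neq v(t+\frac{Cn^2}{\beta})$.

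Next I would run the coupling of Lemma~\ref{l:couplemany} from the relevant vertex set: for a large subset $J$ of the chosen vertices, there is a common near-mixing time $\sigma_0 \in [C/\beta, 3C/\beta]$ such that $v_i(t+\frac{Cn^2}{\beta})$ equals the opinion in $G(t)$ of the walker position $Y_i(t+\sigma_0)$. Running the walks a further time of order $C/\beta$ from $\sigma_0$ up to the final coupling time, Lemma~\ref{l:couplingtv} gives that each surviving walker's position is within $e^{-\sqrt{C}/1000}$ in total variation of a uniform vertex of $V$. For a fixed edge $(u,v)$ I would couple the walkers started at $u$ and at $v$: by Lemma~\ref{l:nonintersection} the two random-walk paths fail to intersect except with small probability (so the two walkers can be taken to have become type-$B$ and move with independent randomness), which means the pair $(Y_u,Y_v)$ at the final time is within $2e^{-\sqrt{C}/1000}$ of a pair of two independent uniform vertices. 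Hence
\[
\P\bigl(u(t+\tfrac{Cn^2}{\beta})\neq v(t+\tfrac{Cn^2}{\beta})\bigr) = 2p(1-p) + O\bigl(e^{-\sqrt{C}/1000}\bigr) + (\text{error from }n^{-3/4}\text{-level density fluctuations, cf. the initial-condition assumption and Lemma~\ref{lbw0}}),
\]
and choosing $C$ large makes this within $\varepsilon_7/3$ of $2p(1-p)$.

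The final step is to turn this pairwise estimate into concentration of $X_{ST}(t+\frac{Cn^2}{\beta})$ around its mean $2p(1-p)\,N_{ST}$. Here I would partition the $\Theta(n^2)$ bonds between $S$ and $T$ into $\Theta(n)$ batches of $\Theta(n)$ vertex-disjoint edges each (possible since degrees are $O(n)$ on $\{t<\tau'_0\}$), apply Lemma~\ref{l:couplemany} and Lemma~\ref{l:nonintersection} simultaneously to the (at most $2n$) vertices involved in one batch — so that the indicator events $\mathbf{1}\{u(t+\frac{Cn^2}{\beta})\neq v(t+\frac{Cn^2}{\beta})\}$ in that batch are, after discarding an exceptional $\kappa$-fraction, conditionally independent with mean $2p(1-p)+O(e^{-\sqrt{C}/1000})$ — and use a Chernoff bound within each batch followed by a union bound over the $O(n)$ batches; the discarded vertices and the $\leq n^2/\beta$ edges present in only one of the two graphs each contribute at most $\varepsilon_7 N_{ST}/3$. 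Summing the batch estimates and accounting for multiplicities (at most $2\varepsilon_4\log n$ per bond, which is negligible compared to $\varepsilon_7 n^2$) gives the claimed two-sided bound with probability $1-e^{-\gamma n}$. The main obstacle is the third step: one cannot simply declare all the edge-indicators independent, and the work is in organizing the batching so that Lemmas~\ref{l:nonintersection}, \ref{l:notrewired}, \ref{l:happycondition} and \ref{l:couplemany} can be invoked for $\Theta(n)$ vertices at once while keeping every error term — non-intersection failures, type-$B$ walkers, rewired traversed edges, and the $O(n^2/\beta)$ edges that differ between $\tilde G(t)$ and $\tilde G(t+\frac{Cn^2}{\beta})$ — below $\varepsilon_7 N_{ST}$, which forces the stated order of quantifiers ($C$ large, then $\varepsilon_{13}$ small, then $\beta$ large).
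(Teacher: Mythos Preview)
Your high-level plan---use the coupling of \S\ref{s:coupling} together with the mixing estimate of Lemma~\ref{l:couplingtv}, so that after $Cn^2/\beta$ steps the opinions at the two endpoints of an edge behave like two nearly independent $\mathrm{Ber}(p)$ draws---is exactly the paper's strategy.  The divergence is in how concentration is obtained.

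The paper does \emph{not} try to control all $\Theta(n^2)$ edges between $S$ and $T$.  Instead it draws a uniform random sample $J$ of size $\varepsilon_{12}n$ from $\mathcal{E}_{ST}(t)$, uses Hoeffding to show that the sample average $\frac{1}{|J|}\sum_{j\in J}X_j$ is within $\varepsilon_7/4$ of the full average $\frac{1}{N_{ST}}\sum_i X_i$, invokes Lemma~\ref{l:vdisjoint} to pass to a vertex-disjoint subsample $J^*$ of size $(1-\kappa)\varepsilon_{12}n$, and only then runs the coupling on the $\varepsilon_{13}n:=2(1-\kappa)\varepsilon_{12}n$ endpoints of $J^*$.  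Because the $\sigma_0$ of Lemma~\ref{l:couplemany} is random, the paper introduces the indicators $U_{j,\tilde\sigma}$ at each deterministic time $\tilde\sigma\in\Sigma$, applies Hoeffding for each fixed $\tilde\sigma$ (the walks are independent by construction), and takes a union bound over the polynomially many $\tilde\sigma$; the final transfer from $U_{j,\sigma_0}$ to $X_j$ costs only the $7\kappa$-fraction of unhappy walkers.  The sampling trick is what keeps the number of walkers small enough for Lemmas~\ref{l:nonintersection}--\ref{l:couplemany} to apply.

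Your batching route can in principle be pushed through, but as written it has a quantitative gap: you place ``at most $2n$'' vertices in a batch, whereas Lemmas~\ref{l:nonintersection}, \ref{l:notrewired} and \ref{l:couplemany} are stated for $\varepsilon_{13}n$ starting points with $\varepsilon_{13}$ chosen small depending on $C$ and $\kappa$---that smallness is precisely what makes the non-intersection and happy-walker fractions close to~$1$.  With $2n$ walkers a constant fraction collide and the coupling loses its content.  You would need batches of $\varepsilon_{13}n/2$ edges and hence $O(n/\varepsilon_{13})$ of them, which still survives the union bound but requires an extra combinatorial decomposition (partitioning a multigraph of max degree $C_2n$ into many small matchings) that the paper's single-sample argument sidesteps.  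Two smaller points: you do not say how you cope with the randomness of $\sigma_0$ (the paper's answer is the union bound over $\Sigma$), and your remark that non-intersecting walkers ``can be taken to have become type $B$'' inverts the definition---type~$B$ walkers are exactly those that have collided; the independence of the walks' marginal laws is already built into the coupling and is what makes the $U_{j,\tilde\sigma}$ independent for fixed $\tilde\sigma$.
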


We shall need the following lemma in order to prove Proposition \ref{p:proportionofbadedges}.

\begin{lemma}
\label{l:vdisjoint}
Let $\mathcal{E}_{S,T}(t)$ denote the set of edges that have one endpoint in $S$ and another endpoint $T$ at time $t$. On $\{t<\tau'_{0}\}$, we have $|\mathcal{E}_{ST}(t)|\geq \frac{\varepsilon_2 n^2}{5}$ provided $1000\varepsilon_3< \varepsilon_2$.
Let $\kappa>0$ be fixed. Let $e_1,e_2,\ldots e_{\varepsilon_{12}n}$ be uniformly chosen edges from $\mathcal{E}_{ST}(t)$. For $\varepsilon_{12}$ sufficiently small, with exponentially high probability there exists at least $(1-\kappa)\varepsilon_{12}n$ many edges among the sample that are vertex disjoint at time $t$.
\end{lemma}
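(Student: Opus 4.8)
The plan is to get the first assertion directly out of the weak cut stopping time, and the second from a one-edge-at-a-time exposure martingale of the kind already used for Lemma~\ref{l:nonintersection}.

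First I would observe that $|\mathcal{E}_{ST}(t)|=N_{ST}(t)$ and that, since $t<\tau'_0\le\tau'_2$, we have $L'(t)<2\varepsilon_3$ and hence $N_{ST}(t)\ge\frac12|S||T|-2\varepsilon_3 N(t)$. Because the number of edges is conserved, $N(t)=N(0)\le\frac{n^2}{4}+n^{3/2}$ by the standing assumption on $G(0)$, while $|S||T|\ge\varepsilon_2(1-\varepsilon_2)n^2$ since $|S|\ge\varepsilon_2 n$ and $\varepsilon_2$ is tiny. Plugging these in and using $1000\varepsilon_3<\varepsilon_2$ yields $N_{ST}(t)\ge(\tfrac12-o(1))\varepsilon_2 n^2>\frac{\varepsilon_2 n^2}{5}$ for $n$ large; this is just arithmetic.

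For the second assertion I would condition on $\{\cf_t,\ t<\tau'_0\}$, so in particular $t<\tau'_5$ and $D_{\max}(t)\le C_2 n$, and expose the sampled edges $e_1,\dots,e_{\varepsilon_{12}n}$ one at a time. Writing $\cg_{i-1}$ for the $\sigma$-field generated by $e_1,\dots,e_{i-1}$ and $Y_i$ for the indicator that $e_i$ shares an endpoint in $G(t)$ with some $e_j$, $j<i$, the crucial bound is that the endpoints of the first $i-1$ edges form a set $W$ of at most $2\varepsilon_{12}n$ vertices, which are incident to at most $\sum_{w\in W}D_w(t)\le 2C_2\varepsilon_{12}n^2$ edges of $\mathcal{E}_{ST}(t)$; since $e_i$ is uniform on a set of size at least $\varepsilon_2 n^2/5$ by the first part,
\[
\E[Y_i\mid\cg_{i-1}]\le\frac{2C_2\varepsilon_{12}n^2}{\varepsilon_2 n^2/5}=\frac{10C_2\varepsilon_{12}}{\varepsilon_2}\le\frac{\kappa}{2}
\]
once $\varepsilon_{12}$ is small enough in terms of $\kappa$ and $\varepsilon_2$. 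The differences $Y_i-\E[Y_i\mid\cg_{i-1}]$ are bounded by $1$, so Azuma's inequality gives $\sum_i Y_i\le\kappa\varepsilon_{12}n$ with probability at least $1-e^{-\kappa^2\varepsilon_{12}n/8}$; on that event the set $J=\{i:Y_i=0\}$ has size at least $(1-\kappa)\varepsilon_{12}n$ and, since $Y_{i'}=0$ prevents $e_{i'}$ from meeting any earlier $e_i$, the edges $\{e_i:i\in J\}$ are pairwise vertex disjoint at time $t$, which is what we want.

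I do not expect a genuine obstacle: essentially all the work is hidden in the definitions of $\tau'_2$ and $\tau'_5$, and what is left is bookkeeping. The two points to watch are that the collision estimate really does need $D_{\max}(t)\le C_2 n$ — so it is essential to be below $\tau'_5$ rather than merely below $\tau_*$ — and that it is harmless whether the $e_i$ are drawn with or without replacement, since coinciding or parallel samples only create extra $Y_i=1$'s.
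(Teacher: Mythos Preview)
Your proposal is correct and follows essentially the same approach as the paper: the paper also exposes the sampled edges one at a time, bounds the conditional probability that $e_i$ meets an earlier edge by $\frac{10C_2 n+O(\log n)}{\varepsilon_2 n^2}\,\varepsilon_{12}n\le\frac{12C_2\varepsilon_{12}}{\varepsilon_2}$ using the max-degree bound from $\tau'_5$ and the lower bound on $|\mathcal{E}_{ST}(t)|$, and then applies Azuma. Your write-up is slightly cleaner in that you spell out the first assertion from the definition of $\tau'_2$ and the edge-count assumption on $G(0)$, which the paper leaves implicit.
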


\begin{proof}
For $i=1,2,\ldots, \varepsilon_{12}n$, let $A_i$ denote the event that $e_i$ is not vertex disjoint with $e_1,e_2,\ldots , e_{i-1}$. Let $Z_i=1_{A_i}$. Let $\cf_{i-1}$ denote the filtration generated by $e_1,e_2,\ldots, e_{i-1}$. Then it is clear from the assumption on the graph that $$\E(Z_i\mid \cf_{i-1})\leq \frac{10C_2n+10\varepsilon_{3}\log n}{\varepsilon_2 n^2} \varepsilon_{12}n\leq \frac{12C_2 \varepsilon_{12}}{\varepsilon_2}.$$
Also notice that $|Z_i-\E(Z_i\mid \cf_{i-1})|\leq 1$ and hence Azuma's inequality yields

\begin{equation}
\label{e:azuma1}
\P\left(\sum_{i=1}^{\varepsilon_{12}n}\left[Z_i -E[Z\mid \cf_{i-1}]\right]\geq \frac{\kappa\varepsilon_{12}}{2}n\right) \leq e^{-\kappa^2\varepsilon_{12}n/8}.
\end{equation}

It follows that

\begin{equation}
\label{e:azuma2}
\P\left(\sum_{i=1}^{\varepsilon_{12}n}Z_i \geq \frac{12C_2\varepsilon_{12}^2n}{\varepsilon_2}+ \frac{\kappa\varepsilon_{12}}{2}n\right) \leq e^{-\kappa^2\varepsilon_{12}n/8}.
\end{equation}

By choosing $\varepsilon_{12}$ sufficiently small such that $\frac{12C_2\varepsilon_{12}}{\varepsilon_2}\leq \frac{\kappa}{2}$ completes the proof.
\end{proof}

\begin{proof}[Proof of Proposition \ref{p:proportionofbadedges}]
Let us condition on $\{\cf_{t}, t<\tau'_0, N_1(t)=pn\}$. Let $N_{ST}(t)$ be the number of edges in $G(t)$ with one endpoint in $S$ and another endpoint in $T$. Let  $\{e_1,e_2,\ldots , e_{N_{ST}(t)}\}$ denote the set of those edges. Let $X_i$ be the indicator that endpoints of $e_i$ in $G(t)$ are disagreeing in $G(t+\frac{Cn^2}{\beta})$. Since in $Cn^2/\beta$ steps, at most $Cn^2/\beta$ edges can be rewired it follows that for $\beta$ sufficiently large, it suffices to prove that

%$$\P(\sum_{i=1}^{N_{ST}(t)}X_i \notin ((2p(1-p)-3\varepsilon_7/4)N_{ST}(t), (2p(1-p)+3\varepsilon_7/4)N_{ST}(t)\mid \cf_{t}, t< \tau'_0)\leq e^{-\gamma n}.$$
%
%Choosing $1000\varepsilon_{3}<\varepsilon_2^2\varepsilon\varepsilon_7$, it follows from noting ${t<\tau'_0}$ and $p>\varepsilon$ that it suffices to prove

$$\P(\frac{1}{N_{ST}(t)}\sum_{i=1}^{N_{ST}(t)}X_i \notin ((2p(1-p)-\varepsilon_7/2), (2p(1-p)+\varepsilon_7/2)\mid \cf_{t}, t< \tau'_0)\leq e^{-\gamma n}.$$

%$$X_i=1_{\text{the endpoints of $e_i$ are disagreeing in $G(t+Cn^2/\beta)
%$}}.$$
%We need to prove that with exponentially high probability
%$$ \frac{1}{N}\sum_{i=1}^{N}X_i \in (2p(1-p)-\varepsilon_7, 2p(1-p)+\varepsilon_7).$$
%
%Let
%$$Y_i= 1_{\text{the endpoints of $e_i$ are disagreeing in $G(t)$}}.$$
%
%Clearly, for $\beta$ sufficiently large it suffices to prove that with exponentially high probability
%
%$$ \frac{1}{N}\sum_{i=1}^{N}Y_i \in (2p(1-p)-\varepsilon_7/2, 2p(1-p)+\varepsilon_7/2).$$

Let $J$ be a set of size $\varepsilon_{12}n$ where each element is an independent uniform sample from $[N_{ST}(t)]$. Clearly by Hoeffding's inequality,

$$\P\left(\mid \frac{1}{\varepsilon_{12}n}\sum_{j\in J} X_j - \frac{1}{N_{ST}(t)}\sum_{i=1}^{N}X_i \mid \geq \varepsilon_7/4 \right)\leq e^{-\frac{\varepsilon_{7}^2\varepsilon_{12}n}{32}}.$$

So it suffices for us to prove that with probability at least $1-e^{-2\gamma n}$,

$$\frac{1}{\varepsilon_{12}n}\sum_{j\in J} X_j\in  (2p(1-p)-\varepsilon_7/4, 2p(1-p)+\varepsilon_7/4).$$

Choose $\varepsilon_{12}$ sufficiently small, and set $\varepsilon_{13}=2\varepsilon_{12}(1-\kappa)$ so that the conclusions of Lemma \ref{l:vdisjoint} and Lemma \ref{l:couplemany} are satisfied. Let $\mathcal{H}_1$ denote the event that there is a subset $J^*\subseteq J$ with $|J^*|=(1-\kappa)\varepsilon_{12}n$ such that endpoints of $e_j$ are disjoint for all $j\in J^*$. It follows from Lemma \ref{l:vdisjoint} that $\P(\mathcal{H}\mid \cf_t, t<\tau'_0)\geq 1-e^{-100\gamma n}$. Condition on $\mathcal{H}_1$ and $J^*$. Let $v_1,v_2,\ldots, v_{\varepsilon_{13}n}$ be  endpoints of edges of $J^*$. By choosing $100\kappa < \varepsilon \varepsilon_{7}$ it follows that it suffices to prove
\begin{equation}
\label{e:reducedlc}
\P[\frac{2}{\varepsilon_{13}n}\sum_{j\in J^*} X_j\notin  (2p(1-p)-\varepsilon_7/8, 2p(1-p)+\varepsilon_7/8)\mid \cf_t, t<\tau'_0, N_1(t)=pn, \mathcal{H}_1, J]\leq e^{-10\gamma n}.
\end{equation}

Consider the coupling described in \S~\ref{s:coupling}. Fix $j\in J^*$, let $v_{j_1}$ and $v_{j_2}$ be endpoints of $e_j$ in $G(t)$. For  $\tilde{\sigma}\in \Sigma=\{C/\beta +\frac{1}{n^3}, C/\beta +\frac{2}{n^3}, \ldots, 3C/\beta\}$ let $U_{j,\tilde{\sigma}}$ denote the indicator that the position of the coupled random walks started from $v_{j_1}$ and $v_{j_2}$ at time $\tilde{\sigma}$ have different opinions in $G(t)$. Clearly, for a fixed $\tilde{\sigma}$, for all $j\in J^*$, $U_{j,\tilde{\sigma}}$ are conditionally independent. Also, it follows from Lemma \ref{l:couplingtv} that $\E(U_{j,\tilde{\sigma}}\mid \cf_t, t<\tau'_0, N_1(t)=pn)\in [2p(1-p)-\varepsilon_7/32,2p(1-p)+\varepsilon_7/32]$.

A standard Hoeffding bound now shows that conditional on $\mathcal{G}=\{\cf_t, t<\tau'_0, N_1(t)=pn, \mathcal{H}_1,J^*\}$, with probability at least $1-e^{-20\gamma n}$,
\begin{equation}
\label{e:reducedlc3}
\frac{2}{\varepsilon_{13}n}\sum_{j\in J^*} U_{j,\tilde{\sigma}}\in  (2p(1-p)-\varepsilon_7/16, 2p(1-p)+\varepsilon_7/16).
\end{equation}
By taking a union bound over all possible values of $\tilde{\sigma}$, it follows that that above holds for all $\tilde{\sigma}$ in $\Sigma$ with  conditional probability at least $1-e^{-15\gamma n}$.

By observing that by Lemma \ref{l:couplemany}, we have, conditional on $\mathcal{G}$ there exists $\tilde{\sigma}\in \Sigma$
$$\frac{2}{\varepsilon_{13}n}\sum_{j\in J^*} |U_{j,\tilde{\sigma}}-X_j|\leq 12\kappa \leq \varepsilon_7/32$$
with probability at least $1-e^{-15\gamma n}$. This and the previous observation implies (\ref{e:reducedlc}) and the proof of the proposition is complete.
\end{proof}

The following proposition follows along the same lines as Proposition \ref{p:proportionofbadedges} and we shall omit the proof.

%Let $S$ and $T$ be two disjoint subsets of $V$ such that $S\cup T=V$ and $\varepsilon_{2}n\leq |S|\leq |T|$.

%AS: We need to get rid of one of these propositions as we discussed.
\begin{proposition}
\label{p:proportionofbadedgesss}
Fix $S\subseteq V$ with $|S|\geq \varepsilon_2 n$.  For the \emph{rewire-to-random-*} dynamics, let us condition on $\{\cf_t, t<\tau'_0, N_1(t)=pn\}$. For $t'>t$, let $X_{SS}(t')$ denote the number of disagreeing edges at time $t'$. Then there exists a constant $C$ sufficiently large, and $\beta=\beta(C)$ sufficiently large such that, $$\P\left(X_{SS}(t+\frac{Cn^2}{\beta})\notin ((2p(1-p)-\varepsilon_7)N_{SS}(t+\frac{Cn^2}{\beta}), (2p(1-p)+\varepsilon_7)N_{SS}(t+\frac{Cn^2}{\beta})\mid \cf_{t}, t< \tau'_0\right)\leq e^{-\gamma n}$$
for some $\gamma>0$ that does not depend on $\beta$.
\end{proposition}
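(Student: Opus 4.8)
\textbf{Proof plan for Proposition~\ref{p:proportionofbadedgesss}.}
The plan is to repeat the proof of Proposition~\ref{p:proportionofbadedges} essentially verbatim, with the bipartite edge set $\mathcal{E}_{ST}(t)$ replaced by the set $\mathcal{E}_{SS}(t)$ of edges having \emph{both} endpoints in $S$. The bipartite structure of $\mathcal{E}_{ST}$ was used in exactly two places in that argument: (a) to know $|\mathcal{E}_{ST}(t)|=\Omega(n^2)$ on $\{t<\tau'_0\}$ (Lemma~\ref{l:vdisjoint}), and (b) in the vertex-disjointness sampling lemma built on top of that size bound. Everything downstream --- the coupling of \S~\ref{s:coupling}, the mixing estimate of Lemma~\ref{l:couplingtv}, the transfer from random-walk opinions to voter-model opinions via Lemma~\ref{l:couplemany}, and the two Hoeffding bounds --- never used that the chosen edges cross the cut, so it applies unchanged to starting vertices all lying inside $S$.

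First I would record the size bound. On $\{t<\tau'_0\}\subseteq\{t<\tau'_2\}$ one has $\big|N_{SS}(t)-\tfrac14|S|^2\big|\le 2\varepsilon_3 N(t)\le 2\varepsilon_3 n^2$, so since $|S|\ge\varepsilon_2 n$ and $\varepsilon_3<\varepsilon_2^2/1000$,
\[
N_{SS}(t)\ \ge\ \tfrac14\varepsilon_2^2 n^2-2\varepsilon_3 n^2\ \ge\ \tfrac{\varepsilon_2^2}{8}\,n^2 .
\]
This plays the role of $|\mathcal{E}_{ST}(t)|\ge\varepsilon_2 n^2/5$ from Lemma~\ref{l:vdisjoint}. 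The adapted disjointness lemma then follows by the same Azuma argument: if $e_1,\dots,e_{\varepsilon_{12}n}$ are sampled uniformly and independently from $\mathcal{E}_{SS}(t)$, then on $\{t<\tau'_5\}$ the number of edges of $\mathcal{E}_{SS}(t)$ incident to any fixed vertex is at most $D_{\max}(t)\le C_2 n$, so the conditional probability that $e_i$ shares a vertex with $e_1,\dots,e_{i-1}$ is at most $2\varepsilon_{12}n\cdot C_2 n/(\tfrac{\varepsilon_2^2}{8}n^2)=O(\varepsilon_{12}/\varepsilon_2^2)$; choosing $\varepsilon_{12}$ small and applying Azuma's inequality exactly as in Lemma~\ref{l:vdisjoint} shows that with exponentially high probability at least $(1-\kappa)\varepsilon_{12}n$ of the sampled edges are pairwise vertex-disjoint.

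From this point the proof of Proposition~\ref{p:proportionofbadedges} transfers word for word. Writing $X_i$ for the indicator that the endpoints (in $G(t)$) of the $i$-th edge of $\mathcal{E}_{SS}(t)$ are disagreeing at time $t+Cn^2/\beta$, since at most $Cn^2/\beta=o(n^2)$ edges are rewired in this time window, it suffices for $\beta$ large to control $\tfrac{1}{N_{SS}(t)}\sum_i X_i$; by Hoeffding this is within $\varepsilon_7/4$ of the average of $X_j$ over an independent uniform sample $J$ of size $\varepsilon_{12}n$. Passing to a vertex-disjoint subsample $J^\ast$ of size $(1-\kappa)\varepsilon_{12}n$ (adapted Lemma~\ref{l:vdisjoint}) and taking as starting points the $\varepsilon_{13}n=2(1-\kappa)\varepsilon_{12}n$ endpoints of the edges in $J^\ast$ --- which now all lie in $S$, but this is irrelevant since the walk of \S~\ref{s:coupling} mixes over all of $V$ --- Lemma~\ref{l:couplingtv} gives that for each $\tilde\sigma\in\Sigma$ the two coupled walkers issued from a given edge of $J^\ast$ are within $e^{-\sqrt C/1000}$ in total variation of independent uniform vertices, so the indicator that their time-$\tilde\sigma$ positions disagree in $G(t)$ has conditional mean in $[2p(1-p)-\varepsilon_7/32,\,2p(1-p)+\varepsilon_7/32]$ on $\{N_1(t)=pn\}$. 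A Hoeffding bound over $j\in J^\ast$ (conditional independence for fixed $\tilde\sigma$) and a union bound over $\tilde\sigma\in\Sigma$, combined with Lemma~\ref{l:couplemany} to replace random-walk opinions by the voter-model indicators $X_j$ for a suitable $\tilde\sigma$, gives the asserted concentration of $X_{SS}(t+Cn^2/\beta)/N_{SS}(t+Cn^2/\beta)$ around $2p(1-p)$ with error $\varepsilon_7$, up to probability $e^{-\gamma n}$.

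I do not expect a genuine obstacle in this adaptation. The only point that needs care is that the main term $\tfrac14|S|^2$ in $N_{SS}(t)$ must dominate the cut-error term $2\varepsilon_3 n^2$, which is exactly what the parameter hierarchy $\varepsilon_3\ll\varepsilon_2^2$ guarantees; once $N_{SS}(t)=\Omega(n^2)$ is in hand, every subsequent estimate is insensitive to whether the sampled edges cross the cut or stay inside $S$, since the coupling and mixing arguments of \S~\ref{s:coupling} are oblivious to the choice of starting vertices.
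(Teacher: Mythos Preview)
Your proposal is correct and matches the paper's approach exactly: the paper itself omits the proof, stating only that ``the following proposition follows along the same lines as Proposition~\ref{p:proportionofbadedges},'' and your write-up supplies precisely those details, including the one point that genuinely needs checking (the lower bound $N_{SS}(t)\ge\tfrac{\varepsilon_2^2}{8}n^2$ on $\{t<\tau'_0\}$ via the parameter hierarchy $\varepsilon_3\ll\varepsilon_2^2$).
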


%\begin{proposition}
%\label{p:proportionofbadedgestt}
%For the \emph{rewire-to-random-*} dynamics, let us condition on $\{\cf_t, t<\tau'_0, N_1(t)=pn\}$. For $t'>t$, let $X_{TT}(t')$ denote the number of disagreeing edges at time $t'$. Then there exists a constant $C$ sufficiently large, and $\beta=\beta(C)$ sufficiently large such that, $$\P\left(X_{TT}(t+\frac{Cn^2}{\beta})\notin ((2p(1-p)-\varepsilon_7)N_{TT}(t+\frac{Cn^2}{\beta}), (2p(1-p)+\varepsilon_7)N_{TT}(t+\frac{Cn^2}{\beta})\mid \cf_{t}, t< \tau'_0\right)\leq e^{-\gamma n}$$
%for some $\gamma>0$ that does not depend on $\beta$.
%\end{proposition}

%\textsf{The main issue with modifying the proof of the previous proposition to these two cases is that the total number of edges across a cut, or in a set does not remain fixed throughout the interval, but by taking $\beta$ sufficiently large we can make the number of edges that get rewired really small. To check how that probability depends on $\beta$?}

Condition on $\{\cf_{t}, t<\tau_0, N_1(t)=pn\}$. Let $D^*(t')$ denote the event that $N_1(t'')\in ((p-\varepsilon_7)n, (p+\varepsilon_7)n)$ for all $t''\in [t+1, t']$.  Fix $S$ and $T$ as in Proposition \ref{p:proportionofbadedges}. For $t'\in [t+1,t+\delta n^2]$, let us define events
$\mathcal{A}^{t'}_{ST}$, $\mathcal{A}_{SS}^{t'}$ and $\mathcal{A}_{T,T}^{t'}$ as follows.
%$\mathcal{A}_{0}^{t'}$ denote the event
%
%$$\mathcal{A}_{0}^{t'}=\{X(t')\in (2p(1-p)-\varepsilon_7, 2p(1-p)-\varepsilon_7)N(t')\}.$$
$$\mathcal{A}^{t'}_{ST}=\left\{X_{ST}(t')\in (2p(1-p)-2\varepsilon_7, 2p(1-p)+2\varepsilon_7)N_{ST}(t')\right\}.$$
$$\mathcal{A}^{t'}_{SS}=\left\{X_{SS}(t')\in (2p(1-p)-2\varepsilon_7, 2p(1-p)+2\varepsilon_7)N_{SS}(t')\right\}.$$
$$\mathcal{A}_{TT}^{t'}=\left\{X_{TT}(t')\in (2p(1-p)-2\varepsilon_7, 2p(1-p)+2\varepsilon_7)N_{TT}(t')\right\}.$$
Finally, let us define
$$\mathcal{A}^{t'}= \mathcal{A}_{SS}^{t'}\cap \mathcal{A}_{ST}^{t'}\cap \mathcal{A}_{TT}^{t'}.$$
Let $Z_{t'}$ be the indicator of $\overline{\mathcal{A}^{t'}}$.

%\textsf{There is a slight issue here. The proportion of bad edges does not remain exactly the same thorough out this process, but one should be able to use a random walk estimate to prove the same things when $G(t)$ does not have exactly $pn$ many vertices to start with.}

\begin{lemma}
\label{l:propcount}
Let $\mathcal{G}$ denote the conditioning $\mathcal{G}=\{\cf_t, t<\tau_0, N_1(t)=pn\}$. Then we have

$$\P\left(\frac{1}{\delta n^2}\sum_{t'=t+1}^{t+\delta n^2} Z_{t'}\geq \varepsilon_{15}, t+\delta n^2<\tau_0', D^*(t+\delta n^2) \mid \mathcal{G}\right)\leq e^{-h(\beta)n}$$

where $h(\beta)$ can be made arbitrarily large by taking $\beta$ sufficiently large.
\end{lemma}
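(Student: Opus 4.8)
The plan is to bound $Z_{t'}$ directly at \emph{every} $t'$ in the interval using Propositions~\ref{p:proportionofbadedges} and \ref{p:proportionofbadedgesss} with an appropriate starting point, and then to upgrade the resulting pointwise bound into a large deviation bound for $\sum_{t'}Z_{t'}$ by exploiting that the events associated to widely separated times are nearly conditionally independent; no interpolation between times is needed. Set $\ell_0=\lceil Cn^2/\beta\rceil$. Fix $t'$ with $t+\ell_0+1\le t'\le t+\delta n^2$ and apply the propositions with starting time $s:=t'-\ell_0$: conditionally on $\cf_{s}$ and on $E_{s}:=\{s<\tau'_0,\ N_1(s)\in((p-\varepsilon_7)n,(p+\varepsilon_7)n)\}$, each of $X_{ST}(t'),X_{SS}(t'),X_{TT}(t')$ lies within a factor $2q(1-q)\pm\varepsilon_7$, $q=N_1(s)/n$, of $N_{ST}(t'),N_{SS}(t'),N_{TT}(t')$ respectively, except with probability at most $3e^{-\gamma n}$ (a union bound over the three propositions, the $TT$ one being Proposition~\ref{p:proportionofbadedgesss} applied to $T$; the at most one-step discrepancy between $s+Cn^2/\beta$ and $t'$ changes each count by $O(n)=o(\varepsilon_7 n^2)$ and is harmless). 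On $D^*(t+\delta n^2)$ we have $|q-p|<\varepsilon_7$, hence $|2q(1-q)-2p(1-p)|<2\varepsilon_7$; running the propositions with a constant $\varepsilon_7/10$ in place of $\varepsilon_7$ — legitimate, since $C$ and then $\beta$ are chosen after all the $\varepsilon$'s and the dependence on $\varepsilon_7$ there is only through constants — absorbs this drift into the $2\varepsilon_7$ tolerance built into the events $\mathcal A^{t'}_{\cdot}$. Thus, writing $\tilde Z_{t'}=Z_{t'}1_{E_{s}}$, we get $\E(\tilde Z_{t'}\mid\cf_{t'-\ell_0})\le 3e^{-\gamma n}$ for every such $t'$, and on the target event $\{t+\delta n^2<\tau'_0\}\cap D^*(t+\delta n^2)$ one has $\tilde Z_{t'}=Z_{t'}$ for all $t'\ge t+\ell_0+1$ (note $t+\delta n^2<\tau'_0\le\tau_0$ supplies the conditioning $\{s<\tau'_0\}$ required by the propositions, even though $\mathcal G$ only records $t<\tau_0$).

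Decompose $\{t+\ell_0+1,\dots,t+\delta n^2\}$ into the $\ell_0$ arithmetic progressions of common difference $\ell_0$; each has $K$ terms with $K\ge\delta\beta/(2C)$ once $\beta$ is large. Fix one such progression $u_1<u_2<\cdots<u_K$; its Proposition call for $u_i$ has starting time $u_{i-1}$, so $\cf_{u_{i-1}}$ already contains $\tilde Z_{u_1},\dots,\tilde Z_{u_{i-1}}$, and by the usual supermartingale comparison $\sum_{i=1}^K\tilde Z_{u_i}$ is stochastically dominated by $\mbox{Bin}(K,3e^{-\gamma n})$. Hence, with $m=\lceil\tfrac12\varepsilon_{15}K\rceil$,
\[
\P\Big(\sum_{i=1}^K\tilde Z_{u_i}\ge m \ \Big|\ \mathcal G\Big)\le\binom{K}{m}(3e^{-\gamma n})^m\le\Big(\frac{6e^{-\gamma n}}{\varepsilon_{15}}\Big)^m\le e^{-\gamma n\varepsilon_{15}K/4}
\]
for all large $n$. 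A union bound over the $\ell_0\le Cn^2/\beta+1$ progressions shows that with probability at least $1-e^{-\gamma n\varepsilon_{15}K/8}$ every progression has fewer than $m$ bad times. On this event, together with the target event, and using $\ell_0K\le\delta n^2$ and $\ell_0<\varepsilon_{15}\delta n^2/4$ for $\beta$ large,
\[
\sum_{t'=t+1}^{t+\delta n^2}Z_{t'}\ \le\ \ell_0+\ell_0\, m\ <\ \varepsilon_{15}\delta n^2,
\]
the leading $\ell_0$ accounting for the initial times $t+1,\dots,t+\ell_0$ not covered by any Proposition call. Therefore
\[
\P\Big(\tfrac1{\delta n^2}\!\!\sum_{t'=t+1}^{t+\delta n^2}\!\!Z_{t'}\ge\varepsilon_{15},\ t+\delta n^2<\tau'_0,\ D^*(t+\delta n^2)\ \Big|\ \mathcal G\Big)\le e^{-h(\beta)n},\qquad h(\beta)=\frac{\gamma\varepsilon_{15}}{8}\Big\lceil\frac{\delta\beta}{2C}\Big\rceil,
\]
and $h(\beta)\to\infty$ as $\beta\to\infty$ because $\gamma$ (from Propositions~\ref{p:proportionofbadedges}--\ref{p:proportionofbadedgesss}) does not depend on $\beta$.

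The main obstacle is precisely this last upgrade. A bare Markov inequality applied to the pointwise bound gives only probability $O(e^{-\gamma n})$ with $\gamma$ fixed, which is too weak: Lemma~\ref{l:propcount} is eventually summed over the $2^{n}$ partitions $(S,T)$, so one needs an exponent that can be driven above $\log 2$. The resolution is the observation that $\{Z_{t'_1}=1\}$ and $\{Z_{t'_2}=1\}$ are essentially conditionally independent when $|t'_1-t'_2|\ge\ell_0$ (the first is $\cf_{t'_2-\ell_0}$--measurable), so within a single progression the bad-time count concentrates like a binomial with $K\asymp\beta$ trials and exponentially small success probability, which is what makes the exponent grow linearly in $\beta$. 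The only other point requiring care is that each application of the propositions needs the conditioning on the weak stopping time $\tau'_0$ and on the opinion density at the moving starting time $t'-\ell_0$, both of which are available only because we have restricted to the target event $\{t+\delta n^2<\tau'_0\}\cap D^*(t+\delta n^2)$; this is also where the $2\varepsilon_7$ slack in the definition of $\mathcal A^{t'}$ is spent.
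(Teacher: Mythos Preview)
Your proof is correct and follows essentially the same approach as the paper: both arguments partition $[t+1,t+\delta n^2]$ into the $\ell_0\approx Cn^2/\beta$ residue classes modulo $\ell_0$, use Propositions~\ref{p:proportionofbadedges}--\ref{p:proportionofbadedgesss} at successive times along a class to obtain a $\mbox{Bin}(K,e^{-\gamma n})$-type domination with $K\asymp\delta\beta/C$ trials, apply a Chernoff bound within each class, and take a union over classes. Your write-up is slightly more careful than the paper's about the density drift $q\neq p$ and the conditioning on $\{s<\tau'_0\}$ at the moving start time, but the structure is the same.
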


\begin{proof}
For $i=1,2,\ldots ,\frac{Cn^2}{\beta}$, and for $k=1,2, \ldots \frac{\delta \beta}{C}-1$ let $t_{i,k}=t+i+k\frac{Cn^2}{\beta}$. Let

$$W_i=\sum_{k=1}^{\frac{\delta \beta}{C}-1}Z_{t_{i,k}}.$$

From Proposition \ref{p:proportionofbadedges} and Proposition \ref{p:proportionofbadedgesss} it follows that for a fixed $i$, $\P[Z_{t_{i,k+1}}=1\mid \cf_{t_{i,k}}, t_{i,k}<\tau'_0, D^*(t_{i,k})]\leq e^{-\gamma n/5}$. Using a Chernoff bound it follows that

\begin{eqnarray}
\label{e:timeprop1}
\P\left[\frac{C}{\delta\beta} W_{i} \geq \varepsilon_{15}/2, t+\delta n^2\leq \tau'_0, D^{*}(t+\delta n^2) \mid \mathcal{G}\right] &\leq & \exp(-\varepsilon_{15}\delta\beta\log({\frac{\varepsilon_{15}e^{-\gamma n/5}}{2}})/4C)\nonumber \\
&\leq & (\frac{2}{\varepsilon_{15}})^{1/4C}\exp(-\varepsilon_{15}\delta\beta\eta n/20C).
\end{eqnarray}

Taking a union bound over all $i$ and choosing $\beta$ sufficiently large so that $\frac{C}{\delta\beta}\leq \varepsilon_{15}/2$
it follows that

$$\P\left(\frac{1}{\delta n^2}\sum_{t'=t+1}^{t+\delta n^2} Z_{t'}\geq \varepsilon_{15}, t+\delta n^2<\tau' \mid \cf_t \right)\leq \frac{Cn^{2}}{\beta}(\frac{2}{\varepsilon_{15}})^{1/4C}\exp(-\varepsilon_{15}\delta\beta\gamma n/20C).$$

Taking $\beta$ sufficiently large completes the proof of the lemma.
\end{proof}

\begin{proposition}
\label{p:bigcut}
Let $\mathcal{G}$ denote the conditioning $\mathcal{G}=\{\cf_t, t<\tau_0,N_1(t)=pn\}$. Let $S,T$ be as above. Then we have, $$\P(K_{ST}(t+\delta n^2)> \varepsilon_3^2, t+\delta n^2<\tau'_0, D^*(t+\delta n^2) \mid \mathcal{G})\leq \exp (-h(\beta)n)$$

where $h(\beta)$ can be made arbitrarily large by taking $\beta$ sufficiently large.
\end{proposition}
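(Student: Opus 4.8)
The plan is to follow the quantity $K_{ST}(t')$ through the window $[t,t+\delta n^2]$ and show its one-step conditional drift is mean-reverting toward $0$, so that, having started strictly below $\varepsilon_3^2$ at time $t$ (since $t<\tau_0\le\tau_2$), it cannot have risen above $\varepsilon_3^2$ by time $t+\delta n^2$ except on an event of probability $e^{-\Omega(n^2)}$; combined with the failure probability in Lemma~\ref{l:propcount} this gives the statement. Throughout I condition on $\mathcal G$ and restrict to the event $\mathcal W$ that $t+\delta n^2<\tau'_0$, that $D^*(t+\delta n^2)$ holds, and that at most $\varepsilon_{15}\delta n^2$ of the times of $[t+1,t+\delta n^2]$ are bad (i.e. fail $\mathcal A^{t'}$); by Lemma~\ref{l:propcount} the part of $\{K_{ST}(t+\delta n^2)>\varepsilon_3^2\}\cap\{t+\delta n^2<\tau'_0\}\cap D^*(t+\delta n^2)$ on which $\mathcal W$ fails is already $\le e^{-h(\beta)n}$, so it suffices to bound the target event intersected with $\mathcal W$. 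To keep increments genuinely bounded I would run the argument for the process $\tilde K$ obtained by freezing $K_{ST}$ at the first time one of the defining conditions of $\mathcal W$ fails (or at $t+\delta n^2$); each such condition is detected by a stopping time, and $\tilde K$ coincides with $K_{ST}$ on $\mathcal W$.

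\emph{Drift computation.} Write $a=N_{SS}(t')-\tfrac14|S|^2$, $b=N_{TT}(t')-\tfrac14|T|^2$, and $\theta=|S|/n\in[\varepsilon_2,\tfrac12]$, so $|T|/n=1-\theta$. A single step of the \emph{rewire-to-random-*} dynamics rewires at most one edge and changes at most one of $N_{SS},N_{TT}$ (by $\pm1$). Reading off the rewiring rule, and using on a good time $t'$ that $X_{SS}\in(2p(1-p)\pm2\varepsilon_7)N_{SS}$ and similarly for $X_{ST},X_{TT}$ (so the relevant ratios are $\tfrac{N_{SS}}{N}(1+O(\varepsilon_7/\varepsilon))$, with $p(1-p)$ bounded below by $\varepsilon/4$ on $D^*\cap\{t<\tau_*\}$), one obtains
\[
\E[\Delta N_{SS}\mid\cf_{t'}]=\tfrac{2p(1-p)(1+O(\varepsilon_7/\varepsilon))}{nN}\bigl(-N_{SS}|T|+\tfrac12 N_{ST}|S|\bigr)+O(n^{-1/2}),
\]
and the symmetric statement for $N_{TT}$. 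Substituting $N_{ST}=N-N_{SS}-N_{TT}$ and $N=\tfrac14n^2+O(n^{3/2})$ shows that the Erd\H{o}s--R\'enyi profile $(N_{SS},N_{TT})=(\tfrac14|S|^2,\tfrac14|T|^2)$ is a fixed point, and that $(\E[\Delta a\mid\cf_{t'}],\E[\Delta b\mid\cf_{t'}])=-\tfrac{8p(1-p)}{n^2}M_0\,(a,b)^{\top}+O(n^{-1/2})$ with $M_0=\begin{pmatrix}1-\theta/2&\theta/2\\(1-\theta)/2&\tfrac12+\theta/2\end{pmatrix}$. The symmetric part of $M_0$ has smallest eigenvalue $\ge\tfrac34-\tfrac1{2\sqrt2}$, uniformly in $\theta$. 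Hence, expanding $\E[\Delta(a^2+b^2)\mid\cf_{t'}]=2a\E[\Delta a\mid\cf_{t'}]+2b\E[\Delta b\mid\cf_{t'}]+\E[(\Delta a)^2\mid\cf_{t'}]+\E[(\Delta b)^2\mid\cf_{t'}]$ (the cross term vanishes) and dividing by $N^2$, on good times
\[
\E[\Delta K_{ST}\mid\cf_{t'}]\le -\tfrac{c_1}{n^2}K_{ST}(t')+\tfrac{C'}{n^4},\qquad c_1=c_1(\varepsilon)>0,
\]
while on $\mathcal W$ one has $|\Delta K_{ST}|\le(2(|a|\vee|b|)+1)/N^2\le C''\varepsilon_3/n^2$ at every step, because there $|a|,|b|<2\varepsilon_3N$.

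\emph{Martingale argument.} Doob-decompose $\tilde K(t')=K_{ST}(t)+\tilde M_{t'}+\tilde A_{t'}$ on $[t,t+\delta n^2]$. The martingale $\tilde M$ has increments $\le C''\varepsilon_3/n^2$, and over any subinterval the compensator increment is at most $\sum_{\mathrm{good}\ s}\bigl(-\tfrac{c_1}{n^2}K_{ST}(s)\bigr)$ plus $C'\delta/n^2+C''\varepsilon_3\varepsilon_{15}\delta=o(1)+C''\varepsilon_3\varepsilon_{15}\delta$, the last term being $\le\varepsilon_3^2/8$ since $\varepsilon_{15}\le1$ and $\delta<\varepsilon_3/(10000C_2)$. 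Suppose $\tilde K(t+\delta n^2)>\varepsilon_3^2$ on $\mathcal W$ and put $\nu_0=c_1\varepsilon_3^2\delta/100$. If $K_{ST}(s)>\varepsilon_3^2-2\nu_0$ for every $s\in[t,t+\delta n^2]$, then every good step contributes $\le-\tfrac{c_1\varepsilon_3^2}{2n^2}$ to the compensator, there are $\ge\tfrac12\delta n^2$ good steps, so choosing $\varepsilon_{15}$ small gives $\tilde A_{t+\delta n^2}-\tilde A_t\le-\tfrac18c_1\varepsilon_3^2\delta+o(1)$, and since $K_{ST}$ moved from below $\varepsilon_3^2$ to above it, $\tilde M_{t+\delta n^2}-\tilde M_t\ge\tfrac1{16}c_1\varepsilon_3^2\delta\ge\nu_0$. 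Otherwise let $\sigma$ be the last time with $K_{ST}(\sigma)\le\varepsilon_3^2-2\nu_0$; on $(\sigma,t+\delta n^2]$ one has $K_{ST}(s)>\varepsilon_3^2/2$, and since $K_{ST}$ rises by $>2\nu_0$ there while each step moves it by $\le C''\varepsilon_3/n^2$, the choice of $\nu_0$ (together with $\delta<\varepsilon_3/(10000C_2)$ and $c_1\ge\varepsilon\gg\varepsilon_3$) forces the length of this subinterval to be $\ge\tfrac12\varepsilon_3^2\delta n^2$, hence $\ge\tfrac14\varepsilon_3^2\delta n^2$ good steps, so the cumulative good-time compensator there is $\le-\tfrac18c_1\varepsilon_3^4\delta$; choosing $\varepsilon_{15}$ small enough that $C''\varepsilon_3\varepsilon_{15}\delta<\tfrac1{16}c_1\varepsilon_3^4\delta$ yields $\tilde M_{t+\delta n^2}-\tilde M_\sigma\ge2\nu_0+\tfrac1{16}c_1\varepsilon_3^4\delta-o(1)\ge\nu_0$. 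In either case the range of $\tilde M$ on $[t,t+\delta n^2]$ exceeds $\nu_0$, which by the maximal Azuma inequality (increments $\le C''\varepsilon_3/n^2$, $\delta n^2$ steps) has probability $\exp\bigl(-\Omega(\nu_0^2 n^2/(\varepsilon_3^2\delta))\bigr)=e^{-\Omega(n^2)}$. Adding the $e^{-h(\beta)n}$ from Lemma~\ref{l:propcount} and relabelling $h$ finishes the proof.

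The main obstacle is the drift computation and its reconciliation with the perturbations: one needs the fixed point to be exactly the Erd\H{o}s--R\'enyi profile and the linearised drift to revert at a rate uniform over all cuts with $|S|\wedge|T|\ge\varepsilon_2 n$, and one then has to cope with the fact that a bad step perturbs $K_{ST}$ by order $\varepsilon_3/n^2$ (linear in $\varepsilon_3$) whereas the reversion near the level $\varepsilon_3^2$ is only of order $\varepsilon_3^2/n^2$. This imbalance is precisely why $\varepsilon_{15}$ must be small relative to $\varepsilon_3$, why the window length $\delta$ must be small relative to $\varepsilon_3$, and why the argument is organised around the last downcrossing of a level slightly below $\varepsilon_3^2$ rather than around a crude bound on $K_{ST}(t+\delta n^2)-K_{ST}(t)$.
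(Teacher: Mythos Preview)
Your proof is correct and follows the same overall strategy as the paper: compute the one-step drift of $K_{ST}$, show it is mean-reverting on good times, invoke Lemma~\ref{l:propcount} to control the contribution of bad times, and finish with Azuma--Hoeffding on the martingale part. Your linearisation via the matrix $M_0$ and the paper's direct expansion of $\E[W_S^2+W_T^2]$ lead to equivalent drift bounds.

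The one place where the paper is noticeably simpler is the case analysis. Instead of your last-downcrossing argument, the paper splits on the event $\mathcal C=\{\min_{s\in[t,t+\delta n^2]}K_{ST}(s)\le\varepsilon_3^2/2\}$. On $\mathcal C$ no probabilistic argument is needed at all: since every step changes $K_{ST}$ by at most $16/n^2$ and there are at most $\delta n^2$ steps after the minimum, the choice $16\delta<\varepsilon_3^2/2$ deterministically forces $K_{ST}(t+\delta n^2)<\varepsilon_3^2$. On $\mathcal C^c$ the drift is $\le -\tfrac{\Delta}{4n^2}\varepsilon_3^2$ at \emph{every} good time, so a single application of Azuma on the full window suffices. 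This sidesteps the need to lower-bound the length of a post-downcrossing subinterval and to tune $\varepsilon_{15}$ against $\varepsilon_3^3$; the paper only needs $\varepsilon_{15}$ small relative to $\varepsilon_3^2$, which your own remarks correctly identify as the essential tension.
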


\begin{proof}
For $s\in [t,t+\delta n^2-1]$, let $\cf_{s}$ denote the filtration generated by the process up to time $s$. Conditioned on $\cf_{s}$ the transition rule for the evolution of $(N_{SS}(s), N_{TT}(s))$ is given by the following.

\begin{equation*}
(N_{SS}(s+1), N_{TT}(s+1))=
\begin{cases}
(N_{SS}(s)+1, N_{TT}(s))~~w.p. ~~  \frac{X_{ST}(s)}{N(s)}\frac{1}{2}\frac{|S|-1}{n-1}(1-\frac{\beta}{n}) \\
(N_{SS}(s), N_{TT}(s)+1)~~ w.p. ~~  \frac{X_{ST}(s)}{N(s)}\frac{1}{2}\frac{|T|-1}{n-1}(1-\frac{\beta}{n}) \\
(N_{SS}(s)-1, N_{TT}(s))~~ w.p.~~ \frac{X_{SS}(s)}{N(s)}\frac{|T|}{n-1}(1-\frac{\beta}{n})  \\
(N_{SS}(s), N_{TT}(s)-1)~~ w.p.~~ \frac{X_{TT}(s)}{N(s)}\frac{|S|}{n-1}(1-\frac{\beta}{n}) \\
(N_{SS}(s), N_{TT}(s)) ~~~~~~    \text{otherwise}.

%\frac{X_{SS}(s)}{N(s)}\frac{|S|-1}{n-1}+\frac{X_{TT}(s)}{N(s)}\frac{|T|-1}%{n-1}+\frac{X_{ST}(s)}{N(s)}\frac{n-2}{2(n-1)}
\end{cases}
\end{equation*}

For this proof, let us write $\Delta=2p(1-p)(1-\frac{\beta}{n})$ and $\varepsilon_{8}=\varepsilon_{7}/2p(1-p)$. For $n$ sufficiently large and on $Z_{s}=0$, we have

\begin{equation*}
(N_{SS}(s+1), N_{TT}(s+1))=
\begin{cases}
(N_{SS}(s)+1, N_{TT}(s))~~ w.p. ~~  \in \Delta\frac{N_{ST}(s)}{N(s)}\frac{1}{2}\frac{|S|}{n}(1\pm 3\varepsilon_8) \\
(N_{SS}(s), N_{TT}(s)+1)~~ w.p. ~~ \in \Delta\frac{N_{ST}(s)}{N(s)}\frac{1}{2}\frac{|T|}{n}(1\pm 3\varepsilon_8) \\
(N_{SS}(s)-1, N_{TT}(s)) ~~ w.p. ~~ \in \Delta\frac{N_{SS}(s)}{N(s)}\frac{|T|}{n}(1\pm 3\varepsilon_8)  \\
(N_{SS}(s), N_{TT}(s)-1)~~ w.p. ~~ \in \Delta\frac{N_{TT}(s)}{N(s)}\frac{|S|}{n}(1\pm 3\varepsilon_8) \\
\end{cases}
\end{equation*}

Doing a change of variable $W_{S}(s)=\frac{N_{SS}(s)-|S|^2/4}{N}$ and $W_{T}(s)=\frac{N_{TT}(s)-|T|^2/4}{N}$ it follows that on $\{Z_{s}=0\}$, we have
\begin{equation*}
(W_{S}(s+1), W_{T}(s+1))=
\begin{cases}
(W_{S}(s)+1/N, W_{T}(s))~~ w.p. ~~  \in\Delta\left(1-W_S(s)-W_{T}(s)-\frac{|S|^2+|T|^2}{4N}\right)\frac{|S|}{2n}\pm 3\varepsilon_7 \\
(W_{S}(s), W_{T}(s)+1/N)~~ w.p. ~~ \in\Delta\left(1-W_S(s)-W_{T}(s)-\frac{|S|^2+|T|^2}{4N}\right)\frac{|T|}{2n}\pm 3\varepsilon_7 \\
(W_{S}(s)-1/N, W_{T}(s)) ~~ w.p. ~~ \in \Delta\left(W_{S}(s)+\frac{|S|^2}{4N}\right)\frac{|T|}{n}\pm 3\varepsilon_7  \\
(W_{S}(s), W_{T}(s)-1/N)~~ w.p. ~~ \in \Delta\left(W_T(s)+\frac{|T|^2}{4N}\right)\frac{|S|}{n} \pm 3\varepsilon_7 \\
\end{cases}
\end{equation*}
It follows that on $\{Z_s=0\}$,

\begin{eqnarray*}
\E[W_S(s+1)^2+W_{T}(s+1)^2\mid \cf_s] &\leq & W_S(s)^2+W_{T}(s)^2 \\
&+&\frac{2W_S(s)\Delta}{N}\left[\frac{|S|}{2n}-\frac{|S|n}{8N}-W_S(s)(\frac{|S|+2|T|}{2n})-W_T(s)\frac{|S|}{2n}\right]\\
&+& \frac{2W_T(s)\Delta}{N}\left[\frac{|T|}{2n}-\frac{|T|n}{8N}-W_T(s)(\frac{2|S|+|T|}{2n})-W_T(s)\frac{|T|}{2n}\right]\\
 &+&\frac{25\varepsilon_7}{n^2}+o(\frac{1}{n^2})\\
 &\leq & W_S(s)^2+W_{T}(s)^2\\
 &-& \frac{4\Delta}{n^2}\left[\frac{1}{4}(W_S(s)^2+W_{T}(s)^2)+\frac{1}{4}(W_s(s)+W_T(s))^2\right]\\
 &-& \frac{4\Delta}{n^2}\left[\frac{|T|}{2n}W_S(s)^2+\frac{|S|}{2n}W_T(s)^2\right] +\frac{32\varepsilon_7}{n^2}\\
 &\leq & W_S(s)^2+W_{T}(s)^2-\frac{\Delta}{n^2}\left[(W_S(s)^2+W_{T}(s)^2)-32\varepsilon_8\right]
\end{eqnarray*}

Hence, we have

$$\E[K_{ST}(s+1)-K_{ST}(s)\mid \cf_s, Z_{s}=0]\leq -\frac{\Delta}{n^2}(K_{ST}(s)-32\varepsilon_{8}).$$

In particular, on $\{Z_{s}=0\}\cap \{K_{ST}(s)\geq \varepsilon_3^2/2\}$, we have
$$\E[K_{ST}(s+1)-K_{ST}(s)\mid \cf_s]\leq -\frac{\Delta}{4n^2}\varepsilon_3^2$$\
by choosing $\varepsilon_3^2\geq 128\varepsilon_8$.

Now notice that $K_{ST}(s+1)-K_{ST}(s)\leq \frac{16}{n^2}$. Let $\mathcal{C}$ denote the event
$$\mathcal{C}=\{\min_{s\in [t,t+\delta n^2]}K_{ST}\leq \varepsilon_{3}^2/2\}.$$
Let $\mathcal{D}$ denote the event
$$\mathcal{D}=\left\{\sum_{s=t}^{t+\delta n^2-1}\E[K_{ST}(s+1)-K_{ST}(s)\mid \cf_s]>\delta(16\varepsilon_{15}-(1-\varepsilon_{15})\Delta\varepsilon_{3}^2/4)\right\}.$$
It follows from Lemma \ref{l:propcount} that
$$\P[\mathcal{C}^c, \mathcal{D}, D^*(t+\delta n^2), t+\delta n^2 <\tau'_0\mid \mathcal{G}]\leq \exp (-h(\beta)n)$$

where $h(\beta)$ can be made arbitrarily large by choosing $\beta$ sufficiently large.

Now an application of Azuma-Hoeffding inequality gives
\begin{equation}
\label{e:azuma7}
\P\left[\sum_{s=t}^{t+\delta n^2-1} K_{ST}(s+1)-K_{ST}(s)-\E(K_{ST}(s+1)-K_{ST}(s)\mid \cf_s)\geq \frac{\delta\varepsilon_{3}^2}{10}\mid \mathcal{G}\right]\leq \exp (-\frac{\delta\varepsilon_{3}^4n^2}{204800}).
\end{equation}

It follows that if $\varepsilon_{15}$ is chosen sufficiently small so that $\varepsilon \varepsilon_3^{2}> \frac{64\varepsilon_{15}}{1-\varepsilon_{15}}$, then we have,

$$\P[K_{ST}(t+\delta n^2)> K_{ST}(t), \mathcal{C}^c, D^*(t+\delta n^2) t+\delta n^2< \tau'_0\mid \mathcal{G}]\leq \exp (-h(\beta)n).$$

Observe that by choosing $\delta$ sufficiently small ($16 \delta < \varepsilon_3^2$), it follows that on $\mathcal{C}$, $K_{ST}(t+\delta n^2)< \varepsilon_3^2$. Hence

$$\P[K_{ST}(t+\delta n^2)> \varepsilon_{3}^2, D^*(t+\delta n^2), t+\delta n^2< \tau'_0\mid \mathcal{G}]\leq \exp (-h(\beta)n).$$

This completes the proof of the proposition.
\end{proof}

Now we are ready to prove the main result of this subsection.

\begin{theorem}
\label{t:bigcut}
We have for all $t\geq \delta n^2$,
$$\P[t+\delta n^2\geq \tau_2, t+\delta n^2<\tau_*, t<\tau_0]\leq \frac{1}{n^{14}}.$$
\end{theorem}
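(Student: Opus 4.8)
The plan is to union bound over pairs $(S,T)$ and decompose according to whether the opinion density has wandered too far. First, note that on the event $\{t<\tau_0\}$ we are in the good regime where the weak stopping times have not been reached, and by Theorem \ref{t:weakbound} the process is unlikely to reach any weak stopping time in the next $\delta n^2$ steps; in particular, $\tau_2\geq \tau'_2$, so $\{t+\delta n^2\geq \tau_2\}$ together with $t+\delta n^2<\tau_*$ forces the event that $K_{ST}(t+\delta n^2)\geq \varepsilon_3^2$ for some admissible pair $(S,T)$ with $\min(|S|,|T|)\geq \varepsilon_2 n$ (since $L(t+\delta n^2)\geq\varepsilon_3^2$ means the max over such pairs is large, and before $\tau_2$ it was below $\varepsilon_3^2$). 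I would also introduce the density-control event $D^*(t+\delta n^2)$ that $N_1$ stays within $(p\pm\varepsilon_7)n$ throughout $[t+1,t+\delta n^2]$, conditioning on $N_1(t)=pn$: by Lemma \ref{lbw0} (or the random-walk estimate behind it) the complement of $D^*$ has probability $e^{-cn}$ on $\{t<\tau_0, t+\delta n^2<\tau_*\}$, uniformly in $p$.

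Next, I would condition on $\cf_t$ and on $N_1(t)=pn$, and apply Proposition \ref{p:bigcut} to a fixed admissible pair $(S,T)$: it gives that $\P(K_{ST}(t+\delta n^2)>\varepsilon_3^2,\, t+\delta n^2<\tau'_0,\, D^*(t+\delta n^2)\mid \mathcal{G})\leq \exp(-h(\beta)n)$, where $h(\beta)\to\infty$ as $\beta\to\infty$. There are at most $2^n$ choices of $(S,T)$, so a union bound over all admissible pairs, followed by taking $\beta$ large enough that $h(\beta)>n\log 2$-type growth dominates — more precisely large enough that $2^n\exp(-h(\beta)n)\leq n^{-14}$ for large $n$ — handles the contribution on $D^*$. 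Combining with the $e^{-cn}$ bound on $\{t<\tau_0, t+\delta n^2<\tau_*\}\cap \overline{D^*}$ and the $n^{-17}$ bound from Theorem \ref{t:weakbound} for reaching a weak stopping time, and finally averaging over the value of $p=N_1(t)/n$ (there are only $n+1$ possible values, absorbed into the polynomial factor), yields $\P[t+\delta n^2\geq\tau_2,\, t+\delta n^2<\tau_*,\, t<\tau_0]\leq n^{-14}$.

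The main obstacle is ensuring the $2^n$-term union bound is actually beaten by Proposition \ref{p:bigcut}'s tail. Here the key point is that $h(\beta)$ in Proposition \ref{p:bigcut} can be made \emph{arbitrarily large} by choosing $\beta$ large — independently of $n$ — so once $\beta$ is fixed sufficiently large (as is permitted in the parameter hierarchy, $\beta$ being chosen last), $\exp(-h(\beta)n)$ beats $2^n$ with room to spare, leaving a $\exp(-\Omega(n))$ bound that is far smaller than $n^{-14}$. One must be slightly careful that the conditioning event $\mathcal{G}=\{\cf_t, t<\tau_0, N_1(t)=pn\}$ in Proposition \ref{p:bigcut} uses $\tau_0$ (strong stopping times) while the statement of Theorem \ref{t:bigcut} also uses $\tau_0$, so the events match up; and that on $\{t+\delta n^2\geq\tau_2\}\cap\{t+\delta n^2<\tau_*\}$ either a weak stopping time was hit first (controlled by Theorem \ref{t:weakbound}) or we may legitimately apply Proposition \ref{p:bigcut} on the event $\{t+\delta n^2<\tau'_0\}$. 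The remaining steps — the density estimate and the discrete union over $p$ — are routine.
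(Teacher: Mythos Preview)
Your overall strategy matches the paper's: union bound over admissible cuts $(S,T)$, apply Proposition~\ref{p:bigcut}, control the density via $D^*$ (the paper calls it $\tilde D$), and invoke Theorem~\ref{t:weakbound} to pass from $\tau'_0$ to $\tau_*$. The step ``average over $p$'' is unnecessary since Proposition~\ref{p:bigcut} is already conditional on $\cf_t$, which fixes $N_1(t)$; but that is harmless.

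There is, however, a genuine gap. You assert that $\{t+\delta n^2\geq \tau_2\}$ forces $L(t+\delta n^2)\geq \varepsilon_3^2$. This is not true: $\tau_2\leq t+\delta n^2$ only says that $L$ crosses $\varepsilon_3^2$ at \emph{some} time in $(t,t+\delta n^2]$; since each $K_{ST}$ changes by $O(1/N)$ per step, $L$ can drop back below $\varepsilon_3^2$ by time $t+\delta n^2$. Proposition~\ref{p:bigcut} only bounds $K_{ST}(t+\delta n^2)$ at the single endpoint, so your union over cuts alone does not cover the event $\{\tau_2\in(t,t+\delta n^2]\}$.

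The paper closes this gap with a further union bound over time. Having established
\[
\P\bigl[L(t+\delta n^2)\geq \varepsilon_3^2,\ t+\delta n^2<\tau_*\ \big|\ \cf_t,\ t<\tau_0\bigr]\leq \tfrac{2}{n^{17}},
\]
one applies this with $t$ replaced by $t-s$ for each $s\in\{0,1,\ldots,\delta n^2-1\}$, using the inclusions $\{t<\tau_0\}\subseteq\{t-s<\tau_0\}$ and $\{t+\delta n^2<\tau_*\}\subseteq\{t-s+\delta n^2<\tau_*\}$. Since $\{\tau_2=t+\delta n^2-s\}\subseteq\{L((t-s)+\delta n^2)\geq \varepsilon_3^2\}$, the union over $s$ covers all possible locations of $\tau_2$ in $(t,t+\delta n^2]$ and costs a factor $\delta n^2$, which is exactly why the final bound is $n^{-14}$ rather than $n^{-17}$, and why the hypothesis $t\geq \delta n^2$ is needed (to keep $t-s\geq 0$). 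Your proposal never uses that hypothesis, which is a symptom of the missing step.
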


\begin{proof}
Let $\tilde{D}(t+\delta n^2)$ be defined as follows.

$$\tilde{D}(t+\delta n^2)=\{\forall t'\in [t+1, t+\delta n^2]: |N_1(t')-N_1(t)|\leq \varepsilon_{7}n\}.$$

By taking a union bound over all cuts $S,T$ such that $\varepsilon_2 n\leq |S|\leq |T|$ and using Proposition \ref{p:bigcut} we get that

$$\P[L(t+\delta n^2)\geq \varepsilon_3^{2}, \tilde{D}(t+\delta n^2), t+\delta n^2 <\tau_0' \mid \cf_t, t<\tau_0]\leq 2^n\exp(-h(\beta)n)\leq \frac{1}{n^{18}}$$
by taking $\beta$ sufficiently large. It follows by a random walk estimate that $\P[\tilde{D}(t+\delta n^2)\mid \cf_t, t<\tau_0]$ is exponentially close to $1$ and hence we have,

$$\P[L(t+\delta n^2)\geq \varepsilon_{3}^2, t+\delta n^2<\tau'_0\mid t<\tau, \cf_t]\leq \frac{2}{n^{18}}.$$

By Theorem \ref{t:weakbound}, we know that $\P[t+\delta n^2\geq \tau'_0, t+\delta n^2< \tau_{*}\mid \cf_t, t<\tau_0]\leq \frac{1}{n^{17}}$ and hence

$\P[L(t+\delta n^2)\geq \varepsilon_3^2, t+\delta n^2< \tau_*\mid \cf_t , t<\tau_0]\leq \frac{2}{n^{17}}$.

Now as $\{t<\tau\}\subseteq \{t-s<\tau\}$ and $\{t+\delta n^2 <\tau_{*}\}\subseteq \{t+\delta n^2-s <\tau_{*}\}$ for each $s\geq 0$, by taking a union over $s\in [0,\delta n^2 -1]$ we get that

$$\P[\tau_2\leq t+\delta n^2, \tau_{*}< t+\delta n^2, t<\tau_0]\leq \frac{1}{n^{14}}.$$
This completes the proof of the theorem.
\end{proof}

\subsection{Edge Multiplicity}
%\marginpar{This subsection needs to be rewritten}
%\marginpar{Rewriting started}
In this subsection we consider the strong stopping time associated with edge multiplicities, i.e., $\tau_3$. We need the following lemma.

\begin{lemma}
\label{l:probagreeing}
Let $u,v$ be two fixed vertices in $V$. Let $X_{uv}(t')$ be the indicator that $u$ and $v$ are disagreeing in $G(t')$. Then we have, $\E(X_{uv}(t+\frac{Cn^2}{\beta})\mid \cf_{t}, t<\tau'_0)\geq \frac{\varepsilon}{2}$.
\end{lemma}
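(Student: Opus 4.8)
The plan is to exploit the duality between the evolving voter model and random walks set up in \S\ref{s:coupling}, applied to just two walkers: one started from $u$ and one from $v$ (i.e.\ take $W=\{u,v\}$ in the coupling construction), whose positions at time $t+s$ I write $Y_u(t+s)$ and $Y_v(t+s)$. Condition throughout on $\{\cf_t,\,t<\tau'_0\}$. Since $\tau'_0\le\tau_*$ this forces $p:=N_1(t)/n\in[\varepsilon,1-\varepsilon]$, and hence $2p(1-p)\ge 2\varepsilon(1-\varepsilon)\ge\varepsilon$. Let $G$ denote the event that \emph{both} walkers are happy (Definition \ref{d:happy}) at the random time $T=\sum_{i=1}^{Cn^2/\beta}T_i$. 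On $G$, Lemma \ref{l:couplingbasic1} identifies the $G(t)$-opinion of $Y_u(t+T)$ with $u(t+\frac{Cn^2}{\beta})$ and that of $Y_v(t+T)$ with $v(t+\frac{Cn^2}{\beta})$; so on $G$ we have $X_{uv}(t+\frac{Cn^2}{\beta})=1$ exactly when $Y_u(t+T)$ and $Y_v(t+T)$ carry different opinions in $G(t)$. Therefore $\E(X_{uv}(t+\frac{Cn^2}{\beta})\mid\cf_t,\,t<\tau'_0)$ is at least $\P(\text{endpoints carry different }G(t)\text{-opinions})-\P(G^c)$.

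To bound $\P(G^c)$ I reprove, for two walkers in place of $\varepsilon_{13}n$, the estimates behind Lemmas \ref{l:notmanysteps}, \ref{l:notrewired}, \ref{l:happycondition} and \ref{l:couplingbasic2}, together with the two-walker case of the non-intersection bound of Lemma \ref{l:nonintersection} (which here controls the only mechanism by which a walker can become type $B$, namely the two walkers meeting). Each of the finitely many ways a given walker can fail to be happy up to time $T$ — taking more than $20C$ steps, traversing a rewired edge, violating condition $3$, the walkers meeting, or $T$ exceeding $3C/\beta$ — has probability at most $e^{-cC}+O(C^2/\beta)+O\!\bigl(C^2q(\beta)/(\varepsilon\beta)\bigr)+o_n(1)$, so $\P(G^c)$ is made smaller than $\varepsilon/6$ by taking $C$, then $\beta=\beta(C)$, then $n$ large (in that order).

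It remains to bound $\P(Y_u(t+T)\text{ and }Y_v(t+T)\text{ carry different }G(t)\text{-opinions})$ from below. The two walkers are, marginally and jointly, independent continuous-time random walks on the fixed graph $G(t)$, and $T$ is a sum of $Cn^2/\beta$ i.i.d.\ $\mathrm{Exp}(2N(t))$ variables with $N(t)=N(0)\in[\frac{n^2}{4}-n^{3/2},\frac{n^2}{4}+n^{3/2}]$, so a large-deviation bound (as in Lemma \ref{l:couplingbasic2}) gives $\bigl|T-\frac{2C}{\beta}\bigr|\le n^{-0.9}/\beta$ off an event of superpolynomially small probability. On $\{t<\tau'_5\}$ each walker jumps at rate at most $\beta$, so with probability $1-o_n(1)$ neither walker jumps during the window $[\frac{2C}{\beta}-\frac{2n^{-0.9}}{\beta},\frac{2C}{\beta}+\frac{2n^{-0.9}}{\beta}]$, whence $Y_u(t+T)=Y_u(t+\frac{2C}{\beta})$ and $Y_v(t+T)=Y_v(t+\frac{2C}{\beta})$ up to an event of probability $o_n(1)$. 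Since $\frac{2C}{\beta}\ge\frac{C}{\beta}$, Lemma \ref{l:couplingtv} bounds the total variation distance of each of $Y_u(t+\frac{2C}{\beta})$ and $Y_v(t+\frac{2C}{\beta})$ from the uniform law on $V$ by $e^{-\sqrt{C}/1000}$; by independence the pair is within $2e^{-\sqrt{C}/1000}$ in total variation of a pair of independent uniform vertices, which carry different $G(t)$-opinions with probability exactly $2p(1-p)\ge\varepsilon$. Combining the three estimates, $\E(X_{uv}(t+\frac{Cn^2}{\beta})\mid\cf_t,\,t<\tau'_0)\ge \varepsilon-2e^{-\sqrt{C}/1000}-o_n(1)-\P(G^c)\ge\varepsilon/2$ for $C$, $\beta$, $n$ large enough.

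The main obstacle is this last step: $T$ is a random time that is \emph{not} independent of the walkers' real-time trajectories, so one cannot directly apply the deterministic-time mixing bound of Lemma \ref{l:couplingtv} at time $T$. The way around it above — concentrating $T$ into a window of width $o(1/\beta)$ about the deterministic time $2C/\beta$, and using that a walker makes no jump in so short a window with probability $1-o_n(1)$, so that we may replace $T$ by $2C/\beta$ — is the crux; alternatively one could mimic the ``quiet interval'' device of Lemma \ref{l:couplemany}. A secondary, purely bookkeeping, point is that the happiness estimates of \S\ref{s:coupling}, stated there for $\varepsilon_{13}n$ walkers, must be re-derived for a single walker, which is routine and only easier.
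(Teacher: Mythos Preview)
Your proof is correct and follows essentially the same route as the paper's: couple the rewire-to-random-$*$ dynamics with two independent continuous-time walks on $G(t)$, use Lemma \ref{l:couplingbasic1} on the happy event, then invoke the mixing bound Lemma \ref{l:couplingtv} at a deterministic time near $2C/\beta$. Your handling of the replacement of the random time $T$ by the deterministic time $2C/\beta$ --- via the $O(n^{-1}\beta^{-1/2})$ concentration of $T$ and the bound on the jump rate --- is in fact more careful than the paper's rather terse appeal to Lemma \ref{l:couplingbasic2} at this step.
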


\begin{proof}
To prove this consider the coupling of the evolving voter model with independent continuous time random walk started from $u$ and $v$ as described in \S~\ref{s:coupling}.  Notice that the chance that the random walks intersect upto time $\frac{3C}{\beta}$ is $o(1)$ as $n\rightarrow \infty$. Also notice that the chance that either of the walk traverses any edge that was rewired can be made less than $\frac{\varepsilon}{100}$ by choosing $\beta$ sufficiently large. Let $Y_{uv}$ denote the indicator that the positions of the random walks started from $u$ and $v$ after time $T$ are disagreeing in $G(t)$. Clearly $P(X_{uv}(t+\frac{Cn^2}{\beta})\neq Y_{uv})\leq \varepsilon/4$ using Lemma \ref{l:couplingbasic1}. Also let $Y^*_{uv}$ be the indicator that the position of random walks started from $u$ and $v$ after time $\frac{C-\varepsilon/100}{\beta}$ are disagreeing in $G(t)$. Using Lemma \ref{l:couplingbasic2} it follows that $\P(Y_{uv}\neq Y^*_{uv})\leq \varepsilon/4$. The result follows by noticing that Lemma \ref{l:couplingtv} implies that for $C$ sufficiently large $\E(Y^*_{uv})\geq \varepsilon$.
\end{proof}
%

%AS  Should this be stated from time t rather than from time 1?  Also [1,t] looks like a continuous interval rather than \{1,...,t\}
%\begin{lemma}
%\label{l:multbadprop}
%Let $u,v\in V$ be fixed. Let $t>\varepsilon_{16}n^2\log n$. Then
%$$\P\left[\#\{t'\in \{1,2,\ldots, t\}: u(t')\neq v(t')\}\leq t\varepsilon/4, t<\tau'_0 \right]\leq \frac{1}{n^{r(\beta)}}$$
%where $r(\beta)$ can be made arbitrarily large by taking $\beta$ sufficiently large.
%\end{lemma}
%
%\begin{proof}
%For any $t'<t$, it follows from Lemma \ref{l:probagreeing} that on $\{t'<\tau'_0\}$, $\P[u(t'+Cn^2/\beta)\neq  v(t'+Cn^2/\beta)\mid \cf_{t'}]\geq \varepsilon/2$ for $\beta$ sufficiently large. It follows using a Chernoff's bound that for each $i=1,2,\ldots, Cn^2/\beta$,
%
%$$\P\left[\#\{t'\in \{i+kCn^2/\beta: k\in [\beta t/n^2C]\}: u(t')\neq  v(t')\}\leq \frac{\beta t\varepsilon}{4Cn^2}, t<\tau'_0\right]\leq  \exp (-\frac{\beta t \varepsilon}{12Cn^2}).$$
%
%For all $t>\varepsilon_{16}n^2\log n$, we have the right hand side of the above inequality is at most $(\frac{1}{n})^{\beta \varepsilon_{16}\varepsilon/12C}$
%and the lemma follows by taking a union bound over all $i\in [Cn^2/\beta]$.
%\end{proof}
%
%
%The following lemma is an immediate corollary of the previous lemma and we omit the proof.

\begin{lemma}
\label{l:multbadprounion}
Let $u,v\in V$ be two vertices in $V$. Fix $t>\varepsilon_{16}n^2\log n$. For $\varepsilon_{16}n^2\log n<t'<t$, let $A_{t'}$ denote the event that there exists $\mathcal{T}\in \{1,2,\ldots, t-t'\}$ such that 
$$\#\{s\in \{\mathcal{T}, \mathcal{T}+1, \ldots , \mathcal{T}+t'\}:u(s)\neq v(s)\}\leq t'\varepsilon/4.$$
Then we have 
$$\P\left[\cup A_{t'}, t<n^4\wedge \tau'_0\right]\leq \frac{1}{n^{r(\beta)}}$$
where $r(\beta)$ can be made arbitrarily large by taking $\beta$ sufficiently large.
\end{lemma}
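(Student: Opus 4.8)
The plan is to follow only the scalar process $Y(s):=\mathbf{1}\{u(s)\neq v(s)\}$ and to show that, on $\{t<n^4\wedge\tau'_0\}$, its occupation time over every window of length at least $\varepsilon_{16}n^2\log n$ is at least $\tfrac{\varepsilon}{4}$ of the window length, the only probabilistic input being Lemma~\ref{l:probagreeing}. Write $L:=Cn^2/\beta$, which we may assume to be an integer; applied with its time parameter set to an arbitrary $s$, Lemma~\ref{l:probagreeing} says precisely that $\E\bigl(Y(s+L)\mid \cf_s\bigr)\ge \varepsilon/2$ on $\{s<\tau'_0\}$. First I would fix a window $[\mathcal{T},\mathcal{T}+t']$ with $\varepsilon_{16}n^2\log n<t'<t$ and $1\le \mathcal{T}\le t-t'$, bound $\P\bigl(\bigl\{\sum_{s=\mathcal{T}}^{\mathcal{T}+t'}Y(s)\le \tfrac{\varepsilon}{4}t'\bigr\}\cap\{t<n^4\wedge\tau'_0\}\bigr)$, and then take a union bound over the at most $n^{8}$ pairs $(\mathcal{T},t')$; since $\tfrac{\varepsilon}{4}t'$ is exactly the threshold appearing in $A_{t'}$, this reduces the lemma to the per-window estimate.

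For a single window the key device is to split $\{\mathcal{T},\mathcal{T}+1,\dots,\mathcal{T}+t'\}$ into the $L$ residue classes $\mathcal{I}_0,\dots,\mathcal{I}_{L-1}$ modulo $L$. Inside a class the times are spaced exactly $L$ apart, so the displayed form of Lemma~\ref{l:probagreeing} says that the corresponding $0/1$ subsequence has conditional mean at least $\varepsilon/2$ at each step given the past, as long as we are before $\tau'_0$. If the total occupation time over the window were at most $\tfrac{\varepsilon}{4}t'<\tfrac{\varepsilon}{4}(t'+1)=\tfrac{\varepsilon}{4}\sum_j|\mathcal{I}_j|$, then by averaging some class $\mathcal{I}_j$ would carry occupation time at most $\tfrac{\varepsilon}{4}|\mathcal{I}_j|$; so it suffices to prove that, for each fixed class, this is exponentially unlikely on $\{t<\tau'_0\}$, and then union over the $L\le Cn^2$ classes.

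For a fixed class, enumerate its times $s_0<s_1<\dots<s_K$ with $s_k=s_0+kL$; since $t'\gg L$ the length obeys $K\ge t'/L-1\ge \varepsilon_{16}\beta(\log n)/(2C)$ for $n$ large, and this is the only place the hypothesis $t'\ge \varepsilon_{16}n^2\log n$ is used. I would then run the standard stopped-submartingale argument: set $D_k:=\bigl(Y(s_k)-\tfrac{\varepsilon}{2}\bigr)\mathbf{1}\{s_{k-1}<\tau'_0\}$, which by Lemma~\ref{l:probagreeing} has $\E(D_k\mid \cf_{s_{k-1}})\ge 0$ and $|D_k|\le 1$, so $\sum_{k\le m}D_k$ is a submartingale with bounded increments and Azuma--Hoeffding gives $\P\bigl(\sum_{k=1}^{K}D_k\le -\tfrac{\varepsilon}{8}K\bigr)\le e^{-\varepsilon^2K/128}$. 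On $\{t<\tau'_0\}$ every $s_k\le \mathcal{T}+t'\le t$ lies before $\tau'_0$, so all the truncating indicators are $1$ there, and a one-line rearrangement using $\tfrac{\varepsilon}{4}(K+1)\le \tfrac{3\varepsilon}{8}K$ (valid for $K\ge 2$) shows $\{t<\tau'_0\}\cap\bigl\{\sum_{s\in\mathcal{I}_j}Y(s)\le \tfrac{\varepsilon}{4}|\mathcal{I}_j|\bigr\}\subseteq\bigl\{\sum_{k=1}^K D_k\le -\tfrac{\varepsilon}{8}K\bigr\}$. Hence each residue-class event has probability at most $e^{-\varepsilon^2K/128}\le n^{-\varepsilon^2\varepsilon_{16}\beta/(256C)}$, and multiplying by the union-bound factor $L\cdot n^4\cdot n^4\le Cn^{10}$ yields the bound $n^{-r(\beta)}$ with $r(\beta)=\varepsilon^2\varepsilon_{16}\beta/(256C)-10-o(1)$, which tends to $\infty$ with $\beta$ uniformly over large $n$.

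The step I expect to be the main obstacle is the bookkeeping with $\tau'_0$: Lemma~\ref{l:probagreeing} supplies the drift bound $\E(Y(s+L)\mid\cf_s)\ge\varepsilon/2$ only before $\tau'_0$, so the subsequence is not genuinely ``drifting'' and Azuma cannot be applied to it directly; the indicator-truncated increment $D_k$ is exactly the remedy, being a legitimate submartingale increment unconditionally while coinciding with $Y(s_k)-\varepsilon/2$ on the event $\{t<n^4\wedge\tau'_0\}$ that the statement concerns. Two secondary points: first, all the geometric control in $\tau'_0$ (degrees, edge multiplicities, large cuts, Cheeger constant) is used here solely through Lemma~\ref{l:probagreeing}, so no fresh estimates on $G(t)$ are needed; second, the threshold $t'\ge\varepsilon_{16}n^2\log n$ is used only to guarantee that every residue class is long enough, $K=\Omega(\beta\log n)$, so that the exponential per-class bound beats the polynomial union-bound factor — which is also the reason nothing is asserted for shorter windows.
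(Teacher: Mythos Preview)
Your proposal is correct and follows essentially the same route as the paper: both split the window $[\mathcal{T},\mathcal{T}+t']$ into residue classes modulo $L=Cn^2/\beta$, use Lemma~\ref{l:probagreeing} to obtain a uniform lower bound on the conditional mean of $Y(s_k)$ given $\cf_{s_{k-1}}$ before $\tau'_0$, apply a concentration bound within each class, and then union over classes, $\mathcal{T}$, and $t'$. The only cosmetic differences are that the paper phrases the per-class bound as a ``Chernoff'' inequality (yielding an exponent linear in $\varepsilon$) while you spell out the stopped-submartingale increments $D_k$ and invoke Azuma--Hoeffding (yielding an exponent quadratic in $\varepsilon$); both give $r(\beta)\to\infty$.
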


\begin{proof}
Fix $\varepsilon_{16}n^2\log n<t'<t\wedge n^4$ and $\mathcal{T}\in \{1,2,\ldots t-t'\}$. For $t''\in \{\mathcal{T}, \mathcal{T}+1,\ldots, \mathcal{T}+t'\}$ it follows from Lemma \ref{l:probagreeing} that on $\{t''<\tau'_0\}$, $\P[u(t''+Cn^2/\beta)\neq  v(t''+Cn^2/\beta)\mid \cf_{t''}]\geq \varepsilon/2$ for $\beta$ sufficiently large. It follows using a Chernoff's bound that for each $i=1,2,\ldots, Cn^2/\beta$,
$$\P\left[\#\{t''\in \{\mathcal{T}+i+kCn^2/\beta: k\in [\beta t'/n^2C]\}: u(t'')\neq  v(t'')\}\leq \frac{\beta t'\varepsilon}{4Cn^2}, t<\tau'_0\right]\leq  \exp (-\frac{\beta t' \varepsilon}{12Cn^2}).$$
For all $t'>\varepsilon_{16}n^2\log n$, we have the right hand side of the above inequality is at most $(\frac{1}{n})^{\beta \varepsilon_{16}\varepsilon/12C}$
and it follows by taking a union bound over all $i\in [Cn^2/\beta]$ and all $\mathcal{T}\in \{1,2,\ldots t-t'\}$ that 
$$\P[A_{t'},t<n^4\wedge \tau'_0]\leq \frac{1}{n^{r'(\beta)}}$$ where $r'(\beta)$ can be made sufficiently large by choosing $\beta$ to be sufficiently large. The lemma now follows by taking union bound over $\varepsilon_{16}n^2\log n<t'<t\wedge n^4$.
\end{proof}

Now we define the following family of random walks which we couple with the \emph{rewire-to-random-*} dynamics as follows, $X^{s}(\cdot)$  indexed by $s\in \{1,2,\ldots, t\}$ with each starting from $K>0$ (i.e., $X^{s}(0)=K~ \forall s$) with transition probabilities as described below.

\begin{equation*}
X^s(h+1)=
\begin{cases}
X^{s}(h)+1 ~~ w.p. ~~  \frac{9C_2}{n^2} \\
X^{s}(h)-1 ~~ w.p. ~~  \frac{K}{n^2}~\text{if}~O_u(s+h)\neq O_v(s+h) \\
X^{s}(h) ~~~ ~otherwise.
\end{cases}
\end{equation*}

The following lemma is immediate by comparing one step transition probabilities of $M_{uv}(t)$ and $X^{s}(t-s)$.

\begin{lemma}
\label{l:multstocdom}
Let $M_{uv}^*(t)=\max_{t'\in[1,t]}M_{uv}(t')$ and $X^{*}(t)=\max_{s,h: s+h\leq t} X^{s}(t)$. Then we have, on $\{t<\tau'_0\}$, $M_{uv}^{*}(t)\preceq X^{*}(t)$ where $\preceq$ denotes stochastic domination.
\end{lemma}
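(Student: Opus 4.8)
The plan is to establish the stochastic domination $M_{uv}^*(t) \preceq X^*(t)$ by comparing the one-step transition probabilities of the multi-edge count $M_{uv}(t')$ with those of the auxiliary walks $X^s(\cdot)$, coupled appropriately with the \emph{rewire-to-random-*} dynamics. First I would fix $u,v$ and examine how $M_{uv}(t')$ can change in a single step of the dynamics conditioned on $\cf_{t'}$ and on $\{t'<\tau'_0\}$. The count $M_{uv}(t')$ increases by $1$ precisely when a disagreeing edge at some bond $(u,w)$ or $(v,w)$ is chosen and rewired with root $u$ (resp. $v$) onto $v$ (resp. $u$); since we are on $\{t'<\tau'_0\}$, in particular before $\tau'_5$, the degree of each of $u$ and $v$ is at most $C_2 n = 2n$, so the number of edges incident to $\{u,v\}$ that could be rewired to the bond $(u,v)$ is at most $2C_2 n$. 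Dividing by $N(t') \ge n^2/4 - n^{3/2}$ for the probability of selecting such an edge, multiplying by the at most $1/(n-1)$ chance of rewiring to the specific other endpoint, and absorbing constants, the chance of an increase is at most $9C_2/n^2$ for $n$ large — matching the up-rate of $X^s$.

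Next I would handle the down-moves. The count $M_{uv}(t')$ decreases by $1$ whenever one of the (currently $M_{uv}(t')$ many) edges in the bond $(u,v)$ is selected and rewired away; this requires the edge to be disagreeing, i.e.\ $u(t') \ne v(t')$, which happens only on the event $\{O_u(s+h) \ne O_v(s+h)\}$ in the notation of the auxiliary walk (here $s$ indexes the time shift and $h$ the number of elapsed steps). On that event, each of the $M_{uv}(t')$ edges is selected with probability $M_{uv}(t')/N(t')$ and then rewired (not a relabelling step, and the rewiring is legal with probability close to $1$), giving a down-rate of at least $M_{uv}(t')/n^2$ when $M_{uv}(t') \le K$ — which is exactly the down-rate $K/n^2$ of $X^s$ when the walk sits at level $\le K$. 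The key monotonicity point is that the down-drift of $X^s$ is frozen at the constant $K/n^2$ rather than scaling with the current level, so as long as $M_{uv} \le K = X^s$-level the true process has at least as strong a downward push; a standard coupling (running both chains on the same source of randomness, with the $X^s$ walk only moving down when $M_{uv}$ does) then propagates $M_{uv}(t') \le X^s(t'-s)$ for every $s \le t'$, and taking the running maxima over $s$ and over time yields $M_{uv}^*(t) \preceq X^*(t)$ on $\{t<\tau'_0\}$.

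I would then note that the factor "$9C_2$" already builds in enough slack (the true up-rate is $\le 8C_2/n^2 + o(1/n^2)$) to absorb the $o(1/n^2)$ corrections coming from the legality-of-rewiring probability, the $\beta/n$ chance of a relabelling step instead of a rewiring step, and the deviation of $N(t')$ from $n^2/4$; all of these are controlled on $\{t'<\tau'_0\}$ by the degree and edge-count bounds. The statement is called "immediate" in the paper precisely because, once one writes down the two transition tables side by side, the domination is a direct consequence of a one-step coupling; the only genuine content is the verification that the constants work out uniformly in $t' < \tau'_0$, and this is the routine calculation I would relegate to a line or two rather than grind through. The main (very mild) obstacle is simply being careful that the coupling respects the "$X^s$ moves down only when $M_{uv}$ does and $M_{uv} \le K$" rule, since otherwise the down-drifts are not directly comparable; but this is exactly the kind of bookkeeping that the frozen down-rate $K/n^2$ is designed to make transparent.
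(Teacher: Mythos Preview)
Your approach is the paper's: the lemma is declared immediate from a one-step comparison of transition probabilities, and you correctly bound the up-rate of $M_{uv}$ by $9C_2/n^2$ using the degree bound valid on $\{t'<\tau'_0\}$.

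The down-rate comparison, however, is inverted. You write that ``as long as $M_{uv}\le K$'' the true process has at least as strong a downward push as $X^s$, and propose coupling so that $X^s$ moves down only when $M_{uv}$ does. But when $M_{uv}$ is small (e.g.\ $M_{uv}=1$) its down-rate is of order $1/n^2$, far below the fixed down-rate $K/n^2$ of $X^s$, so that coupling is not available and the inequality $M_{uv}(t')\le X^s(t'-s)$ need not propagate for all $s,t'$. The correct observation is the opposite one: when $M_{uv}(s+h)\ge K$, its down-rate
\[
\frac{M_{uv}}{N}\Bigl(1-\frac{\beta}{n}\Bigr)\frac{n-2}{n-1}\;\ge\;\frac{K}{n^2}
\]
(using $N\le n^2/4+n^{3/2}$) dominates the down-rate of $X^s$. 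This is precisely why the auxiliary walks are indexed by $s$ and each restarts at level $K$: one only needs the one-step domination on excursions of $M_{uv}$ above level $K$, since $X^*(t)\ge K$ deterministically (each $X^s(0)=K$) already handles the case $M_{uv}^*(t)\le K$. For an excursion starting at time $s$ with $M_{uv}(s)=K=X^s(0)$, the coupling (up-moves of $M_{uv}$ matched by up-moves of $X^s$; down-moves of $X^s$ matched by down-moves of $M_{uv}$, valid throughout since $M_{uv}\ge K$ there) gives $M_{uv}(s+h)\le X^s(h)$ for the duration, and taking the maximum over $s,h$ yields the lemma. Once you flip the inequality and invoke this excursion decomposition, the rest of your write-up goes through.
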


From the previous lemma, we deduce the following.

\begin{lemma}
\label{l:multmart}
We have
$\P[M_{uv}^*(t)>\varepsilon_4 \log n , t<\tau'_0\wedge n^4]\leq \frac{1}{n^{10}}$.
\end{lemma}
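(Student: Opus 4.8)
The plan is to exploit the stochastic domination $M_{uv}^*(t) \preceq X^*(t)$ from Lemma \ref{l:multstocdom} and to show that the family of random walks $X^s(\cdot)$ essentially never reaches height $\varepsilon_4 \log n$ in $n^4$ steps, on the event $\{t < \tau'_0\}$. First I would control the ``downward drift'' available to each walk $X^s$. The subtlety is that the downward step of $X^s(h)$ has probability $\tfrac{K}{n^2}$ only when $u(s+h) \neq v(s+h)$, i.e.\ only when $u$ and $v$ are disagreeing at the appropriate time; so I need a quantitative guarantee that, over any sufficiently long window, $u$ and $v$ disagree for a constant fraction of the time. This is exactly what Lemma \ref{l:multbadprounion} provides: with probability $1 - n^{-r(\beta)}$ (for $\beta$ large), there is no time window of length $t' > \varepsilon_{16} n^2 \log n$ inside $[1, n^4]$ during which $u, v$ disagree for fewer than $t'\varepsilon/4$ steps, provided $t < n^4 \wedge \tau'_0$. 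So I would work on the complement of $\bigcup A_{t'}$ henceforth.

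Next, fix a single walk $X^s$ started at $K = \varepsilon_4 \log n / 2$ (this is the natural threshold: once we bound $X^*(t)$ above by about $K$ plus a small excursion, we get the $\varepsilon_4 \log n$ bound — more precisely I would track excursions above $K$). Consider the stopped process run until it either hits $0$ or reaches $\varepsilon_4 \log n$. On the good event, over every window of length $\sim \varepsilon_{16} n^2 \log n$ the number of ``active'' (disagreeing) steps is at least $\varepsilon t'/4 \sim \tfrac{\varepsilon \varepsilon_{16}}{4} n^2 \log n$. During active steps the walk has a net negative drift provided $K \geq 10 C_2$, which holds since $K = \tfrac{\varepsilon_4}{2}\log n \to \infty$; indeed the downward rate $\tfrac{K}{n^2}$ dominates the upward rate $\tfrac{9C_2}{n^2}$ by a factor $\gtrsim K/(9C_2) \gg 1$. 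An exponential (Doob) supermartingale argument — multiplying $X^s(h)$ by $\theta^{-X^s(h)}$ for a suitable $\theta > 1$ and using that on active steps $\mathbb{E}[\theta^{-X^s(h+1)} \mid \mathcal{F}_h] \leq \theta^{-X^s(h)}$ while on inactive steps it increases by a negligible factor $(1 + O(n^{-2}))$ — shows that, because the number of active steps in $[0, n^4]$ is $\Omega(n^2 \log n)$ (easily enough to kill the $O(n^4 \cdot n^{-2}) = O(n^2)$ inactive inflation once we also insert upward-step control), the probability that $X^s$ ever exceeds $\varepsilon_4 \log n$ before returning to $0$ is at most $\theta^{-\varepsilon_4 \log n / 2} = n^{-c \varepsilon_4}$ for some constant $c > 0$; taking $\varepsilon_4$ moderate and absorbing constants, this is $\leq n^{-30}$, say. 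I would combine this with an elementary Chernoff bound on the total number of upward steps taken by $X^s$ in $n^4$ steps (stochastically dominated by $\mathrm{Bin}(n^4, 9C_2/n^2)$, concentrated around $9C_2 n^2$) to guarantee the walk cannot drift up gradually either.

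Finally, take a union bound over the $s \in \{1, \dots, t\} \subseteq \{1, \dots, n^4\}$ choices of starting index and over the $h$ index implicit in $X^*(t) = \max_{s,h} X^s(t-s)$: there are at most $n^8$ pairs, each contributing $\leq n^{-30}$, which is comfortably $\leq \tfrac12 n^{-10}$; adding the $n^{-r(\beta)}$ from Lemma \ref{l:multbadprounion} (made $\leq \tfrac12 n^{-10}$ by choosing $\beta$ large, as $r(\beta) \to \infty$) and invoking Lemma \ref{l:multstocdom} to transfer the bound from $X^*(t)$ to $M_{uv}^*(t)$ gives $\P[M_{uv}^*(t) > \varepsilon_4 \log n, \ t < \tau'_0 \wedge n^4] \leq n^{-10}$. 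The main obstacle is the bookkeeping in the exponential supermartingale step: one must carefully handle that the negative drift is present only on a (guaranteed but irregularly spaced) constant fraction of the steps, so the clean martingale identity holds only after conditioning on the disagreement pattern of $(u,v)$, and one has to check the inactive-step inflation $\prod (1 + O(n^{-2}))$ over $n^4$ steps — i.e.\ $e^{O(n^2)}$ — is overwhelmed by the $\theta^{-\Omega(n^2 \log n)}$ gain from the active steps, which forces the threshold $K$ to grow like $\log n$ and pins down the admissible range of $\varepsilon_4$ (consistent with the constraint $\varepsilon_4 < 1/(4\log 10)$ fixed earlier).
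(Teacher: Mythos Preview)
Your overall strategy matches the paper's: invoke Lemma~\ref{l:multstocdom}, control the disagreement pattern via Lemma~\ref{l:multbadprounion}, run an exponential-moment argument on each $X^s$, and union-bound over the at most $n^8$ pairs $(s,t')$. But the execution has gaps.

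First, the sign of your exponential process is wrong. On active steps $X^s$ has \emph{downward} drift, so $\theta^{-X^s}$ is a \emph{sub}martingale there, not a supermartingale; the correct object is $e^{\lambda X^s}$ with $\lambda>0$, which is what the paper uses.

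Second, and this is the main point, the paper takes $K$ to be a large \emph{constant}, not $\tfrac{\varepsilon_4}{2}\log n$. One first fixes $\lambda$ with $\lambda\varepsilon_4>20$, then chooses a constant $K=K(\lambda,\varepsilon)$ large enough that the worst-case one-step MGF factor averaged over the guaranteed mix of steps,
\[
\Bigl(1+\tfrac{9C_2}{n^2}(e^{\lambda}-1)\Bigr)^{1-\varepsilon/4}\Bigl(1+\tfrac{9C_2}{n^2}(e^{\lambda}-1)+\tfrac{K}{n^2}(e^{-\lambda}-1)\Bigr)^{\varepsilon/4},
\]
is at most $1$. On the event $\mathcal{C}$ from Lemma~\ref{l:multbadprounion} every window of length $t'>\varepsilon_{16}n^2\log n$ has at least $\varepsilon t'/4$ active steps, so conditioning on the opinion pattern one gets $\E[e^{\lambda X^s(t')}1_{\mathcal{C}}]\le e^{\lambda K}$ uniformly in such $t'$, and Markov gives $\P(X^s(t')>\varepsilon_4\log n,\,\mathcal{C})\le e^{\lambda K}n^{-\lambda\varepsilon_4}\le n^{-19}$. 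There is no $e^{O(n^2)}$ inflation to overcome; your conclusion that ``$K$ must grow like $\log n$'' is unwarranted.

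Third, the paper handles short windows $t'\le\varepsilon_{16}n^2\log n$ separately: there the disagreement guarantee is unavailable, but $X^s(t')-K\preceq\mathrm{Bin}(\varepsilon_{16}n^2\log n,\,9C_2/n^2)$ has mean $9C_2\varepsilon_{16}\log n$, and a Chernoff bound with $\varepsilon_{16}$ small enough gives $\le n^{-19}$. Your proposal does not isolate this regime, and the Chernoff bound you mention on upward steps over all $n^4$ steps (mean $\sim n^2$) is far too coarse to control a $\log n$-scale quantity. The gambler's-ruin framing (``before returning to $0$'') is also not apt, since $X^s$ is not absorbed at $0$; the paper simply bounds $\P(X^s(t')>\varepsilon_4\log n)$ for each fixed $(s,t')$.
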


\begin{proof}
By Lemma \ref{l:multstocdom} it suffices to prove the inequality in the statement with $M_{uv}^*(t)$ replaced by $X^{*}(t)$.
Let $\mathcal{C}$ denote the following event.
$$\mathcal{C}=\{\forall T\in [1,t], t'> \varepsilon_{16}n^2\log n~ \#\{s\in [T,T+t']:
u(s)\neq v(s)\}\geq t'\varepsilon/4\}.$$

Then we have that for all $t-s>t'>\varepsilon_{16}n^2\log n$,

$$\E(e^{\lambda X^{s}(t')}1_{\mathcal{C}\cap \{t<\tau'_0\}})\leq e^{\lambda K}\left(1+\frac{9C_2}{n^2}(e^{\lambda}-1)\right)^{t'(1-\varepsilon/4)}\left(1+\frac{9C_2}{n^2}(e^{\lambda}-1)+\frac{K}{n^2}(1-e^{-\lambda})\right)^{t'\varepsilon/4}.$$

Fix $\lambda$ large enough such that $\lambda\varepsilon_{4}>20$. Choosing $K$ sufficiently large depending on $\lambda$ and $\varepsilon$, it follows that

$$(1+\frac{9C_2}{n^2}(e^{\lambda}-1))^{1-\varepsilon/4}(1+\frac{9C_2}{n^2}(e^{\lambda}-1)+\frac{K}{n^2}(1-e^{-\lambda}))^{\varepsilon/4}< 1$$
and hence

$$\E(e^{\lambda X^{s}(t')} 1_{\mathcal{C}\cap \{t<\tau'_0\}})\leq e^{\lambda K}.$$

By Markov's inequality it follows that

$$\P(\{X^{s}_{t'}>\varepsilon_4\log n\}\cap \mathcal{C}\cap \{t<\tau'_0\})\leq e^{-\lambda(K-\varepsilon_4\log n)}\leq (\frac{1}{n})^{19}.$$

For $t'<\varepsilon_{16}n^2\log n$, $X^{s}(t')-K$ is stochastically dominated by a $\mbox{Bin}(\varepsilon_{16}n^2\log n, \frac{9C_2}{n^2})$ variable. Using a Chernoff bound in this case, we get for $n$ sufficiently large
$$\P[X^s(t')\geq \varepsilon_4\log n]\leq e^{-\varepsilon_4\log n\log(\varepsilon_{4}/9C_2\varepsilon_{16})/2} \leq (\frac{1}{n})^{19} $$
by choosing $\varepsilon_{16}$ sufficiently small such that $\varepsilon_4\log(\varepsilon_{4}/9C_2\varepsilon_{16})>38.$

By taking a union bound over all $s,t'$ it follows that

$$\P[X^{*}(t)>\varepsilon_4 \log n, \mathcal{C}, \{t<\tau'_0\}]\leq (\frac{1}{n})^{11}.$$

The result now follows from Lemma \ref{l:multbadprounion}.
\end{proof}

\begin{theorem}
\label{t:indmultbound}
Let $t<n^4$. Then $\P[t\geq \tau_3, t< \tau'_0]\leq \frac{1}{n^4}$.
\end{theorem}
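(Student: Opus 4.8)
This theorem is the clean ``union bound over bonds'' conclusion of the work already done in Lemma~\ref{l:multmart}, so the plan is short. First I would unpack the event. On $\{t\geq\tau_3\}\cap\{t<\tau'_0\}$ we have $\tau_3\le t<\tau'_0$, so setting $s=\tau_3$ we get $s<\tau'_0$ and $M(s)\geq\varepsilon_4\log n$; since under the standing initial conditions $M(0)\leq 1<\varepsilon_4\log n$ for $n$ large, $s=\tau_3\geq 1$, and since $s\leq t<n^4$ we also have $s<\tau'_0\wedge n^4$. Hence there is a bond $(u,v)$ with $M_{uv}(s)\geq\varepsilon_4\log n$ and $1\le s\le t$, so that $M^*_{uv}(t)=\max_{t'\in[1,t]}M_{uv}(t')\geq\varepsilon_4\log n$. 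Using that $t<n^4$ identifies $\{t<\tau'_0\}$ with $\{t<\tau'_0\wedge n^4\}$, this gives the inclusion
\[
\{t\geq\tau_3,\ t<\tau'_0\}\ \subseteq\ \bigcup_{u\neq v}\ \{M^*_{uv}(t)\geq\varepsilon_4\log n,\ t<\tau'_0\wedge n^4\}.
\]

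\textbf{Applying the lemma.} Next I would feed each term into Lemma~\ref{l:multmart}. That lemma bounds the event with the strict threshold $M^*_{uv}(t)>\varepsilon_4\log n$; since $M^*_{uv}(t)$ is integer valued, the events $\{M^*_{uv}(t)\geq\varepsilon_4\log n\}$ and $\{M^*_{uv}(t)>\varepsilon_4\log n\}$ differ only when $\varepsilon_4\log n\in\mathbb{Z}$, i.e. for at most finitely many $n$, which is irrelevant as we take $n$ large (one could also just shrink $\varepsilon_4$ infinitesimally). Thus for every pair $u\neq v$,
\[
\P[M^*_{uv}(t)\geq\varepsilon_4\log n,\ t<\tau'_0\wedge n^4]\ \leq\ \frac{1}{n^{10}}.
\]
Summing over the at most $\binom{n}{2}<n^2$ bonds yields
\[
\P[t\geq\tau_3,\ t<\tau'_0]\ \leq\ \binom{n}{2}\cdot\frac{1}{n^{10}}\ \leq\ \frac{1}{n^{8}}\ \leq\ \frac{1}{n^{4}},
\]
which is the claim.

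\textbf{Where the difficulty lies.} There is essentially no obstacle at this stage: all the substantive content --- the stochastic domination of $M^*_{uv}$ by the family of drifted random walks $X^s$ (Lemma~\ref{l:multstocdom}), the input from the random-walk/voter-model coupling that two fixed vertices disagree with probability at least $\varepsilon/2$ after $Cn^2/\beta$ steps (Lemma~\ref{l:probagreeing}), the exponential supermartingale bound controlling $X^s$ on the good event $\mathcal{C}$, and the estimate ruling out long stretches in which $u,v$ agree too often (Lemma~\ref{l:multbadprounion}) --- has already been carried out in proving Lemma~\ref{l:multmart}. The only points needing a word of care are the trivial reduction of $\{t<\tau'_0\}$ to $\{t<\tau'_0\wedge n^4\}$ afforded by $t<n^4$, the negligible $\geq$ versus $>$ discrepancy in the multiplicity threshold, and checking $\tau_3\geq 1$ so that $M^*_{uv}(t)$ genuinely dominates $M_{uv}(\tau_3)$; each of these is immediate.
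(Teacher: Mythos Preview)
Your proposal is correct and follows exactly the paper's approach: the paper's proof is a single line invoking Lemma~\ref{l:multmart} and taking a union bound over all bonds $(u,v)\in V^{(2)}$. Your write-up simply makes explicit the inclusion of events and the minor bookkeeping points (the $\geq$ versus $>$ threshold, the reduction via $t<n^4$, and $\tau_3\geq 1$) that the paper leaves implicit.
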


\begin{proof}
The theorem follows from using Lemma \ref{l:multmart}, taking a union bound over all $(u,v) \in V^{(2)}$.
%, and using Theorem \ref{t:weakbound}.
\end{proof}

\subsection{Degree Estimate}
In this section we prove the following theorem.
\begin{theorem}
\label{t:degsbound}
We have for all $t\geq \delta n^2$, $\P[t+\delta n^2\geq \tau_5, t+\delta n^2<\tau_{*}, t<\tau]\leq \frac{1}{n^{14}}$.
\end{theorem}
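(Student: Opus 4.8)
<br>

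The plan is to adapt the treatment of the cut stopping time (Proposition~\ref{p:bigcut} and Theorem~\ref{t:bigcut}): I fix a vertex $v$, identify the one--step drift of its degree $D_v(s)$, show it is mean--reverting towards $n/2$ — which lies well inside $(\varepsilon n/2,(1-\tfrac{\varepsilon}{2})n)$ since $\varepsilon<1$ — and then conclude by a martingale concentration estimate followed by a union bound over $v$ and over the $\delta n^2$ possible offsets, exactly as in the passage from Proposition~\ref{p:bigcut} to Theorem~\ref{t:bigcut}. Concretely, writing $Y_v(s)$ for the number of disagreeing edges incident to $v$ and $X(s)=N_{ST}(s)$ for the total number of disagreeing edges ($S,T$ the two opinion classes), in the \emph{rewire-to-random-*} chain $D_v$ increases by $1$ with probability $(1-\tfrac{\beta}{n})\tfrac{X(s)}{N}\tfrac{1}{n-1}(1+o(1))$, decreases by $1$ with probability $(1-\tfrac{\beta}{n})\tfrac{Y_v(s)}{2N}$, and is otherwise unchanged; since $N$ is conserved and $N=\tfrac{n^2}{4}(1+o(1))$, the deterministic prefactors are $\tfrac{4}{n^2}(1+o(1))$.

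The main work is the single--vertex analogue of Proposition~\ref{p:proportionofbadedges}: on $\{s<\tau'_0\}$, after $Cn^2/\beta$ further steps the fraction of disagreeing edges at $v$ is $2p(1-p)\pm\varepsilon_7$, with probability $1-e^{-\gamma n}$ (here $p=N_1(s)/n$); the total fraction $X(s)/N$ of disagreeing edges lies in $2p(1-p)\pm 4\varepsilon_3$ directly from the definition of $\tau'_2$. The vertex statement is proved along the lines of Proposition~\ref{p:proportionofbadedges}, through the coupling of \S~\ref{s:coupling} with continuous--time random walks; the one new feature is that the edges at $v$ all share the endpoint $v$, so there can be up to $D_v(s)\le(1-\tfrac{\varepsilon}{2})n$ walks to control, exceeding the $\varepsilon_{13}n$ budget of Lemma~\ref{l:nonintersection}. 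I would handle this by passing to a uniform subsample of $\varepsilon_{12}n$ of the edges incident to $v$: Hoeffding's inequality then controls the sampling error, while the number of distinct endpoints in the subsample is at most $\varepsilon_{12}n\le\varepsilon_{13}n$, so one can run coupled walks from $v$ and from these endpoints and invoke Lemmas~\ref{l:nonintersection}, \ref{l:notmanysteps}, \ref{l:notrewired}, \ref{l:happycondition}, \ref{l:couplemany} and \ref{l:couplingtv} essentially verbatim (the extra walk from $v$ meets one of the sampled walks with probability $O(C^2\varepsilon_{12})$, which is negligible, and after $Cn^2/\beta$ steps the opinion of $v$ and those of its sampled neighbours approach an i.i.d.\ $\mbox{Ber}(p)$ law, giving the factor $2p(1-p)$).

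Feeding these estimates into the transition rule, I get that for all but an $\varepsilon_{15}$--fraction of the steps $s\in[t+1,t+\delta n^2]$ — an exceptional set controlled by a Chernoff argument exactly as in Lemma~\ref{l:propcount} —
\[
\E\big[D_v(s+1)-D_v(s)\mid\cf_s\big]=\frac{4p(1-p)}{n^2}\Big(\frac n2-D_v(s)\Big)+O\Big(\frac{\varepsilon_7}{n}\Big),
\]
so that, using $p\in[\varepsilon,1-\varepsilon]$ on $\{s<\tau_*\}$, the drift is at most $-c_1/n$ once $D_v(s)\ge(1-\tfrac{\varepsilon}{4})n$ and at least $c_1/n$ once $D_v(s)\le\varepsilon n$, for a constant $c_1=c_1(\varepsilon)>0$; moreover a Binomial domination as in Lemma~\ref{l:lbw5} shows $D_v$ makes at most $C_3\delta n$ moves over the window with probability $1-e^{-cn}$. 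Combining these: if $D_v(t)$ is at distance $\ge C_3\delta n$ from both thresholds it cannot reach either in $\delta n^2$ steps; if $D_v(t)$ starts within $C_3\delta n$ of the upper threshold then $e^{\lambda(D_v(s)-D_v(t))}$ is a supermartingale while $D_v$ remains in the top danger zone, and an optional--stopping/Freedman estimate — exploiting that the predictable quadratic variation over the window is only $O(\delta n)$, so a $\Theta(n)$ downward excursion would be needed before $D_v$ could rise to $(1-\tfrac{\varepsilon}{2})n$ — bounds the crossing probability by $e^{-\gamma' n}$; the lower threshold is symmetric, using the positive drift. A union bound over the $n$ vertices, and then (as in the final paragraph of the proof of Theorem~\ref{t:bigcut}) over the $\delta n^2$ offsets and the two directions, yields $\P[t+\delta n^2\ge\tau_5,\ t+\delta n^2<\tau_*,\ t<\tau_0]\le n^{-14}$.

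I expect the single--vertex mixing estimate of the second paragraph to be the principal obstacle: in Proposition~\ref{p:proportionofbadedges} one samples edges from a cut and thins them to a vertex--disjoint family, whereas here every edge under consideration is anchored at $v$, so the coupling either involves all of $v$'s incident edges or must be preceded by a subsampling step, and making the subsampling/Hoeffding reduction interact cleanly with the non--intersection and "happy--walk" estimates is the delicate point. The secondary subtlety is upgrading the near--threshold martingale bound from a constant to $e^{-\gamma n}$, which works only because the drift target $n/2$ sits a fixed distance from both thresholds and because $D_v$ performs only $O(\delta n)$ — not $\Theta(n^2)$ — moves in a window of length $\delta n^2$.
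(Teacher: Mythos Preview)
Your overall strategy matches the paper's (a single--vertex coupling estimate analogous to Proposition~\ref{p:proportionofbadedges}, then an exponential supermartingale for $D_v$, then a union bound over $v$ and over offsets), but there is a genuine error in the mixing step. You assert that after $Cn^2/\beta$ steps the fraction $Y_v/D_v$ of disagreeing edges at $v$ concentrates around $2p(1-p)$ with probability $1-e^{-\gamma n}$. This is false. Every edge under consideration shares the endpoint $v$, and the opinion of $v$ at the future time is a \emph{single} random variable common to all of them. Conditional on $v$'s future opinion, the neighbours' future opinions are (via the coupling) approximately independent $\mbox{Ber}(p)$, so the disagreeing fraction concentrates around $p$ if $v$ ends up in the majority and around $1-p$ if $v$ ends up in the minority --- never around $2p(1-p)$ unless $p=1/2$. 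Your sentence ``the opinion of $v$ and those of its sampled neighbours approach an i.i.d.\ $\mbox{Ber}(p)$ law'' is precisely where this breaks: the joint law of $\bigl(v(t+Cn^2/\beta),u_1(t+Cn^2/\beta),\ldots\bigr)$ is not product, because the first coordinate appears in every edge. You correctly flagged the anchoring at $v$ as the principal obstacle, but the issue is not the walk from $v$ intersecting the others; it is that a single walk from $v$ determines one endpoint of \emph{every} indicator $X_j$, so those indicators are far from independent.

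The paper's Lemma~\ref{l:degcoupling} accordingly proves only the weaker (and correct) statement $Y_v/D_v\in\bigl(p(1-\kappa_2),\,1-p(1-\kappa_2)\bigr)$: it starts coupled walks only from the neighbours $u_j$, shows that the fraction of them landing on opinion $0$ and on opinion $1$ each concentrate (near $1-p$ and $p$ respectively), and concludes that whichever opinion $v$ holds, the disagreeing fraction lies in the stated interval. This range bound is enough for the supermartingale step (Lemma~\ref{l:degmartingale}): the drift of $D_v$ then points toward either $(1-p)n$ or $pn$ rather than $n/2$, but since $p\in[\varepsilon,1/2]$ both targets lie strictly inside $(\varepsilon n/2,(1-\varepsilon/2)n)$, so the drift is still uniformly negative once $D_v>(1-3\varepsilon/4)n$ and uniformly positive once $D_v<3\varepsilon n/4$. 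Your drift formula $\frac{4p(1-p)}{n^2}\bigl(\frac{n}{2}-D_v\bigr)$ is therefore wrong as written, but once $Y_v/D_v$ is replaced by the correct range the qualitative conclusions you draw (sign of the drift in each danger zone, exponential control of the crossing probability, union bound) go through essentially as you describe.
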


We start with the following lemma.

\begin{lemma}
\label{l:degcoupling}
Let $\kappa_2>0$ be fixed. Let us condition on $\{\cf_{t_1}, t_1<\tau', N_{*}(t)=pn\}$. Let $v$ be a fixed vertex in $V$. Let $X_v(t')$ denote the number of disagreeing edges incident to $v$ at time $t'$. Then for sufficiently large $C$ and sufficiently large $\beta=\beta(C)$, at time $t_2=t_1+\frac{Cn^2}{\beta}$ we have $\P[X_v(t_2)\notin (p(1-\kappa_2), 1-p(1-\kappa_2))D_v(t_2)\mid \cf_{t_1}, \tau'_0>t_1]\leq e^{-\varepsilon_{20}n}$ for some constant $\varepsilon_{20}>0$.
\end{lemma}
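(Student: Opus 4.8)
The plan is to couple the evolving voter model dynamics with a family of independent continuous time random walks on $G(t_1)$ as constructed in \S~\ref{s:coupling}, and to show that the opinions of the endpoints of edges incident to $v$ at time $t_2$ behave, up to a small error, like independent samples governed by a pair of random walks that have mixed. The structure mirrors very closely the proof of Proposition~\ref{p:proportionofbadedges}, except that now, rather than a fixed cut $(S,T)$, the relevant family of edges is $\{e_1,\dots,e_{D_v(t_1)}\}$, the set of edges incident to $v$. First I would note that on $\{t_1<\tau'_0\}$ we have $\tfrac{\varepsilon n}{4}\le D_v(t_1)\le C_2 n$ by the definition of $\tau'_5$, so $D_v(t_1)=\Theta(n)$ and the number of edges rewired away from or onto $v$ in $Cn^2/\beta$ steps is $O(n^2/\beta)=o(n)$ for $\beta$ large; hence $D_v(t_2)=D_v(t_1)(1+o(1))$ and it suffices to control the proportion of disagreeing edges among the $D_v(t_1)$ edges incident to $v$ at time $t_1$, evaluated at the endpoints' opinions at time $t_2$.

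The key steps, in order, would be: (1) Use the $C_1$-balanced property (from $t_1<\tau'_4$) to argue that among the $D_v(t_1)$ edges incident to $v$, all but a $\kappa_2/100$ fraction lie in distinct bonds $(v,u_i)$, and then run independent continuous time random walks from each such $u_i$ (and one from $v$). (2) Invoke Lemma~\ref{l:nonintersection} and Lemma~\ref{l:notmanysteps} (applied with $\varepsilon_{13}n$ replaced by $\Theta(n)$ many starting vertices — note $D_v(t_1)/n$ may exceed $\varepsilon_{13}$, so one partitions the $u_i$'s into $O(1)$ blocks of size at most $\varepsilon_{13}n$ and applies those lemmas blockwise) to conclude that with exponentially high probability all but $O(\kappa)\cdot D_v(t_1)$ of these walks are non-intersecting, take few steps, and traverse no rewired edge. (3) Apply Lemma~\ref{l:couplemany} in the same blockwise fashion to transfer this to the evolving voter model: with exponentially high probability, for all but $O(\kappa) D_v(t_1)$ indices $i$, the opinion of $u_i$ at time $t_2$ equals the opinion in $G(t_1)$ of the position $Y_i(t_1+\sigma_0)$ of the walk started at $u_i$, for a common good time $\sigma_0$, and similarly for the walk from $v$. (4) By Lemma~\ref{l:couplingtv}, for $C$ large each such $Y_i(t_1+\sigma_0)$ and the walk from $v$ are each within total variation $e^{-\sqrt C/1000}$ of a uniform vertex, so conditionally the indicator that $v$ and $u_i$ disagree (at the walk endpoints, in $G(t_1)$) has mean in $[2p(1-p)-\kappa_2/10,\,2p(1-p)+\kappa_2/10]$ — wait, here $p=N_*(t_1)/n$ is the \emph{minority} fraction, so the disagreement probability for the edge from $v$ is $p$ if $v$ holds the majority opinion and $1-p$ otherwise; in either case it lies in $(p(1-\kappa_2),1-p(1-\kappa_2))$. (5) Since for fixed $\sigma_0$ these indicators are conditionally independent across $i$, a Hoeffding bound gives concentration of $\tfrac1{D_v(t_1)}\sum_i X_i$ around its mean within $\kappa_2/10$ with probability $1-e^{-\gamma D_v(t_1)}=1-e^{-\Theta(n)}$; a union bound over the $O(n^3/\beta)$ choices of $\sigma_0\in\Sigma$ preserves this. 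Combining, $X_v(t_2)/D_v(t_2)\in (p(1-\kappa_2),1-p(1-\kappa_2))$ except on an event of probability $e^{-\varepsilon_{20}n}$.

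The main obstacle I anticipate is bookkeeping the dependence on $p$ correctly: unlike in Proposition~\ref{p:proportionofbadedges} where the conditioning was $N_1(t)=pn$ and the relevant quantity was the symmetric $2p(1-p)$, here we condition on $N_*(t_1)=pn$ and the quantity $X_v(t_2)/D_v(t_2)$ is \emph{not} symmetric — it is close to $p$ or to $1-p$ depending on which opinion $v$ currently holds, and $v$'s opinion itself may change during $[t_1,t_2]$. One must argue that $v$'s opinion is also well-approximated by the opinion of its own random walk at the mixing time (so $v$'s ``effective'' opinion is again uniform-ish), and then the statement $X_v(t_2)\in(p(1-\kappa_2),1-p(1-\kappa_2))D_v(t_2)$ holds because $\min\{p,1-p\}=p$ and $\max\{p,1-p\}=1-p$; the interval is wide enough to absorb both cases plus the $O(\kappa_2)$ errors. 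A secondary nuisance is that $D_v(t_1)$ may be as large as $C_2 n > \varepsilon_{13} n$, forcing the blockwise application of the coupling lemmas noted above, but since $C_2=2$ is an absolute constant this only costs a constant factor in the union bounds and does not affect the exponential rate $\varepsilon_{20}$.
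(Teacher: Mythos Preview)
Your proposal is correct and follows the same strategy as the paper: reduce to the argument of Proposition~\ref{p:proportionofbadedges}, with the edges incident to $v$ playing the role of $\mathcal{E}_{ST}$. Two points of execution differ, and both simplify matters.

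First, rather than partitioning the $D_v(t_1)$ neighbours into $O(1)$ blocks of size $\varepsilon_{13}n$ and applying the coupling lemmas blockwise, the paper (just as in Proposition~\ref{p:proportionofbadedges}) takes a uniform random sample of $\varepsilon_{12}n$ of the edges incident to $v$ and uses a Hoeffding bound to relate the sample average to the full average; this keeps the number of walkers at the level where Lemmas~\ref{l:nonintersection}--\ref{l:couplemany} apply directly.

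Second, and this dissolves what you flag as the ``main obstacle'': the paper does not run a walk from $v$. It only runs walks from the other endpoints $v_j$ and defines, for each $j$ and each $\tilde\sigma$, the pair of indicators $U^0_{j,\tilde\sigma}$ and $U^1_{j,\tilde\sigma}$ that the walk from $v_j$ sits at a vertex of opinion~$0$, respectively~$1$, in $G(t_1)$. By Lemma~\ref{l:couplingtv} both have mean in $(p(1-\kappa_2/100),1-p(1-\kappa_2/100))$, and for fixed $\tilde\sigma$ both families are genuinely conditionally independent across $j$, so both sums concentrate. Whatever opinion $v$ carries at time $t_2$, the disagreeing fraction equals one of these two sums and therefore lies in the target interval. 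This also repairs a small gap in your step~(5): the indicators ``walk from $v$ and walk from $u_i$ disagree'' that you write there all share the walk from $v$ and are not conditionally independent without first conditioning on that walk's endpoint.
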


\begin{proof}
The proof of this lemma goes along the same lines as that of Proposition \ref{p:proportionofbadedges}. We shall therefore only give the sketch of the steps. Let the edges incident to $v$ at time $t_1$ be $\{e_1,e_2,\ldots, e_{D_v(t_1)}\}$. Let $Y_i$ denote the indicator that the endpoints of $e_i$ are disagreeing in $G(t_2)$. It follows by taking $\beta$ sufficiently large that it suffices to prove that

$$\frac{1}{D_v(t_1)}\sum_{i=1}^{D_v(t)} Y_i \in (p(1-\kappa_2/2),1-p(1-\kappa_2/2))$$
with exponentially high probability. To this end, we choose a subset of these edges of size $\varepsilon_{12}n$. Condition on a subset $J$ of these edges of size at least $\varepsilon_{13}n=(1-\frac{\kappa_2}{100})\varepsilon_{12}n$ which correspond to distinct bonds in $V^{(2)}$. Arguing as in the proof of Proposition \ref{p:proportionofbadedges}, we see that it suffices to show that, conditionally,
$$\frac{1}{\varepsilon_{13}n}\sum_{j\in J} Y_j \in (p(1-\kappa_2/50),1-p(1-\kappa_2/50))$$
with exponentially high probability.

Consider the coupling described in \S~\ref{s:coupling} of the \emph{rewire-to-random-*} dynamics started with $G(t_1)$ with independent random walks started from $v_j$ where $e_j$ is placed in the bond $(v,v_j)$ is $G(t_1)$. Let for $j\in J$ and $\tilde{\sigma}>\frac{C}{\beta}$, $U^0_{j,\tilde{\sigma}}$ and $U^1_{j,\tilde{\sigma}}$ denote the indicators that the position of the random walk started from $v_j$ has the opinion $0$ and opinion $1$ respectively at time $\tilde{\sigma}$. It follows by taking $C$ sufficiently large that $\E(U^0_{j,\tilde{\sigma}}), \E(U^1_{j,\tilde{\sigma}})\in (p(1-\kappa_2/100),1-p(1-\kappa_2/100))$. Now the proof is completed arguing as in the proof of Proposition \ref{p:proportionofbadedges}.
\end{proof}

\begin{lemma}
\label{l:deginterval}
With the notation as in Lemma \ref{l:degcoupling}, let $A_t$ denote the event that for some $t'$ with $t'\in [t+\frac{Cn^2}{\beta}, t+\delta n^2]$, $X_v(t') \notin (p-\varepsilon/8, 1-(p-\varepsilon/8))D_v(t')$. Then we have,
$$\P(A_t, t+\delta n^2< \tau'_0\mid \cf_t, t<\tau'_0, N_*(t)=pn)\leq e^{-\varepsilon_{20}n/2}.$$
\end{lemma}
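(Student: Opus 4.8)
The statement of Lemma~\ref{l:deginterval} asks us to upgrade the single-time estimate of Lemma~\ref{l:degcoupling} to a statement that holds simultaneously for all $t'$ in the interval $[t+\frac{Cn^2}{\beta}, t+\delta n^2]$. The natural approach is to discretize the interval into blocks of length $\frac{Cn^2}{\beta}$ and argue that (a) at the grid points $t_k = t + k\frac{Cn^2}{\beta}$ the conclusion of Lemma~\ref{l:degcoupling} holds with exponentially high probability, and (b) between consecutive grid points neither $X_v$ nor $D_v$ can move by more than a small fraction of $n$, so the constraint $X_v(t') \in (p-\varepsilon/8, 1-(p-\varepsilon/8))D_v(t')$ at a grid point propagates to a slightly weaker (but still acceptable) constraint throughout the block.

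For step (a): there are at most $\frac{\delta\beta}{C}$ grid points in the interval. On the event $\{t+\delta n^2 < \tau'_0\}$ we have $t_k < \tau'_0$ for every grid point, so Lemma~\ref{l:degcoupling} (applied with the conditioning filtration $\cf_{t_{k}}$ and with $\kappa_2$ chosen small enough that $p(1-\kappa_2)$ is comfortably below $p - \varepsilon/16$, using $D^*$-type control on $N_*$) gives
\[
\P\bigl[X_v(t_{k+1}) \notin (p(1-\kappa_2), 1-p(1-\kappa_2))D_v(t_{k+1}) \mid \cf_{t_k}, \tau'_0 > t_k\bigr] \le e^{-\varepsilon_{20} n}.
\]
A union bound over the $O(\beta)$ grid points leaves an exponentially small probability $e^{-\varepsilon_{20}n}\cdot O(\beta) \le e^{-\varepsilon_{20}n/2}$ that the conclusion fails at any grid point. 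Here one also needs to control the drift of $p = N_*(\cdot)/n$ over the whole window, which follows from the random-walk estimate used repeatedly above (number of relabelling steps in $\delta n^2$ steps is $O(\delta\beta n)$, so $N_*$ moves by $o(n)$ w.h.p.); this only perturbs the target interval by $o(1)$ and is absorbed by the slack between $\varepsilon/8$ and $\kappa_2$.

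For step (b): fix a block $[t_k, t_{k+1}]$ of length $\frac{Cn^2}{\beta}$. Each rewiring step changes $D_v$ by at most $1$ and changes $X_v$ by at most the current degree of the single rewired endpoint, but more usefully the \emph{number} of steps in the block that touch an edge incident to $v$ is, by the argument in Lemma~\ref{l:lbw5} and Lemma~\ref{l:lbw3} (edges rewired to $v$ are dominated by $\mathrm{Bin}(\frac{Cn^2}{\beta}, \frac{1}{n-1})$, and since $D_v \le C_2 n$ on $\{t<\tau'_5\}$ the edges rewired \emph{away} from $v$ are similarly few), at most $\frac{\kappa'}{\beta} n$ with exponentially high probability, for $\kappa'$ small if $\beta$ is large. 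Hence $\sup_{t'\in[t_k,t_{k+1}]}|X_v(t') - X_v(t_k)| \vee |D_v(t') - D_v(t_k)| \le \frac{\kappa'}{\beta} n$. Combining with $D_v(t') \ge \varepsilon n/4$ on $\{t' < \tau'_5\}$, an elementary computation shows the ratio $X_v(t')/D_v(t')$ stays within $\varepsilon/16$ of $X_v(t_k)/D_v(t_k)$, so the grid-point bound $X_v(t_k)/D_v(t_k) \in (p(1-\kappa_2), 1-p(1-\kappa_2)) \subseteq (p - \varepsilon/16, 1-(p-\varepsilon/16))$ upgrades to $X_v(t')/D_v(t') \in (p-\varepsilon/8, 1-(p-\varepsilon/8))$ for all $t'$ in the block. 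Union bound over the $O(\beta)$ blocks for this auxiliary event, and combine with step (a).

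\textbf{Main obstacle.} The genuinely delicate point is step (a), i.e.\ invoking Lemma~\ref{l:degcoupling} with the conditioning done at the random times $t_k$ while staying on $\{t_k < \tau'_0\}$: one must be careful that $\tau'_0$ is a stopping time so the strong Markov property applies and the conditional estimates chain correctly under the union bound. Everything in step (b) is routine Chernoff/union-bound bookkeeping of the type already carried out in \S~\ref{s:weakbound}, and the drift control on $N_*$ is the same random-walk estimate used throughout; these introduce only $o(1)$ losses that are swallowed by the gap between $\kappa_2$ and $\varepsilon/8$. So the proof is essentially: discretize, apply Lemma~\ref{l:degcoupling} at grid points via the strong Markov property, fill the gaps by a crude step-counting bound, and union bound.
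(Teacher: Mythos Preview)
Your block-decomposition approach is more elaborate than what is needed, and step~(b) has a genuine gap. The paper's proof is a single sentence: ``This follows from a union bound and the previous lemma.'' The point is that there are only $\delta n^{2}$ integer times $t'$ in the window, and for each such $t'$ one can apply Lemma~\ref{l:degcoupling} with $t_{1}=t'-\frac{Cn^{2}}{\beta}$ (which is $<\tau'_{0}$ on the event $\{t+\delta n^{2}<\tau'_{0}\}$). Since the bound $e^{-\varepsilon_{20}n}$ is exponential in $n$ while $\delta n^{2}$ is only polynomial, the union bound $\delta n^{2}\cdot e^{-\varepsilon_{20}n}\le e^{-\varepsilon_{20}n/2}$ goes through immediately. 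The small drift of $N_{*}$ over the window is absorbed by the slack between $\kappa_{2}$ and $\varepsilon/8$, exactly as you note. There is no need to discretize into $O(\beta)$ blocks and then interpolate.

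The specific flaw in your step~(b) is the claim that $\sup_{t'\in[t_{k},t_{k+1}]}|X_{v}(t')-X_{v}(t_{k})|\le \frac{\kappa'}{\beta}n$ with exponentially high probability. You bound the number of steps that \emph{pick} an edge incident to $v$; for rewiring steps this is fine, since each such step changes $X_{v}$ by at most $1$. But relabelling steps are not controlled this way. A relabelling of $v$ itself flips $X_{v}$ to $D_{v}-X_{v}$, a change of order $n$ in one step, and the expected number of such relabels in a block of length $\frac{Cn^{2}}{\beta}$ is $\Theta(1)$ (roughly $\frac{Cn^{2}}{\beta}\cdot\frac{D_{v}}{2N}\cdot\frac{\beta}{n}=\Theta(C)$), so this occurs with probability bounded away from zero, not exponentially small. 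Moreover, relabelling of a neighbour $u$ of $v$ changes $X_{v}$ by $\pm M_{uv}$ even when the picked edge is not incident to $v$, so your count misses these entirely. The interpolation argument therefore fails, but fortunately it is unnecessary: the direct union bound over all $t'$ already gives the lemma.
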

\begin{proof}
This follows from a union bound and the previous lemma.
\end{proof}

\begin{lemma}
\label{l:degmartingale}
We have $\P[D_v(t+\delta n^2)\notin (\varepsilon/2, 1-\varepsilon/2)n, t+\delta n^2<\tau'_0\mid \cf_t, t<\tau_0]\leq e^{-\varepsilon_{21}n}$.
\end{lemma}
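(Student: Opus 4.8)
The plan is to exhibit, for the degree $D_v$ of the fixed vertex $v$, an inward drift of order $1/n$ throughout a fixed‑width neighbourhood of each endpoint of $(\tfrac\varepsilon2 n,(1-\tfrac\varepsilon2)n)$, and to combine this with the elementary fact that $D_v$ can move by only $O(\delta n)$ during $\delta n^2$ steps — which, since the parameter hierarchy makes $\delta$ far smaller than $\varepsilon$, is far smaller than the width of that neighbourhood. First I would reduce to $t<\tau'_0$: as $\tau'_0\le\tau_0$, the event in the statement is empty on $\{t<\tau_0\}\setminus\{t<\tau'_0\}$, so we may assume $t<\tau'_0\le\tau_*\wedge\tau'_5$; then $D_v(t)\in[\tfrac\varepsilon2 n,(1-\tfrac\varepsilon2)n]$ and, writing $N_*(t)=pn$, $p\ge\varepsilon$. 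A Chernoff estimate shows that with probability $1-e^{-cn}$ the number of steps in $[t,t+\delta n^2]$ at which $D_v$ changes — rewirings that attach an edge to $v$ or detach a disagreeing edge from $v$, the latter counted using $D_v\le C_2 n$ on $\{t+\delta n^2<\tau'_5\}$ — is $O(\delta n)$, whence $|D_v(t')-D_v(t)|\le c_0\delta n$ throughout. This already settles any starting value at distance more than a suitable $\delta_0=\delta_0(\varepsilon)>0$ times $n$ from the endpoints; it remains to treat, say, $D_v(t)<(\tfrac\varepsilon2+\delta_0)n$ (the top being symmetric), in which case the same bound keeps $D_v(t')$ inside the ``drift‑up zone'' $\{D_v\le(\tfrac\varepsilon2+2\delta_0)n\}$ for all $t'\in[t,t+\delta n^2]$.

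Next I would compute the one‑step drift. Reading it off the \emph{rewire-to-random-*} rule, for any $t'<\tau'_0$,
\[
\E[D_v(t'+1)-D_v(t')\mid\cf_{t'}]=\frac{1-\beta/n}{(n-1)N}\Bigl(|\mathcal{E}^{\times}(t')|-\tfrac{n-1}{2}\,X_v(t')\Bigr),
\]
with $|\mathcal{E}^{\times}(t')|$ the total number of disagreeing edges, so the drift points inward at the bottom exactly when $X_v(t')<\tfrac2{n-1}|\mathcal{E}^{\times}(t')|$. Taking $S,T$ the two opinion classes at $t'$ (each of size $\ge\varepsilon n>\varepsilon_2 n$ on $\{t'<\tau_*\}$), the weak cut bound $t'<\tau'_2$ gives $|\mathcal{E}^{\times}(t')|=N_{ST}(t')=\tfrac12|S||T|+O(\varepsilon_3 N)$, hence $\tfrac2{n-1}|\mathcal{E}^{\times}(t')|=p'(1-p')n(1+o(1))+O(\varepsilon_3 n)$ with $p'$ the current minority fraction, and $p'=p+O(\varepsilon_7)$ with probability $1-e^{-cn}$ by a random‑walk estimate. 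By Lemma~\ref{l:deginterval} — applied with its slack parameter $\varepsilon/8$ replaced by an arbitrarily small constant $\kappa'$, which Lemma~\ref{l:degcoupling} permits — with probability $1-e^{-cn}$ one has $X_v(t')\in\bigl((p-\kappa')D_v(t'),(1-p+\kappa')D_v(t')\bigr)$ for all $t'\in[t+\tfrac{Cn^2}{\beta},t+\delta n^2]$. On the drift‑up zone this gives $X_v(t')\le(1-p+\kappa')(\tfrac\varepsilon2+2\delta_0)n<p'(1-p')n(1-o(1))-O(\varepsilon_3 n)$ once $\kappa',\delta_0,\varepsilon_3,\varepsilon_7$ are small enough in terms of $\varepsilon$, the decisive inequality $(1-p)\tfrac\varepsilon2<p(1-p)$ being equivalent to $p>\varepsilon/2$, true with room since $p\ge\varepsilon$. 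Hence the drift is $\ge c_1(\varepsilon)/n$ on $\{D_v\le(\tfrac\varepsilon2+2\delta_0)n\}$, and the symmetric computation (again using $p>\varepsilon/2$) gives drift $\le-c_1(\varepsilon)/n$ on $\{D_v\ge(1-\tfrac\varepsilon2-2\delta_0)n\}$.

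Finally I would close with a routine drift estimate. The initial segment $[t,t+\tfrac{Cn^2}{\beta}]$ contributes $|D_v(t+\tfrac{Cn^2}{\beta})-D_v(t)|\le O(n/\beta)$, negligible once $\beta$ is large. On the main segment $D_v$ stays in the drift‑up zone, so at each of its active steps it increments by $+1$ with conditional probability at least $\tfrac12+\eta(\varepsilon)$ for some $\eta(\varepsilon)>0$ (as $q_+-q_-\ge c_1/n$ while $q_++q_-=O(1/n)$); since the number of active steps lies in $[c_2(\varepsilon)\delta n,\,C_2'\delta n]$ with probability $1-e^{-cn}$ (Chernoff, using that the per‑step increase probability is $\ge\tfrac{2\varepsilon(1-\varepsilon)}{n}(1-o(1))$ for the lower bound), a Hoeffding bound for this embedded biased walk — run with variance proxy the $\Theta(\delta n)$ active steps, \emph{not} the $\delta n^2$ time steps — yields $D_v(t+\delta n^2)-D_v(t)\ge c_3(\varepsilon)\delta n-O(n/\beta)>0$ with probability $1-e^{-cn}$, so $D_v(t+\delta n^2)>\tfrac\varepsilon2 n$. (Equivalently one can run the exponential supermartingale $e^{-\theta D_v(\cdot\wedge\rho)}$ with a small constant $\theta$ and $\rho$ the first exit of the drift‑up zone, in the spirit of Lemma~\ref{l:betasmallmg}.) The symmetric argument at the top, plus a union over the finitely many ``good'' events and the two endpoints, gives the lemma with some $\varepsilon_{21}=\varepsilon_{21}(\varepsilon,\delta,\beta)>0$.

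The step I expect to be the crux is producing the \emph{strictly} inward drift near the boundary. Because $D_v(t)$ may sit exactly on $\tfrac\varepsilon2 n$ (or $(1-\tfrac\varepsilon2)n$), a mere ``the graph barely changes'' estimate does not suffice: one needs the drift to be inward by a genuine $\Theta(1/n)$ margin throughout a full neighbourhood of each endpoint, which forces control of both the total number of disagreeing edges (via $\tau'_2$) and the proportion incident to $v$ (via Lemma~\ref{l:deginterval}); the margin survives precisely because the minority density stays $\ge\varepsilon$, i.e.\ because the ``equilibrium degree'' of $v$ — whichever opinion it holds — lies strictly inside $(\tfrac\varepsilon2 n,(1-\tfrac\varepsilon2)n)$. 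A secondary point, flagged above, is that the drift accumulates over only $\Theta(\delta n)$ active steps, so the fluctuation estimate must use that variance scale rather than the naive $\delta n^2$.
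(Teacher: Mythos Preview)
Your proposal is correct and follows the same blueprint as the paper: both compute the one-step drift of $D_v$ from the rewiring rule, control $X_v(t')/D_v(t')$ via Lemma~\ref{l:deginterval} and $|\mathcal E^\times(t')|$ via the weak cut bound $t'<\tau'_2$, and observe that the drift is inward with a definite margin near each endpoint precisely because $p\ge\varepsilon>\varepsilon/2$. The only real difference is in the concentration device. The paper runs the exponential supermartingale $e^{\lambda(D_v(t')-(1-3\varepsilon/4)n)}$ directly---with no case split on the starting position---and reads off the bound by Markov's inequality; you instead split into a ``middle'' case (dispatched by the $O(\delta n)$ total-movement bound) and a ``near-boundary'' case (dispatched by additive drift plus a Bernstein/Hoeffding bound on the $\Theta(\delta n)$ active steps). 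You yourself flag the multiplicative route as an equivalent alternative, so you have both in hand; your remark that naive Azuma over all $\delta n^2$ steps would only give $e^{-c\delta}$ is exactly the reason the paper uses the exponential supermartingale, and your active-step workaround is a legitimate substitute. One small slip: you write $\tau'_0\le\tau_0$, but the convention in the paper gives $\tau_0\le\tau'_0$ (the stronger threshold is crossed first); your conclusion that $t<\tau_0$ forces $t<\tau'_0$ is nonetheless correct, for exactly that reason.
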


\begin{proof}
Let $C_t$ denote the event that for all $t'\in [t+1,t+\delta n^2]$, $|N_*(t')-N_*(t)|\leq \varepsilon n/16$. Let $H_t=A_t^{c}\cap C_t \cap \{t+\delta n^2 <\tau'_0\}$. Notice that on $H_t$, we have at time $t'\in [t,t+\delta n^2]$, the number of disagreeing edge at time $t$, $Z({t'})\geq (p(1-p)-\varepsilon/8)n^2$. Also notice that on $A_t$, the number of disagreeing edges incident to $v$ is at time $t'$ is in $[p(1-\varepsilon/8),1-p(1-\varepsilon/8)]D_v(t)$. Set $X(t')=D_v(t')-(1-3\varepsilon/4)n$, also set $\Delta^*=(1-\frac{\beta}{n})$. It follows therefore that for $\lambda>0$
\begin{eqnarray}
\label{e:degmartcalc}
\E(e^{\lambda X_{t'+1}}1_{H_t}\mid \cf_{t'})&\leq & e^{\lambda X_{t'}}\left(1+(e^{\lambda}-1)\frac{\Delta^* Z(t')}{N(n-1)}+(e^{-\lambda}-1)\frac{\Delta^*(p-\varepsilon/8)(X_{t'}+(1-3\varepsilon/4)n)}{2N}\right)\nonumber
%&\leq & e^{\alpha X_{t'}}\left(1+(e^{\alpha}-1)\frac{1}{n-1}+(e^{-\alpha}-1)\frac{(1-2\varepsilon_{23})(X_{t'}+(1-3\varepsilon/4)n)}{(1-\varepsilon)n^2}\right)\nonumber
\end{eqnarray}
 Now take $\lambda$ so small such that $e^{\lambda}-1\leq (1+\varepsilon/100)\lambda$ and $e^{-\lambda}-1\leq -(1-\varepsilon/100)\lambda$. Then we have

\begin{eqnarray}
\label{e:degmartcalc}
\E(e^{\lambda X_{t'+1}}1_{H_t}\mid \cf_{t'})&\leq & e^{\lambda X_{t'}}\left(1+ \lambda\Delta^*\left(\frac{(1+\varepsilon/50)Z_{t'}}{nN}-\frac{(p-\varepsilon/6) X_{t'}}{2N}-\frac{(p-\varepsilon/6)(1-3\varepsilon/4)n}{2N}\right) \right)\nonumber\\
& \leq & e^{\lambda X_{t'}}\left(1-\lambda\frac{\varepsilon X_{t'}}{8N}\right)\nonumber\\
&\leq & e^{\lambda_{*}X_{t'}}
\end{eqnarray}
where $0<\lambda_{*}<\lambda(1-\frac{\varepsilon}{10N})$ and since $p>\varepsilon$ implies that on $H_t$ we have $\frac{(1+\varepsilon/50)Z_{t'}}{nN}<\frac{(p-\varepsilon/6)(1-3\varepsilon/4)n}{2N}$. It follows that there exist $\lambda_0$ bounded away from $\lambda$ such that

$$\E(e^{\lambda X_{t+\delta n^2}}1_{H_t}\mid \cf_{t+Cn^2/\beta})\leq e^{\lambda_0 X_{t+Cn^2/\beta}}1_{H_t}\leq e^{\lambda_0\varepsilon n/4},$$

$$\text{i.e.},~\E(e^{\lambda X_{t+\delta n^2}}1_{H_t}\mid \cf_{t}, t<\tau_0)\leq e^{\lambda_0\varepsilon n/4}.$$ By Markov's inequality it now follows that

$$\P[D_v(t+\delta n^2)\geq (1-\varepsilon/2)n, H_t\mid \cf_t, t<\tau_0]\leq e^{-(\lambda-\lambda_0)\varepsilon n/4}.$$ It follows from Lemma \ref{l:deginterval} and another random walk estimate that $\P(H_t^c, t+\delta n^2<\tau'_0)$ is exponentially small in $n$, which completes the proof of one side of the bound in this lemma.

The other side of the bound can be proved similarly by starting with $\lambda$ negative and $X(t)=D_v(t)-3\varepsilon/4$. This completes the proof of the lemma.
\end{proof}

\begin{proof}[Proof of Theorem \ref{t:degsbound}]
This theorem follows from Lemma \ref{l:degmartingale} by taking a union bound over all vertices $v$, and all times $t'\in [t-\delta n^2, t]$ as in the proof of Theorem \ref{t:bigcut}, and using Theorem \ref{t:weakbound}.
\end{proof}

\subsection{Multiple-Edge Estimates}

\begin{theorem}
\label{t:multedgesbound}
We have for all $t\geq \delta n^2$, $\P[t+\delta n^2\geq \tau_4, t+\delta n^2< \tau_*, t<\tau]\leq \frac{1}{n^{4}}$.
\end{theorem}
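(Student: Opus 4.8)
The plan is to mirror the structure used for the individual edge--multiplicity bound (Theorem~\ref{t:indmultbound}) but now tracking, for each vertex $v$ and each multiplicity level $k$, the quantity $R_{v,k}(t)=\#\{u:M_{uv}(t)=k\}$, and showing it stays below $2C_1 10^{-k}n$ for all $k$ and all $v$ up to time $\tau_*\wedge(t+\delta n^2)$. The key observation, exactly as in Lemma~\ref{l:lbw4}, is that a bond $(u,v)$ can only gain an edge when a disagreeing edge is chosen and rewired to $v$, and once the graph is close to Erd\H os--R\'enyi a typical vertex $v$ has at most $\approx\varepsilon n/4$ disagreeing edges concentrated appropriately across its incident bonds; the $C_1$-balanced assumption at time $t$ (i.e.\ $t<\tau_4$, so $R_{v,\ell}(t)\le C_1 10^{-\ell}n$) bounds how many bonds sit at level $\ell$ to begin with. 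So for a fixed $v$ and $k$, write $R_{v,k}(t+\delta n^2)\le \tfrac19 C_1 10^{-k}n+\sum_{\ell=0}^{k-1}N(\ell\to k)$ where $N(\ell\to k)$ counts bonds that started at level $\ell$ and gained $\ge k-\ell$ edges; this is the decomposition already used in the proof of Lemma~\ref{l:lbw4}.

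First I would set up the coupling of the multiplicity dynamics of the relevant bonds with an urn/balls process as in Lemma~\ref{l:lbw4}, but now over a window of length $\delta n^2$ (not just the short window there), so the number of balls thrown is of order $\delta n^2\cdot\beta/n \cdot(\text{something})$ and one must instead use that the number of edges added to bonds incident to $v$ over $[t,t+\delta n^2]$ is stochastically dominated by a binomial with the right mean because each rewiring targeting $v$ is uniform and $D_v$ stays below $C_2 n$ by $\tau'_5$. Then, for each $\ell$, $N(\ell\to k)$ is dominated by a sum of $\le C_1 10^{-\ell}n$ indicators each of probability $q(k-\ell)\le 30^{-(k-\ell)}$ (the same computation as in Lemma~\ref{l:lbw4}, possibly with a different constant reflecting the longer window and the $\varepsilon$-balanced structure), and a Chernoff bound gives $\P[N(\ell\to k)\ge (2.5)^{-(k-\ell)}C_1 10^{-k}n]$ exponentially small in $n$, provided $k<2\varepsilon_4\log n$, since $10^{-k}n\ge n^{1-2\varepsilon_4\log 10}=n^{\Omega(1)}$ by the constraint $\varepsilon_4<\tfrac1{4\log 10}$. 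Summing the geometric series $\sum_\ell(2.5)^{-(k-\ell)}<\tfrac19$ yields $R_{v,k}(t+\delta n^2)\le 2C_1 10^{-k}n$, i.e.\ $v$ is $2C_1$-balanced. Union bounds over $k\in[1,2\varepsilon_4\log n]$, over $v\in V$, and over the $\delta n^2$ shifted starting times $t'\in[t-\delta n^2,t]$ (as in the proofs of Theorems~\ref{t:bigcut} and \ref{t:degsbound}) turn this into $\P[t+\delta n^2\ge\tau_4, t+\delta n^2<\tau_*, t<\tau]\le n^{-4}$.

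One subtlety requires care: to control $q(k-\ell)$ one needs good control on the total number of edges added to bonds incident to $v$, which in turn needs $D_v$ to stay bounded and the number of disagreeing edges incident to $v$ to stay $O(\varepsilon n)$; this is where the conditioning on $t<\tau$ (so in particular $t<\tau_5$) and the weak-stopping-time estimate Theorem~\ref{t:weakbound} enter, guaranteeing $\tau'_5>t+\delta n^2$ with high probability so that $D_v(t')\le C_2 n$ throughout. Another point is that over the long window $\delta n^2$ the relevant targeting probability per step is $\Theta(\beta/n^2)$ for a given bond, so the expected number of additions to a fixed bond is $\Theta(\delta\beta)$, a constant; hence the binomials involved have constant mean and the Chernoff tails at threshold $k-\ell\ge 1$ are $\le (c\delta\beta)^{k-\ell}$ type, and one must have chosen $\delta$ small relative to $\beta$ — but this is already covered by the standing parameter hierarchy ($\delta<\varepsilon_3/(10000 C_2)$ and $\beta$ taken last), so no new constraint arises.

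The main obstacle I expect is bookkeeping the interaction between the $\varepsilon$-balanced hypothesis and the coupling: one must correctly argue that conditioning on the level-$\ell$ bonds at time $t$ (there are at most $C_1 10^{-\ell}n$ of them by $t<\tau_4$) and then bounding, uniformly over which bonds these are, the joint law of the numbers of edges they each gain, using the fact that at most one incident bond gains an edge per step and the uniform-target property — exactly the $(Y_1,\dots,Y_D)$ versus i.i.d.\ binomial stochastic-domination argument of Lemma~\ref{l:lbw4}, which must now be re-run with the window $\delta n^2$ and with $D=C_1 10^{-\ell}n$. Once that domination is in place the rest is the same geometric-series-of-Chernoff-bounds computation already carried out in Section~\ref{s:weakbound}, so I would largely cite Lemma~\ref{l:lbw4} and indicate only the changes in the constants and the window length, then finish with the three union bounds and Theorem~\ref{t:weakbound}.

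\begin{proof}[Proof sketch of Theorem~\ref{t:multedgesbound}]
The argument is a long--window version of Lemma~\ref{l:lbw4}, combined with the opinion--mixing estimates of \S\ref{s:strongbound} which are used only to ensure that a typical vertex does not accumulate disagreeing edges faster than the Erd\H os--R\'enyi rate. Fix $v\in V$ and $1\le k<2\varepsilon_4\log n$; we bound $R_{v,k}(t')=\#\{u:M_{uv}(t')=k\}$ uniformly over $t'\in[t+1,t+\delta n^2]$. Write, exactly as in Lemma~\ref{l:lbw4},
\[
\max_{t'\in[t+1,t+\delta n^2]}R_{v,k}(t')\le \tfrac19 C_1 10^{-k}n+\sum_{\ell=0}^{k-1}N(\ell\to k),
\]
where $N(\ell\to k)$ is the number of bonds incident to $v$ that had multiplicity $\ell$ at time $t$ and gained at least $k-\ell$ further edges during $[t+1,t+\delta n^2]$; here we used $t<\tau_4$, so $v$ is $C_1$-balanced at time $t$. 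Conditioning on $\{\cf_t,\,t<\tau\}$ and ignoring the exponentially unlikely event (via Lemma~\ref{l:lbw5} and Theorem~\ref{t:weakbound}) that $D_w(t')+D_v(t')>2C_2n$ for some $t'$ in the window, each such bond gains edges at rate at most $\tfrac{12C_2}{n^2}$ per step, and at most one incident bond gains an edge per step. The stochastic--domination device of Lemma~\ref{l:lbw4} (urns/balls, then domination by i.i.d.\ binomials conditioned on their sum) gives, for each $\ell$ with at most $D:=C_1 10^{-\ell}n$ bonds at level $\ell$,
\[
\P\bigl[N(\ell\to k)\ge (2.5)^{-(k-\ell)}C_1 10^{-k}n\bigr]\le e^{-c C_1 10^{-k}n}\le 2e^{-cC_1\sqrt n},
\]
since $10^{-k}n\ge n^{1-2\varepsilon_4\log 10}\ge\sqrt n$ by $\varepsilon_4<\tfrac1{4\log10}$; the constant $c>0$ does not depend on $\beta$, and the hierarchy $\delta<\varepsilon_3/(10000C_2)$ with $\beta$ chosen last guarantees the per--bond tail bound $q(k-\ell)\le 30^{-(k-\ell)}$ exactly as there. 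Summing $\sum_{\ell=0}^{k-1}(2.5)^{-(k-\ell)}<\tfrac19$ yields
\[
\P\bigl[\max_{t'\in[t+1,t+\delta n^2]}R_{v,k}(t')>2C_1 10^{-k}n\bigr]\le \tfrac1{n^{25}},
\]
after a union bound over $\ell$. A union bound over $k\in[1,2\varepsilon_4\log n]$ and over $v\in V$ gives that, with probability at least $1-n^{-18}$, every vertex is $2C_1$-balanced throughout $[t+1,t+\delta n^2]$ on the event $\{t+\delta n^2<\tau'_0\}$; intersecting with Theorem~\ref{t:weakbound} (which bounds $\P[t+\delta n^2\ge\tau'_0,\ t+\delta n^2<\tau_*\mid\cf_t,t<\tau_0]$ by $n^{-17}$) removes that restriction. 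Finally, exactly as in the proofs of Theorems~\ref{t:bigcut} and \ref{t:degsbound}, taking a union bound over the $\delta n^2$ shifted starting times $s\in[0,\delta n^2-1]$ (using $\{t<\tau\}\subseteq\{t-s<\tau\}$ and $\{t+\delta n^2<\tau_*\}\subseteq\{t+\delta n^2-s<\tau_*\}$) converts the above into
\[
\P[t+\delta n^2\ge\tau_4,\ t+\delta n^2<\tau_*,\ t<\tau]\le\tfrac1{n^4},
\]
which is the assertion of the theorem.
\end{proof}
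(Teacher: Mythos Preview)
Your approach has a genuine gap: you are (re-)proving the \emph{weak} balance bound, not the strong one. Theorem~\ref{t:multedgesbound} concerns $\tau_4$, the first time some vertex fails to be $C_1$-balanced, so you must show $\#\{u:M_{uv}(t')=k\}\le C_1 10^{-k}n$, not $\le 2C_1 10^{-k}n$. You explicitly conclude only that ``every vertex is $2C_1$-balanced'', which is the $\tau'_4$ statement already handled by Lemma~\ref{l:lbw4}.

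The reason your argument cannot reach the strong threshold is structural. In the decomposition from Lemma~\ref{l:lbw4} there is a term you have dropped: the bonds that sit at level exactly $k$ at time $t$. Since $t<\tau_0$ only guarantees there are at most $C_1 10^{-k}n$ such bonds, this term alone already saturates the target $C_1 10^{-k}n$, and any positive contribution from $\sum_\ell N(\ell\to k)$ pushes you above it. (Incidentally, $\sum_{m\ge 1}(2.5)^{-m}=\tfrac23$, not $\tfrac19$.) To stay below $C_1 10^{-k}n$ you must show that most of those high-multiplicity bonds \emph{lose} edges during $[t,t+\delta n^2]$. This is the mean-reversion step, and it is precisely where the opinion-mixing estimates enter: one uses the coupling of \S\ref{s:coupling} to show that, with exponentially high probability, all but a $25^{-L_0}$-fraction of the bonds $(v,u_i)$ with multiplicity in $[k-L_0,k]$ spend at least a $c\varepsilon\delta$-fraction of the window disagreeing (Proposition~\ref{p:kedge2continuousdisagreeing}, Lemmas~\ref{l:continuousdisagreeingkedgesample}--\ref{l:kedgecontdependence}), and hence lose at least $L_0$ edges (Lemma~\ref{l:kedge2continuous}, Lemma~\ref{l:kedge2}). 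The paper's Lemma~\ref{l:kedge} combines this loss estimate with the gain estimate (Lemma~\ref{l:kedge1}, which is essentially your $N(\ell\to k)$ bound for $\ell<k-L_0$) to obtain $X_{v,k}(t+\delta n^2)\le C_1 10^{-k}n$; the union bounds over $k$, over $v$, and over the $\delta n^2$ time shifts (together with Theorem~\ref{t:weakbound}) then finish exactly as you describe. Your sketch says the mixing estimates are ``used only to ensure that a typical vertex does not accumulate disagreeing edges faster than the Erd\H os--R\'enyi rate'', but for this theorem their role is the opposite: they guarantee that high-multiplicity bonds are disagreeing often enough to shed edges.
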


The above theorem follows from the following lemma.

\begin{lemma}
\label{l:kedge}
Condition on $\{\cf_t, t<\tau_0\}$. Let $v$ be a vertex in $V$. Let $X_{v,k}(t)$ denote the number of vertices $u\in G$ such that $M_{uv}(t)=k$. Then for each $1\leq k\leq 2\varepsilon_{4}\log n$, we have
$$\P[X_{v,k}(t+\delta n^2)> C_110^{-k}n, t+\delta n^2<\tau'_0\mid \cf _t, t<\tau]\leq \frac{1}{n^{10}}.$$
\end{lemma}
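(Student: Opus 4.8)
The plan is as follows. The estimate is vacuous unless $k$ exceeds a large constant $k_0=k_0(\varepsilon,\delta)$: on $\{t+\delta n^2<\tau'_5\}$ — hence on $\{t+\delta n^2<\tau'_0\}$ — every degree is at most $C_2n$, so $X_{v,k}(t+\delta n^2)\le D_v(t+\delta n^2)\le C_2n$, and since $C_1$ is chosen large we have $C_2n\le C_1 10^{-k_0}n\le C_1 10^{-k}n$ whenever $k\le k_0$. So fix $k>k_0$; I would condition on $\{\cf_t,t<\tau_0\}$ and run the chain stopped at $\tau'_0$, so that the weak estimates $\#\{u:M_{uv}(t')=m\}\le 2C_1 10^{-m}n$, $\tfrac{\varepsilon n}{4}\le D_w(t')\le C_2n$ and $M(t')<2\varepsilon_4\log n$ are available for all $w$, $m$ and $t'\le t+\delta n^2$.

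Next I would track the weighted count $\Phi_k(t')=\sum_{u\ne v} b^{(M_{uv}(t')-k)_+}\mathbf 1\{M_{uv}(t')\ge k\}$ for a fixed base $b\in(1,\sqrt{10})$. Then $X_{v,k}(t')\le\Phi_k(t')$, and since $v$ is $C_1$-balanced at time $t$ we have $\Phi_k(t)\le(1-b/10)^{-1}C_1 10^{-k}n$. Arguing as in Lemma~\ref{l:lbw3}, on the weak-degree event an edge is added to any fixed bond containing $v$ at rate at most $c_0/n^2$ with $c_0=O(C_2)$, so promotions into and within the levels $\ge k$ increase $\Phi_k$ in expectation by at most $\tfrac{c_0}{n^2}x_{k-1}(t')+\tfrac{c_0(b-1)}{n^2}\Phi_k(t')$ per step, where $x_{k-1}(t')=\#\{u:M_{uv}(t')=k-1\}$; while a bond at level $m\ge k$ that is currently disagreeing sheds one of its $m$ disagreeing edges at rate $\tfrac{m}{N}(1-\tfrac\beta n)$, which lowers $\Phi_k$ by at least $b^{(m-1-k)_+}(b-1)$ (and by $1$ when $m=k$). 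Exactly as in the proof of Theorem~\ref{t:degsbound} — via the coupling of \S\ref{s:coupling}, the mixing bound Lemma~\ref{l:mixing}, and Lemmas~\ref{l:probagreeing} and~\ref{l:multbadprounion} — one shows that at all but an $\varepsilon_{15}$-fraction of the times $t'\in[t,t+\delta n^2]$ one has $\#\{u:M_{uv}(t')=m,\ u(t')\ne v(t')\}\ge\tfrac\varepsilon4 x_m(t')$ simultaneously for every $m$, and at such times
\[
\E\!\bigl[\Phi_k(t'+1)-\Phi_k(t')\mid\cf_{t'}\bigr]\ \le\ \frac{c_0}{n^2}\,x_{k-1}(t')-\frac{\varepsilon k(b-1)}{4bN}\,\Phi_k(t')\bigl(1-o(1)\bigr).
\]
Since the shedding coefficient grows linearly in $k$ while $x_{k-1}(t')\le 2C_1 10^{-(k-1)}n$ is of order $10^{-k}n$, for every $k>k_0$ this is at most $-\tfrac{c}{n^2}\bigl(\Phi_k(t')-\tfrac12 C_1 10^{-k}n\bigr)$ for a constant $c=c(\varepsilon,C_2)>0$ — that is, $\Phi_k$ is pulled strictly below half the threshold using nothing about level $k-1$ beyond the weak balance bound.

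To conclude I would partition $[t,t+\delta n^2]$ into $\delta\beta/C$ blocks of length $Cn^2/\beta$ and, as in Lemma~\ref{l:propcount}, use the previous step to show that — taking $\beta$ large — with probability at least $1-n^{-11}$ the drift inequality holds at all but an $\varepsilon_{15}$-fraction of steps, the remaining steps contributing in total at most $\varepsilon_{15}\delta\cdot O(C_1 10^{-k}n)$ to $\Phi_k$, which a small choice of $\varepsilon_{15}$ absorbs. A Freedman-type inequality for $\Phi_k(t+\delta n^2)-\Phi_k(t)-\sum_s\E[\Phi_k(s+1)-\Phi_k(s)\mid\cf_s]$ — whose increments are at most $n^{1/4}$ on $\{\cdot<\tau'_3\}$ since $b<\sqrt{10}$ and $\varepsilon_4<\tfrac1{4\log 10}$ — then finishes: the accumulated negative drift exceeds the initial surplus $\Phi_k(t)-C_1 10^{-k}n$ because $k>k_0$ with $k_0$ taken of order at least $1/(\varepsilon\delta)$, while the fluctuations have standard deviation only $O(\sqrt{k\,C_1 10^{-k}n})=o(C_1 10^{-k}n)$ since $k\le 2\varepsilon_4\log n$ and $2\varepsilon_4\log 10<\tfrac12$ force $C_1 10^{-k}n\ge C_1 n^{1/2}$. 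Thus $\P[\Phi_k(t+\delta n^2)>C_1 10^{-k}n,\ t+\delta n^2<\tau'_0\mid\cf_t,t<\tau_0]\le n^{-10}$, and $X_{v,k}\le\Phi_k$ gives the lemma.

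The main obstacle is the drift estimate, and inside it the passage from Lemma~\ref{l:multbadprounion} (a per-bond-in-time statement) to control of the instantaneous counts $\#\{u:M_{uv}(t')=m,\ u(t')\ne v(t')\}$; this is where the random-walk coupling of \S\ref{s:coupling} is genuinely needed, in the spirit of Lemma~\ref{l:degcoupling}. It is also essential — and a priori perhaps surprising — that the contraction is driven entirely by the linear-in-$k$ shedding rate and needs no improvement over the weak bound at the neighbouring level $k-1$; this is exactly why $k_0$ must be a large constant depending on $\varepsilon$ and $\delta$ and why $C_1$ is taken larger than $10^{k_0}C_2$. The remaining ingredients — the block decomposition and the concentration step — repeat the machinery behind Theorems~\ref{t:bigcut} and~\ref{t:degsbound}, together with the crude balls-in-urns bounds of Lemma~\ref{l:lbw4} to absorb the rare steps where the disagreeing-fraction condition fails.
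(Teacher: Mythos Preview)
Your approach is genuinely different from the paper's, and the overall strategy is sound, but the two routes diverge in how they organise the same hard technical input.

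The paper does not run a Lyapunov/drift argument on a weighted count like your $\Phi_k$. Instead it decomposes $X_{v,k}(t+\delta n^2)$ statically according to the level $r=M_{uv}(t)$ at the \emph{start} of the window: bonds with $r>k$ contribute at most $\tfrac19 C_1 10^{-k}n$ by the strong balance at time $t$; bonds with $r<k-L_0$ (for a fixed large $L_0$) would have to gain $>L_0$ edges, and a balls-in-urns/Chernoff argument (their Lemma~\ref{l:kedge1}) shows few do; and for the dangerous band $r\in[k-L_0,k]$ the paper proves (their Lemma~\ref{l:kedge2}, via a continuous-time version and a $\sqrt n$-sampling trick in Proposition~\ref{p:kedge2continuousdisagreeing}) that all but a $20^{-L_0}$-fraction of these bonds \emph{lose} at least $L_0$ edges, hence cannot end at level $k$ without also gaining many, which is again rare. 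This ``initial-level decomposition'' sidesteps any per-time-step statement: what is needed, and what is proved, is only that each individual high-level bond is disagreeing for at least an $\varepsilon/4$-fraction of \emph{total time}, which is a per-bond time-average.

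Your drift formulation instead asks for the \emph{instantaneous} inequality $\#\{u:M_{uv}(t')=m,\ u(t')\ne v(t')\}\ge\tfrac\varepsilon4\,x_m(t')$ at most times $t'$ and for all $m\ge k$. You correctly flag this as the main obstacle. It is not delivered by Lemma~\ref{l:multbadprounion} alone, and Lemma~\ref{l:degcoupling} as written applies to the full $\Theta(n)$-sized neighbourhood of $v$, not to the level-$m$ slice which can be as small as $C_1\sqrt n$. The paper's Proposition~\ref{p:kedge2continuousdisagreeing} is precisely the tool that handles such small sets (by sampling $\sqrt n$ bonds and getting $e^{-c\sqrt n}$ concentration), and with it one can indeed upgrade to the instantaneous statement you want --- but note that the paper only ever uses the time-averaged consequence, which is why its decomposition is organised per bond rather than per time step. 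In short: your potential-function route works and is arguably cleaner once the disagreeing-fraction lemma is in hand, but the supporting lemma you need is essentially the paper's Proposition~\ref{p:kedge2continuousdisagreeing}, and the paper's own organisation avoids your instantaneous step by counting gains and losses per bond over the whole window.
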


The proof of the above lemma depends upon the next two lemmas.

\begin{lemma}
\label{l:kedge1}
Fix $v$ and $k$ as in the above lemma. Let $Y_{k,r}$ denote the number of $r$-edges that became $k$ edges in time $[t,t+\delta n^2]$. Fix an integer  $L_0\geq 1000C_2\delta \vee 1$. Then we have for each $k>3L_0$ and each $r<k-L_0$ we have
$\P(Y_{k,r}\geq 3^{-(k-r)}C10^{-k}n, t+\delta n^2<\tau'_0\mid \cf_t, t<\tau_0)\leq \frac{1}{n^{20}}$.
\end{lemma}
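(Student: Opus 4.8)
\textbf{Proof proposal for Lemma \ref{l:kedge1}.}

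The plan is to control $Y_{k,r}$, the number of bonds at $v$ that start with exactly $r$ edges at time $t$ and accumulate to $k$ edges by time $t+\delta n^2$, by a stochastic domination argument of the same flavour as Lemma \ref{l:lbw4}. First I would dispose of the exceptional event $\{t+\delta n^2 \geq \tau'_0\}$-compatible degree blow-up: by Lemma \ref{l:lbw5} (or rather its consequence used throughout) we may assume $D_u(s)+D_v(s) \leq 2C_2 n$ for all $s \in [t, t+\delta n^2]$ and all $u$, up to an event of probability exponentially small in $n$. On that event, at each step the probability that a given bond $(v,u)$ gains an edge is at most $\tfrac{12C_2}{n^2}$, exactly as established in Lemma \ref{l:lbw3}. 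Let $D$ be the number of bonds at $v$ with exactly $r$ edges at time $t$; since $t < \tau_0 \leq \tau_4$, the vertex $v$ is $C_1$-balanced, so $D \leq C_1 10^{-r} n$, and we may assume $D = C_1 10^{-r} n$ (adding phantom bonds only increases $Y_{k,r}$ stochastically).

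Next I would set up the urn domination. Let $T_1,\dots,T_D$ be the numbers of edges gained by these $D$ bonds during $[t+1, t+\delta n^2]$; since at most one such bond gains an edge per step and each does so with probability at most $\tfrac{12C_2}{n^2}$, the vector $(T_1,\dots,T_D)$ is jointly stochastically dominated by the occupancy vector $(Y_1,\dots,Y_D)$ of $D$ urns after $\delta n^2$ steps, where at each step we place a ball in a uniformly random urn with probability $\tfrac{12C_2 D}{n^2}$ and do nothing otherwise. Conditioning on the total number of balls $\sum Y_i \leq 24\delta C_2 D$ (the complement has probability $e^{-cD}$ by a Chernoff bound, and $D \geq C_1 10^{-r} n \geq C_1 10^{-(k-L_0)} n \geq C_1 n^{1-2\varepsilon_4 \log 10}$ is polynomial in $n$ since $k < 2\varepsilon_4 \log n$), this is dominated by i.i.d.\ $\mathrm{Bin}(24\delta C_2 D, \tfrac{2}{D})$ variables $Y'_1,\dots,Y'_D$. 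A bond that was an $r$-edge and became a $k$-edge gained at least $k-r > L_0$ edges, so $Y_{k,r} \preceq \sum_{i=1}^D \mathbf{1}\{Y'_i \geq k-r\}$. By a Chernoff bound, $q(k-r) := \P[\mathrm{Bin}(24\delta C_2 D, \tfrac2D) \geq k-r] \leq e^{-\frac12 (k-r)\log(\frac{k-r}{48 C_2 \delta})} \leq 30^{-(k-r)}$ once $k-r > L_0 \geq 1000 C_2 \delta$, using that $\tfrac{k-r}{48C_2\delta} \geq e^2$ say. Then $\sum_i \mathbf{1}\{Y'_i \geq k-r\}$ is a sum of i.i.d.\ Bernoulli$(q(k-r))$ variables with mean $Dq(k-r) \leq D \cdot 30^{-(k-r)}$, and one more Chernoff bound gives
\[
\P\!\left[\sum_{i=1}^D \mathbf{1}\{Y'_i \geq k-r\} \geq 3^{-(k-r)} C 10^{-k} n\right] \leq e^{-c\, 3^{-(k-r)} C 10^{-k} n}.
\]
Since $3^{-(k-r)} C 10^{-k} n \geq c' C 10^{-k} n \geq c' C n^{1 - 2\varepsilon_4 \log 10} \gg \log n$ (here $\varepsilon_4 < \frac{1}{4\log 10}$ ensures the exponent of $n$ stays above $1/2$), and choosing $C$ large relative to $C_1, C_2, \delta$, the right-hand side is at most $n^{-20}$ for $n$ large; adding back the exceptional events (degree blow-up, and the total-occupancy event, both exponentially small) keeps the bound at $n^{-20}$.

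The main obstacle I anticipate is bookkeeping the constants so that the decay $3^{-(k-r)} C 10^{-k} n$ in the target threshold genuinely beats the tail mass $D \cdot 30^{-(k-r)} = C_1 10^{-r} 30^{-(k-r)} n$: one needs $30^{-(k-r)}$ to dominate $3^{-(k-r)} 10^{-(k-r)} \cdot (C/C_1)^{\pm}$, which forces the choice $L_0 \geq 1000 C_2 \delta$ precisely so that $q(k-r)$ is small enough, and one must track that $k < 2\varepsilon_4 \log n$ keeps $C 10^{-k} n$ polynomially large (a power of $n$ bounded below by $1/2$) so that all the Chernoff bounds yield genuine $n^{-20}$-type estimates rather than merely $o(1)$. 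Everything else — the urn domination, the conditioning on the total count, the reduction to i.i.d.\ binomials — is routine and mirrors Lemma \ref{l:lbw4} almost verbatim.
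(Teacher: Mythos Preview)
Your proposal is correct and follows essentially the same approach as the paper: both argue by mirroring Lemma~\ref{l:lbw4}, using the $C_1$-balanced condition from $t<\tau_4$ to bound the number $D$ of $r$-bonds at $v$, the urn/joint-domination argument to reduce to i.i.d.\ binomial gains, a Chernoff bound to get $q(k-r)\le 30^{-(k-r)}$ (the paper uses $40^{-(k-r)}$) once $k-r>L_0\ge 1000C_2\delta$, and a final Chernoff bound on the Bernoulli sum. One small slip in your bookkeeping: the inequality ``$3^{-(k-r)} C\,10^{-k} n \ge c' C\,10^{-k} n$'' is not valid since $3^{-(k-r)}$ is not bounded below by a constant; the correct lower bound is $3^{-(k-r)}10^{-k}n\ge n^{1-2\varepsilon_4\log 30}$, which is still a positive power of $n$ (indeed $2\varepsilon_4\log 30 < \tfrac{\log 30}{2\log 10}<1$), so the Chernoff conclusion $\le n^{-20}$ goes through as you intend.
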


\begin{proof}
This proof is similar to the proof of Lemma \ref{l:lbw4} and we give only a sketch. Condition on $\{\cf_t, t<\tau_0\}$. Let $u_1,u_2,\ldots, u_{D}$ be the vertices in $V$ such that $\{M_{vu_i}(t)=r\}$. Now without loss of generality we assume $D=C10^{-r}n$. Let $T_i$ be the number of times a rewired edges rewires along $vu_i$. It is clear that on $\{t+\delta n^2<\tau'_0\}$, $T_i$ is stochastically dominated by a $\mbox{Bin}(\delta n^2, \frac{10C_2}{n^2})$. Let $Z_i$ denote the indicator that $T_i\geq (k-r)$. Using a joint stochastic domination argument as in the proof of Lemma \ref{l:lbw4} it is easy to show that upto terms much smaller than $n^{-100}$, $\P[\sum_{i}Z_{i}\geq (30)^{-(k-r)}D]$ can be approximated by $\P[\sum_{i}Z'_{i}\geq (30)^{-(k-r)}D]$ where $Z'_i$ are i.i.d. $\mbox{Bin}(\delta n^2, \frac{20C_2}{n^2})$. For our choice of $L_0$ it follows that for all $\ell\geq L_0$, $\P(\mbox{Bin}(\delta n^2, \frac{10C_2}{n^2})\geq \ell)\leq 40^{-\ell}$. Now using another Chernoff bound gives us that on $\{t+\delta n^2<\tau'_0\}$
$$P[Y_{k,r}\geq (30)^{-(k-r)}C_110^{-r}n]\leq \frac{1}{n^{20}}.$$  
%
%It is also clear that $\{t+\delta n^2<\tau'_0\}$, $\{T_i\}$ is sotchastically dominated by a family of independent $\mbox{Bin}(\delta n^2, \frac{10C_2}{n^2})$ variables. From a Chenoff bound it follows that for our choice of $L_0$, we have for all $\ell\geq L_0$, $\P(\mbox{Bin}(\delta n^2, \frac{10C_2}{n^2})\geq \ell)\leq 40^{-\ell}$. Now using another Chernoff bound gives us that
%
%$$P[Y_{k,r}\geq (30)^{-(k-r)}C_110^{-r}n]\leq \frac{1}{n^{20}}.$$
%
%Note that, in this proof we use the fact that by our choice of $\varepsilon_4$, for $k<2\varepsilon_4$, $10^{-k}n\geq n^{1/2}$. 
This completes the proof of the lemma.
\end{proof}

\begin{lemma}
\label{l:kedge2}
%\marginpar{Not sure yet how to prove this lemma}
Let $L_0$ be chosen as in Lemma \ref{l:kedge1} and $k>\frac{10^6L_0}{\varepsilon\delta}$. Let $k-L_0\leq r\leq k$ be fixed. Let $u_1,u_2,\ldots, u_{D}$ be the vertices in $G$ such that $\{M_{vu_i}(t)=r\}$. Let $R_i$ be the number of edges lost by $vu_i$ in time $[t,t+\delta n^2]$. Let $Z'_i=1_{\{R_i\leq L_0\}}$. Then
$$\P[\sum_{i=1}^{D}Z'_i\geq 20^{-L_0}C10^{-r}n]\leq \frac{1}{n^{20}}.$$
\end{lemma}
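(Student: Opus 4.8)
The plan is to turn this into an (essentially deterministic) counting bound on the event $\{t+\delta n^2<\tau'_0\}$, using that a bond which loses at most $L_0$ edges is necessarily still ``heavy'' at time $t+\delta n^2$, while on this event every vertex is known to be weakly balanced. Since $\tau'_0\le\tau'_4$, on $\{t+\delta n^2<\tau'_0\}$ the vertex $v$ is $2C_1$-balanced in $G(t+\delta n^2)$, i.e.\ $\#\{u\in V:M_{uv}(t+\delta n^2)=m\}\le 2C_1 10^{-m}n$ for all $m$. On the other hand, if $Z'_i=1$ and we write $G_i\ge 0$ for the number of edges that the bond $(v,u_i)$ gains during $[t,t+\delta n^2]$, then $M_{vu_i}(t+\delta n^2)=r-R_i+G_i\ge r-R_i\ge r-L_0$; thus each $i$ with $Z'_i=1$ contributes the distinct vertex $u_i$ to $\{u\in V:M_{uv}(t+\delta n^2)\ge r-L_0\}$.

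Carrying this out, note first that $r-L_0\ge k-2L_0\ge 1$ because $k>10^6L_0/(\varepsilon\delta)$, so the $2C_1$-balancedness estimate applies to every multiplicity occurring below; hence, on $\{t+\delta n^2<\tau'_0\}$,
$$\sum_{i=1}^{D}Z'_i\ \le\ \#\{u\in V:M_{uv}(t+\delta n^2)\ge r-L_0\}\ \le\ \sum_{m\ge r-L_0}2C_1 10^{-m}n\ =\ \tfrac{20C_1}{9}\,10^{-(r-L_0)}n\ =\ \tfrac{20C_1}{9}\,10^{L_0}\,10^{-r}n .$$
In the parameter hierarchy $C$ is chosen last and sufficiently large, so we may demand $C\ge\tfrac{20C_1}{9}\,200^{L_0}$ (this is harmless: $C_1$ is already fixed and large, and $L_0$ is a fixed constant), and then the right side above is $\le 20^{-L_0}C10^{-r}n$. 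Consequently the inequality $\sum_i Z'_i<20^{-L_0}C10^{-r}n$ holds \emph{deterministically} on $\{t+\delta n^2<\tau'_0\}$, which gives $\P[\sum_i Z'_i\ge 20^{-L_0}C10^{-r}n,\ t+\delta n^2<\tau'_0\mid\cf_t,\ t<\tau_0]=0\le n^{-20}$; if the probability in the statement is not already restricted to $\{t+\delta n^2<\tau'_0\}$, the remaining mass is absorbed by Theorem~\ref{t:weakbound}.

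So there is essentially no probabilistic content, and the only point to verify is the compatibility of $C\ge\tfrac{20C_1}{9}200^{L_0}$ with the other lower bounds imposed on $C$, which holds since $L_0$ and $C_1$ precede $C$ in the parameter ordering. I remark that the strong hypothesis $k>10^6L_0/(\varepsilon\delta)$ is barely used by this route (only $k\ge 2L_0+1$ enters, to force $r-L_0\ge 1$); it is calibrated rather for the more hands-on alternative, in which one uses the coupling of \S\ref{s:coupling} and Lemma~\ref{l:probagreeing} to show that while $R_i\le L_0$ the bond $(v,u_i)$ has at least $r-L_0\ge \tfrac{10^6L_0}{2\varepsilon\delta}$ disagreeing edges and hence sheds edges at rate $\ge(r-L_0)/N$ at every step on which $v$ and $u_i$ disagree, so that its expected loss over $[t,t+\delta n^2]$ is $\gg L_0$. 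The difficulty in that route — and why the deterministic argument above is preferable — is that lower-bounding the amount of disagreeing time of a \emph{single} pair over a window of length only $\Theta(n^2)$, comparable to the mixing time, with probability close to $1$ does not follow from the available tools, so one would be forced to treat the whole family $(R_i)_{i\le D}$ simultaneously via a joint stochastic-domination argument in the spirit of Lemma~\ref{l:lbw4}.
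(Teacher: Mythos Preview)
Your deterministic shortcut does not work, and the gap is in the parameter bookkeeping, not in the probability. You bound $\sum_i Z'_i$ on $\{t+\delta n^2<\tau'_0\}$ by $\tfrac{20C_1}{9}\,10^{L_0}\,10^{-r}n$ using only $2C_1$-balancedness at the terminal time, and then absorb the mismatch by demanding $C\ge \tfrac{20C_1}{9}\,200^{L_0}$. But look at how Lemma~\ref{l:kedge2} is consumed in the proof of Lemma~\ref{l:kedge}: the three contributions there add up to
\[
\tfrac{C_1}{9}10^{-k}n \;+\; 3^{-L_0}C\,10^{-k}n \;+\; 2^{-L_0}C\,10^{-k}n,
\]
and this must be at most $C_1 10^{-k}n$. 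That forces $(3^{-L_0}+2^{-L_0})C\le \tfrac{8}{9}C_1$, i.e.\ roughly $C\lesssim C_1\,2^{L_0}$. Your constraint $C\ge \tfrac{20}{9}C_1\,200^{L_0}$ is therefore incompatible with the very application the lemma is written for. Put differently: the content of the lemma is precisely the factor $20^{-L_0}$ in front of $C_1 10^{-r}n$ (note the continuous-time companion, Lemma~\ref{l:kedge2continuous}, is stated with $C_1$); your route replaces that genuinely small factor by a large one and hides it inside $C$, which trivializes the statement and breaks Lemma~\ref{l:kedge}. The ``$C$'' in the displayed bound should be read as $C_1$, consistently with Lemma~\ref{l:kedge2continuous}; under that reading your inequality $\tfrac{20C_1}{9}10^{L_0}\le 20^{-L_0}C_1$ is plainly false.

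The paper's proof is the ``hands-on'' route you sketch and then set aside: pass to the continuous-time process (Lemma~\ref{l:kedge2continuous}) and use Proposition~\ref{p:kedge2continuousdisagreeing}, whose input is the random-walk coupling of \S\ref{s:coupling} together with Lemmas~\ref{l:continuousdisagreeingkedgesample} and~\ref{l:kedgecontdependence}, to show that for all but a $25^{-L_0}$-fraction of the $u_i$ the bond $(v,u_i)$ is disagreeing for at least a fixed positive fraction of $[0,\delta/2]$. Since each such bond carries $\ge r-L_0\ge \tfrac{10^6 L_0}{2\varepsilon\delta}$ edges, its loss count dominates a Poisson with mean $\gg L_0$, and a Chernoff bound finishes. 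Your worry that ``lower-bounding the disagreeing time of a single pair with probability close to $1$'' is unavailable is exactly why the paper works with a sample of $\sqrt{n}$ of the $u_i$ simultaneously and gets a stretched-exponential bound (Proposition~\ref{p:kedge2continuousdisagreeing}); that is the missing idea in your proposal.
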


For the proof of this lemma we shall consider the continuous time \emph{rewire-to-random-*} dynamics. We need the following proposition.

\begin{proposition}
\label{p:kedge2continuousdisagreeing}
Let us condition on $\{\cf_t, t<\tau_0\}$. Let $v$ be a fixed vertex in $V$. Let $u_1,u_2,\ldots , u_{D}$ be the vertices in $v$ such that we have $M_{vu_i}(t)=r$, where $1<r<2\varepsilon_4\log n$. Set $G(t)=H(0)$ and Run the following continuous time \emph{rewire-to-random-*} process $H(\cdot)$ from time $0$ to $\delta/2$. Each directed edge rings at rate $1$. If the endpoints of the edge are agreeing in the current graph, no change occurs. If they are diasgreeing then we do a voter model step with probability $\frac{\beta}{n}$ and a rewire-to-random step with probability $(1-\frac{\beta}{n})$. Let $Z_i$ be the indicator that $(v,u_i)$ is disagreeing for less that $\frac{\varepsilon\delta}{200}$ time in $H(\cdot)$. Then we have $P(\sum_{i=1}^{D} Z_i > 25^{-L_0}D, \tau'_0>\delta/2 )\leq e^{-\gamma\sqrt{n}}$ for some $\gamma>0$.
\end{proposition}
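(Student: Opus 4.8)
Throughout I would condition on $\{\cf_t,\ t<\tau_0\}$ and work on the event $\{\tau'_0>\delta/2\}$, so that $N_0(s)/n,N_1(s)/n$ stay in $[\varepsilon,1-\varepsilon]$ and all the weak bounds ($D_{\max}\le C_2n$, every vertex $2C_1$-balanced, $M(\cdot)<2\varepsilon_4\log n$, etc.) persist for every time $s\le\delta/2$ of $H(\cdot)$. Partition $[0,\delta/2]$ into $M:=\lfloor\beta\delta/(20C)\rfloor$ consecutive sub-intervals $I_1,\dots,I_M$, each of length $|I_j|=\delta/(2M)\asymp C/\beta$ — a fixed large multiple of the random-walk mixing time of Lemma~\ref{l:mixing} — with left endpoint $t_{j-1}$. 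For each bond $(v,u_i)$ set $W_{i,j}:=\int_{I_j}\mathbf 1[v(s)\ne u_i(s)]\,ds$ and $\mathcal W_i:=\sum_{j=1}^M W_{i,j}$, so that $Z_i=\mathbf 1\{\mathcal W_i<\varepsilon\delta/200\}$. The plan is to prove a tail bound for a single $\mathcal W_i$ and then a concentration estimate over the $D$ bonds.

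\emph{Step 1 (tail bound for one bond).} I would apply the coupling of \S\ref{s:coupling} on each window $I_j$, exactly as in the proof of Lemma~\ref{l:probagreeing}: for $s\in I_j$ with $s-t_{j-1}\ge C/\beta$, the opinions $v(s)$ and $u_i(s)$ agree, up to an $O(\kappa)$-probability error (happiness conditions and the two walks intersecting), with the initial configuration $X_0$ evaluated at the positions of two \emph{independent} random walks run for time $s-t_{j-1}$, each within total variation $e^{-\sqrt C/1000}$ of uniform by Lemma~\ref{l:couplingtv}. Hence on $\{t_{j-1}<\tau'_0\}$,
\[
\P\bigl(v(s)\ne u_i(s)\mid\cf_{t_{j-1}}\bigr)\ \ge\ 2\frac{N_0(t_{j-1})N_1(t_{j-1})}{n^2}-2e^{-\sqrt C/1000}-O(\kappa)\ \ge\ \frac{\varepsilon}{2}
\]
for $C$ large and $\kappa$ small. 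Integrating over the last nine-tenths of each $I_j$ gives $\E[W_{i,j}\mid\cf_{t_{j-1}}]\ge\frac{\varepsilon}{3}|I_j|$, so $\sum_j\E[W_{i,j}\mid\cf_{t_{j-1}}]\ge\varepsilon\delta/6$. Since $0\le W_{i,j}\le|I_j|$, Azuma's inequality applied to the martingale $\sum_{j\le m}\bigl(\E[W_{i,j}\mid\cf_{t_{j-1}}]-W_{i,j}\bigr)$ (stopped at $\tau'_0$) gives
\[
\P\bigl(Z_i=1,\ \tau'_0>\delta/2\bigr)\ \le\ \exp\!\bigl(-c\,\varepsilon^2\delta\beta/C\bigr)\ \le\ 25^{-L_0}/2
\]
once $\beta$ is large in terms of the already-fixed constants $\varepsilon,\delta,C,L_0$.

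\emph{Step 2 (decorrelating the bonds).} A plain first-moment bound only yields $\P(\sum_iZ_i>25^{-L_0}D)\le 1/2$, so I need (conditional) near-independence of the $Z_i$. The obstacle is that all the $\mathcal W_i$ depend on the single opinion process $(v(s))_{s\le\delta/2}$ of the common vertex $v$, which induces positive correlation; I would remove this by conditioning additionally on the whole trajectory $(v(s))_{s\le\delta/2}$. Step 1 survives this conditioning: given that trajectory, $W_{i,j}=\int_{I_j}\mathbf 1[u_i(s)\ne v(s)]\,ds$, and in the coupling $u_i(s)$ is still read off a random walk that is near-uniform and independent of $v$'s trajectory, so $\P(u_i(s)\ne v(s)\mid v\text{-traj},\cf_{t_{j-1}})\ge N_*(t_{j-1})/n-2e^{-\sqrt C/1000}-O(\kappa)\ge\varepsilon/2$, whence $\P(Z_i=1\mid v\text{-traj},\tau'_0>\delta/2)\le 25^{-L_0}/2$ for every $i$. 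For the joint statement I would, conditionally on $v$'s trajectory and running the coupling window by window, reveal the walks started from $u_1,\dots,u_D$ one at a time (the walk from $v$ being part of the conditioning), exactly as in the proof of Proposition~\ref{p:proportionofbadedges} via Lemmas~\ref{l:nonintersection}, \ref{l:notmanysteps}, \ref{l:notrewired} and \ref{l:happycondition}, and using that any bond joining two distinct $u_i$'s has multiplicity $<2\varepsilon_4\log n$ on $\{t<\tau_3\}$ so the $u_i$ interact only weakly with one another; this yields that, outside an exceptional index set of size $\le\kappa D$, the events $\{Z_i=1\}$ are, conditionally on $v$'s trajectory, stochastically dominated by independent $\mathrm{Bernoulli}(25^{-L_0}/2)$ variables.

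\emph{Step 3 (conclusion).} A Chernoff bound for a sum of $D$ such independent variables gives $\P(\sum_iZ_i>25^{-L_0}D\mid v\text{-traj},\tau'_0>\delta/2)\le e^{-c'25^{-L_0}D}$ for a numerical $c'>0$; since $r\le 2\varepsilon_4\log n$ and $\varepsilon_4<\frac{1}{4\log 10}$ we have $D=C10^{-r}n\ge C\sqrt n$, so the bound is $e^{-\gamma\sqrt n}$ for a suitable $\gamma>0$ independent of $\beta$, and averaging over $v$'s trajectory finishes the proof. The step I expect to be the main obstacle is Step 2: turning the $D$ strongly overlapping bond events — all of which pivot on the single vertex $v$ — into a genuinely concentrated statement; this is exactly why the conclusion is only $e^{-\gamma\sqrt n}$ (rather than $e^{-\gamma n}$) and why it is convenient to phrase everything for the continuous-time process, so that the window-by-window couplings and their conditional-independence structure stay clean. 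The remaining ingredients are the by-now-routine window/coupling/martingale bookkeeping already developed in \S\ref{s:strongbound}.
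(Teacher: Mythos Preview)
Your overall architecture---partition $[0,\delta/2]$ into windows of length $\asymp C/\beta$, use the coupling of \S\ref{s:coupling} in each window, and then concentrate over the $D$ bonds---matches the paper's. The gap is in Step~2, precisely at the sentence ``in the coupling $u_i(s)$ is still read off a random walk that is near-uniform and \emph{independent of $v$'s trajectory}.'' It is not. In the coupling of \S\ref{s:coupling} the random walks are driven by the relabelling sequence $\mathbb{RL}$ (read backwards), and the process $H(\cdot)$---in particular the opinion trajectory $(v(s))_{s\le\delta/2}$---is built from the very same $\mathbb{RL}$ (together with $\mathbb{RW}$ and the $Z_i$'s). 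Conditioning on $(v(s))$ therefore conditions on a nontrivial functional of $\mathbb{RL}$, and after that conditioning the walk from $u_i$ is no longer a bona fide random walk on $G(t_{j-1})$; Lemma~\ref{l:couplingtv} does not apply, and neither does your subsequent claim of conditional stochastic domination by independent Bernoullis. The positive correlation you identified (all $D$ bonds share the vertex $v$) is real, and conditioning it away is not free.

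The paper resolves this differently, and the difference is worth noting. First it subsamples a uniformly random set $D^*$ of only $\sqrt n$ of the $u_i$'s, which both explains the exponent $\sqrt n$ and keeps the coupling-failure bookkeeping (Lemma~\ref{l:kedgecontdependence}) under control. Second---and this is the substitute for your conditioning---it discretises $v$'s opinion on each sub-window into a binary symbol $\chi_v^{j,k}\in\{0,1\}$, so that the entire trajectory of $v$ relevant to the argument is encoded by a string of length $O(\delta\beta/C)$, a constant independent of $n$. For any \emph{fixed deterministic} such string the random walks are genuinely independent of it, the window-by-window lower bound $\E[U_i^{j,k}\mid\cf_{t_{j-1}}]\ge\varepsilon/4$ holds, and a Chernoff bound over the $\sqrt n$ indices gives $e^{-c\sqrt n}$. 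A union bound over the $2^{O(\beta)}$ possible strings then captures the actual $\chi_v^{j,k}$ at only constant cost. This is the content of Lemma~\ref{l:continuousdisagreeingkedgesample}; the transfer from the coupled walks back to the true disagreeing times $\tilde Y_i$ is handled separately in Lemma~\ref{l:kedgecontdependence}. If you want to salvage your argument, replace the conditioning on $(v(s))$ by this discretise-then-union-bound device; the rest of your outline then goes through essentially as written.
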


\begin{proof}
Without loss of generality, we assume $D=C_110^{-r}n >> \sqrt{n}$. Let us choose a random subset $D^*\subseteq {1,2,\ldots, D}$ with $|D^*|=\sqrt{n}$. It therefore suffices to prove that $\P(\sum_{i\in D^*} Z_i> 30^{-L_0}\sqrt{n}\mid D^*)\leq e^{-\gamma\sqrt{n}}$. This fact is established by Lemma \ref{l:continuousdisagreeingkedgesample} and Lemma \ref{l:kedgecontdependence} below which completes the proof.
\end{proof}

\begin{lemma}
\label{l:continuousdisagreeingkedgesample}
Condition on  $\{\cf_t, t<\tau_0\}$. Let us set $G(t)=H(0)$, and consider running the continuous time \emph{rewire-to-random-*} dynamics $H(\cdot)$ from time $0$ to $\delta/2$. Let $v, v_1,v_2,\ldots , v_{\sqrt{n}}$ be fixed vertices in $V$. Let us consider the independent continuous time random walks described in \S~\ref{s:coupling}. Let us $X_i^{j}(\cdot)$ be the random walk started from $v_i$ on $H(2j\frac{C}{\beta})$ run for time $\frac{2C}{\beta}$. Let for $\frac{C}{\beta}\leq s \leq \frac{2C}{\beta}$,  $Y_i(2j\frac{C}{\beta}+s)$ is the indicator of the event that the opinion of $X_i^{j}(s)$ in $H(0)$ is different from the opinion of $v$ in $H(s)$. Let $Y_i^*=\int_{0}^{\frac{\delta}{2}}Y_i(s)~ds$. Further, let $Z_i^*$ denote the indicator that $Y_i^*< \frac{\varepsilon\delta}{64}$. Then we have,
$$\P\left[\sum_{i} Z_i^* \geq 40^{-L_0}\sqrt{n}\right]\leq e^{-c\sqrt{n}}.$$
\end{lemma}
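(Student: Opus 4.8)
The plan is to run essentially the argument of Proposition \ref{p:proportionofbadedges}, adapted in two ways: we integrate over the $M:=\lfloor\delta\beta/(4C)\rfloor$ restart windows $I_j:=[2jC/\beta,(2j+2)C/\beta]$, $0\le j<M$, rather than looking at a single time, and we compare the random-walk proxy for $v_i$'s opinion against $v$'s \emph{live} opinion $v(H(\cdot))$ instead of against a fixed density. Throughout I would work on $\{t<\tau_0\}\cap\{\tau'_0>\delta/2\}$, on which every snapshot $H(2jC/\beta)$ satisfies the weak conditions, so that Lemma \ref{l:couplingtv} (in the continuous-time analogue of the coupling of \S~\ref{s:coupling}) applies to the walk on each of them; write $W_i^j:=\int_{(2j+1)C/\beta}^{(2j+2)C/\beta}Y_i(s)\,ds\in[0,C/\beta]$, so that $Y_i^*=\sum_{j=0}^{M-1}W_i^j$.

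First I would prove a per-sample estimate: for each fixed $i$, $\P[Z_i^*=1]\le e^{-c_1\beta}$ for a constant $c_1=c_1(\varepsilon,\delta,C)>0$, which is below $\tfrac12\,40^{-L_0}$ once $\beta$ is large. Fixing $i,j$ and conditioning on $\sigma(H(s):s\le 2jC/\beta)$, Lemma \ref{l:couplingtv} gives that for every $s$ in the active half-window the walk position at local time $s-2jC/\beta\ge C/\beta$ is within total variation $e^{-\sqrt C/1000}$ of uniform on $V$; since its opinion is read off the fixed graph $H(0)=G(t)$, and $t<\tau_0\le\tau_*$ forces $N_0(G(t))/n\in[\varepsilon,1-\varepsilon]$, the conditional probability that $Y_i(s)=1$ is at least $\varepsilon-e^{-\sqrt C/1000}\ge\varepsilon/2$ for $C$ large, \emph{uniformly} over the evolution of $v$. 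Integrating gives $\E[W_i^j\mid\sigma(H(s):s\le 2jC/\beta)]\ge\varepsilon C/(2\beta)$ and hence $\sum_j\E[W_i^j\mid\cdot]\ge\varepsilon\delta/16$; since $W_i^j-\E[W_i^j\mid\cdot]$ is a martingale difference sequence with increments bounded by $C/\beta$, Azuma's inequality gives $\P[Y_i^*<\varepsilon\delta/64]\le\exp(-(\varepsilon\delta/32)^2/(2M(C/\beta)^2))\le e^{-c_1\beta}$. The same bound holds after additionally conditioning on $(v(H(s)))_{s\le\delta/2}$ and on the snapshots $(H(2jC/\beta))_j$, since the estimate used only those.

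Next I would upgrade this to the bound on $\sum_i Z_i^*$, copying the decoupling used in passing from Lemma \ref{l:couplingtv} to Lemma \ref{l:couplemany}: by the Poisson-thinning structure of the coupling of \S~\ref{s:coupling}, walks started from distinct vertices move independently until they collide, and by Lemma \ref{l:nonintersection} applied in each of the $M$ windows (with a union bound and using $C_2\le 2$) outside an event of probability $\le e^{-\gamma\sqrt n}$ all but a $\kappa$-fraction of the $\sqrt n$ walks are, in every window, mutually non-intersecting and happy. Conditioning on $(v(H(s)))_{s\le\delta/2}$, the snapshots, and this good event, the surviving $Z_i^*$ become conditionally independent with conditional means $\le\tfrac12\,40^{-L_0}$, so $\sum_i Z_i^*$ is conditionally dominated by $\kappa\sqrt n+\mathrm{Bin}(\sqrt n,\tfrac12\,40^{-L_0})$; choosing $\kappa<\tfrac12\,40^{-L_0}$ and applying a Chernoff bound then gives $\P[\sum_i Z_i^*\ge 40^{-L_0}\sqrt n]\le e^{-c\sqrt n}$ after integrating out the conditioning.

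The hard part is this last decoupling step: the walks from the different $v_i$ are driven by the common relabelling stream of the coupling, and $Y_i$ moreover refers to $v$'s own opinion path, so the $Z_i^*$ are not literally independent. As elsewhere in \S~\ref{s:strongbound} I would resolve this by discarding the $O(\kappa\sqrt n)$ walks that ever intersect another walk or fail to be happy and arguing that, conditionally on $v$'s opinion trajectory and the snapshots, the survivors behave like independent samples (the residual entanglement between a single walker and $v$'s path being negligible because a walker spends only a vanishing fraction of its time at $v$). The remaining bookkeeping, namely uniformity of the mixing estimate over the random snapshot at each restart and the fact that the integrated disagreement is not too concentrated near $0$, is routine given Lemmas \ref{l:couplingtv}, \ref{l:nonintersection} and \ref{l:notmanysteps}.
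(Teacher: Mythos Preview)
Your overall strategy matches the paper's: partition $[0,\delta/2]$ into windows of length $2C/\beta$, use mixing in each window to lower-bound the disagreement probability uniformly over $v$'s current opinion, and then concentrate across $i$ using independence of the walks. You diverge on two points.

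First, in handling the dependence on $v$'s opinion path. The paper does not condition on $(v(H(s)))_s$ directly; instead it discretises time further into sub-intervals of length $\theta/\beta$ (with $\theta$ small enough that the walk rarely jumps in a sub-interval), records $v$'s majority opinion $\chi_v^{j,k}\in\{0,1\}$ on each sub-interval, and then takes a union bound over the $2^{\delta\beta/(4C)}$ possible realizations of $(\chi_v^{j,k})_j$ for each fixed offset $k$, followed by a union bound over $k$. Your Azuma argument, conditioning on the full $v$-trajectory and using that the per-window bound is uniform in $v$'s opinion, is a legitimate alternative and arguably cleaner, but it is not the paper's route.

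Second, your decoupling step is unnecessary for \emph{this} lemma and rests on a misreading. You write that ``the walks from the different $v_i$ are driven by the common relabelling stream of the coupling,'' but here they are not: the $X_i^j$ are the \emph{independent} continuous-time random walks of \S\ref{s:coupling}, each with its own Poisson clock. Consequently, conditional on the snapshots $(H(2jC/\beta))_j$ and on $(v(H(s)))_s$, the variables $(Y_i^*)_i$ and hence $(Z_i^*)_i$ are already independent, with no need for Lemma~\ref{l:nonintersection} or the happiness machinery. The paper accordingly just says ``since the random walks are independent'' and applies a direct concentration bound. The coupling to the relabelling stream and the non-intersection argument only enter in the \emph{next} lemma (Lemma~\ref{l:kedgecontdependence}), where the proxy $Y_i^*$ is compared to the actual disagreement time $\tilde Y_i$.
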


\begin{proof}
For $k=0,2,\ldots , \frac{C}{\theta}-1$, $j=1,2,\ldots \frac{\delta \beta}{4C}$, let $\chi_v^{j,k}=1$ if $v$ spends majority of its time in the interval $[(2j-1)\frac{C}{\beta}+\frac{k\theta}{\beta},(2j-1)\frac{C}{\beta}+\frac{(k+1)\theta}{\beta}]$ with opinion $1$ and $0$ otherwise. Let $\chi_i^{j,k}=1$ if the opinion of $X_i^{j}(s)=1$ for all $s\in [(2j-1)\frac{C}{\beta}+\frac{k\theta}{\beta},(2j-1)\frac{C}{\beta}+\frac{(k+1)\theta}{\beta}]$, $\chi_i^{j,k}=0$ if the opinion of $X_i^{j}(s)=0$ for all $s\in [(2j-1)\frac{C}{\beta}+\frac{k\theta}{\beta},(2j-1)\frac{C}{\beta}+\frac{(k+1)\theta}{\beta}]$, and $\chi_i^{j,k}=\star$ otherwise. Now let $U_i^{j,k}=1_{\{\chi_i^{j,k}=1, \chi_v^{j,k}=0\}}+1_{\{\chi_i^{j,k}=1, \chi_v^{j,k}=0\}}$. Let us fix $k$. Now choose $\theta$ sufficiently small so that the chance that the random walk takes a step in time $\theta/\beta$ is at most $\frac{\varepsilon}{4}$. Clearly, for a fixed realisation of the sequence $\chi_v^{j,k}$, and on $\{2(j-1)C/\beta<\tau'_0\}$, we have by Lemma \ref{l:couplingtv}, that  $\E[U_i^{j,k}\mid \cf_{2(j-1)C/\beta}]\geq \varepsilon/4$. Since the random walks are independent, it follows that

$$\P\left[\#\{i: \sum_j U_i^{j,k}\leq \frac{\delta \beta \varepsilon}{8C}\}\geq e^{-\gamma'\beta}\sqrt{n}, \chi_v^{j,k}, \frac{\delta}{2}<\tau'_0\right]\leq e^{-c\sqrt{n}}.$$

Taking a union bound over $2^{\delta\beta/4C}$ possible realisations of the sequence $\chi_v^{k,j}$ (for a fixed $k$), we get that,

$$\P\left[\#\{i: \sum_j U_i^{j,k}\leq \frac{\delta \beta \varepsilon}{32C}\}\geq e^{-\gamma'\beta}\sqrt{n}, \frac{\delta}{2}<\tau'_0\right]\leq e^{-c\sqrt{n}}$$
for some constant $\gamma'$ and $c>0$.

Now taking a union bound over $k$ we get,

$$\P\left[\#\{i: \sum_j\sum_{k} U_i^{j,k}\leq \frac{\delta \beta \varepsilon}{32\theta}\}\geq \frac{C}{\theta}e^{-\gamma'\beta}\sqrt{n}, \frac{\delta}{2}<\tau'_0\right]\leq \frac{C}{\theta}e^{-c\sqrt{n}}.$$
Now notice that on $\{\sum_j\sum_{k} U_i^{j,k}>\frac{\delta \beta \varepsilon}{32\theta}\}$, we have $Y_i^*>\frac{\varepsilon\delta}{64}$, and the proof of the lemma is completed by taking $\beta$ sufficiently large.
\end{proof}

\begin{lemma}
\label{l:kedgecontdependence}
Assume the setting of Lemma \ref{l:continuousdisagreeingkedgesample}. Also let $\tilde{Y}_i$ denote the amount of time the bond $(v,v_i)$ is disagreeing in $[0,\frac{\delta}{2}]$. Then there is a coupling of the continuous time evolving voter model with the continuous time random walks started at $v_i$ as described in Lemma \ref{l:continuousdisagreeingkedgesample} such that
$$\P[\#\{i: \tilde{Y}_{i}\leq Y_i^*- \frac{\varepsilon\delta}{128}\}\geq 40^{-L_0}\sqrt{n}, \frac{\delta}{2}<\tau'_0]\leq e^{-c\sqrt{n}}$$
for some constant $c$.
\end{lemma}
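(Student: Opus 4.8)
The plan is to verify the stated inequality for the coupling already in force in Lemma \ref{l:continuousdisagreeingkedgesample} --- the window-by-window version of the duality coupling of \S~\ref{s:coupling}, in which, for each window $j\in\{1,\dots,\delta\beta/4C\}$ occupying real time $[2jC/\beta,\,2(j+1)C/\beta]$, the auxiliary walk $X_i^{j}(\cdot)$ is run on the frozen graph $H(2jC/\beta)$ with its moves coupled to the voter-model updates of $H(\cdot)$ exactly as in \S~\ref{s:coupling}. Call $(i,j)$ \emph{good} if $X_i^{j}$ is \emph{happy} in the sense of Definition \ref{d:happy} throughout the sub-window $s\in[C/\beta,2C/\beta]$. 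On a good pair, the continuous-time analogue of Lemma \ref{l:couplingbasic1} identifies the opinion of $X_i^{j}(s)$ in $H(2jC/\beta)$ with the opinion of $v_i$ in $H(\cdot)$ at time $2jC/\beta+s$, so that $Y_i(u)=1_{\{(v,v_i)\ \text{disagreeing at time}\ u\}}$ for all $u$ in that sub-window; summing over windows and using $Y_i\le 1$ on the remaining sub-windows (each of length $C/\beta$) gives the deterministic inequality $Y_i^{*}\le \tilde Y_i+\tfrac{C}{\beta}\#\{j:(i,j)\ \text{not good}\}$. Thus on $\{\delta/2<\tau'_0\}$ it is enough to show that all but $40^{-L_0}\sqrt n$ of the indices $i$ have at most an $\varepsilon/32$ fraction of their windows not good, since then $\tfrac{C}{\beta}\#\{j:(i,j)\ \text{not good}\}\le \varepsilon\delta/128$ and hence $\tilde Y_i\ge Y_i^{*}-\varepsilon\delta/128$.

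Next I would prove a per-window estimate: fix $i,j$, condition on $\cf_{2jC/\beta}$ on $\{\delta/2<\tau'_0\}$ (so $H(2jC/\beta)$ satisfies all the weak degree, multiplicity, balance and Cheeger bounds) and on the relevant block of $\mathbb{RW}$ restricted to the event $\mathcal{G}=\{\omega<2Cn^2/\beta,\ |E^{*}(v)|\le 4Cnq(\beta)/\beta\ \forall v\}$ of Lemma \ref{l:happycondition}. Given this data, $X_i^{j}$ is a fresh random walk on a graph obeying all the weak bounds, so the happiness analysis of \S~\ref{s:coupling} --- Lemma \ref{l:notmanysteps} (not too many steps), Lemma \ref{l:notrewired} (no traversed edge rewired) and Lemma \ref{l:happycondition} (condition $3$) --- gives $\P((i,j)\ \text{not good}\mid\cf_{2jC/\beta},\mathbb{RW})\le q=q(C,\beta)$, and $q$ can be made smaller than $\varepsilon/64$ by first choosing $C$ and then $\beta$ large. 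Moreover, exactly as in Lemma \ref{l:continuousdisagreeingkedgesample}, conditionally on the trajectory of $H(\cdot)$ on $[0,\delta/2]$ the walkers from distinct $v_i$ evolve independently (they remain of type $A$, so no single edge-clock moves two of them), and for fixed $i$ the events $\{(i,j)\ \text{not good}\}$, $j=1,2,\dots$, have conditional expectation at most $q$ along the natural filtration, since $X_i^{j+1}$ is run on $H(2(j+1)C/\beta)$ with randomness independent of the past.

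It then remains to combine these by Chernoff bounds in two stages, along the lines of the proof of Lemma \ref{l:propcount}. For fixed $i$, since $\#\{j:(i,j)\ \text{not good}\}=\sum_j B_{i,j}$ with $\E[B_{i,j}\mid\mathcal F_{i,j-1}]\le q<\varepsilon/64$ and there are $\delta\beta/(4C)$ windows, a Chernoff/supermartingale estimate yields $\P(\sum_j B_{i,j}>\varepsilon\delta\beta/(128C),\ \delta/2<\tau'_0)\le e^{-c_1\delta\beta/C}=:p$ for some $c_1>0$ independent of $\beta$, and $\{\tilde Y_i<Y_i^{*}-\varepsilon\delta/128\}$ forces this event by the deterministic inequality above. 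Finally, conditionally on the trajectory of $H(\cdot)$ these ``bad index'' events are independent across $i$ and each has probability at most $p$, so choosing $\beta$ large enough that $p\le\tfrac12\,40^{-L_0}$ and applying one more Chernoff bound over the $\sqrt n$ indices gives $\P(\#\{\text{bad indices}\}\ge 40^{-L_0}\sqrt n,\ \delta/2<\tau'_0)\le e^{-c\sqrt n}$, which is the claim. The main obstacle throughout is the bookkeeping around the coupling: upgrading Lemma \ref{l:couplingbasic1} from a single-instant to a whole-sub-window statement, verifying the across-$i$ conditional independence of the walkers in the continuous-time/windowed setting, and handling the genuine correlations between successive windows (since $H(2(j+1)C/\beta)$ depends on the entire past) by chaining along a filtration rather than invoking independence; once these are in place the tail estimates are routine.
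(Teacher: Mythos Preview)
Your overall strategy matches the paper's: define per-window failure indicators $Y_{i,j}$ (your $B_{i,j}$), use the deterministic bound $Y_i^{*}-\tilde Y_i\le \tfrac{2C}{\beta}\sum_j Y_{i,j}$, and then show that few indices $i$ can have large $\sum_j Y_{i,j}$. The difference is in how the last step is carried out. The paper does not do a two-stage Chernoff; it bounds the \emph{total} $\sum_{i,j}Y_{i,j}$ in one shot (by the arguments of Lemma~\ref{l:couplemany}, applied per window with $\sqrt n$ walkers and $\kappa$ taken of order $1/\sqrt\beta$, then a union bound over the $\delta\beta/4C$ windows), obtaining $\sum_{i,j}Y_{i,j}\le 10C\delta\sqrt\beta\,\sqrt n$ with probability $\ge 1-e^{-c\sqrt n}$. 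A straight pigeonhole then gives that at most $\tfrac{2560C^2}{\sqrt\beta\,\varepsilon\delta}\sqrt n$ indices $i$ can have $\sum_j Y_{i,j}\ge \varepsilon\delta\beta/(256C)$, and one finishes by taking $\beta$ large.

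Your route is more elaborate and has a genuine soft spot at the second Chernoff step. The claim that ``conditionally on the trajectory of $H(\cdot)$ the walkers from distinct $v_i$ evolve independently'' is not right for the duality coupling of \S\ref{s:coupling}: while the walkers are of type~$A$ they move according to the \emph{same} relabelling sequence $\mathbb{RL}$, so once you condition on $H$ (hence on $\mathbb{RL}$) their type-$A$ trajectories are deterministic, not independent; and the event ``$(i,j)$ not good'' includes ``walker $i$ became type $B$'', which is a collision event coupling the different $i$'s together. Your parenthetical (``no single edge-clock moves two of them'') does not establish conditional independence --- it only says each clock moves at most one type-$A$ walker. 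Unconditionally the walks are independent CTRWs, but then the ``good'' events share the random environment $H$ and are again correlated across $i$. This can be patched (e.g.\ handle the type-$A$/type-$B$ collisions separately as in Lemmas~\ref{l:nonintersection}--\ref{l:notmanysteps}, which are rare since there are only $\sqrt n$ walkers), but the clean way around it is exactly the paper's: bound $\sum_{i,j}Y_{i,j}$ globally and use Markov, which needs no independence across $i$ at all.
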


\begin{proof}
Consider the coupling described in \S~\ref{s:coupling},
%\marginpar{\tiny{the argument here is a bit more complicated. Instead of looking at paths of size $20C$, we need to look at paths of size $C\sqrt{\beta}$}.} 
with the obvious modification for the continuous time dynamics. Define $Y_{i,j}=0$ if the opinion of $X_i^j(s)$ is the same as the opinion of $v_i$ in $H(\frac{2jC}{\beta}+s)$ for all $s\in [0, \frac{2C}{\beta}]$ and $Y_{i,j}=1$ otherwise. It follows by arguments similar to those in the proof of Lemma \ref{l:couplemany} that
$$\P\left[\sum_{j=1}^{\delta\beta/4C}\sum_{i=1}^{\sqrt{n}}Y_{i,j} \geq 10C\delta\sqrt{\beta}\sqrt{n}\right]\leq e^{-c\sqrt{n}}$$.

Again notice that, $\tilde{Y}_i-Y_i^* \geq (\sum_{j}Y_{i,j})\frac{2C}{\beta}$. It follows that,

$$\P[\#\{i: \tilde{Y}_{i}\leq Y_i^*- \frac{\varepsilon\delta}{64}\}\geq \frac{2560C^2}{\sqrt{\beta}\varepsilon\delta}\sqrt{n}]\leq e^{-c\sqrt{n}}$$
The proof of the lemma is completed by taking $\beta$ sufficiently large.
\end{proof}

\begin{lemma}
\label{l:kedge2continuous}
Condition on $\{\cf_t, t<\tau_0\}$. Assume the setting of Proposition \ref{p:kedge2continuousdisagreeing}. Then, Let $W_i^*$ be the indicator that the bond $vu_i$ loses at least $L_0$ edges by time $\frac{\delta}{2}$. For $r> \frac{10^6L_0}{\varepsilon\delta}$, we have
$$\P\left[\sum_{i=1}^{D}W^*_i\geq 20^{-L_0}C_110^{-r}n, \tau'_0>\frac{\delta}{2}\right]\leq \frac{1}{n^{20}}.$$
\end{lemma}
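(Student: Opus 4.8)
The plan is to reduce the claim, using Proposition~\ref{p:kedge2continuousdisagreeing}, to a bound on the bonds $vu_i$ that are disagreeing for total time at least $\tfrac{\varepsilon\delta}{200}$ in $H(\cdot)$, and then to control those by a per‑bond Chernoff estimate which is promoted to a high‑probability statement by random sub‑sampling, exactly as in the proof of Proposition~\ref{p:kedge2continuousdisagreeing}. Write $D=C_110^{-r}n$; since $r<2\varepsilon_4\log n$ with $\varepsilon_4<\tfrac1{4\log10}$ we have $D\ge C_1\sqrt n$. Let $Z_i$ be the indicator (from Proposition~\ref{p:kedge2continuousdisagreeing}) that $vu_i$ is disagreeing for less than $\tfrac{\varepsilon\delta}{200}$ time, and recall that $W_i^*$ records whether $vu_i$ loses at most $L_0$ edges by time $\delta/2$. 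Since $\{W_i^*=1\}\subseteq\{Z_i=1\}\cup\{Z_i=0,W_i^*=1\}$, and since Proposition~\ref{p:kedge2continuousdisagreeing} gives $\P(\sum_iZ_i>25^{-L_0}D,\ \tau'_0>\delta/2)\le e^{-\gamma\sqrt n}$ while $25^{-L_0}D<20^{-L_0}D$, it suffices to show
\[
\P\bigl(\,\#\{i:Z_i=0,W_i^*=1\}\ \ge\ (20^{-L_0}-25^{-L_0})D,\ \ \tau'_0>\tfrac\delta2\,\bigr)\ \le\ \tfrac12\,n^{-20}.
\]

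Next I record the relevant high‑probability events, lumped into $\mathcal G$: (a) $\{\tau'_0>\delta/2\}$, on which every bond has multiplicity below $2\varepsilon_4\log n$, so edges arrive at $vu_i$ at rate $O(1)$ and, by a Chernoff bound and a union bound, each $vu_i$ gains at most $L_0/2$ edges; (b) the event that voter‑model steps occur at only $o(D)$ of the bonds $vu_1,\dots,vu_D$ — this holds with probability $1-n^{-\omega(1)}$ because each such bond has at most $2\varepsilon_4\log n$ edges, each ringing at rate $O(1)$ and producing a voter step only with probability $\beta/n$, so the expected total number of such voter steps is $O(\beta\delta 10^{-r}\log n)=o(D)$, and a Chernoff bound upgrades this to $n^{-\omega(1)}$. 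On $\mathcal G$, call a bond \emph{clean} if no voter step ever occurs at it; for a clean bond $vu_i$ that is also \emph{good} ($Z_i=0$), every ring at a current edge during a disagreeing moment removes an edge from $vu_i$, and the multiplicity stays $\ge r-L_0-L_0/2\ge r/2$ until $L_0$ edges have been lost, so the number of edges $vu_i$ loses stochastically dominates from below the number of points of a Poisson process of rate $\ge r$ run over total time $\ge\tfrac{\varepsilon\delta}{200}$, i.e.\ a Poisson variable of mean $\tfrac{r\varepsilon\delta}{200}>5000\,L_0$. A Chernoff bound therefore gives, for a clean good bond,
\[
\P\bigl(W_i^*=1 \ \big|\ \text{its disagreeing-status trajectory}\bigr)\ \le\ e^{-c_0L_0},
\]
with $c_0$ an absolute constant that may be taken $>100$. (The mild circularity of conditioning on $\{W_i^*=1\}$ above is removed in the usual way, by stopping the loss process the first time the multiplicity drops below $r/2$ or the number of arrivals exceeds $L_0/2$.)

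Finally — and this is the step needing care — the per‑bond estimate must be turned into a bound on the count, which is subtle because the loss processes at the different bonds $vu_i$ are coupled through the global evolution of the graph and of the opinions. As in the proof of Proposition~\ref{p:kedge2continuousdisagreeing}, one passes to a uniformly random $D^*\subseteq[D]$ with $|D^*|=\sqrt n$; on the high‑probability event (Lemmas~\ref{l:notmanysteps}, \ref{l:nonintersection}, \ref{l:notrewired}) that the short random walks from the endpoints of the corresponding edges take $O(C)$ steps, are pairwise non‑intersecting, and traverse no rewired edge, the opinion trajectories of $v$ and of the $u_i$, $i\in D^*$, become conditionally independent across $D^*$; since (on $\mathcal G$) the loss count at a clean bond is a function only of that bond's disagreeing‑status trajectory together with the fresh ring/rewire randomness at its own edges, the events $\{Z_i=0,\text{clean},W_i^*=1\}$, $i\in D^*$, are then conditionally independent, each of probability $\le e^{-c_0L_0}$. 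A Chernoff bound gives $\P\bigl(\#\{i\in D^*:Z_i=0,\text{clean},W_i^*=1\}\ge\tfrac12(20^{-L_0}-25^{-L_0})\sqrt n\bigm| D^*\bigr)\le e^{-c\sqrt n}$, and by hypergeometric concentration this forces $\#\{i\in[D]:Z_i=0,\text{clean},W_i^*=1\}<(20^{-L_0}-25^{-L_0})D$ except with probability $e^{-c\sqrt n}$; adding back the $o(D)$ contaminated bonds (which is $\ll(20^{-L_0}-25^{-L_0})D$), the $e^{-\gamma\sqrt n}$ loss from Proposition~\ref{p:kedge2continuousdisagreeing}, and the $n^{-\omega(1)}$ loss from $\mathcal G$, yields the claim. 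The main obstacle is precisely the conditional‑independence assertion across $D^*$ — checking that, off the non‑intersection event, conditioning on the opinion trajectories genuinely decouples the per‑bond loss processes — which is exactly what the coupling of \S\ref{s:coupling} is built to deliver.
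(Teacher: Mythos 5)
Your overall skeleton is the same as the paper's: reduce via Proposition \ref{p:kedge2continuousdisagreeing} to the bonds that are disagreeing for time at least $\tfrac{\varepsilon\delta}{200}$, note that such a bond has multiplicity at least $r-L_0\ge r/2$ while fewer than $L_0$ edges have been lost, so its loss count dominates a Poisson of mean at least $\tfrac{r\varepsilon\delta}{400}\gg L_0$, and then aggregate over the $D=C_110^{-r}n$ bonds. (You also read $W_i^*$ as ``loses at most $L_0$ edges'', which is the intended meaning, consistent with Lemma \ref{l:kedge2}.) The gap is exactly the step you flag: the claimed conditional independence, across a random subsample $D^*$, of the opinion trajectories of $v$ and the $u_i$ (and hence of the events $\{Z_i=0,\ \text{clean},\ W_i^*=1\}$) is not something Lemmas \ref{l:notmanysteps}, \ref{l:nonintersection}, \ref{l:notrewired} or the coupling of \S\ref{s:coupling} provide. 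On the non-intersection/no-rewired-edge event the coupling only lets you replace the opinions at prescribed times by functions of independent random walks run on the \emph{frozen} graph $G(t)$, up to a controlled fraction of failing indices (this is precisely how Lemmas \ref{l:continuousdisagreeingkedgesample} and \ref{l:kedgecontdependence} are used inside Proposition \ref{p:kedge2continuousdisagreeing}); the true trajectories remain dependent, and $W_i^*$ depends on the actual continuous-time evolution (ring times, rewiring destinations of the bond's own edges, which feed back into the global dynamics, and edges gained from elsewhere), not merely on the bond's disagreement trajectory plus private randomness. The paper's proof does not need any trajectory independence: it works directly with the loss counts $S_i$, uses that the ring clocks of edges sitting at distinct bonds are independent to couple $(S_1,\dots,S_D)$ against i.i.d.\ $\mbox{Poi}(1000L_0)$ minorants $S_i'$ with $S_i\ge S_i'\wedge L_0$ outside the at most $25^{-L_0}D$ bonds of small disagreement time furnished by Proposition \ref{p:kedge2continuousdisagreeing}, and then applies a Chernoff bound to the independent $S_i'$ to get $\#\{i:S_i'\le L_0\}\le 40^{-L_0}D$ up to probability $e^{-c\sqrt n}$; adding the two exceptional sets gives $20^{-L_0}D$. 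That coupling against independent Poisson minorants is the ingredient your argument is missing (or replaces by an unproved decoupling claim).

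Two smaller points. Your event $\mathcal{G}$(a), that \emph{every} bond $vu_i$ gains at most $L_0/2$ edges by time $\delta/2$, is not a high-probability event: $L_0$ is a constant, the per-bond probability of gaining $L_0/2$ edges is a constant of order $(C\delta)^{L_0/2}$, and a union bound over $D\ge C_1\sqrt n$ bonds fails. Fortunately it is also unnecessary (and used in the wrong direction): gained edges only \emph{increase} the loss rate, and $r-L_0\ge r/2$ already suffices. Likewise, ``voter steps at only $o(D)$ of the bonds with probability $1-n^{-\omega(1)}$'' should be stated as ``at most a fixed small fraction, say $\tfrac12(20^{-L_0}-25^{-L_0})D$'', which a Poisson tail bound does give since the expected number of such voter steps is $O(10^{-r}\beta\delta\log n)=o(D)$ and $D\ge C_1\sqrt n$. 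These are fixable; the decoupling assertion is the genuine gap.
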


\begin{proof}
Without loss of generality assume $D=C_110^{-r}n>> \sqrt{n}$. In the continuous time model, the rate at which a bond with $k$-disagreeing edges lose an edge is $k$, and the rings of the different edges are independent. Let $S_1, S_2,\ldots, S_{D}$ be the number of edges lost by the bonds $vu_1, vu_2,\ldots vu_D$ respectively. Let $S'_i$ be independent $\mbox{Poi}(1000L_0)$ variables. It follows from Proposition \ref{p:kedge2continuousdisagreeing} that there exist a coupling such that,
$$\P[\#\{i: S_i\leq S'_i\wedge L_0 \}\geq 25^{-L_0}C_1 10^{-k}n, \tau'_0>\frac{\delta}{2}]\leq e^{-c\sqrt{n}}$$ for some constant $c>0$.
Also notice that

$P[\#\{i: S'_i\leq L_0\}\geq 40^{-L_0}D]\leq e^{-c\sqrt{n}}$. The lemma follows.
\end{proof}

Now we are ready to prove Lemma \ref{l:kedge2}.

\begin{proof}[Proof of Lemma \ref{l:kedge2}]
The proof follows from the obvious coupling of the continuous time {rewire-to-random-*} dynamics with the discrete time {rewire-to-random-*} dynamics and observing that with exponentially high probability, the number of step taken in the discrete time process upto time $\frac{\delta}{2}$ in the continuous time process is less that $\delta n^2$.
%For the proof of the above Lemma it will be convenient to consider the coupling of our model with the continuous time model. On $\{\tau'<t+\delta n^2\}$ we have that the number of disagreeing edges at all times between the process is at most $\frac{n^2}{3}$, if we show that in the continuous time dynamics run upto time $\delta/2$, the number of edges lost is less than $L_0$ for at most $20^{-L_0}C10^{-r}n$ many $r$-edges with probability at least $1-\frac{1}{n^{20}}$ then the lemma will follow. The following lemma then completes the proof.
%\textsf{We are missing the concentration argument here}.
\end{proof}

We now complete the proof of Lemma \ref{l:kedge}.

\begin{proof}[Proof of Lemma \ref{l:kedge}]
Choose $C_1$ large enough such that the condition is satisfied for all $k<\frac{10^6L_0}{\varepsilon\delta}$ by the degree estimate. Now notice that $$X_{v,k}(t+\delta n^2)\leq \sum_{r\geq k+1} X_{v,r}(t)+\sum_{r=0}^{k-L_0}Y_{r,k}+\sum_{r=k-L_0}^{k} Y_{r,k}.$$
On $\{t<\tau_0\}$ the first sum is $\frac{C_110^{-k}n}{9}$. By Lemma \ref{l:kedge1} we have that
$$\P\left[\sum_{r=0}^{k-L_0}Y_{r,k}\geq 3^{-L_0}C10^{-k}n, t+\delta n^2<\tau'_0\right]\leq \frac{1}{n^{19}}.$$
Also using Lemma \ref{l:kedge1} and Lemma \ref{l:kedge2} it follows that
$$\P\left[\sum_{r=k-L_0}^{k}Y_{r,k}\geq 2^{-L_0}C10^{-k}n, t+\delta n^2<\tau'_0\right]\leq \frac{1}{n^{19}}.$$
Putting together all these gives us the statement of the lemma.
\end{proof}

\begin{proof}[Proof of Theroem \ref{t:multedgesbound}]
For each fixed $v$, taking a union bound over all $k$, and then taking a union bound over all $v$, and then taking a union bound over times in $[t-\delta n^2, t]$ yields  the theorem from Lemma \ref{l:kedge}.
\end{proof}

\subsection{Completing the proof of Theorem \ref{t:stoptime}}

Now we are ready to prove Theorem \ref{t:stoptime}.

\begin{proof}[Proof of Theorem \ref{t:stoptime}]
Using a random walk estimate it is clear that $\P[\tau_0>n^4]=o(1)$. Also it is clear from the properties of an Erd\H{o}s-R\'enyi graph that $P[\tau_0<\delta n^2]=o(1)$. Now for $k\geq 0$, and $i=2,3,4,5$, let $A_{k,i}$ denote the event $\{k\delta n^2<\tau_0, (k+1)\delta n^2<\tau_*, (k+1)\delta n^2\geq \tau_i\}$. Using Theorem \ref{t:bigcut}, Theorem \ref{t:indmultbound}, Theorem \ref{t:multedgesbound} and Theorem \ref{t:degsbound} and taking a union over $0\leq k \leq n^2/\delta$, it follows that $$\P[\tau_0<\tau_*-\delta n^2]\leq o(1)+\sum_{i,k} \P(A_{k,i}) =o(1).$$
This completes the proof.
\end{proof}

\section{Rewire-to-random Eventually Splits}
\label{s:esplit}
%\textsf{This needs to be modified to treat the fact that we are not allowing loops}.

In this section we prove Theorem \ref{t:rtoresplit}. For this section we shall consider running the \emph{rewire-to-random} model with a different initial condition. For $0<p<1$, let $\mathcal{G}^{*}(p)$ be the subset of the state space of our markov chain, i.e., let $\mathcal{G}^*(p)$ is a set of multi-graphs of $n$ vertices with labelled edges where each vertex has either of the two opinions $0$ and $1$, such that $N_1(G)=pn$ and the number of edges in $G$ is in $[\frac{12n^2}{50},\frac{13n^2}{50}]$.
Theorem \ref{t:rtoresplit} will follow from the the next theorem.

\begin{theorem}
\label{t:es}
Let $\beta>0$ be fixed. Consider running the \emph{rewire-to-random} model with relabelling rate $\beta$ starting with the state $G(0)$. Consider the stopping times $\tau=\min\{t:\mathcal{E}^{\times}(t)=\emptyset\}$ and $\tau_*=\tau_{*}(p/2)=\min\{t:N_*(t)\leq \frac{pn}{2}\}$. Then there exists $p=p(\beta)$ sufficiently small such that for all $G(0)\in \mathcal{G}^{*}(p)$, we have $\tau<\tau_{*}$ with high probability.
\end{theorem}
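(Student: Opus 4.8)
The plan is to reduce Theorem~\ref{t:es} to two statements: (a) from any $G(0)\in\mathcal{G}^{*}(p)$ the number of disagreeing edges $X_t:=|\mathcal{E}^{\times}(t)|$ reaches $0$ within $T_1:=C_\beta n^2$ steps with high probability, for a suitable constant $C_\beta$; and (b) $\tau_*>T_1$ with high probability. Together these give $\tau\le T_1<\tau_*$. Statement (b) is the soft half: on $\{t<\tau\}$ the minority count $N_1(t)$ performs a symmetric simple random walk that advances by one at each relabelling step, and within $T_1=C_\beta n^2$ steps there are at most $2C_\beta\beta n$ relabelling steps with high probability, so by the reflection principle $N_1$ cannot move by $pn/2$ — in particular cannot leave $[pn/2,2pn]$ — within $[0,T_1]$ except with probability $e^{-\Omega(p^2 n/\beta)}=o(1)$; thus throughout $[0,T_1]$ the minority fraction $q_t$ stays in $[p/2,2p]$.

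All the work is in (a), where the engine is a drift estimate for $X_t$. At a rewiring step a uniform disagreeing edge $e=(u,v)$ (say $u$ of the minority opinion, $v$ of the majority) is moved to a uniform new endpoint with a uniformly chosen root among $\{u,v\}$; $e$ becomes agreeing with probability $\tfrac12 q_t+\tfrac12(1-q_t)=\tfrac12(1-o(1))$, and $X_t$ never increases at such a step, so rewiring contributes a downward drift of $\tfrac12-o(1)$ per step. At a relabelling step (probability $\tfrac\beta n$) the root $w$ flips and $X_t$ changes by \emph{exactly} $a_w-b_w$, where $a_w$ and $b_w$ count the edges from $w$ to vertices of the same and the opposite opinion respectively. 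When $q_t$ is small this is large and negative for a minority-opinion root and large and positive for a majority-opinion root, and since the root is a uniform endpoint of a uniform disagreeing edge these two effects nearly cancel: averaging gives
\[
\E[X_{t+1}-X_t\mid\cf_t,\ \text{relabelling}]=\frac{1}{2X_t}\sum_{w}b_w(d_w-2b_w)=\frac{1}{2X_t}\Big(\sum_w b_w d_w-2\sum_w b_w^2\Big),
\]
and when $G(t)$ is close to an Erd\H{o}s--R\'enyi graph with the opinions in approximate product measure — so that $b_w\approx q_t d_w$ on majority vertices and $b_w\approx(1-q_t)d_w$ on minority vertices — the two $\Theta(n^3)$ sums cancel to leading order, leaving a relabelling contribution of order $\beta p$ per step. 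Choosing $p=p(\beta)$ small enough that this relabelling contribution is at most $\tfrac18$ per step, the process $X_t$ has drift $\le-\tfrac18$ as long as $X_t>0$ and $G(t)$ is of this form, so Azuma's inequality applied to $X_t+\tfrac18 t$ (stopped at the relevant stopping times) forces $X_t$ to hit $0$ within $O(X_0)=O(n^2)$ steps with high probability.

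To license this near-product structure I would, as in \S\ref{s:stoptime}--\S\ref{s:strongbound}, introduce stopping times for the first time $G(t)$ violates a maximum-degree bound, an edge-multiplicity bound, a bound on the number of within-minority edges $E_{\min}(t)$, or a family of cut conditions on $N_{ST}(t)$ for balanced partitions $S,T$. Two facts keep these beyond $T_1$: an $O(n^2)$-step window is too short to change the graph appreciably — each vertex is a rewiring target only $O(n)$ times and each bond only $O(1)$ times — and rewiring to \emph{uniform} vertices is self-regularising, so any irregular feature of $G(0)$ (a high-degree vertex or a large same-opinion multi-edge) is inert while it is agreeing and, once activated by a relabelling flip, is dismantled at a rate proportional to its size. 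Smallness of $p$ is used here too, through estimates such as $E_{\min}(t)\lesssim p\,X_t/\beta$, to bound the contribution of clustered minority vertices to the bracket above.

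The main obstacle is controlling the relabelling term precisely: a relabelling step that flips a high-degree majority-opinion vertex into the minority injects $\Theta(n)$ (or, from a pathological $G(0)$, more) disagreeing edges at once and temporarily destroys the drift. I would absorb such bursts into the argument by noting that a burst changes $N_1$ by only $1$, so statement (b) still applies and the supermartingale argument can be restarted from the new configuration; since a burst requires a relabelling step to occur during the short terminal window in which $X_t$ is already small, the number of restarts before $\tau_*$ is $O(1)$ with high probability and hence harmless.
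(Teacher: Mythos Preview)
The proposal has a genuine structural gap: the hypothesis of Theorem~\ref{t:es} allows $G(0)$ to be an \emph{arbitrary} multigraph in $\mathcal{G}^*(p)$, constrained only by $N_1=pn$ and $|E|\in[\tfrac{12}{50}n^2,\tfrac{13}{50}n^2]$. Your drift computation for the relabelling term, and your Azuma step, both lean on $G(t)$ being close to an Erd\H{o}s--R\'enyi graph with opinions in approximate product measure --- but nothing in the hypothesis supplies this, and the stopping-time machinery of \S\ref{s:stoptime}--\S\ref{s:strongbound} cannot be invoked since those stopping times may already be hit at $t=0$. Concretely, $G(0)$ may contain a majority vertex $v^*$ of degree $\Theta(n^2)$ (via multi-edges) with a single disagreeing edge to a minority vertex; a relabelling step that flips $v^*$ then injects $\Theta(n^2)$ disagreeing edges, so the increments of $X_t$ are not $O(n)$ and Azuma fails outright. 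Your ``restart'' patch does not help: the claim that bursts occur only in a short terminal window, and hence $O(1)$ times, is unsupported --- a burst can occur at the very first step, and after it the configuration may be just as pathological.

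The paper's proof avoids any regularity assumption on $G(0)$ by a completely different, combinatorial mechanism. It partitions $V$ into $S=\{v:D_v(0)\le 10n\}$ (necessarily $|S|\ge \tfrac{24}{25}n$) and $T=V\setminus S$, and controls the number of times an $S$--$S$, $S$--$T$, or $T$--$T$ disagreeing edge is picked, rather than $X_t$ itself. The key idea is an equivalent construction in which the waiting time until a vertex's next relabelling is a pre-drawn $\mbox{Geom}(\beta/n)$ variable: whenever an $S$-vertex flips from $0$ to $1$ and is assigned a ``stubborn'' variable ($\ge 25n$), that vertex can never flip again, since its initial degree plus all future incoming rewires in $10n^2$ steps cannot exceed $24n$. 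Since the number of opinion-$1$ vertices stays below $\tfrac{3pn}{2}$ (your part (b)), at most $\tfrac{3pn}{2}$ stubborn variables can be consumed; for $p$ small enough this forces $RL_{SS}\le \tfrac{\beta n}{20}$ and hence $R_{SS}\le \tfrac{n^2}{10}$. A cascade via edge-count bookkeeping ($Y_{SS}\ge -R_{SS}+W_{ST\to SS}$, then $Y_{ST}\ge -R_{ST}+W_{TT\to ST}$) then yields $R_{ST}\le 3n^2$ and $R_{TT}\le 6n^2$, so $\tau\le R_{SS}+R_{ST}+R_{TT}<10n^2$. This argument uses smallness of $p$ only through the stubbornness count, and is entirely robust to high-degree vertices, multi-edges, and non-product opinion placement in $G(0)$.
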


Before starting with the proof of Theorem \ref{t:es} we make the following definitions. Let us fix $G(0)\in \mathcal{G}^{*}(p)$. Let $S$ be the set of vertices in $G(0)$ with degree at most $10n$ and let $T$ be the set of vertices with degree more than $10n$. Clearly $|S|\geq \frac{24n}{25}$. Let us run the process till $10n^2$ steps.

Let $W_{SS}$ denotes the total number of rewirings of edges with both endpoints in $S$. $W_{ST}$ and $W_{TT}$ are defined similarly. Let $Y_{SS}$  denote the number of edges with both endpoints in $S$ at the end of the process (i.e., after running $10n^2$ steps). $Y_{ST}$, $Y_{TT}$ are defined similarly.

We next describe an equivalent way of  constructing the \emph{rewire-to-random} dynamics.

\subsection*{An Equivalent Construction of the Dynamics}
Let $\{X_i\}$ and $\{X'_i\}$ be two sequences of i.i.d. $\mbox{Geom}(\frac{\beta}{n})$ variables (taking values in $\{0,1, \ldots\}$). Let $\{Z_i\}$ be a sequence of i.i.d. $\mbox{Ber}(\frac{1}{2})$ variables. Let $\{W_i\}$ be a sequence of vertices of $G$ where each $W_i$ is uniformly chosen vertex of $G$. All these sequences are distributed independently of each other.

We now describe how to run the process starting with $G(0)$ using only the randomness in the above sequences and the randomness used to choose a disagreeing edge uniformly at each step. Having chosen a disagreeing edge the variables $Z_i$ will be used to designate one of the endpoints of the edge uniformly as the root of the current (relabelling or rewiring) update.  
Also for each vertex $v$ in $V$ we shall define a sequence $K_i(v)$.  To start with, list the vertices in $V$ in some order, say $\{v_1,v_2,\ldots, v_n\}$. Define $K_0(v_j)=X'_{j}$ for all $j$. This encodes the number of updates at the vertex $v_j$  (i.e., the number of moves with $v_j$ being the root) before it changes its opinion for the first time which clearly has a $\mbox{Geom}(\frac{\beta}{n})$ distribution. Roughly speaking, for each vertex $v$ the sequence $\{K_i(v)\}$ will be a counter which shall denote how many more rewiring updates one needs to make at $v$ before the next relabelling update. Once the counter runs to $0$, the next update at that vertex is a relabelling one, and a new value from either the sequence $\{X_i\}$ or the sequence $\{X'_i\}$ will be assigned to the counter. We describe the process formally below.   

We shall define the sequences $L_{i}, L'_{i}, T_{i}$ recursively, these will be indices of different elements chosen from $\{X_i\}$, $\{X'_i\}$ and $\{W_i\}$ respectively. Let $L_0=T_0=0$ and $L'_{0}=n$. At step $i$, pick a disagreeing edge $e$ uniformly at random, if such an edge exists. If $Z_i=1$, then choose the vertex with opinion $1$ to be the root of the rewiring or relabelling step, if $Z_i=0$, choose the other one. Let $v$ be the chosen vertex. 
%AS  very hard to follow - we need to explain what the different quantities mean, i.e. the X's encode the number of times a vertex is updates before a change of sign
If $v$ is in $S$ and the opinion of $v$ is $0$, do the following.
Set $L'_{i}=L'_{i-1}$. If $K_{i-1}(v)$ is positive, then define $T_{i}=\min\{k>T_{i-1}:W_{k}\neq v\}$, that is $T_i$ is the index of the first hitherto uninspected element in $\{W_j\}$ which allows a legal rewiring move. Rewire the edge to $W_k$ and reduce $K_{i}(v)$ by 1, and set $L_{i}=L_{i-1}$. If $K_{i-1}(v)=0$, then relabel $v$ and set $L_i=L_{i-1}+1$ and $K_i(v)=X_{L{i}}$, in this case, also set $T_{i}=T_{i-1}$. If $v$ is not in $S$, or the opinion of $v$ is $1$, then do the same as in the previous case except use elements from the sequence $X'$ and $L'$ in stead of the elements from sequence $X$ and $L$, and change the values in the sequence $L'$ instead of the sequence $L$.

It is easy to see that this is indeed an implementation of the \emph{rewire-to-random} dynamics.

We need the following lemmas.  The first follows immediately from the fact that the number of vertices with opinion $1$ does a random walk.

\begin{lemma}
\label{l:es1}
For a fixed $p>0$ and $G(0)\in \mathcal{G}^*(p)$, the number of vertices of opinion $1$ remains between $pn/2$ and $3pn/2$ throughout the first $10n^2$ steps w.h.p..
\end{lemma}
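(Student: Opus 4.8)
The plan is to observe that the process $N_1(\cdot)$ counting vertices of opinion $1$ is, up to a random time change, a simple symmetric random walk on $\Z$ which takes only $O(n)$ steps during the window $[0,10n^2]$, so its fluctuations stay $o(n)$ and in particular well below $\tfrac{pn}{2}$. Concretely, $N_1$ is unchanged at every rewiring step and at every step where there is no disagreeing edge; it changes only at a relabelling step performed on a disagreeing edge $(u,v)$, and since the root is chosen uniformly among the two (oppositely-opinioned) endpoints, $N_1$ then moves by $+1$ or $-1$ with probability $\tfrac12$ each. Hence, writing $RL(t)$ for the number of relabelling steps up to time $t$, on $\{t<\tau\}$ we have $N_1(t)-N_1(0)=Z_{RL(t)}$ for a simple symmetric random walk $\{Z_i\}_{i\ge 0}$ started at $0$ (and after $\tau$ the opinion configuration is frozen, so the same conclusion holds trivially).

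The steps I would carry out are: (i) $RL(10n^2)$ is stochastically dominated by a $\mbox{Bin}(10n^2,\tfrac{\beta}{n})$ variable, so a Chernoff bound gives $\P[RL(10n^2)>20\beta n]\le e^{-cn}$ for some $c=c(\beta)>0$; (ii) on the complementary event,
\[
\max_{t\le 10n^2}\bigl|N_1(t)-N_1(0)\bigr|\ \le\ \max_{i\le 20\beta n}|Z_i|;
\]
(iii) by the reflection principle together with Hoeffding's inequality,
\[
\P\Bigl[\max_{i\le 20\beta n}|Z_i|\ge \tfrac{pn}{2}\Bigr]\ \le\ 4\,\P\bigl[Z_{20\beta n}\ge \tfrac{pn}{2}\bigr]\ \le\ 4\,e^{-p^2 n/(160\beta)};
\]
(iv) since $N_1(0)=pn$ for $G(0)\in\mathcal{G}^*(p)$, combine (i) and (iii) to conclude that the probability that $N_1(t)$ leaves $[\tfrac{pn}{2},\tfrac{3pn}{2}]$ for some $t\le 10n^2$ is at most $e^{-cn}+4e^{-p^2n/(160\beta)}=o(1)$.

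There is essentially no genuine obstacle here; the only point requiring a moment's care is passing from control of $N_1$ at a single fixed time to control of $\max_{t\le 10n^2}N_1(t)$, which is exactly what the maximal inequality for the symmetric walk in step (iii) provides — and it works precisely because the number of relabelling steps ($O(n)$) is of much smaller order than the square of the permitted fluctuation ($\Theta(n^2)$).
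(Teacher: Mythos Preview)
Your argument is correct and is exactly the approach the paper has in mind: the paper's entire proof is the single remark that ``the number of vertices with opinion $1$ does a random walk,'' and you have supplied precisely the details (binomial domination of $RL(10n^2)$, then a maximal inequality for the simple symmetric walk) that the paper leaves implicit and which it spelled out earlier in the proof of Lemma~\ref{l:betasmalldensity}.
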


We call an element of the sequence $X$ \emph{stubborn} if it is at least $25n$.

\begin{lemma}
\label{l:es2}
Let $Y=\#\{i\leq L_{10n^2}: X_{i} > 25 n\}$ denote the number of stubborn elements $X$ which are used in first $10n^2$ steps. Then with high probability, $N_1(10n^2)\geq Y$, i.e., the number of vertices with label $1$ after $10n^2$ steps is at least the number of used \emph{stubborn} elements of the sequence $X$.
\end{lemma}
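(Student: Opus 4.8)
The plan is to exploit the equivalent construction just described and to track, for each vertex, how many times it is the root of a rewiring move during the first $10n^2$ steps; the point is that if a \emph{stubborn} element of $X$ ($\ge 25n$) is ever assigned to a vertex as its counter, that vertex cannot relabel again within the window, because there are simply not enough root-rewirings available.

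First I would bound the incoming rewirings. Using the i.i.d.\ sequence $\{W_i\}$, exactly as in the proof of Lemma~\ref{l:betasmallincoming}, the number of entries of $\{W_i\}$ inspected in $10n^2$ steps is at most $11n^2$ with exponentially high probability (a geometric-sum large-deviation estimate, since each rewiring inspects a $\mathrm{Geom}(\tfrac{n-1}{n})$ number of entries), and for a fixed vertex $v$ the number of those entries equal to $v$ is dominated by $\mathrm{Bin}(11n^2,\tfrac1n)$, hence at most $12n$ with probability $1-e^{-cn}$; a union bound over $v$ shows that with high probability every vertex has at most $12n$ edges rewired to it. Since a rewiring rooted at $v$ removes one edge incident to $v$, a rewiring to $v$ adds one, and relabelling moves no edges, on this event every $v\in S$ is the root of at most $D_v(0)+12n\le 22n$ rewiring moves in $[0,10n^2]$; here we use $D_v(0)\le 10n$ for $v\in S$, and note that $|S|\ge \tfrac{24n}{25}$ since the total degree of $G(0)$ is $O(n^2)$ for $G(0)\in\mathcal{G}^*(p)$.

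Next I would follow a stubborn element through the construction. An element $X_i$ is used precisely when some $v\in S$ is relabelled from opinion $0$ to opinion $1$, at which instant the counter $K(v)$ is set equal to $X_i$; thereafter $v$ is in the ``opinion-$1$'' branch, $K(v)$ is decremented by one at each rewiring move rooted at $v$ (this happens in both branches of the construction), and $v$ relabels back to $0$ only once $K(v)$ reaches $0$. If $X_i\ge 25n$, clearing this counter would require at least $25n$ rewiring moves rooted at $v$, which on the event above is impossible since at most $22n$ such moves occur in all of $[0,10n^2]$. Hence, once a stubborn element of $X$ is assigned to a vertex, that vertex holds opinion $1$ for the remainder of the first $10n^2$ steps; in particular it is never again relabelled from $0$ to $1$, so the vertices to which the distinct stubborn elements (listed in order of use) are assigned are pairwise distinct, and each of them has opinion $1$ at time $10n^2$. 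Therefore $N_1(10n^2)\ge Y$ on an event of probability $1-o(1)$, which is the claim.

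The step that needs care is the bookkeeping in the equivalent construction — specifically verifying that the counter $K(v)$ decreases exactly at rewiring moves rooted at $v$, uniformly across the $X$- and $X'$-branches, so that a stubborn counter value genuinely ``costs'' $\ge 25n$ root-rewirings before it is cleared. The probabilistic inputs (the geometric-sum tail bound, the Chernoff bound for $\mathrm{Bin}(11n^2,\tfrac1n)$, and the union bounds) are entirely routine.
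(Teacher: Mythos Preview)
Your argument is essentially the paper's: control incoming rewirings via a Chernoff bound on the $\{W_i\}$ sequence, then argue that a vertex in $S$ assigned a stubborn counter cannot be the root of enough rewirings to exhaust it, hence retains opinion $1$ and is never reassigned. The constants differ slightly ($12n$ versus the paper's $14n$, yielding $22n$ versus $24n$), but the structure is identical.

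There is, however, a concrete error at precisely the step you flagged as delicate. You write ``a rewiring rooted at $v$ removes one edge incident to $v$'' and use a degree-balance to conclude that the number of rewirings rooted at $v$ is at most $D_v(0)+12n$. But in this model the root \emph{keeps} the edge: when $v$ is the root of a rewiring, the disagreeing edge on the bond $(v,u)$ is moved to the bond $(v,W_k)$, leaving the degree of $v$ unchanged; it is the non-root endpoint $u$ that loses the edge. Your degree-balance therefore bounds the number of rewirings of edges incident to $v$ with the \emph{other} endpoint as root, which is not the quantity that decrements $K(v)$. The paper asserts the same inequality (``the number of rewirings rooted at $v$ is at most the sum of the initial degree of $v$ and the number of rewirings to $v$'') without spelling out a mechanism, so your overall line faithfully reproduces theirs; but the specific justification you supply for it is incorrect as written.
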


\begin{proof}
Let $\mathcal{S}$ denote the following event.
$$\mathcal{S}=\Bigl\{\forall v\in G: \#\{i\leq T_{10n^2}:W_i=v\}\leq 14n\Bigr\}.$$
We show that on $\mathcal{S}$, the vertices in $S$ that each stubborn element gets assigned to (i.e., those $v$ such that $X_{\ell}=K_i(v)$ for some $i$ and some stubborn $X_{\ell}$) are distinct and each of them has label $1$ after $10n^2$ steps. Consider a specific stubborn element, suppose it was used and assigned to the vertex $v$. By definition, at that point the opinion of $v$ was $1$. Now by definition of \emph{stubbornness}, before it changes its opinion again, $v$ needs to be the root of at least $25n$ rewiring moves. Notice now that the number of rewirings rooted at $v$ is at most the sum of the initial degree of $v$ (which is at most $10n$ since $v\in S$) and the number of rewirings to $v$ (which is at most $14n$ on $\mathcal{S}$). Hence the vertex $v$ never changes its opinion again and in particular is never associated with any other stubborn element. Hence corresponding to each used stubborn element, there are distinct vertices in $V$ which have opinion $1$ after $10n^2$ steps.

It remains to show that $\mathcal{S}$ occurs with high probability. First notice that using an argument similar to that used in the proof of Lemma \ref{l:betasmallincoming}, it follows that $\P(T_{10n^2}>11n^2)$ is exponentially small in $n$. Also, we note that for each $v\in V$, the chance that $v$ occurs more than $14n$ times in the first $11n^2$ elements of the list $W$ is exponentially small in $n$ using a Chernoff bound. Taking a union bound over all the vertices completes the proof of the lemma.
\end{proof}

\begin{lemma}
\label{l:es3}
Let $RL_{SS}$ denote the number of times a relabelling occurs when an edge with both endpoints in $S$ was chosen. Then, for $p$ sufficiently small, $RL_{SS}\leq \frac{\beta n}{20}$ w.h.p..
\end{lemma}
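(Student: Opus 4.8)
The plan is to bound $RL_{SS}$ by a constant multiple of the number of elements of the sequence $\{X_i\}$ that are consumed in the first $10n^2$ steps, and then to control that number using Lemma~\ref{l:es1} and Lemma~\ref{l:es2}. Write $L := L_{10n^2}$ for the value of the index $L_i$ after the first $10n^2$ steps of the equivalent construction described above, i.e.\ the number of used elements of $\{X_i\}$. By construction $L_i$ is incremented by exactly one precisely when a vertex of $S$ that currently holds opinion $0$ is relabelled, so $L$ equals the number of $0\to1$ opinion flips of vertices of $S$ during the first $10n^2$ steps.

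First I would reduce the lemma to a high-probability bound on $L$. Every relabelling move at a disagreeing edge with both endpoints in $S$ has its root in $S$ and flips that root's opinion, so it is either a $0\to1$ or a $1\to0$ flip of a vertex of $S$; hence $RL_{SS}$ is at most the total number of opinion flips of vertices of $S$. At each vertex the $0\to1$ and $1\to0$ flips alternate, so a vertex has at most one more $1\to0$ flip than $0\to1$ flips, and only if it starts with opinion $1$. Consequently the number of flips of vertices of $S$ is at most $2L + \#\{v\in S: v(0)=1\}\le 2L + N_1(0) = 2L+pn$, so it suffices to show that $L\le C(\beta)\,pn$ with high probability for a constant $C(\beta)$ depending only on $\beta$.

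To bound $L$, recall that the $X_i$ are i.i.d.\ $\mathrm{Geom}(\beta/n)$, so each is \emph{stubborn} (exceeds $25n$) independently with probability $q=q(\beta,n):=(1-\beta/n)^{25n}$, and $q\ge\tfrac12 e^{-25\beta}$ for all large $n$. Put $m:=\lceil 3pn/q\rceil+1$. By a Chernoff bound, with probability $1-o(1)$ at least $mq/2>3pn/2$ of $X_1,\dots,X_m$ are stubborn; on this event, if $L\ge m$ then the number $Y$ of used stubborn elements satisfies $Y\ge 3pn/2$. But Lemma~\ref{l:es2} gives $N_1(10n^2)\ge Y$ with high probability, while Lemma~\ref{l:es1} gives $N_1(10n^2)\le 3pn/2$ with high probability, a contradiction. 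Hence $L<m\le 8e^{25\beta}pn$ with high probability for $n$ large. One small technical point: $L$ is not independent of the stubbornness indicators, but for each fixed $m$ the event ``fewer than $mq/2$ of $X_1,\dots,X_m$ are stubborn'' depends only on $X_1,\dots,X_m$ and has exponentially small probability, so it can be discarded before relating $L$ to $Y$ and then to $N_1(10n^2)$.

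Combining the two steps, and then choosing $p=p(\beta)>0$ small enough that $(16e^{25\beta}+1)p<\beta/20$, gives $RL_{SS}\le 2L+pn\le(16e^{25\beta}+1)pn<\beta n/20$ with high probability. The crux is the first step --- the reduction of $RL_{SS}$ to the count of consumed $X$-elements --- since it is precisely this that makes the stubborn-element argument of Lemma~\ref{l:es2} applicable. A direct drift or martingale analysis of the number of disagreeing edges with both endpoints in $S$ looks considerably harder, because relabelling a high-degree vertex of $S$ can change that quantity by order $n$ in a single step.
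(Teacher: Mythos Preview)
Your proof is correct and follows essentially the same route as the paper: both bound the number of consumed elements $L$ of the sequence $\{X_i\}$ by appealing to the stubborn-element count together with Lemmas~\ref{l:es1} and~\ref{l:es2}, and then deduce the bound on $RL_{SS}$.  The only difference is in the final reduction: the paper observes that, conditional on a relabelling at an $SS$-edge, the root is the $0$-endpoint with probability $\tfrac12$ independently (via the coins $Z_i$), so $RL_{SS}^{+}\le L\le \beta n/50$ forces $RL_{SS}\le \beta n/20$ by a binomial tail bound; you instead use the deterministic alternation of $0\to1$ and $1\to0$ flips at each vertex of $S$ to get $RL_{SS}\le 2L+pn$.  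Both reductions are straightforward, and your version has the mild advantage of being purely combinatorial (no further randomness from the $Z_i$'s is needed).
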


\begin{proof}  Let $RL_{SS}^+$ denote the number of times we have a relabelling changing an opinion to $1$ after an edge with both endpoints in $S$ was chosen. Notice that each time we choose an edge with both endpoints in $S$, and do a relabelling update changing an opinion from $0$ to $1$, and element from the sequence $X$ gets used. Hence it follows from Lemma \ref{l:es1} and Lemma \ref{l:es2} it follows that w.h.p.
$$RL_{SS}^{+}\leq \min_i\Bigl\{\#\{j\leq i: X_j\geq  25n\}>\frac{3pn}{2} \Bigr\}.$$ %AS This needs to be explained

Since each $X_j$ is a $\mbox{Geom}(\frac{\beta}{n})$ variable it follows that $\P(X_j\geq 25n)=(1-\frac{\beta}{n})^{25n}\geq e^{-50\beta}$. Since $X_j$'s are i.i.d., it follows that for $p$ sufficiently small (depending on $\beta$), within first $\frac{\beta n}{50}$ elements of $X$, there are more than $\frac{3pn}{2}$ \emph{stubborn} elements with high probability. It follows that with high probability $RL_{SS}^{+}\leq \frac{\beta n}{50}$.

Now notice that each time a relabelling occurs when an edge with both endpoints in $S$ is chosen, with probability $\frac{1}{2}$ the relabelling changes an opinion to $1$ and these events are independent of one another. It follows that $\P(RL_{SS}>\frac{\beta n}{20}, RL_{SS}^{+}\leq \frac{\beta n}{50})$ is exponentially small in $n$. It now follows that for $p$ sufficiently small, $RL_{SS}\leq \frac{\beta n}{20}$ w.h.p..
\end{proof}

\begin{lemma}
\label{l:es4}
Let $R_{SS}$ be the number of times an edge with both endpoints on $S$ was picked. For $p$ sufficiently small, $R_{SS}\leq \frac{n^2}{10}$ w.h.p..
\end{lemma}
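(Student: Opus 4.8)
The plan is to split $R_{SS}=RL_{SS}+RW_{SS}$, where $RL_{SS}$ counts the relabelling steps at which a disagreeing bond $(u,v)\subseteq S$ is chosen (so $RL_{SS}\le\beta n/20$ w.h.p.\ by Lemma~\ref{l:es3}) and $RW_{SS}$ counts the rewiring steps at which such a bond is chosen. Since $\beta n/20=o(n^2)$, it suffices to prove $RW_{SS}\le n^2/11$ w.h.p. Throughout I will work on the (w.h.p.) events supplied by Lemmas~\ref{l:es1}, \ref{l:es2}, \ref{l:es3} together with the routine Chernoff/union estimates of Lemma~\ref{l:betasmallincoming} type (e.g.\ that every fixed vertex receives at most $11n$ rewirings onto it during the first $10n^2$ steps).

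Fix a rewiring step at which a disagreeing bond $(u,v)\subseteq S$ is chosen; exactly one endpoint, call it $x$, currently has opinion $1$, and $x\in S$. Write $RW_{SS}=A+B$ according to whether the root of the move equals $x$ (count $A$), or equals the opinion-$0$ endpoint (count $B$). Every step counted by $A$ is a rewiring rooted at a vertex of $S$ currently holding opinion $1$; arguing exactly as in the proof of Lemma~\ref{l:es2} (via the event $\mathcal S$ there), for each $v\in S$ the total number of rewirings rooted at $v$ in the first $10n^2$ steps is at most $D_v(0)+14n\le 24n$ w.h.p. Every step counted by $B$ is a rewiring in which the opinion-$1$ endpoint $x\in S$ \emph{loses} an edge (it is the non-root endpoint), and an edge can leave a fixed vertex at most once per visit, with every visit after time $0$ caused by a rewiring onto that vertex; hence for each $v\in S$ the number of rewirings off $v$ is at most $D_v(0)+11n\le 21n$ w.h.p. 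Letting $\mathcal V_1=\{v\in S:\ v$ has opinion $1$ at some time $\le 10n^2\}$, this gives
\[
RW_{SS}=A+B\le 45n\,|\mathcal V_1|.
\]

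It remains to bound $|\mathcal V_1|$. A vertex of $S$ lies in $\mathcal V_1$ either because it has opinion $1$ at time $0$ (at most $N_1(0)=pn$ of these) or because of a relabelling $0\to1$ rooted at it; in the equivalent construction the latter events are exactly the increments of the counter $L$, each of which consumes the next element of the i.i.d.\ sequence $\{X_i\}$, so with $m:=L_{10n^2}$ there are exactly $m$ such relabellings and the $X$-elements used are $X_1,\dots,X_m$. By Lemmas~\ref{l:es2} and \ref{l:es1}, the number of stubborn elements among $X_1,\dots,X_m$ is at most $N_1(10n^2)\le \tfrac{3pn}{2}$ w.h.p. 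On the other hand, for $M:=\lceil 15pe^{25\beta}n\rceil$ the count $\#\{i\le M: X_i>25n\}$ is $\mathrm{Bin}(M,q)$ with $q=(1-\beta/n)^{25n+1}\to e^{-25\beta}$, hence concentrates around $Mq\ge \tfrac{15}{2}pn>\tfrac{3pn}{2}$ for $n$ large; since on $\{m\ge M\}$ this count is at most the number of stubborn elements among $X_1,\dots,X_m$, we get $m<M$ w.h.p. Thus $|\mathcal V_1|\le pn+M\le 16pe^{25\beta}n$ w.h.p., so $RW_{SS}\le 720\,p\,e^{25\beta}n^2\le n^2/11$ once $p=p(\beta)$ is chosen small enough (e.g.\ $p\le(8000e^{25\beta})^{-1}$, also small enough for Lemmas~\ref{l:es1}–\ref{l:es3} to apply), and with $RL_{SS}\le\beta n/20$ this yields $R_{SS}\le n^2/10$ w.h.p.

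The main obstacle is the last step: a naive bound would permit $\Theta(\beta n)$ distinct vertices of $S$ to cycle through opinion $1$, each contributing $\Theta(n)$ rewirings, which is fatal when $\beta$ is a large constant. The point is to exploit the counter construction and the "stubborn element" mechanism of Lemmas~\ref{l:es1}–\ref{l:es2} — stubborn $X$-elements correspond to vertices of $S$ permanently frozen at opinion $1$, of which there are at most $N_1(10n^2)\le\tfrac32 pn$ — to show $m=O(pe^{25\beta}n)$, trading the blow-up against the freedom to take $p$ small depending on $\beta$.
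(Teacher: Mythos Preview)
Your approach is substantially different from the paper's, which is a two-line argument. The paper simply observes that each picked edge leads to a relabelling with probability $\beta/n$ independently, so on $\{R_{SS}>n^2/10\}$ the count $RL_{SS}$ stochastically dominates $\mathrm{Bin}(n^2/10,\beta/n)$; by Chernoff this exceeds $\beta n/20$ with probability $1-e^{-\Omega(n)}$, and combining with Lemma~\ref{l:es3} finishes. Your decomposition $R_{SS}=RL_{SS}+RW_{SS}$ is the same starting point, but you then take a much longer route to bound $RW_{SS}$ directly.

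There is a genuine gap in your bound on $A$. You assert that ``for each $v\in S$ the total number of rewirings rooted at $v$ in the first $10n^2$ steps is at most $D_v(0)+14n$''. But when $v$ is the \emph{root} of a rewiring, the edge stays incident to $v$ (only the other endpoint moves), so $v$'s degree is unchanged; there is no degree-based cap on how many times $v$ can be root. The quantity $D_v(0)+(\text{edges rewired onto }v)$ correctly bounds the number of rewirings in which $v$ is the \emph{non-root} and hence loses an edge --- which is exactly what your $B$ argument uses --- but it does not bound rewirings rooted at $v$. (You cite the proof of Lemma~\ref{l:es2}, which contains the same sentence, but the inference does not follow there either; in any case you are applying it to all $v\in\mathcal V_1$, not just to vertices carrying a stubborn counter.) A clean repair: since the root is chosen by the independent fair coin $Z_i$, each $SS$-rewiring goes into $A$ or $B$ with probability $1/2$ independently of everything else, so $A\le 2B+o(n^2)$ w.h.p.\ by a Chernoff bound, and your (correct) control of $B$ via $|\mathcal V_1|$ then bounds $RW_{SS}$. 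Even with this fix, however, the paper's direct binomial argument is considerably shorter and avoids the whole $\mathcal V_1$ machinery.
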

\begin{proof}
Each time an edge is picked, it leads to a relabelling with probability $\frac{\beta}{n}$. It follows using a Chernoff bound that
$$\P(R_{SS} > \frac{n^2}{10}, RL_{SS}\leq \frac{\beta n}{20})\leq \P(\mbox{Bin}(\frac{n^2}{10}, \frac{\beta}{n})\leq \frac{\beta n}{20})\leq e^{-\frac{\beta n}{80}}.$$
The lemma now follows from Lemma \ref{l:es3}.
\end{proof}

\begin{lemma}
\label{l:es5}
Let $R_{ST}$ be the number of times a disagreeing edge is picked with one endpoint in $S$ and another in $T$. For $p$ sufficiently small $R_{ST}\leq 3n^2$ w.h.p..
\end{lemma}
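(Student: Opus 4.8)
The plan is to track $N_T(t)$, the number of edges of $G(t)$ with at least one endpoint in the fixed set $T$, as a potential function. Each time a disagreeing $ST$ edge is picked there is a chance bounded below by an absolute constant that the edge is reconnected inside $S$ (choose its $S$-endpoint as root, perform a rewiring, land in $S$), which decreases $N_T$ by one; on the other hand $N_T$ can only increase when an $SS$ edge is rewired, and there are few such moves by Lemma~\ref{l:es4}. Since $N_T\ge 0$ throughout, this pins down $R_{ST}=O(n^2)$ with a constant small enough for the statement.

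To implement this, for $1\le t\le 10n^2$ I would let $U_t$ be the indicator that the disagreeing edge picked at step $t$ joins $S$ and $T$ (so $R_{ST}=\sum_t U_t$), let $D_t$ be the indicator that at step $t$ a disagreeing $ST$ edge is picked, its $S$-endpoint is taken as root, a rewiring move is performed, and the edge is reconnected to a vertex of $S$, and let $I_t$ be the indicator that at step $t$ a disagreeing $SS$ edge is picked and rewired to a vertex of $T$. A short case analysis of the possible updates gives $N_T(t)-N_T(t-1)=I_t-D_t$ for every $t$: relabellings and rewirings whose root lies in $T$ leave $N_T$ unchanged, a rewired $SS$ edge raises $N_T$ precisely when $I_t=1$, and a rewired $ST$ edge with root in $S$ lowers $N_T$ precisely when $D_t=1$. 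Summing over $t$ and using $N_T(10n^2)\ge 0$ yields $\sum_t D_t\le N_T(0)+\sum_t I_t$. Since $N_T(0)\le 2|E(0)|\le \tfrac{13n^2}{25}$ and $\sum_t I_t\le R_{SS}$, Lemma~\ref{l:es4} gives, for $p=p(\beta)$ small enough, $\sum_t D_t\le \tfrac{13n^2}{25}+\tfrac{n^2}{10}=\tfrac{31n^2}{50}$ with high probability.

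Finally I would convert this into a bound on $R_{ST}$. Conditionally on the history up to step $t$ and on $\{U_t=1\}$, the root choice, the rewire/relabel coin, and the rewiring target are fresh independent randomness, so $\P[D_t=1\mid \cf_{t-1},U_t=1]=\tfrac12\bigl(1-\tfrac{\beta}{n}\bigr)\tfrac{|S|-1}{n-1}\ge\tfrac{9}{20}$ for $n$ large (using $|S|\ge\tfrac{24n}{25}$). Processing the steps in order, and at the $i$-th step with $U_t=1$ revealing $D_t$ together with a fresh $\mathrm{Ber}(9/20)$ variable $\xi_i$ coupled so that $\xi_i\le D_t$, produces i.i.d.\ $\xi_1,\xi_2,\dots$ with the property that on $\{R_{ST}\ge 3n^2\}$ one has $\sum_t D_t\ge\sum_{i=1}^{3n^2}\xi_i$. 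As $3\cdot\tfrac{9}{20}=\tfrac{27}{20}>\tfrac{31}{50}$, a Chernoff bound gives $\P[\sum_{i=1}^{3n^2}\xi_i\le\tfrac{31n^2}{50}]=e^{-\Omega(n^2)}$; combined with the previous step and Lemma~\ref{l:es4}, $\P[R_{ST}\ge 3n^2]=o(1)$. The one delicate point is this last coupling: $R_{ST}$ is itself random and the $ST$-picks occur at data-dependent times, so the heuristic ``each $ST$-pick independently destroys a $T$-incidence with probability at least $9/20$'' has to be made into a genuine stochastic-domination statement through the explicit step-by-step construction; the remaining ingredients --- the case check for $N_T(t)-N_T(t-1)$, the crude estimate $N_T(0)\le 2|E(0)|$, and the Chernoff bound --- are routine, and the considerable numerical slack (the argument in fact gives $R_{ST}\lesssim\tfrac{31}{50}\cdot\tfrac{20}{9}\,n^2\approx 1.4n^2$) comfortably absorbs the lower-order corrections.
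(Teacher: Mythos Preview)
Your argument is correct and is essentially the same as the paper's: both track the flow of edges between the $SS$-class and its complement, use Lemma~\ref{l:es4} to bound the reverse flow $SS\to T$, and then apply a Chernoff bound to convert the count of successful $ST\to SS$ rewirings into a bound on $R_{ST}$. The paper phrases this via $Y_{SS}$ and $W_{ST\to SS}$, whereas you track the complementary quantity $N_T=|E|-N_{SS}$; these are dual and the bookkeeping is identical. One harmless slip: $N_T(0)\le |E(0)|$ rather than $2|E(0)|$, since each edge is counted at most once in $N_T$; this only gives you more slack.
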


\begin{proof}
Let $W_{ST}$ denote the number of rewiring moves where a disagreeing edge with one endpoint in $S$ and another endpoint in $T$ is rewired. Each time an edge with one endpoint in $S$ and the other in $T$ is rewired, with probability $\frac{1}{2}$ it is rewired with the root at the vertex in $S$ and independent of that with probability at least $\frac{|S|-1}{n-1}$ the edge is rewired to a vertex in $S$. That is, after rewiring an edge with one endpoint in $S$ and another in $T$, the chance that it becomes an edge with both endpoints in $S$ is at least $\frac{|S|-1}{2(n-1)}\geq \frac{23}{50}$ for $n$ sufficiently large. Let $W_{ST\rightarrow SS}$ denote the number of such rewirings. It follows that
$\P(W_{ST\rightarrow SS}\leq \frac{11n^2}{10}, W_{ST}> \frac{5n^2}{2})$ is exponentially small in $n^2$. Now notice that,
$$n^2\geq Y_{SS}\geq -R_{SS}+W_{ST\rightarrow SS}$$
and it follows from Lemma \ref{l:es4} that for $p$ sufficiently small $W_{ST\rightarrow SS}\leq \frac{11n^2}{10}$ w.h.p. and hence $W_{ST}\leq \frac{5n^2}{2}$ w.h.p.. Since each time a disagreeing edge is picked, with probability $1-\frac{\beta}{n}$, it leads to a rewiring, it follows that $\P(R_{ST}>3n^2, W_{ST}\leq \frac{5n^2}{2})$ is exponentially small in $n^2$ and hence for $p$ sufficiently small $R_{ST}\leq 3n^2$ w.h.p..
\end{proof}

\begin{lemma}
\label{l:es6}
Let $R_{TT}$ be the number of times a disagreeing edge is picked with both endpoints in $T$. For $p$ sufficiently small $R_{TT}\leq 6n^2$ w.h.p..
\end{lemma}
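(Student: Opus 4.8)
The plan is to control $R_{TT}$ by tracking $N_{TT}(t)$, the number of edges (agreeing or not) with both endpoints in $T$ at time $t$, and exploiting that every pick of a disagreeing $TT$ edge has a uniformly good chance of decreasing $N_{TT}$ by one, while $N_{TT}$ can only rarely increase; since $N_{TT}$ stays nonnegative and starts out $O(n^2)$, the number of $TT$-picks cannot be too large.

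First I would record the structural facts. Only rewiring moves alter the underlying graph, and a rewiring of $(a,b)$ with root $a$ replaces it by $(a,v')$ with $v'$ uniform on $V\setminus\{a\}$. Going through the four cases of $(a,b)$ according to $S/T$-membership, one checks that $N_{TT}$ can decrease only when a disagreeing $TT$ edge is picked, the move is a rewiring, and $v'\in S$ (in which case it decreases by exactly $1$), and $N_{TT}$ can increase only when a disagreeing $ST$ edge is picked, the move is a rewiring rooted at its $T$-endpoint, and $v'\in T$; relabelling moves never change $N_{TT}$. Using $|S|\ge\frac{24n}{25}$ (hence $|T|\le\frac{n}{25}$), for $n$ large the conditional probability, given the past, that a given disagreeing $TT$-pick triggers a decrease is at least $q:=(1-\frac{\beta}{n})\frac{|S|}{n-1}\ge\frac{22}{25}$, and the conditional probability that a given disagreeing $ST$-pick triggers an increase is at most $(1-\frac{\beta}{n})\cdot\frac12\cdot\frac{|T|}{n-1}\le\frac1{48}$.

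Next I would bound the two budgets over the first $10n^2$ steps. Since $G(0)\in\mathcal{G}^*(p)$, $N_{TT}(0)\le|E(0)|\le\frac{13n^2}{50}$. Let $\mathrm{inc}$ be the number of increase-steps. By Lemma \ref{l:es5}, $R_{ST}\le 3n^2$ w.h.p., and conditioned on the past each $ST$-pick is an increase-step with probability $\le\frac1{48}$, so appending independent $\mathrm{Ber}(\tfrac1{48})$ variables past the $R_{ST}$-th $ST$-pick (the same device used in the constructions of Lemma \ref{l:betasmallincoming} and Lemma \ref{l:lbw4}) and applying a Chernoff bound gives $\mathrm{inc}\le\frac{n^2}{10}$ w.h.p. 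Because $N_{TT}(10n^2)\ge 0$, the number $\mathrm{dec}$ of decrease-steps then satisfies $\mathrm{dec}\le N_{TT}(0)+\mathrm{inc}\le\frac{13n^2}{50}+\frac{n^2}{10}\le\frac{2n^2}{5}$ w.h.p.

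Finally I would argue by contradiction: suppose $R_{TT}\ge 6n^2$. Index the first $6n^2$ disagreeing $TT$-picks by $j=1,\dots,6n^2$ and let $\tilde\xi_j$ indicate that the $j$-th such pick is a rewiring with $v'\in S$; then $\mathrm{dec}\ge\sum_{j=1}^{6n^2}\tilde\xi_j$, and each $1-\tilde\xi_j$ has conditional probability $\le 1-q\le\frac3{25}$ given the past, so $\sum_{j=1}^{6n^2}(1-\tilde\xi_j)\le n^2$ w.h.p.\ by Chernoff, forcing $\mathrm{dec}\ge 5n^2$ w.h.p.\ on $\{R_{TT}\ge 6n^2\}$; this contradicts $\mathrm{dec}\le\frac{2n^2}{5}$, so w.h.p.\ $R_{TT}<6n^2$. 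Smallness of $p$ enters only through the invocation of Lemma \ref{l:es5}; the bounds $|S|\ge\frac{24n}{25}$ and $N_{TT}(0)\le\frac{13n^2}{50}$ hold for every $G(0)\in\mathcal{G}^*(p)$. The only real technical nuisance, exactly as in the earlier lemmas of this section, is making the ``conditionally biased coin with a random number of flips'' stochastic-domination arguments precise, which is handled by the appending-independent-Bernoullis trick together with conditioning on the high-probability event supplied by Lemma \ref{l:es5}.
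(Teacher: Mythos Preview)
Your argument is correct and is essentially the same edge-balance strategy as the paper's proof, the only difference being which counter you track: you bound the number of $TT$-picks by following $N_{TT}(t)$ (which is nonnegative, starts $\le \tfrac{13n^2}{50}$, and can only be replenished through $ST$-picks), whereas the paper follows $N_{ST}$ (bounded above by the total edge count, depleted only through $ST$-picks, and fed by $W_{TT\to ST}$). Both routes reduce to Lemma~\ref{l:es5} plus a Chernoff bound and yield the same conclusion; your version is arguably slightly more direct since the quantity you track is the one whose picks you are trying to bound.
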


\begin{proof}
Arguing as in the proof of Lemma \ref{l:es5} we have that after rewiring an edge with both endpoints in $T$, the chance that it becomes an edge with one endpoint in $S$ and another in $T$ is $\frac{|S|}{n}\geq \frac{24}{25}$. Let $W_{TT\rightarrow ST}$ denote the number of such rewirings. It follows that $\P(W_{TT\rightarrow ST}\leq 4n^2, W_{TT}> 5n^2)$ is exponentially small in $n^2$. Now notice that
$$n^2\geq X_{ST}\geq -R_{ST}+W_{TT\rightarrow ST}$$
and it follows from Lemma \ref{l:es5} that for $p$ sufficiently small $W_{TT\rightarrow ST}\leq 4n^2$ w.h.p.. It follows that $W_{TT}\leq 5n^2$ w.h.p.. Arguing as in the proof of Lemma \ref{l:es5} we conclude that $R_{TT}\leq 6n^2$ w.h.p..
\end{proof}

We are now ready to prove Theorem \ref{t:es}.

\begin{proof}[Proof of Theorem \ref{t:es}]
From Lemma \ref{l:es4}, Lemma \ref{l:es5} and Lemma \ref{l:es6} we have that for $p=p(\beta)$ sufficiently small and $G(0)\in \mathcal{G}^{*}(p)$, we have $R_{SS}+R_{ST}+R_{TT}< 10n^2$ with high probability. This implies after $10n^2$ steps there are no disagreeing edge in the graph, i.e., for $p$ sufficiently small $\tau\leq 10n^2$ w.h.p.. The theorem now follows from Lemma \ref{l:es1}.
\end{proof}

Now we prove Theorem \ref{t:rtoresplit} using Theorem \ref{t:es}.

\begin{proof}[Proof of Theorem \ref{t:rtoresplit}]
Since in the set up of this theorem $G(0)$ is distributed as $G(n,\frac{1}{2})$, it follows that with high probability the number of edges in $G(0)$ is in $[\frac{12n^2}{50},\frac{13n^2}{50}]$. Let $p=p(\beta)$ be sufficiently small so that the conclusion of Theorem \ref{t:es} holds. It follows that on $\{\tau_{*}(p)<\infty\}$, with high probability $G(\tau_{*}(p))\in \mathcal{G}^{*}(p)$. Since the \emph{rewire-to-random} dynamics is symmetric in the opinions $0$ and $1$, it follows from Theorem \ref{t:es} that $\tau<\tau_{*}(\frac{p}{2})$ with high probability. This completes the proof of the theorem.
\end{proof}

%\begin{theorem}
%\label{t:es2}
%Let $G\sim G(n,\frac{1}{2})$, and let each vertex of $G$ has the label $+1$ or $-1$ w.p. $\frac{1}{2}$ independent of each other to start with. Let us run the rewire to random process with this graph and this labels with relabeling rate $\frac{\beta}{n}$. Let $N_+$(resp. $N_{-}$) denote the number of vertices with label $+1$ (resp. $-1$) when the process stops. Then there exists $p=p(\beta)>0$ such that with high probability $\min(N_{+}, N_{-})\geq pn$.
%\end{theorem}

\section{Modifications for the Rewire-to-same Model}
\label{s:samemod}
We can prove  Theorem \ref{t:rewiretorandom} for the rewire-to-same model in a similar manner. For the sake of not being repetitive, we only point out the main differences here. Notice that, with respect to our proof in previous sections, the major difference between the two dynamics that causes some inconvenience is that in the \emph{rewire-to-same} dynamics a vertex with minority opinion is likely to receive edges at a higher rate than a vertex with majority opinion. But as long as the minority opinion density does not become too small, the difference is of a bounded factor, and it turns out that the arguments can be modified to accommodate this. We now point out the main lemmas from the previous sections that need to be modified for the \emph{rewire-to-same} dynamics.

\subsection{Small $\beta$ Case:}
Notice that the only place that needs a modification is Lemma \ref{l:betasmallincoming}. One needs to define for this case $L_{i+1}$ as the first entry after $L_{i}$ to which a rewiring move is legal. Here it is not true that $L_{6n^2}\leq 13n^2/2$ with exponentially high probability. But notice that, on $t<\tau_*(1/3)$, $L_{i+1}-L_{i}$ is stochastically dominated by a $\mbox{Geom}(\frac{1}{3})$ variable and hence, one can say $L_{6n^2}<20n^2$ with exponentially large probability. The rest of the proof is in essence same up to some minor changes in constants.

\subsection{Large $\beta$ Case:}
The proof in the large $\beta$ case also follows along the similar lines. Instead of the \emph{rewire-to-same} dynamics, we consider the \emph{rewire-to-same-*} dynamics, where instead of a disagreeing edge, at each step we pick an edge at random, and do not do anything if the edge happens to be agreeing. Most of necessary changes occur while bounding the number of incoming edges to a vertex in time $[t,t+\delta n^2]$. But on $\tau<\tau_*(\varepsilon)$, one can bound the number of incoming edges to a vertex $v$ in that time by a $\mbox{Bin}(\delta n^2, \frac{1}{\varepsilon n})$ variable. 

The only other place where a somewhat significant modification is necessary is in the bound for large cuts. Instead of Proposition \ref{p:proportionofbadedges} and Proposition \ref{p:proportionofbadedgesss}, one needs to show that for any given cut $S$ and $T$, with $|S|\wedge |T|\geq \varepsilon_2 n$, the number of edges with one endpoint in $S$ and another endpoint in $T$, such that the $S$ end point has opinion $0$ and the $T$ endpoint has opinion $1$, is roughly about $p(1-p)$ fraction of the total number of edges with exponentially high probability, and similar bounds on other similar quantities. It can be checked that all these can be obtained following a similar line of arguments as in the proof of Proposition \ref{p:proportionofbadedges}. We omit the details. The rest of the bounds can then be obtained by suitably modifying the martingale calculations in Proposition \ref{p:bigcut}. The whole proof can then be carried out with some minor changes of constants.

\section{Open Questions}
\label{s:conclusion}
While we establish some rigorous results for the evolving voter model on dense random graphs, the picture is far from clear. We conclude with the following natural questions, that are still open.

\begin{itemize}
\item {\bf What happens eventually in the \emph{rewire-to-same} model?} Notice that we do not have any result analogous to Theorem  \ref{t:rtoresplit} in the \emph{rewire-to-same} model. It is a natural question to ask whether in the rewire-to-same model, is there a positive fraction of both opinions present when the process reaches an absorbing state? As we have mentioned before in \cite{Durrett12} it was conjectured that, for the sparse graphs (with constant average degree), in the rewire-to-same model, one of the opinions take over almost the whole graph, but it is not known rigorously.

\item{\bf Is there a sharp transition in $\beta$?} Another natural question to ask is if there is any value $\beta_0$ such that for $\beta<\beta_0$ we have behaviour as in Theorem \ref{t:rewiretorandom}$(i)$ and for $\beta>\beta_0$, we have behaviour as in  Theorem \ref{t:rewiretorandom}$(ii)$?

\item{\bf What can we say about sparser graphs?}
We can prove by an argument similar to proof of Theorem \ref{t:rtoresplit}. for sparser graph (i.e., $G(0)\sim G(n,\frac{\lambda}{n})$) that there exists $\beta_0>0$, such that for all $\beta<\beta_0$, with high probability the process stops before the density of the opinions change. But the other side of the phase transition seems harder to prove. The main difficulty seems to be the presence of a few vertices of very high degree. It is another interesting question to investigate whether one could prove results about evolving voter models starting with not too sparse graphs, e.g. $G(0)\sim G(n, n^{1-\alpha})$ for some $0<\alpha <1$?

\end{itemize}

\medskip

\textbf{Acknowledgements.} The authors would like to thank Rick Durrett for introducing us to the model.

\bibliography{evm}
\bibliographystyle{plain}
\end{document}